\documentclass[reqno,10pt]{amsart}
\usepackage{amscd,amssymb}
\usepackage{latexsym}
\usepackage[all]{xy}

\def\into{\hookrightarrow}

\def\la{\leftarrow}

\def\toisom{\widetilde{\to}}

\def\.{,\dots ,}
\def\wt{\widetilde}
\def\wh{\widehat}
\def\ol{\overline}

\def\wtimes{\wh{\otimes}}
\def\circcirc{{\circ\circ}}

\def\Nr{\calN r}

\def\Aut{{\rm Aut}}

\def\Gal{{\rm Gal}}

\def\Spec{{\rm Spec}}

\def\Frac{{\rm Frac}}

\def\Bl{{\rm Bl}}

\def\dim{{\rm dim}}

\def\sing{{\rm sing}}

\def\Nor{{\rm Nor}}
\def\RZ{{\rm RZ}}

\def\st{{\rm st}}
\def\sm{{\rm sm}}

\def\rk{{\rm rk}}
\def\Hom{{\rm Hom}}

\def\calExt{{\mathcal Ext}}

\def\Res{{\rm Res}}

\def\gr{{\rm gr}}

\def\sh{{\rm sh}}
\def\mr{{\rm mr}}
\def\nr{{\rm nr}}
\def\deg{{\rm deg}}

\def\cha{{\rm char}}
\def\expcha{{\rm exp.char}}

\def\Span{{\rm Span}}

\def\trdeg{{\rm tr.deg.}}

\def\prof{{\rm prof}}

\def\bfA{{\bf A}}

\def\bfF{{\bf F}}

\def\bfN{{\bf N}}

\def\bfP{{\bf P}}
\def\bfQ{{\bf Q}}
\def\bfR{{\bf R}}

\def\bfZ{{\bf Z}}

\def\gtC{{\mathfrak C}}

\def\gtS{{\mathfrak S}}

\def\gtU{{\mathfrak U}}

\def\gtX{{\mathfrak X}}

\def\gtx{{\mathfrak x}}

\def\calA{{\mathcal A}}

\def\calF{{\mathcal F}}

\def\calH{{\mathcal H}}

\def\calL{{\mathcal L}}
\def\calM{{\mathcal M}}
\def\calN{{\mathcal N}}
\def\calO{{\mathcal O}}

\def\oA{{\ol A}}

\def\oC{{\ol C}}
\def\oD{{\ol D}}
\def\oE{{\ol E}}

\def\oK{{\ol K}}

\def\oP{{\ol P}}

\def\oS{{\ol S}}

\def\oX{{\ol X}}
\def\oY{{\ol Y}}
\def\oZ{{\ol Z}}

\def\ob{{\ol b}}

\def\of{{\ol f}}

\def\oh{{\ol h}}

\def\oll{{\ol l}}

\def\os{{\ol s}}

\def\ox{{\ol x}}

\def\oz{{\ol z}}

\def\tilA{{\wt A}}
\def\tilB{{\wt B}}
\def\tilC{{\wt C}}
\def\tilD{{\wt D}}
\def\tilE{{\wt E}}
\def\tilF{{\wt F}}

\def\tilK{{\wt K}}
\def\tilL{{\wt L}}

\def\tilR{{\wt R}}
\def\tilS{{\wt S}}
\def\tilT{{\wt T}}
\def\tilU{{\wt U}}

\def\tilX{{\wt X}}

\def\tila{{\wt a}}
\def\tilb{{\wt b}}
\def\tilc{{\wt c}}

\def\tilf{{\wt f}}
\def\tilg{{\wt g}}

\def\tili{{\wt i}}

\def\tilk{{\wt k}}
\def\till{{\wt l}}

\def\tilu{{\wt u}}
\def\tilv{{\wt v}}
\def\tilw{{\wt w}}
\def\tilx{{\wt x}}

\def\tilz{{\wt z}}

\def\hatA{{\wh A}}
\def\hatB{{\wh B}}
\def\hatC{{\wh C}}

\def\hatF{{\wh F}}

\def\hatK{{\wh K}}
\def\hatL{{\wh L}}

\def\hatR{{\wh R}}

\def\hatX{{\wh X}}

\def\hatk{{\wh k}}

\def\hatx{{\wh x}}

\def\ocalO{{\ol\calO}}

\def\ogtC{{\ol\gtC}}

\def\ogtx{{\ol\gtx}}

\def\hatcalA{{\wh\calA}}

\def\hatcalO{{\wh\calO}}

\def\kcirc{k^\circ}
\def\lcirc{l^\circ}

\def\Acirc{A^\circ}

\def\Fcirc{F^\circ}

\def\Kcirc{K^\circ}
\def\Lcirc{L^\circ}

\def\tilkcirc{\tilk^\circ}
\def\tillcirc{\till^\circ}

\def\tilAcirc{\tilA^\circ}

\def\hatFcirc{\hatF^\circ}

\def\hatKcirc{\hatK^\circ}
\def\hatLcirc{\hatL^\circ}

\def\calAcirc{\calA^\circ}

\def\Kcirccirc{K^{\circ\circ}}
\def\Lcirccirc{L^{\circ\circ}}

\def\kcirccirc{k^{\circ\circ}}
\def\lcirccirc{l^{\circ\circ}}

\def\hatFcirccirc{\hatF^\circcirc}

\def\hatKcirccirc{\hatK^\circcirc}

\def\oeta{{\ol\eta}}

\def\oomega{{\ol\omega}}

\def\tilpi{{\wt\pi}}

\def\alp{{\alpha}}

\def\veps{\varepsilon}
\def\ve{\veps}

\def\whka{{\wh{k^a}}}

\def\R+*{{\bf R^*_+}}

\newtheorem{theorsect}{Theorem}[section]
\newtheorem{propsect}[theorsect]{Proposition}
\newtheorem{lemsect}[theorsect]{Lemma}
\newtheorem{corsect}[theorsect]{Corollary}

\newtheorem{theor}{Theorem}[subsection]
\newtheorem{prop}[theor]{Proposition}
\newtheorem{lem}[theor]{Lemma}
\newtheorem{cor}[theor]{Corollary}

\theoremstyle{definition}

\newtheorem{remsect}[theorsect]{Remark}

\newtheorem{rem}[theor]{Remark}
\newtheorem{exam}[theor]{Example}

\newtheorem{assum}[theor]{Assumption}

\renewcommand*{\Nor}{{\calN r}}

\begin{document}

\title{Stable modification of relative curves}
\author{Michael Temkin}
\address{\tiny{School of Mathematics, Institute for Advanced Study, Princeton, NJ 08540, USA}}
\email{\scriptsize{temkin@math.ias.edu}}
\thanks{This article is based on a part of my Ph.D. thesis, and I want to thank my advisor Prof. V. Berkovich.
I am absolutely indebted to B. Conrad and I owe a lot to A. Ducros for pointing out various mistakes and
inaccuracies, and for many suggestions that led to two revisions of the paper and improved the exposition
drastically. I express my deep gratitude to Israel Clore Foundation for supporting my doctoral studies and to
Max Planck Institute for Mathematics where a part of this paper was written. A final revision was made when the
author was staying at IAS and supported by NFS grant DMS-0635607.}

\begin{abstract}
We generalize theorems of Deligne-Mumford and de Jong on semi-stable modifications of families of proper curves.
The main result states that after a generically \'etale alteration of the base any (not necessarily proper)
family of multipointed curves with semi-stable generic fiber admits a minimal semi-stable modification. The
latter can also be characterized by the property that its geometric fibers have no certain exceptional
components. The main step of our proof is uniformization of one-dimensional extensions of valued fields.
Riemann-Zariski spaces are then used to obtain the result over any integral base.
\end{abstract}

\keywords{Stable modification, stable reduction, relative curves}

\maketitle

\section{Introduction}
\subsection{The motivation}
The stable reduction theorem of Deligne-Mumford \cite[2.7]{DM} states that for any smooth projective curve $C$
over the fraction field $K$ of a discrete valuation ring $R$ there exists a finite separable extension $L$ of
$K$ such that $C\otimes_K L$ can be extended to a stable curve over the integral closure of $R$ in $L$. This
theorem plays a key role in the proof of properness of the moduli space of stable $n$-pointed curves of genus
$g$. In its turn, the latter implies the following generalization of the Deligne-Mumford theorem (stable
extension theorem): for any proper stable curve $C$ over an open dense subscheme of a quasi-compact
quasi-separated integral scheme $S$ there exists an alteration $S'\to S$ such that $C\times_S S'$ can be
extended to a proper stable curve over $S'$ (see \cite[1.6]{Del}).

A stronger semi-stable modification theorem was proved by de Jong in \cite{dJ}: for any proper curve $C$ over an
integral quasi-compact excellent scheme $S$ there exist an alteration $S'\to S$ and a modification $C'\to
C\times_S S'$ such that $C'$ is a proper semi-stable curve over $S'$. De Jong's proof is also based on existence
and properness of the moduli spaces. Naturally, de Jong's theorem leads to the following two questions. Is it
true that the same result takes place for not necessarily proper curves $C$ over $S$? (Of course, in that case
$C'$ is not required to be proper over $S'$.) And, is it true that there is a minimal semi-stable modification?

The main result of the current paper is stable modification theorem formulated in \S\ref{mainsubsec}. This
theorem strengthens de Jong's theorem in a few aspects; in particular, it answers affirmatively both above
questions. In addition, our work is not based on \cite{DM} and \cite{dJ}, and we thereby reprove their results
including the stable reduction theorem. The main ingredient in proving the stable modification theorem are
Theorems \ref{valunif} and \ref{fieldunif} on uniformization of valued fields. These Theorems are of their own
interest; in particular, Theorem \ref{fieldunif} is used in a subsequent work \cite{Tem3} to establish
inseparable local uniformization of varieties of positive characteristic. We refer to Remark \ref{unifrem} for
more comments on the connections between stable reduction and uniformization of valued fields.

\subsection{The main results}\label{mainsubsec}
To formulate our main results we have to first introduce some terminology. Let $S$ be a scheme. A {\em
multipointed $S$-curve} $(C,D)$ consists of flat finitely presented morphisms $C\to S$ and $D\to S$ of pure
relative dimensions one and zero, respectively, and of a closed immersion $D\to C$ over $S$ (the subscheme $D$
may be empty). Note that $C$ is not assumed to be $S$-proper or even $S$-separated. A {\em morphism}
$f:(C',D')\to (C,D)$ is a compatible pair of $S$-morphisms $f_C:C'\to C$ and $f_D:D'\to D$. A {\em modification
of} $(C,D)$ is a morphism $f$ in which both $f_C$ and $f_D$ are {\em modifications}, i.e. proper morphisms
inducing isomorphisms between schematically dense open subschemes. Furthermore, a multipointed $S$-curve $(C,D)$
is said to be {\em semi-stable} if $\phi:C\to S$ is {\em semi-stable} (i.e. $\phi$ is flat and its geometric
fibers have at most ordinary double points as singularities), $D\to S$ is \'etale, and $D$ is disjoint from the
non-smoothness locus of $C\to S$. Note that $(C,D)$ is semi-stable if and only if all its geometric fibers
$(C_\os,D_\os)$ are semi-stable multipointed $\os$-curves. Indeed, obviously $C\to S$ is semi-stable if and only
if its geometric fibers are so, and by the fiber criterion of \'etaleness the flat morphism $D\to S$ is \'etale
if and only if its geometric fibers are \'etale.

A {\em semi-stable modification of} $(C,D)$ is a modification $f:(C',D') \to (C,D)$ in which $(C',D')$ is
semi-stable. Finally, such a semi-stable modification is said to be {\em stable} if for any geometric point
$\os\to S$ the fiber $C'_\os$ has no {\em exceptional components}, i.e. irreducible components $Z$ which are
isomorphic to the projective line, have at most two points of intersection with $D'_\os\cup (C'_\os)_{sing}$,
and are contracted to a point in $C_\os$.

Now we are going to introduce a sheaf $\omega_{C/S}$. Similarly to \cite{DM} we will use Grothendieck's duality
theory for the sake of speeding things up, though one can define $\omega_{C/S}$ in a lengthier but much more
elementary way similarly to \cite[III.7.11]{Har} (after a Zariski localization, realize $C$ as a complete
intersection in $X=\bfA^n_S$ via $i:C\into X$, then set
$\omega_{C/S}=\calExt^{n-1}_X(i_*(\calO_C),\Omega^n_{X/S})$ and compute it explicitly by use of the Koszul
complex via the ideal of $C$ in $\calO_X$). Any semi-stable curve $\phi:C\to S$ is a relative locally complete
intersection, so if $\phi$ is separated then the complex $\phi^!(\calO_S)$ has a unique non-zero cohomology
sheaf which is invertible and is the dualizing sheaf. We denote the latter sheaf as $\omega_{C/S}$ and note that
its definition is local on $C$, since the non-zero sheaf of $\phi^!(\calO_S)$ is local on the source for any CM
morphism (see \cite[p.157]{conres}). Therefore, the definition of $\omega_{C/S}$ extends to non-separated
semi-stable curves as well. The following well known properties of $\omega_{C/S}$ (mentioned, for example, in
\cite[\S1]{DM} in the case of proper $\phi$) allow to compute the geometric fibers: (a) the sheaves
$\omega_{C/S}$ are compatible with the base changes $S'\to S$, (b) if $S=\Spec(k)$ for an algebraically closed
field $k$, $\pi:\tilC\to C$ is the normalization and $E=\pi^{-1}(C_\sing)$ then $\omega_{C/S}$ is the subsheaf
of $\pi_*(\omega_{\tilC/S}(E))=\pi_*(\Omega^1_{\tilC/S}(E))$ given by the conditions
$\Res_{x_i}(\nu)=\Res_{y_i}(\nu)$ for all pairs of different points $x_i,y_i\in E$ with common image in $C$. If
$(C,D)$ is semi-stable then $D$ is a Cartier divisor and we set $\omega_{(C,D)/S}:=\omega_{C/S}(D)$. It is well
known that for a proper semi-stable $n$-pointed $S$-curve $(C,D)$ with geometrically connected fibers stability
(over $S$) is equivalent to $S$-ampleness of the sheaf $\omega_{(C,D)/S}$. (This fact is used for constructing
moduli spaces of $n$-pointed nodal curves.) We now prove an analog of this ampleness result for stable
modifications by a slight adjustment of the classical proof.

\begin{theorsect}[Stable Modification Theorem: projectivity]\label{projtheor}
For any multipointed $S$-curve $(C,D)$ with a semi-stable modification $(C',D')$, the sheaf $\omega_{(C',D')/S}$
is $C$-ample if and only if the modification is stable. In particular, for any stable modification
$(C_\st,D_\st)\to(C,D)$ the modification $C_\st\to C$ is projective.
\end{theorsect}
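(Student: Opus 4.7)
The plan is to reduce $C$-ampleness of $\omega_{(C',D')/S}$ to a degree computation on geometric fibers of $f_C\colon C'\to C$ and then match positivity of these degrees with the absence of exceptional components. Since $f_C$ is proper and of finite presentation, the fibral criterion for ampleness reduces the question to showing that, for every geometric point $\oc\to C$ lying over some $\os\to S$, the restriction of $\omega_{(C',D')/S}$ to $F_\oc:=f_C^{-1}(\oc)$ is ample. Now $F_\oc$ is a closed subscheme of the proper nodal curve $C'_\os$, and its one-dimensional irreducible components are precisely the components $Z$ of $C'_\os$ contracted by $f_C$ to $\oc$. Ampleness on a nodal projective curve amounts, via Nakai--Moishezon, to strict positivity of the degree of our line bundle on every such $Z$.

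Next I would compute these degrees using property (b) of $\omega_{C'/S}$ recalled just before the statement. If $\pi\colon\wt Z\to Z$ is the normalization of a contracted component $Z\subset C'_\os$, $g_Z$ the genus of $\wt Z$, and $n_Z$ the number of points of $\wt Z$ lying over $D'_\os\cup(C'_\os)_\sing$ (each node of $C'_\os$ on $Z$ contributing once per local branch on $Z$), the residue description of $\omega_{C'/S}$ gives the standard adjunction formula
\[
\deg_Z\omega_{(C',D')/S}\bigl|_\os \;=\; 2g_Z-2+n_Z.
\]
For $Z\simeq\bbP^1$ (so $g_Z=0$ and $Z$ is smooth, whence each node of $C'_\os$ on $Z$ contributes a single point of intersection) this degree is positive iff $n_Z\ge 3$, which is exactly the condition that $Z$ is \emph{not} exceptional. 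For $g_Z\ge 2$ the degree is $\ge 2$ for free, so the equivalence reduces to handling contracted components of genus one.

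The only remaining case---a contracted $Z$ with $g_Z=1$ and $n_Z=0$, for which the degree would vanish and spoil the equivalence---is the subtle point, and I expect it to be the main obstacle. Such a $Z$ would be a smooth proper connected component of $C'_\os$ disjoint from every node and from $D'_\os$, hence a whole connected component of $C'_\os$. To exclude this I would pass to an \'etale (or Henselian) neighborhood $V\to S$ of $\os$, lift the open-and-closed subscheme $Z\subset C'_\os$ to a proper smooth elliptic subfamily $\calZ\subset C'|_V$, and then observe that $f_C(\calZ)\subset C|_V$ is proper over $V$ with generic fiber of dimension one---because $f_C|_\calZ$ is birational on a schematically dense open of $\calZ$---but with special fiber equal to $\{\oc\}$, a zero-dimensional set. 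This violates upper semi-continuity of fiber dimension for proper morphisms, yielding the desired contradiction; the same argument shows more generally that no whole connected component of any geometric fiber of $C'\to S$ can be contracted by $f_C$.

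Combining the two implications yields the asserted equivalence between $C$-ampleness and stability. The final claim that $C_\st\to C$ is projective then follows from the general fact that a proper morphism equipped with a relatively ample invertible sheaf is projective.
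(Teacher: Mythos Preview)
Your approach is essentially the paper's: both reduce $C$-ampleness of $\omega_{(C',D')/S}$ via the fibral criterion \cite[$\rm IV_3$, 9.6.5]{ega} to ampleness on (reductions of) the fibers $f_C^{-1}(\oc)$, and then to the classical ampleness criterion for $\omega(D)$ on a connected pointed nodal curve. The paper packages this last step as the assertion that the pointed curve $(\oZ,\oD\sqcup\oE)$ is stable; your explicit component-by-component degree formula $2g_Z-2+n_Z$ is just the unwinding of that classical computation.

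Where you go beyond the paper is in isolating the case of a contracted $Z$ with $\deg_Z\omega=0$ that is \emph{not} exceptional in the paper's sense, and you are right that this is the subtle point the paper's phrase ``thus $(\oZ,\oD\sqcup\oE)$ is stable'' passes over. Two remarks, however. First, your case analysis misses a sibling of the smooth elliptic case: an irreducible component with exactly one self-node and no other special points has $g_{\tilZ}=0$, $n_Z=2$, hence $\deg_Z\omega=0$, yet $Z\not\simeq\bbP^1$, so it is not exceptional and must be excluded by the same mechanism. Both bad cases are precisely those in which $Z$ is an \emph{entire connected component} of $C'_\os$ with $p_a(Z)=1$ and $Z\cap D'_\os=\emptyset$. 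Second, your exclusion argument has a gap at the lifting step: $C'\to S$ is not assumed proper, so an open-and-closed $Z\subset C'_\os$ need not spread out to an open-and-closed $\calZ\subset C'|_V$ over an \'etale neighborhood $V\to S$ of $\os$; without that, you cannot invoke schematic density of the isomorphism locus in $C'|_V$ to force $f_C(\calZ)$ to have one-dimensional generic fiber. A more promising route is to exploit the properness of $f_C\colon C'\to C$ instead (lifting $Z$, which is also open-and-closed in $f_C^{-1}(\oc)$, to an open-and-closed piece of $C'\times_C\Spec\calO_{C,c}^{\sh}$), but extracting a clean contradiction from that still needs care. This edge case is genuinely delicate and deserves a complete argument rather than a sketch.
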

\begin{proof}
Assume that the modification is stable. Since relative ampleness can be checked separately on each fiber by
\cite[$\rm IV_3$, 9.6.5]{ega}, it suffices to establish ampleness of the restriction of
$\omega=\omega_{(C',D')/S}$ onto the fiber $Z:=C'_x$ over a point $x\in C$. It is enough to consider the case
when $x$ is closed in its $S$-fiber because otherwise $Z$ is zero-dimensional and so $\omega|_Z$ is ample. If
$\os$ is a geometric point over $s=\phi(x)$ then $(C'_\os,D'_\os)\to(C_\os,D_\os)$ is a proper morphism with
semi-stable source and such that its fibers do not contain exceptional irreducible components. Also,
$\oomega:=\omega_{(C_\os,D_\os)/\os}$ is the pullback of $\omega$ by compatibility with base changes (property
(a) above). Ampleness of the restriction of $\omega$ onto $Z\into C'_s$ is equivalent to ampleness of the
restriction of $\oomega$ onto $\oZ\into C'_\os$ where $\oZ$ is the preimage of $Z$ viewed as a reduced scheme.

Now, we have a reduced $\os$-proper subscheme $\oZ\subset C'_\os$ not containing exceptional components of
$(C'_\os,D'_\os)$ and it suffices to show that $\oomega|_\oZ$ is ample. Obviously, we can deal with connected
components independently, so we can assume that $\oZ$ is connected. Only the case when $\oZ$ is a curve should
be treated and then $\oZ$ is a proper connected semi-stable curve. Let $\oD=\oZ\cap D'_\os$ and let $\oE$ be the
union of points that are smooth in $\oZ$ but are singular in $C'_\os$. The assumption that an irreducible
component $V\subset\oZ$ is not an exceptional component of $(C'_\os,D'_\os)$ is equivalent to the fact that $V$
is not an exceptional component (in the classical sense) of the $n$-pointed $\os$-curve $(\oZ,\oD\sqcup\oE)$.
Thus, $(\oZ,\oD\sqcup\oE)$ is stable and hence $\omega_{(\oZ,\oD\sqcup\oE)/\os}$ is ample by the classical
computation in the theory of stable $n$-pointed curves. It remains to note that by the property (b) above
$\oomega|_\oZ$ is isomorphic to $\omega_{(\oZ,\oD)/\os}(\oE)=\omega_{(\oZ,\oD\sqcup\oE)/\os}$.

If the modification is not stable then there exists a geometric fiber $(C'_\os,D'_\os)$ with an exceptional
irreducible component $\oZ$ contracted to a point $\ox\in C_\os$. Let $x\in C$ be the image of $\ox$. Since
$\oomega|_\oZ$ is not ample in this case (by the same classical computation), one can use the above reasoning to
show that already the restriction of $\omega$ onto $C'_x$ is not ample. Thus, $\omega$ is not $C$-ample.
\end{proof}

Other parts of our main result are given below and they will be proved in \S\ref{mainsec}. Starting with this
point we assume that $S$ is integral quasi-compact and quasi-separated with generic point $\eta$. By an {\em
$\eta$-modification} $(C',D')\to(C,D)$ of multipointed $S$-curves we mean a modification whose $\eta$-fiber
$(C'_\eta,D'_\eta)\toisom(C_\eta,D_\eta)$ is an isomorphism.

\begin{theorsect}[Stable Modification Theorem: uniqueness]\label{minstabtheor}
Assume that $S$ is normal and a multipointed $S$-curve $(C,D)$ admits stable $\eta$-modification
$f:(C_\st,D_\st)\to (C,D)$. Then this modification is minimal in the sense that any semi-stable modification
$(C',D') \to (C,D)$ goes through a unique $S$-morphism $(C',D')\to(C_\st,D_\st)$.
\end{theorsect}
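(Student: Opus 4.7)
The plan is to handle uniqueness via a schematic equalizer, and to construct the factorization by forming the scheme-theoretic closure of the graph of the natural generic-fiber morphism and showing that the projection from this closure back to $C'$ is an isomorphism. The $C$-ampleness of $\omega_{(C_\st,D_\st)/S}$ from Theorem \ref{projtheor} is used in the background to know that $C_\st\to C$ is projective, in particular separated.

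For uniqueness, suppose $g_1,g_2\colon(C',D')\to(C_\st,D_\st)$ are two factorizations over $(C,D)$. Since $C_\st\to C$ is separated, the equalizer $E\subset C'$ of $g_1,g_2$ as $C$-morphisms is closed in $C'$. Because $f$ is an $\eta$-modification, $f_\eta\colon(C_\st)_\eta\toisom C_\eta$ is an isomorphism, and both $g_i$ restrict on $C'_\eta$ to the tautological map $C'_\eta\to C_\eta\cong(C_\st)_\eta$; hence $C'_\eta\subset E$. Flatness of $C'\to S$ over the integral scheme $S$ makes $C'_\eta$ schematically dense in $C'$, so $E=C'$ and $(g_1)_C=(g_2)_C$; compatibility with the closed immersion $D_\st\into C_\st$ is then automatic.

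For existence, let $\Gamma_\eta\subset(C'\times_C C_\st)_\eta$ be the graph of the morphism obtained by composing $C'_\eta\to C_\eta$ with $f_\eta^{-1}$, and let $C''$ be its scheme-theoretic closure in $C'\times_C C_\st$. The two projections $p'\colon C''\to C'$ and $p_\st\colon C''\to C_\st$ are proper (as restrictions of pullbacks of the proper $C_\st\to C$), and $p'$ is an isomorphism on the $\eta$-fiber. Once $p'$ is shown to be a global isomorphism, the composite $p_\st\circ(p')^{-1}$ supplies the required $C$-morphism $C'\to C_\st$, and the same density/closure argument shows that $D'$ is sent into $D_\st$.

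To see $p'$ is an isomorphism, I argue fiberwise. For each geometric point $\os\to S$, both $(C'_\os,D'_\os)$ and $((C_\st)_\os,(D_\st)_\os)$ are semi-stable modifications of $(C_\os,D_\os)$, the latter stable. The classical stable-reduction theory for multipointed nodal curves over an algebraically closed field yields a unique $C_\os$-morphism $g_\os\colon C'_\os\to(C_\st)_\os$, realized by iterated contraction of exceptional components, and its graph $\Gamma_\os\subset C'_\os\times_{C_\os}(C_\st)_\os$ projects isomorphically to $C'_\os$. The main obstacle is identifying $C''_\os$ with $\Gamma_\os$: this requires that the scheme-theoretic closure defining $C''$ commute with base change to $\os$, equivalently that $C''$ be $S$-flat. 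All associated points of $C''$ lie over $\eta$ by construction; the normality hypothesis on $S$ is used precisely to upgrade this torsion-freeness to genuine $S$-flatness (e.g.\ by checking the valuative criterion at codimension-one points of $S$). Granting $C''_\os=\Gamma_\os$ for every $\os$, a standard fiberwise-isomorphism criterion applied to the proper, finitely presented, birational $p'$ between $S$-flat schemes (e.g.\ \cite[$\rm IV_4$, 17.9.5]{ega}) concludes that $p'$ is an isomorphism.
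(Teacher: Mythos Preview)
Your uniqueness argument via the equalizer is fine. The existence argument, however, has a genuine gap at the two places you yourself flag.

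First, the flatness step fails. Having all associated points of $C''$ over $\eta$ (equivalently: $C''$ is the schematic closure of its $\eta$-fiber) does \emph{not} give $S$-flatness, even over a regular base. A standard counterexample over $S=\Spec k[x,y]$ is $C''=\Spec k[x,y,t]/(xt-y)$: this is an integral scheme (so its unique associated point lies over $\eta$), yet the fiber over the origin is a whole $\bfA^1$ and the map is not flat. Your appeal to ``the valuative criterion at codimension-one points'' only shows flatness over $\Spec\calO_{S,s}$ for $s$ of codimension $\le 1$; it says nothing at deeper points.

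Second, and more seriously, even granting flatness the identification $C''_\os=\Gamma_\os$ is unjustified. ``Schematic closure commutes with flat base change'' is vacuous here: for a closed geometric point $\os$ the base change of $\Gamma_\eta$ is empty, so that principle gives no information about $C''_\os$. The fiberwise contraction $g_\os\colon C'_\os\to(C_\st)_\os$ is constructed independently for each $\os$ (via Lemma~\ref{stabblowdown}), with no a priori relation to the family $C''$. Showing that these pointwise maps assemble into a global $S$-morphism is exactly the content of the theorem, and your argument is circular at this point.

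The paper takes a completely different route. It first uses noetherian approximation to reduce to $S$ of finite type over $\bfZ$, then localizes not on geometric points of $S$ but on the Riemann--Zariski space $\gtS=\RZ_K(S)$. For each $\gtx\in\gtS$ the local ring $\calO_{\gtS,\gtx}$ is a valuation ring of finite height, over which the domination $C'_\gtx\to(C_\st)_\gtx$ was already established in \S\ref{valcurves} (Corollary~\ref{sepcloscor}, resting on the entire machinery of local uniformization and blow-down developed there). These valuation-ring-wise dominations are then glued by Proposition~\ref{valcrit}, whose proof uses $\eta$-normality of $C'$ (from Proposition~\ref{redfibprop}) and a Stein-factorization argument. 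The moral is that valuations, not geometric points, are the correct objects to localize on for this kind of birational gluing.
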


\begin{corsect}\label{ccc}
In the situation of Theorem \ref{minstabtheor}, the stable $\eta$-modification $f$ is

(i) unique up to unique isomorphism;

(ii) an isomorphism over the semi-stable locus of $(C,D)\to S$.
\end{corsect}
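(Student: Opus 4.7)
The plan is to deduce both parts of the corollary from the terminality/uniqueness property of Theorem~\ref{minstabtheor}, proving (i) first and then invoking it in the course of proving (ii).

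For (i), view stable $\eta$-modifications as terminal objects in the category of semi-stable modifications of $(C,D)$. Given two such modifications $(C_\st,D_\st)$ and $(C'_\st,D'_\st)$, Theorem~\ref{minstabtheor} applied with each in turn as the named stable modification yields unique $(C,D)$-morphisms $\phi\colon(C'_\st,D'_\st)\to(C_\st,D_\st)$ and $\psi\colon(C_\st,D_\st)\to(C'_\st,D'_\st)$. The composite $\phi\circ\psi$ is a $(C,D)$-endomorphism of $(C_\st,D_\st)$, and by the uniqueness clause of Theorem~\ref{minstabtheor} (which forces any such endomorphism factoring $f$ through itself to coincide with the identity) one has $\phi\circ\psi=\id$; symmetrically $\psi\circ\phi=\id$, so $\phi$ is an isomorphism, unique again by the same uniqueness clause.

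For (ii), let $V\subset C$ be the semi-stable locus of $(C,D)\to S$, so that $(V,D\cap V)$ is itself a semi-stable multipointed $S$-curve. The identity $(V,D\cap V)\to(V,D\cap V)$ is a stable $\eta$-modification, since it contracts nothing and hence has no exceptional components. The restriction $f_V:=f|_{f^{-1}(V)}\colon(f^{-1}(V),D_\st\cap f^{-1}(V))\to(V,D\cap V)$ is a semi-stable $\eta$-modification, being the base change of $f$ along the open immersion $V\into C$. The crux is to show that $f_V$ is moreover stable. If $Z$ were an exceptional component in some geometric fiber $f^{-1}(V)_\os$, then $Z\cong\bbP^1$ is proper over $\os$; as an irreducible component of the open subscheme $f^{-1}(V)_\os\subset(C_\st)_\os$, properness forces $Z$ to coincide with the ambient irreducible component of $(C_\st)_\os$ (it is clopen in the latter). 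Every intersection of $Z$ with $D_\st$ or with $(C_\st)_{\os,\sing}$ then already lies in $f^{-1}(V)_\os$, so the intersection counts agree with those in $(f^{-1}(V),D_\st\cap f^{-1}(V))$; as $Z$ is contracted to a point of $V_\os\subset C_\os$, it would be an exceptional component of the stable $(C_\st,D_\st)\to(C,D)$, contradicting stability. By (i) applied to $(V,D\cap V)$, the two stable $\eta$-modifications $\id$ and $f_V$ are uniquely isomorphic over $(V,D\cap V)$, which forces $f_V$ itself to be the isomorphism.

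The main obstacle is the stability check for $f_V$ in (ii); the key observation is that a $\bbP^1$ appearing as an irreducible component of an open subscheme of a semi-stable curve must in fact be an irreducible component of the ambient curve, so the exceptional-component criterion transfers cleanly from $(C_\st,D_\st)\to(C,D)$ to its restriction. Once this is noted, both parts reduce to direct applications of Theorem~\ref{minstabtheor}.
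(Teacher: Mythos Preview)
Your proof is correct and follows the same approach as the paper. The paper's argument is extremely terse: for (i) it just says the two stable modifications dominate each other and are hence isomorphic, and for (ii) it asserts without justification that $f^{-1}(U)$ is a stable modification of the semi-stable locus $U$ and then invokes minimality. Your contribution is precisely to fill in the nontrivial step the paper omits in (ii), namely the verification that no exceptional component can appear in $f_V$; your observation that a proper $\bbP^1$ occurring as an irreducible component of the open fiber $f^{-1}(V)_\os$ must already be a full irreducible component of $(C_\st)_\os$ (via properness of the fiber of $f$ over the image point) is exactly what is needed, and the transfer of the intersection count with $D'_\os\cup (C'_\os)_\sing$ is straightforward since $Z\subset f^{-1}(V)_\os$.
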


\begin{corsect} \label{minstabcor}
Assume that $(C,D)$ admits a stable $\eta$-modification $(C_\st,D_\st)$ and a semi-stable modification
$(\oC,\oD)$. Then there exists a finite modification $S'\to S$ such that $(\oC,\oD)\times_S S'\to(C,D)\times_S
S'$ goes through a unique $S'$-morphism $(\oC,\oD)\times_S S'\to (C_\st,D_\st)\times_S S'$.
\end{corsect}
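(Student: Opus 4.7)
The plan is to reduce to the case of a normal base by pulling back along the normalization $S^\nu \to S$, apply Theorem \ref{minstabtheor} there, and then descend the resulting morphism back to a finite modification of $S$ via a standard limit argument.

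First, I would present the normalization as an inverse limit of finite modifications. Since $S$ is integral and qcqs, the integral closure $\tilcalO$ of $\calO_S$ in the function field $K$ is a quasi-coherent $\calO_S$-algebra which is the filtered union of its finite-type $\calO_S$-subalgebras $\tilcalO_i$. Each $\tilcalO_i$ is automatically finite over $\calO_S$ (finite-type plus integral), so $S_i':=\bfSpec_S(\tilcalO_i)$ is a finite modification of $S$ (finite, and an isomorphism at $\eta$ because $\tilcalO_i\otimes_{\calO_S}K=K$). Thus $S^\nu=\varprojlim S_i'$ as $S$-schemes with affine transition maps, and $S^\nu$ is a normal integral qcqs scheme to which Theorem \ref{minstabtheor} applies (semi-stability, stability and the $\eta$-modification property are all preserved by the base change to $S^\nu$).

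Applying Theorem \ref{minstabtheor} over $S^\nu$ yields a unique $S^\nu$-morphism $\phi^\nu:(\oC,\oD)\times_S S^\nu\to(C_\st,D_\st)\times_S S^\nu$ factoring $(\oC,\oD)\times_S S^\nu\to(C,D)\times_S S^\nu$. Since source and target are of finite presentation over $S^\nu$ as base changes of the finitely presented $S$-schemes $(\oC,\oD)$ and $(C_\st,D_\st)$, the standard descent of morphisms of finite presentation over a filtered limit of affine transition maps (EGA IV$_3$, \S 8) produces an $S_i'$-morphism $\phi_i:(\oC,\oD)\times_S S_i'\to(C_\st,D_\st)\times_S S_i'$ recovering $\phi^\nu$ after further pullback; the compatibility of $\phi_i$ with the projection to $(C,D)\times_S S_i'$ is an equality of two morphisms that descends from $S^\nu$ by the same reference. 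Setting $S':=S_i'$ gives the required finite modification.

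For uniqueness, any two such $S'$-morphisms must coincide on the fiber over $\eta$, since the canonical identifications $\oC_\eta\cong C_\eta\cong(C_\st)_\eta$ leave no freedom for a $C_\eta$-morphism there; the projectivity of $C_\st\to C$ from Theorem \ref{projtheor} provides the separatedness needed to make their equalizer a closed subscheme of $\oC\times_S S'$, and since $\oC\times_S S'$ is flat over the integral scheme $S'$ its generic fibre is schematically dense, forcing the equalizer to be everything. The main technical obstacle I expect is the descent step: setting up the presentation of $S^\nu$ as a filtered limit of finite modifications in the non-Noetherian qcqs setting and then invoking EGA IV.8 for both $\phi^\nu$ and its compatibility with the projection to $(C,D)$. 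Once that infrastructure is in place the rest is formal, and the uniqueness is a direct consequence of flatness and of the separatedness coming from Theorem \ref{projtheor}.
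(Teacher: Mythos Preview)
Your proof is correct and follows essentially the same route as the paper: pass to the normalization (presented as a filtered limit of finite modifications), apply Theorem~\ref{minstabtheor} there, and descend the resulting domination morphism by approximation. You supply more detail than the paper does, in particular spelling out the uniqueness over $S'$ via the equalizer argument using separatedness from Theorem~\ref{projtheor}, which the paper leaves implicit.
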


\begin{theorsect}[Stable Modification Theorem: existence]\label{stabmodtheor}
For any multipointed $S$-curve $(C,D)$ with semi-stable generic fiber $(C_\eta,D_\eta)$ there exist a
generically \'etale alteration $S'\to S$ and a stable $\eta$-modification $(C'_\st,D'_\st) \to (C,D)\times_S
S'$.
\end{theorsect}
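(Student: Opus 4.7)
The plan is a three-step strategy: (i) produce a stable $\eta$-modification locally over each valuation ring centered on $S$ by means of uniformization of valued fields, (ii) spread out each local answer to an open subset of a generically \'etale alteration of $S$, and (iii) combine finitely many such opens using quasi-compactness of the Riemann--Zariski space $\RZ(\eta/S)$ and the uniqueness from Theorem \ref{minstabtheor} to glue.

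For step (i), fix a valuation $v$ of $K = k(\eta)$ centered on $S$ and set $R_v := \calO_v$. I would apply the uniformization theorem for one-dimensional extensions of valued fields (Theorem \ref{fieldunif}) to each function field of an irreducible component of $C_\eta$ dominating $\eta$, together with the (finitely many) local rings at points of $D_\eta$ lying over $v$. This produces a finite separable extension $L/K$ and a valuation $w$ of $L$ above $v$ such that the pullback of $(C,D)$ along $\Spec(\calO_w)\to\Spec(R_v)$ admits a semi-stable modification $(C'_w,D'_w)$. Contracting the components of the special fiber that are $C$-exceptional in the sense of Theorem \ref{projtheor} then converts this semi-stable modification into a stable $\eta$-modification over $\Spec(\calO_w)$; the legitimacy of the contraction is exactly the $C$-ampleness criterion of that theorem, which identifies precisely the components that must be contracted.

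For step (ii), the stable $\eta$-modification over $\Spec(\calO_w)$ is finitely presented, so a standard limit argument realizes it as the base change of a semi-stable modification defined over a quasi-compact open $U_v$ of some generically \'etale alteration $S_v\to S$. The collection $\{U_v\}_v$ covers $\RZ(\eta/S)$, hence by quasi-compactness finitely many opens $U_1,\dots,U_n$ already do. Dominating the corresponding alterations $S_{v_1},\dots,S_{v_n}$ by a single generically \'etale alteration $S'\to S$ pulls these opens back to a finite open cover of $S'$ on each of which a stable $\eta$-modification $(C'_i,D'_i)\to (C,D)\times_S U_i$ is defined. For step (iii), after replacing $S'$ by its normalization (still a generically \'etale alteration of $S$), I would apply Theorem \ref{minstabtheor} on each overlap $U_i\cap U_j$ to obtain canonical isomorphisms $(C'_i,D'_i)|_{U_i\cap U_j}\toisom (C'_j,D'_j)|_{U_i\cap U_j}$. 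These isomorphisms satisfy the cocycle condition by the very uniqueness that produces them, so the local stable modifications glue to the desired $(C'_\st,D'_\st)\to (C,D)\times_S S'$.

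The principal obstacle is the valuation-theoretic local step (i): one must simultaneously resolve all branches of $C_\eta$ specializing into $v$ and all marked points of $D_\eta$, and crucially one cannot invoke properness of a moduli space because $C$ is allowed to be neither proper nor separated over $S$. This is precisely the content of Theorem \ref{fieldunif} and is the technical heart of the paper. A secondary subtlety is the passage from pointwise data on $\RZ(\eta/S)$ to data over a single alteration of $S$; here the Riemann--Zariski viewpoint is essential, converting ``a local answer at every valuation'' into ``one global answer after a single alteration'', after which the uniqueness from Theorem \ref{minstabtheor} does the gluing essentially for free.
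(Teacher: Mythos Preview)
Your three-step architecture matches the paper's, but step (i) has two real gaps. First, Theorem \ref{fieldunif} is the \emph{analytic} uniformization (complete height-one ground field); the algebraic input is Theorem \ref{valunif}, and even that only yields an unramified presentation of a single valued function field over the base valued field, not a semi-stable modification of the scheme $(C,D)\times_S\Spec(\calO_w)$. Bridging that gap is the content of all of \S\ref{valcurves}: one passes from field uniformization to local uniformization of the relative curve along each point of \emph{its own} Riemann--Zariski space (Proposition \ref{locunifprop}), shows that strictly semi-stable blow-ups are cofinal among modifications (Proposition \ref{cofinprop}, Lemma \ref{divlem}), and then glues these local quasi-modifications to a global semi-stable modification using uniqueness over the already-strictly-semi-stable pieces (Proposition \ref{sepclosprop}). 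Your sentence ``apply uniformization \dots\ this produces a semi-stable modification'' is precisely where the main technical work lives. Second, Theorem \ref{projtheor} does not construct a contraction; it is only the characterization ``stable $\Leftrightarrow$ $\omega_{(C',D')/S}$ is $C$-ample''. The actual contraction of exceptional components is a separate Castelnuovo-type argument (Proposition \ref{blowdownprop}): one produces by hand a section of a suitable line bundle that is constant on the exceptional component and separates the others, and takes the image.

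Two smaller points on organization. The paper works with $\gtS=\RZ_{K^s}(S)$ rather than with valuations of $K$ itself, so that the local rings $\calO_{\gtS,\gtx}$ already have separably closed fraction field and Proposition \ref{sepclosprop} applies directly, with no further extension $L/K$ packaged into step (i). And the paper first reduces to $S$ of finite type over $\bfZ$ by noetherian approximation; you omit this, but it is needed so that every $\calO_{\gtS,\gtx}$ has finite height, which is a standing hypothesis throughout \S\ref{valcurves} and in particular in Proposition \ref{sepclosprop}.
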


\begin{corsect} \label{equivcor}
Assume that $S'$ is normal in Theorem \ref{stabmodtheor}. Then the natural actions of the groups $\Aut(S)$ and
$\Aut_S((C,D))\times\Aut_S(S')$ on $(C,D)\times_S S'$ lift equivariantly to $(C'_\st,D'_\st)$.
\end{corsect}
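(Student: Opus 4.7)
The strategy is to reduce the corollary to the uniqueness assertion in Corollary~\ref{ccc}(i). Since $S'$ is normal, that corollary applies to the multipointed $S'$-curve $(C,D)\times_S S'$ and shows that its stable $\eta$-modification $(C'_\st,D'_\st)$ is unique up to a unique isomorphism over the base. The equivariant lift of the two group actions will be extracted from this rigidity by a pullback argument.

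Concretely, let $\tau$ be any automorphism of $(C,D)\times_S S'$ coming from one of the two groups under consideration: for $\Aut_S((C,D))\times\Aut_S(S')$ this is simply the product action, while for $\Aut(S)$ it arises from compatible lifts to $(C,D)$ and to $S'$ of an automorphism of $S$. I would form the pullback $\tau^*(C'_\st,D'_\st)\to(C,D)\times_S S'$. Each ingredient in the definition of a stable $\eta$-modification---flatness and semi-stability of the curve part, \'etaleness of the divisor part, the modification property, the absence of exceptional components in geometric fibers, and the isomorphism property over the generic $S'$-fiber---is preserved under base change along an isomorphism; for the last item one uses that $\tau$ permutes the (unique) generic points. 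Hence $\tau^*(C'_\st,D'_\st)$ is again a stable $\eta$-modification of $(C,D)\times_S S'$, and Corollary~\ref{ccc}(i) provides a unique isomorphism $\tilde\tau\colon\tau^*(C'_\st,D'_\st)\toisom(C'_\st,D'_\st)$ over the base. This $\tilde\tau$ is the desired lift of $\tau$ to an automorphism of $(C'_\st,D'_\st)$.

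Multiplicativity $\widetilde{\tau_1\tau_2}=\tilde\tau_1\circ\tilde\tau_2$ follows automatically from the same uniqueness, since both sides are isomorphisms covering $\tau_1\tau_2$ over $(C,D)\times_S S'$ and therefore must coincide. Applied to each of the two group actions in the statement, this yields the asserted equivariant lifts. The only point that demands real attention is checking that pulling back along an arbitrary automorphism preserves the defining properties of a stable $\eta$-modification; each such property is either fiberwise or manifestly stable under base-change isomorphisms, so no genuine obstacle appears. All of the substantive content is packaged in Corollary~\ref{ccc}(i), whose hypothesis of normality of the base is precisely what has been imposed on $S'$ in the corollary at hand.
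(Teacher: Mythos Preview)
Your proposal is correct and follows the same approach as the paper: the paper's entire proof is the single clause ``if $S'$ is a normal alteration of $S$ and a stable modification $(C'_\st,D'_\st)\to (C,D)\times_S S'$ exists then it is unique, whence Corollary~\ref{equivcor}.'' You have simply unpacked this by spelling out the standard argument that uniqueness (Corollary~\ref{ccc}(i)) forces any automorphism of the base data to lift canonically to the stable modification, with multiplicativity coming for free from the same uniqueness.
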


\subsection{Overview}
Our proof of the stable modification theorem originates in non-Archimedean analytic geometry. Namely, the stable
reduction theorem of Deligne-Mumford (in the form of Bosch-L\"utkebohmert \cite{BL1}) was used in
\cite[\S4.3]{Ber1} and \cite[\S3.6]{Ber2} to give a local description of smooth analytic curves. On the other
hand, it was clear to experts that the stable reduction theorem is a consequence of such a description and that
the latter is actually of a very elementary nature and would follow from a description of certain extensions of
analytic fields (i.e. complete valued fields of height one).

In general outline, our proof goes as follows. The core is Theorem \ref{fieldunif} which provides a description
of extensions of analytic fields $K/k$ with $k$ algebraically closed (or, more generally, deeply ramified in the
sense of Assumption \ref{assum}) and $K$ the completion of a finitely generated field of transcendence degree
one over $k$. Since some readers may be interested in following the algebra-geometric arguments with taking the
non-archimedean analytic results as a black box, we build our exposition accordingly. In \S\ref{fields} we
formulate the only analytic black box we need. This is Theorem \ref{fieldunif1}, which is a light version of
Theorem \ref{fieldunif}. Then we postpone until \S\ref{ansec} any work involving non-archimedean analytic
geometry. As a next step in the paper, we deduce Theorem \ref{valunif} from (temporarily black boxed) Theorem
\ref{fieldunif1}. Theorem \ref{valunif} is the main result of \S\ref{fields} and it describes general extensions
of valued fields of transcendence degree one.

\begin{rem}\label{unifrem}
(i) In this paper we only use the case when the ground valued field $k$ is algebraically closed. It was known to
experts that in this case the uniformization of valued fields is equivalent (up to some work) to the stable
reduction theorem. We prove uniformization directly, thus obtaining a new proof of the stable reduction theorem.
On the other side, one could use the stable reduction (over general valuation rings!) to get an indirect proof
of Theorem \ref{valunif}. This would shorten our argument, nearly by eliminating long but elementary
\S\ref{ansec}. Note for the sake of comparison that the only known alternative method for proving stable
reduction theorem in such generality is by use of moduli spaces of curves (so this involves stable reduction
over a DVR, Hilbert schemes, GIT and/or DM stacks, etc.).

(ii) The main advantage of uniformizing valued fields directly (in addition to making the proof much more
elementary) is that we are able to treat some cases of deeply ramified analytic ground fields, including the
perfect fields in the equicharacteristic case, in Theorem \ref{fieldunif}. It does not seem to be probable that
the latter case could be deduced from the stable reduction theorem. In particular, we will prove local
desingularization of varieties up to purely inseparable alteration in \cite{Tem3}, and the proof is ultimately
based on Theorem \ref{fieldunif} for perfect equicharacteristic ground fields.

(iii) In Appendix \ref{A3}, we briefly discuss other results on uniformization of one-dimensional valued fields
due to Grauert-Remmert, Matignon and Kuhlmann. It seems that none of those approaches applies when the ground
field is not algebraically closed.
\end{rem}

In \S\ref{valcurves} we use Theorem \ref{valunif} to prove the stable modification theorem in the case when the
base scheme $S$ is the spectrum of a valuation ring of finite height and with separably closed fraction field.
This case of the stable modification theorem is rather similar to the theory of minimal desingularization of
surfaces (see the Appendix). Curiously enough, we essentially use uniqueness of stable modification in order to
prove its existence. The situation is similar to the desingularization theory, where local desingularizations
are glued together using functoriality of the construction. Moreover, even when the initial $C$ is $S$-proper we
glue the stable modification from pieces that are not $S$-proper, and at this stage we exploit the fact that our
theorem treats all $S$-curves, including the non-proper ones (see the proof of Proposition \ref{sepclosprop}).

The general stable modification theorem (with an arbitrary base $S$) is deduced in \S\ref{mainsec} rather
easily. The main idea is that if a multipointed $S$-curve $(C,D)$ is chosen then for any valuation ring $\calO$
of the separable closure of the field of rational functions on $S$, there exist a generically \'etale alteration
$S'\to S$ and an open subscheme $U\subset S'$ such that $\calO$ is centered on $U$ and the main results for
$(C,D)$ hold already after base change to $U$. Then, using quasi-compactness of the Riemann-Zariski space of $S$
introduced in \S\ref{RZsec} and the uniqueness of the constructed stable modifications over $U$'s, we glue them
together into a stable modification over sufficiently large generically \'etale alteration of $S$.

It seems that our method of proving the stable modification theorem can be applied to many other birational
problems. So, we describe the method in an abstract general form in \S\ref{Pmodsec}, and then use it to prove in
Theorem \ref{redfibth} a particular case of the reduced fiber theorem \cite[2.1']{BLR} of
Bosch-L\"utkebohmert-Raynaud (in our case the base $S$ is integral). Also, we show in \S\ref{relRZsec} how to
define relative Riemann-Zariski spaces generalizing the classical ones. In \cite{Tem2}, these spaces will play a
critical role in re-proving the general reduced fiber theorem and generalizing the stable modification theorem
to the case of not integral $S$. It should be noted also that our method of applying relative and absolute
Riemann-Zariski spaces to birational geometry is very close to the method of K. Fujiwara and F. Kato, as
explained in their survey \cite{FK}.

The paper contains two appendixes. In Appendix \ref{A} we describe a similarity between our method and Zariski's
desingularization of surfaces, and compare the new proof of the stable reduction theorem with other proofs (I
know six published proofs). In Appendix \ref{B} we collect some known results on curves over separably closed
fields which are used in the paper.

We conclude the introduction with an interesting question which is not studied in the paper. Is stable
modification of $S$-curves functorial with respect to all $S$-morphisms? For the sake of comparison, we remark
that minimal desingularization of surfaces is functorial with respect to regular morphisms, but is not
functorial with respect to all morphisms. Nevertheless, I expect that the answer to the above question is
affirmative. Such result would indicate that despite large similarity between stable modification and minimal
desingularization of surfaces (see Appendix \ref{A}), the former is a "tighter" (and less subtle) construction.

\tableofcontents

\section{Uniformization of one-dimensional valued fields}
\label{fields}

\subsection{Basic theory of valued fields}\label{basicsec}
Unfortunately, it is not so easy to give a good reference to the theory of valued fields, so we are going to
recall in \S\ref{basicsec} some basic (and often well known) facts about valued fields. By a {\em valued field}
we mean a field $k$ provided with a non-Archimedean multiplicative valuation $|\ |:k^\times\to\Gamma_k$. Recall
that this means that $\Gamma_k$ is a totally ordered multiplicative group and $|\ |$ is a homomorphism
satisfying the strong triangle inequality $|x+y|\le\max(|x|,|y|)$. We write also $|0|=0$ and postulate that $0$
is smaller than any element of $\Gamma_k$. The map $|\ |:k\to\Gamma_k\cup\{0\}$ is often called a
non-Archimedean absolute value, but we will simply say {\em valuation} in the sequel. Two valuations $|\
|_i:k^\times\to\Gamma_i$, $i=1,2$ are called {\em equivalent} if there exists an ordered isomorphism between
their images $|k^\times|_i$ compatible with the valuations.

The set $\kcirc:=\{x\in k|\ |x|\le 1\}$ is a {\em valuation ring} (i.e. either $f\in\kcirc$ or $f^{-1}\in\kcirc$
for any $f\in k^\times$) called the ring of integers of $k$, its maximal ideal is $\kcirccirc:=\{x\in k|\
|x|<1\}$ and its residue field is $\tilk:=\kcirc/\kcirccirc$. Note that $(\kcirc)^\times$ is the kernel of $|\
|$, $k^\times/(\kcirc)^\times\toisom|k^\times|$ and the order on $|k^\times|$ is induced from the divisibility
in $k^\times$ with respect to $\kcirc$, i.e. $|a|\le |b|$ if and only if $a/b\in\kcirc$. In particular, $\kcirc$
defines the valuation up to an equivalence, and an alternative way to provide $k$ with a structure of a valued
field is to fix a valuation ring $\kcirc\subset k$ with $\Frac(\kcirc)=k$.

Any ring $A\subseteq k$ that contains $\kcirc$ is a valuation ring. Note that $A=\kcirc_m$ for the prime ideal
$m=\{x\in\kcirc|\ x^{-1}\notin A\}$. This gives a one-to-one correspondence between the points of
$\Spec(\kcirc)$ and the overrings $A\supseteq\kcirc$ in $k$. Also, there is a one-to-one correspondence
$A\mapsto|A^\times|$ between the overrings and the convex subgroups $\Gamma\subseteq|k^\times|$. (Thus, $A$
induces a valuation on $k$ with the group of values $|k^\times|/\Gamma$.) Recall that the {\em height} of a
valued field $k$ is defined as the cardinality of the set $\Spec(\kcirc)$ decreased by $1$ (following
\cite{Bou}, we prefer the notion of height, though one usually says the rank or the convex rank of the
valuation). We automatically provide any valued field of height one with the $\pi$-adic topology, where
$\pi\in\kcirccirc\setminus\{0\}$. (The topology is independent of the choice of $\pi$.) By an {\em analytic
field} $k$ we mean a complete valued field with a non-trivial valuation $|\ |:k\to\bfR_+$. Any analytic field is
{\em henselian} in the sense that its ring of integers $\kcirc$ is henselian.

Note that the topological space of the scheme $S=\Spec(\kcirc)$ is totally ordered with respect to
generalizations since the set of all prime ideals of $\kcirc$ (as well as the sets of all its ideals and
fractional ideals) is totally ordered with respect to inclusion. So, $k$ is of a finite height $h$ if and only
if $|S|$ is homeomorphic to the set $\{0\. h\}$ with the topology given by open subsets $\{0\. n\}$ for $-1\le
n\le h$.

\begin{lem}\label{finhtlem}
Let $R$ be a valuation ring of finite height and $S=\Spec(R)$. Then for any quasi-finite $S$-scheme $X$ with a
point $x\in X$, $X_x:=\Spec(\calO_{X,x})$ is an open subscheme of $X$. In particular, $\calO_{X,x}$ is finitely
generated over $R$. Likewise, if $X$ is finitely presented over $S$ then $\calO_{X,x}$ is finitely presented
over $R$.
\end{lem}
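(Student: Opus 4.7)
Plan: I would reduce the problem, via Zariski's Main Theorem, to the case of a finite $S$-scheme $\overline{X} = \Spec A$, and then exploit the fact that such an $A$ has only finitely many prime ideals (since $S$ does, by finite height) to carry out a short prime-avoidance argument.

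The first step is to reduce to the case where $\phi(x)$ is the closed point of $S$. Write the primes of $R$ as $\gtp_0 \subset \gtp_1 \subset \cdots \subset \gtp_h$ and set $\phi(x) = \gtp_i$. Choosing any $g \in \gtp_{i+1} \setminus \gtp_i$ (which exists because the chain is strict), one checks directly from the chain structure that $D(g) = \{\gtp_0\.\gtp_i\}$, and moreover that $R[1/g] = R_{\gtp_i}$: the maximal ideal of $R[1/g]$ is $\gtp_i R[1/g]$, so any element outside $\gtp_i$ is a unit in $R[1/g]$, giving the inclusion $R_{\gtp_i} \subseteq R[1/g]$, while the reverse inclusion is obvious. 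Thus $\Spec R_{\gtp_i} \hookrightarrow S$ is an open immersion, and pulling back we may assume $\phi(x)$ is the closed point. The statement is local on $X$ at $x$, so we further shrink $X$ to a quasi-compact open neighborhood of $x$ and apply Zariski's Main Theorem (Stacks 05K0, together with its finitely presented version as in EGA $\rm IV_4$.18.12.13) to factor $X$ as an open subscheme of some $\overline{X} = \Spec A$ finite over $S$, with $A$ finitely presented over $R$ when $X/S$ is.

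Now $\overline{X} \to S$ is a finite morphism and $S$ has only $h+1$ points, so $A$ has finitely many prime ideals $\gtq_1\.\gtq_n$; let $\gtq$ be the one corresponding to $x$. For each $\gtq_i \not\subseteq \gtq$ choose $a_i \in \gtq_i \setminus \gtq$ and set $f := \prod a_i$. Then $f \in \gtq_i$ whenever $\gtq_i \not\subseteq \gtq$, while $f \notin \gtq$ by primality, which forces $f \notin \gtq_j$ for every $\gtq_j \subseteq \gtq$ as well. Consequently, the principal open $D(f) \subseteq \overline{X}$ is exactly the set of generalizations of $x$ in $\overline{X}$. Since $X$ is open in $\overline{X}$ and contains $x$, it contains every generalization of $x$, so $D(f) \subseteq X$; thus $X_x = D(f) = \Spec A_f$ is an open subscheme of $X$.

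It then remains to identify $A_f$ with $\calO_{X,x} = A_{\gtq}$. Every prime of $A_f$ corresponds to some $\gtq_j \subseteq \gtq$, so no prime of $A_f$ meets $A \setminus \gtq$; hence each element of $A \setminus \gtq$ is a unit in $A_f$, yielding the canonical isomorphism $A_f \toisom A_{\gtq}$. Finite generation of $\calO_{X,x}$ over $R$ is then immediate from $\calO_{X,x} \cong A[T]/(fT-1)$ and the fact that $A$ is finite (hence finitely generated) over $R$; and finite presentation follows the same way when $A$ is finitely presented. The one genuine obstacle is the prime-avoidance computation producing $f$ and the subsequent identification $A_f = A_{\gtq}$, both of which rest on the finiteness of $\Spec A$ and hence on the finite-height hypothesis on $R$.
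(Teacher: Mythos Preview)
Your argument is correct, but it takes an unnecessary detour through Zariski's Main Theorem. The paper's proof is substantially shorter because it observes directly that $|X|$ is finite: since $R$ has finite height, $|S|$ is finite, and a quasi-finite morphism has finite fibers, so $|X|$ is finite without any appeal to ZMT. One then replaces $X$ by an affine open neighborhood $\Spec(A)$ of $x$ (with $A$ finitely generated, resp.\ finitely presented, over $R$), and the very same prime-avoidance computation you carry out in $\overline{X}$ works verbatim in $\Spec(A)$, producing $f\in A$ with $\Spec(\calO_{X,x})=D(f)$. Your preliminary reduction to $\phi(x)$ being the closed point is likewise unneeded. What you gain from ZMT is a finite $R$-algebra $A$ rather than merely a finitely generated one, but this extra structure is never used; the only fact you exploit about $\overline{X}$ is that it has finitely many points, and you already have that for any affine open of $X$. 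So the two proofs share the same core mechanism, but yours wraps it in machinery that the finite-height hypothesis already renders superfluous. (One small omission: your reduction step tacitly assumes $i<h$; when $\phi(x)$ is already closed there is nothing to do.)
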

\begin{proof}
Since the set $|S|$ is finite, so is the set $|X|$. Replacing $X$ with a neighborhood of $x$ we can achieve that
$X=\Spec(A)$ is affine. The ring $\calO_{X,x}$ is a localization of $A$ along an infinite set, but since $X_x$
is obtained from $X$ by removing finitely many points, $\calO_{X,x}$ can be obtained by localizing $A$ by a
single element. The assertion of the Lemma follows.
\end{proof}

Given a valued field $k$, by a {\em valued $k$-field} we mean a field $l$ containing $k$ and provided with a
valuation $|\ |_l:l^\times\to\Gamma_l$ such that $\Gamma_k\subset\Gamma_l$ and $|\ |_l$ extends $|\ |_k$. In the
above situation we say that $l/k$ is an {\em extension of valued fields}. We use the standard notation
$e_{l/k}=\#(|l^\times|/|k^\times|)$ and $f_{l/k}=[\till:\tilk]$ (these cardinals can be infinite) and say that
$l/k$ is {\em immediate} if $e=f=1$. If $L/k$ is a finite extension of abstract fields, $\Lcirc$ is the integral
closure of $\kcirc$ in $L$ and $m_1\. m_n$ are its maximal ideals, then $\Lcirc_i:=\Lcirc_{m_i}$ for $1\le i\le
n$ are the extensions of $\kcirc$ to a valuation ring of $L$. A classical argument shows that $\sum_{i=1}^n
e_{L_i/k}f_{L_i/k}\le[L:k]$. It is obvious from the above that the valuation of $k$ uniquely extends to all
finite extensions $L/k$ if and only if each $\Lcirc$ is a local ring. It is easy to see that the latter happens
if and only if any connected finite $\kcirc$-scheme is local, that is, $\kcirc$ is henselian. In this case we
will also say that the valued field $k$ is {\em henselian}.  For an extension $l/k$ of valued fields we set also
$F_{l/k}=\trdeg(\till/\tilk)$ and $E_{l/k}=\dim_\bfQ\left((|l^\times|/|k^\times|)\otimes_\bfZ\bfQ\right)$. The
following Lemma is called Abhyankar's inequality.

\begin{lem}\label{Abhlem}
Any extension $l/k$ of valued fields satisfies $\trdeg(l/k)\ge E_{l/k}+F_{l/k}$.
\end{lem}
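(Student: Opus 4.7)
The plan is to prove Abhyankar's inequality by exhibiting the required number of algebraically independent elements. It suffices to treat finite $m \le E_{l/k}$ and $n \le F_{l/k}$, and show that $m+n$ suitable elements are algebraically independent over $k$; the inequality then follows (including when $E_{l/k}$ or $F_{l/k}$ is infinite).

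First I would choose $e_1,\dots,e_m\in l^\times$ whose images in $|l^\times|/|k^\times|$ are $\bbQ$-linearly independent, and $f_1,\dots,f_n\in\lcirc$ with $|f_j|=1$ such that the residues $\tilf_1,\dots,\tilf_n\in\till$ are algebraically independent over $\tilk$. Suppose, for a contradiction, that there is a nontrivial polynomial relation $P=\sum_{I,J}c_{I,J}e^If^J=0$ with $c_{I,J}\in k$, where $I=(i_1,\dots,i_m)$, $J=(j_1,\dots,j_n)$, and group the sum as
\[
0 = \sum_I g_I\,e^I, \qquad g_I := \sum_J c_{I,J}f^J \in l.
\]
Because $|f_j|=1$, a single monomial has value $|c_{I,J}|\prod_s|e_s|^{i_s}$. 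If two such values coincide, the quotient $|c_{I,J}/c_{I',J'}|$ lies in $|k^\times|$, hence $\prod_s|e_s|^{i'_s-i_s}$ maps to $0$ in $(|l^\times|/|k^\times|)\otimes\bbQ$; by the $\bbQ$-linear independence of the $|e_s|$, this forces $I=I'$.

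Next I would compute $|g_I|$ when not all $c_{I,J}$ vanish. Let $M=\max_J|c_{I,J}|$, attained at some $J_0$, and divide by $c_{I,J_0}$: the element $c_{I,J_0}^{-1}g_I=\sum_J\oc_J f^J$ has coefficients $\oc_J\in\kcirc$ with $\oc_{J_0}=1$. Reducing modulo $\lcirccirc$ gives $\sum_J\tilde{\oc}_J\tilf^J\in\till$ with $\tilde{\oc}_{J_0}=1\ne 0$; algebraic independence of the $\tilf_j$ over $\tilk$ ensures this residue is nonzero, so $|c_{I,J_0}^{-1}g_I|=1$ and hence $|g_I|=M=\max_J|c_{I,J}|\in|k^\times|$. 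In particular, whenever $g_I\ne 0$ the value $|g_I|$ lies in $|k^\times|$.

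The final step combines the two ingredients. For distinct $I$'s with $g_I, g_{I'}\ne 0$, the ratio $|g_I e^I|/|g_{I'}e^{I'}|\in|k^\times|\cdot\prod_s|e_s|^{i_s-i'_s}$ equals an element of $|k^\times|$ only if $\prod_s|e_s|^{i_s-i'_s}\in|k^\times|$, which by the $\bbQ$-linear independence forces $I=I'$. Thus the nonzero terms $g_Ie^I$ in the sum $\sum_I g_Ie^I=0$ have pairwise distinct valuations, and the ultrametric inequality makes such a sum nonzero — a contradiction. Hence every $g_I=0$, and then every $c_{I,J}=0$, proving algebraic independence.

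The only subtle point is the reduction argument showing $|g_I|=\max_J|c_{I,J}|$; this is where the hypothesis that the $\tilf_j$ are algebraically independent over $\tilk$ (not merely nonzero) is essential, and it is the one place the two separate pieces of data about $l/k$ interact. Everything else is bookkeeping with the strong triangle inequality.
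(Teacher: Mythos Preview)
Your proof is correct and follows essentially the same approach as the paper: choose lifts of a transcendence basis of $\till/\tilk$ together with elements whose values are $\bfQ$-independent modulo $|k^\times|$, and show these are algebraically independent over $k$ by verifying that distinct monomials (with $k$-coefficients) have distinct valuations. The paper compresses your detailed computation of $|g_I|$ and the distinctness of $|g_Ie^I|$ into the single phrase ``the monomial basis of $k[B]$ is orthogonal with respect to the valuation on $l$''; your write-up simply unpacks this orthogonality claim.
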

\begin{proof}
Choose a transcendence basis of $\till/\tilk$ and let $B_F\subset\lcirc$ be any lifting. Also, choose a subset
of $B_E\subset|l^\times|$ which is mapped bijectively onto a $\bfQ$-basis of
$|l^\times|/|k^\times|\otimes_\bfZ\bfQ$. Note that $B_E\cap B_F=\emptyset$ and set $B=B_E\sqcup B_F$. Then
$|B_E|=E_{l/k}$ and $|B_F|=F_{l/k}$, and it is easy to see that the monomial basis of $k[B]$ is orthogonal with
respect to the valuation on $l$. In particular, $k[B]\into l$ and hence $|B|\le\trdeg(l/k)$.
\end{proof}

\begin{cor}\label{approxcor}
(i) If a valued field $k$ is of finite absolute transcendence degree $d$ then the height of $k$ is bounded by
$d+1$.

(ii) Any valuation ring is a filtered union of its valuation subrings of finite height.
\end{cor}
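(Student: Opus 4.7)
For part (i), the plan is to apply Abhyankar's inequality (Lemma~\ref{Abhlem}) to the extension $k/k_0$ of valued fields, where $k_0$ is the prime subfield of $k$ equipped with the valuation restricted from $k$. The auxiliary fact that makes this work is the bound ``height $\le$ rational rank of the value group'': in a strict chain of convex subgroups $\{1\} = \Delta_0 \subsetneq \cdots \subsetneq \Delta_h = |k^\times|$, which has length equal to the height (the $h+1$ convex subgroups correspond bijectively to the points of $\Spec(\kcirc)$), each successive quotient $\Delta_{i+1}/\Delta_i$ is a nontrivial torsion-free totally ordered abelian group and therefore has rational rank at least one. Since rational rank is additive across short exact sequences (tensoring with $\bfQ$ is exact), this yields $h \le \dim_\bfQ(|k^\times| \otimes \bfQ)$.

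Applying the same additivity to $0 \to |k_0^\times| \to |k^\times| \to |k^\times|/|k_0^\times| \to 0$ expresses the rational rank of $|k^\times|$ as the sum of the rational rank of $|k_0^\times|$ and $E_{k/k_0}$. The first summand is at most $1$: when $\cha k = p > 0$ the prime field $k_0 = \bfF_p$ has torsion multiplicative group, so the induced valuation must be trivial (the value group is torsion-free); when $\cha k = 0$, any non-archimedean valuation on $\bfQ$ is either trivial or $p$-adic for a unique prime $p$ (two distinct primes with absolute value below $1$ would contradict Bezout), in which case the value group is cyclic. Abhyankar's inequality gives $E_{k/k_0} \le \trdeg(k/k_0) = d$, and combining the two bounds yields height $\le d+1$.

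For part (ii), given a valuation ring $A$ with fraction field $K$, I would exhibit $A$ as the filtered union of the subrings $A_\alpha := A \cap k_\alpha$ as $k_\alpha$ ranges over all finitely generated subfields of $K$, ordered by inclusion. Each $A_\alpha$ is a valuation ring of $k_\alpha$: for any $x \in k_\alpha^\times$, the valuation property of $A$ in $K$ forces $x$ or $x^{-1}$ to lie in $A$, hence in $A_\alpha$. By part (i) applied to $k_\alpha$ with the induced valuation, $A_\alpha$ has finite height since $k_\alpha$ has finite transcendence degree over its prime subfield. The family is filtered because the compositum of two finitely generated subfields is finitely generated, and $A = \bigcup_\alpha A_\alpha$ since any $a \in A$ already lies in $A \cap k_0(a)$. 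The main obstacle is essentially nonexistent: both parts reduce directly to Abhyankar's inequality together with the elementary height-versus-rational-rank bound, the only mild care being the verification that the restricted valuation on the prime field has rank at most one in both characteristics.
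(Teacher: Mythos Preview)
Your proof is correct and follows essentially the same approach as the paper: apply Abhyankar's inequality to the extension of $k$ over its prime subfield for (i), then cut $\kcirc$ by finitely generated subfields for (ii). The paper's proof is terser and leaves implicit the step you spell out, namely that height is bounded by the rational rank of the value group; your explicit treatment of this via convex subgroups, together with the additivity of rational rank, simply fills in what the paper takes for granted.
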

\begin{proof}
(i) holds for the prime fields $\bfQ$ and $\bfF_p$ since all their valuation rings are $\bfZ_{(p)}$, $\bfQ$ and
$\bfF_p$. The general case of (i) follows by applying Abhyankar's inequality to the extension $k/\bfF$ where
$\bfF$ is the valued prime subfield of $k$. Using (i) we see that any valuation ring $\kcirc$ is the union of
its valuation subrings of finite height which are cut off from $\kcirc$ by finitely generated subfields of $k$.
\end{proof}

Many statements about schemes over arbitrary valuation rings can be reduced to the case of finite height using
approximation from Corollary \ref{approxcor}(ii). Next, we are going to introduce a technique of induction on
height that often makes it possible to reduce a problem on valuations to the height one case. Assume that the
field $\tilk$ is provided with a valuation $|\ |_\tilk$ and let $\tilkcirc$ be its valuation ring. Then the
preimage of $\tilkcirc$ in $\kcirc$ is a valuation ring $R$, and we say that the corresponding valuation of $k$
(unique up to equivalence) is {\em composed} from $|\ |_k$ and $|\ |_\tilk$. Topologically $\Spec(R)$ is
obtained by gluing $\Spec(\kcirc)$ and $\Spec(\tilkcirc)$ so that the closed point $\Spec(\tilk)$ of the first
space is pasted to the generic point of the second one. (We will not need it, but one can easily show that this
is a scheme-theoretic gluing in the sense that the four morphisms $\Spec(\tilk)\to\Spec(\kcirc)\to\Spec(R)$,
$\Spec(\tilk)\to\Spec(\tilkcirc)\to\Spec(R)$ are monomorphisms that form a {\em universally bi-Cartesian
square}, i.e. the square is both Cartesian and universally co-Cartesian; see \cite[\S2.3]{Tem2} for more
details.)

\begin{lem}\label{composlem}
Let $k$ be a valued field.

(i) For any overring $A$ with $\kcirc\subseteq A\subseteq k$ we have that $A$ is a valuation ring,
$m_A\subset\kcirc$, $A':=\kcirc/m_A$ is a valuation ring in $A/m_A$ and $\kcirc$ is composed from $A$ and $A'$.

(ii) If $k$ is of height larger than one then there exists $A$ as above so that both $A$ and $A'$ are of
positive height.

(iii) If $k$ is of finite height then it can be obtained by iterative composing valuations of height one.
\end{lem}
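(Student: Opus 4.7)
The plan is to exploit two facts recalled just before the lemma: the overrings of $\kcirc$ inside $k$ correspond bijectively to $\Spec(\kcirc)$ via $p \mapsto \kcirc_p$, and the topological space $|\Spec(\kcirc)|$ is a totally ordered chain under specialization. With these in hand, all three parts reduce to bookkeeping about the chain of primes.

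For (i), I would first observe that any overring $A$ between $\kcirc$ and $k$ is automatically a valuation ring of $k$, since the valuation dichotomy for $\kcirc$ already forces one of $f, f^{-1}$ into $A$ for every $f \in k^\times$. To see $m_A \subset \kcirc$, take $x \in m_A$; since $x^{-1} \notin A$ we have $x^{-1} \notin \kcirc$, so by the dichotomy $x \in \kcirccirc \subset \kcirc$. This places $A' := \kcirc/m_A$ inside the residue field $A/m_A$, and the same dichotomy applied to any lift $f \in A$ of a nonzero element of $A/m_A$ shows that $A'$ is itself a valuation ring. Identifying $\kcirc$ with the composite of $A$ and $A'$ is then merely unwinding the definition: the preimage of $A'$ under $A \twoheadrightarrow A/m_A$ is $\kcirc$, precisely because $m_A \subset \kcirc$.

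For (ii), I would make the chain $0 = p_0 \subsetneq p_1 \subsetneq \cdots \subsetneq p_h = m_\kcirc$ of $\Spec(\kcirc)$ explicit (so $h \ge 2$ by hypothesis) and choose $A := \kcirc_{p_1}$. Then $\Spec(A)$ is the two-point chain $\{p_0, p_1\}$, so $A$ has height $1$, while $\Spec(A') = \Spec(\kcirc/p_1)$ is the chain $\{p_1, \ldots, p_h\}$ of length $h - 1 \ge 1$, so $A'$ also has positive height. (More generally any $1 \le i \le h-1$ works with $A = \kcirc_{p_i}$.)

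Part (iii) is then an easy induction on the height $h$. The base $h = 1$ is vacuous, and for $h \ge 2$ one applies (ii) to write $\kcirc$ as the composite of two overrings of strictly smaller height, invoking the induction hypothesis on each factor. Associativity of composition is immediate from the preimage description in (i), so concatenating height-one steps is unambiguous. I do not anticipate any real obstacle here: the lemma is essentially a repackaging of the fact that $\Spec(\kcirc)$ is a finite totally ordered set, with each edge of the chain corresponding to a height-one composition step.
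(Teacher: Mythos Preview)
Your proposal is correct and matches the paper's approach: the paper declares (i) obvious, for (ii) picks any prime $p$ with $0 \subsetneq p \subsetneq \kcirccirc$ and sets $A = \kcirc_p$, and runs (iii) by induction on height exactly as you do. One minor point on (ii): by writing out a finite chain $p_0 \subsetneq \cdots \subsetneq p_h$ and choosing the minimal nonzero prime $p_1$, you are tacitly assuming finite height, whereas the statement allows the height to be infinite; in that generality a minimal nonzero prime of $\kcirc$ need not exist, so you should instead pick an arbitrary intermediate prime (one exists simply because height larger than one means $|\Spec(\kcirc)| \ge 3$), as the paper does.
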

\begin{proof}
(i) is obvious. In (ii) we choose a prime ideal $p$ with $0\subsetneq p\subsetneq\kcirc$ and set $A=\kcirc_p$.
To prove (iii) we choose $A$ as in (ii) and note that the heights of $A$ and $A'$ are smaller than that of $k$,
hence we can apply induction on height.
\end{proof}

Next, we discuss basic ramification theory of algebraic extensions of valued fields. Such an extension $l/k$ is
called {\em unramified} if there exists a local injective $\kcirc$-homomorphism $\lcirc\into(\kcirc)^\sh$ with
target being the strict henselization of $\kcirc$. In particular, $k^\nr:=\Frac((\kcirc)^\sh)$ is the maximal
unramified extension of $k$ and a finite extension $l/k$ is unramified if and only if the embedding
$\kcirc\to\lcirc$ is local-\'etale (i.e. $\lcirc$ is a localization of a $\kcirc$-\'etale algebra). It follows
that if $k$ is henselian (e.g. analytic) then $\Gal(k^\nr/k)\toisom\Gal(\tilk^s/\tilk)$ and any subfield
$\tilk\subseteq\till\subseteq\tilk^s$ is the residue field of a valued field $l$ that is unramified over $k$ and
unique up to unique isomorphism . An extension $l/k$ is called {\em totally ramified} if for any tower
$k\subseteq k'\subsetneq l'\subseteq l$, the extension $l'/k'$ is not unramified. An extension $l/k$ is called
{\em moderately ramified} (or {\em tamely ramified}) if it is a composition of an unramified extension $l^n/k$
followed by an extension $l/l^n$ such that any its finite subextension is of degree prime to $p$. Any other
$l/k$ is called {\em wildly ramified}, including the case when $l/k$ is inseparable, as opposed to the usual
convention.

Let $k$ be henselian. By \cite[\S6.2]{GR}, $k$ possesses unique maximal moderately ramified extension $k^\mr$,
$k^\mr/k^\nr$ is an abelian extension and $\Gal(k^s/k^\mr)$ is a pro-$p$-group, where $p=\expcha(\tilk)$ is the
{\em exponential characteristic} (i.e. $p=1$ or $p$ is the usual prime characteristic). In particular, any
finite extension of $k^\mr$ is a $p$-extension, and $k^\mr=k^a$ when $p=1$. We say that a finite extension $l/k$
of henselian valued fields {\em defectless} if its {\em defect} $d_{l/k}:=[l:k]/(e_{l/k}f_{l/k})$ equals to one
(in other words, the defect is trivial). In this case one also says that $l/k$ is {\em Cartesian} because this
happens if and only if the $k$-vector space $l$ has an {\em orthogonal} basis, i.e. a basis $v_1\. v_n$ such
that the non-archimedean inequality $|\sum_{i=1}^n a_iv_i|\le\max_{1\le i\le n}|a_iv_i|$ is an equality for any
choice of $a_i\in k$.

\begin{lem}
Let $l/k$ be a finite extension of henselian valued fields, $e=e_{l/k}$, $f=f_{l/k}$, $d=d_{l/k}$,  $n=[l:k]$
and $p=\expcha(\tilk)$.

(i) $l/k$ splits uniquely into a tower $l/l^m/l^n/k$, where $l^n/k$ is unramified, $l^m/l^n$ is totally
moderately ramified, and $l/l^m$ is totally wildly ramified;

(ii) $l/k$ is unramified if and only if $f=n$ and $\till/\tilk$ is separable;

(iii) $l/k$ is moderately ramified if and only if $\till/\tilk$ is separable, $(e,p)=1$ and $d=1$ (i.e. $l/k$ is
Cartesian);

(iv) $d$ is a power of $p$.
\end{lem}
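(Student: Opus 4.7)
The plan is to deduce all four parts from the existence of the towers $k\subseteq k^\nr\subseteq k^\mr\subseteq k^s$ granted by \cite[\S6.2]{GR}. Fix a separable closure $k^s$ containing $l^s$, and define
\[
l^n := l\cap k^\nr,\qquad l^m := l\cap k^\mr.
\]
For part (i), any subextension of $l/k$ that is unramified (resp.\ moderately ramified) lies inside $k^\nr$ (resp.\ $k^\mr$) by the maximality of those fields, so $l^n$ and $l^m$ are the unique maximal unramified and moderately ramified subextensions of $l/k$. The fact that $l/l^m$ is totally wildly ramified is then immediate from the definition of ``totally wildly ramified'' as the negation of admitting a nontrivial moderately ramified subtower. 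Similarly $l^m/l^n$ is totally moderately ramified.

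For part (ii), the $\Leftarrow$ direction uses the bijection $\Gal(k^\nr/k)\toisom\Gal(\tilk^s/\tilk)$ recalled earlier: given $\till/\tilk$ separable of degree $n$, lift $\till$ to an unramified $l'/k$ with residue field $\till$; then $l'\otimes_k l$ is a finite \'etale $\lcirc$-algebra with residue ring $\till\otimes_\tilk\till$ which contains a diagonal factor, so by henselianness of $\lcirc$ this lifts to an $\lcirc$-algebra map $l'\to l$, identifying $l$ with $l'$. The $\Rightarrow$ direction is immediate from the interpretation of $k^\nr$.

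For part (iii), assume $\till/\tilk$ is separable, $(e,p)=1$, $d=1$. Then $l^n/k$ given by (ii) has $f_{l^n/k}=f=[l:l^n]_{\rm ef}/\dots$, and $l/l^n$ is totally ramified of degree $ed=e$, prime to $p$, hence moderately ramified in the sense of the definition; conversely, if $l\subseteq k^\mr$ then $l/l^n$ lies in $k^\mr/k^\nr$ so has degree prime to $p$ and defect $1$, and $\till=\till^n$ is separable over $\tilk$, yielding the three conditions (the equality $d=1$ for the moderately ramified $l/l^n$ follows from \cite[\S6.2]{GR}, where one exhibits an orthogonal basis explicitly from roots of suitable polynomials of the form $T^m-\pi$).

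For part (iv), combine the three: by (iii), $d_{l^m/k}=1$, so multiplicativity of the defect in towers gives $d=d_{l/l^m}$. By the pro-$p$ property of $\Gal(k^s/k^\mr)$ together with $p$-power degree of any inseparable extension, the degree $[l:l^m]$ is a power of $p$; since $[l:l^m]=e_{l/l^m}f_{l/l^m}d_{l/l^m}$ is then a $p$-power and each factor is a positive integer, $d_{l/l^m}$ itself is a power of $p$. The main obstacle will be verifying that ``moderately ramified'' as defined in the text is closed under taking subextensions and composita in the ways needed for (i) and (iii) -- this requires some care since the definition mixes an unramified part with a tame part, but it can be checked using the $\Gal(k^\mr/k^\nr)$ structure from \cite{GR}, which I would invoke rather than reprove.
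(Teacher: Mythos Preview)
Your approach is essentially the paper's: define $l^n$ and $l^m$ via intersection with $k^\nr$ and $k^\mr$, then read off (ii)--(iv) from the structure of the tower. The paper phrases the construction of $l^n$ slightly differently---it first builds the unramified extension with residue field equal to the separable closure of $\tilk$ in $\till$ and then embeds it into $l$ via \cite[$\rm IV_4$, 18.8.4]{ega}---but this yields the same subfield as your $l\cap k^\nr$.

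Two small points to tighten. First, since the paper's convention explicitly allows $l/k$ to be inseparable (such extensions are declared wildly ramified), you should intersect inside $k^a$ rather than $k^s$; your phrase ``a separable closure $k^s$ containing $l^s$'' is ambiguous and does not literally accommodate the inseparable case. Second, in your argument for (ii)$\Leftarrow$, the object ``$l'\otimes_k l$'' is not an $\lcirc$-algebra; you mean $(l')^\circ\otimes_{\kcirc}\lcirc$, which is finite \'etale over $\lcirc$ and whose reduction $\till\otimes_\tilk\till$ has the diagonal idempotent you then lift by henselianness. With these fixes the argument goes through as written.
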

\begin{proof}
Find an unramified extension $l^n/k$ with an isomorphism $\phi:\till^n\toisom F$, where $F$ is the separable
closure of $\tilk$ in $\till$. Then $\phi$ lifts to a (necessarily local) homomorphism $(l^n)^\circ\into\lcirc$
by \cite[$\rm IV_4$, 18.8.4]{ega}, so we identify $(l^n)^\circ$ and $l^n$ with their images in $l$. The
extension $l/l^n$ is totally ramified because all intermediate fields are henselian and have purely inseparable
residue field extension, so there is no non-trivial unramified subextension. By the maximality condition
$(l^n)^\mr$ contains all its conjugates over $l^n$, hence $l^m=l\cap (l^n)^\mr$ is a well defined moderately
ramified extension of $k$. Finally, $l/l^m$ is a $p$-extension because $\Gal(k^s/(l^n)^\mr)$ is a pro-$p$-group
and so $k^a/(l^n)^\mr$ is a union of finite $p$-extensions. This proves existence in (i), and the method of
proof gives uniqueness, so the remaining claims follow.
\end{proof}

\begin{lem}\label{etalelem}
A finite extension $l/k$ of valued fields of finite height is unramified if and only if $\lcirc/\kcirc$ is
\'etale.
\end{lem}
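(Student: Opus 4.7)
The lemma asserts an equivalence, so I would prove each direction separately, exploiting the finite-height hypothesis through Lemma \ref{finhtlem}.

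\textbf{Forward direction.} Assume $l/k$ is unramified. By the characterization recalled just before the lemma, $\kcirc\to\lcirc$ is local-\'etale, i.e.\ $\lcirc\cong A_\gtp$ for some $\kcirc$-\'etale algebra $A$ and a prime $\gtp\subset A$. Since \'etale morphisms are finitely presented and quasi-finite, $\Spec(A)\to\Spec(\kcirc)$ is quasi-finite and finitely presented. As $\kcirc$ has finite height, Lemma \ref{finhtlem} applied to the point $x\in\Spec(A)$ corresponding to $\gtp$ shows that $\Spec(A_\gtp)=\Spec(\calO_{\Spec(A),x})$ is an open subscheme of $\Spec(A)$. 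Open immersions are \'etale, so the composition $\Spec(\lcirc)\into\Spec(A)\to\Spec(\kcirc)$ is \'etale.

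\textbf{Backward direction.} Assume $\lcirc/\kcirc$ is \'etale. Then $B:=\lcirc\otimes_\kcirc(\kcirc)^\sh$ is \'etale over the strictly henselian local ring $(\kcirc)^\sh$, hence splits as a finite product
\[
B\cong\prod_{i=1}^n (\kcirc)^\sh
\]
of copies of $(\kcirc)^\sh$ itself. Let $\phi_i:\lcirc\to(\kcirc)^\sh$ be the composition of the base change $\lcirc\to B$ with the $i$-th projection. Each $\phi_i$ is a $\kcirc$-homomorphism; it is local because the closed point of $\Spec((\kcirc)^\sh)$ maps to a closed point of $\Spec(B)$, and all closed points of $\Spec(B)$ lie over the closed point of $\Spec(\lcirc)$ by faithful flatness of $\kcirc\to(\kcirc)^\sh$. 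To finish, I need at least one $\phi_i$ to be injective, which is precisely where the valuation ring structure enters.

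\textbf{Injectivity of some $\phi_i$.} Faithful flatness of $\lcirc\to B$ gives $\bigcap_i\ker\phi_i=\ker(\lcirc\to B)=0$. Each $\ker\phi_i$ is a prime ideal of $\lcirc$ (as $(\kcirc)^\sh$ is a domain, being a valuation ring of $k^\nr$), and the prime ideals of the valuation ring $\lcirc$ form a totally ordered chain. Hence the intersection of the finitely many primes $\ker\phi_1,\ldots,\ker\phi_n$ equals the smallest among them, which must therefore be $0$. Picking the corresponding $\phi_i$ yields a local injective $\kcirc$-homomorphism $\lcirc\into(\kcirc)^\sh$, so $l/k$ is unramified by definition.

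The main (minor) obstacle is the forward direction: one must bridge the definitional gap between \emph{local-\'etale} (a localization of an \'etale algebra) and \emph{\'etale}. Without the finite height hypothesis this bridge fails in general, and Lemma \ref{finhtlem} is exactly what is needed to turn the localization at a prime into a principal localization, i.e.\ an open immersion.
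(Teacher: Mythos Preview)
Your forward direction is exactly the paper's argument. For the backward direction the paper is much shorter: it simply invokes the equivalence ``unramified $\Leftrightarrow$ local-\'etale'' already recorded (without proof) in the paragraph preceding the lemma, together with the triviality \'etale $\Rightarrow$ local-\'etale. Your route instead unwinds the definition of unramified directly by base-changing to $(\kcirc)^\sh$, which is more self-contained but longer.

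There is one small gap in your backward direction: the splitting $B\cong\prod_i(\kcirc)^\sh$ requires $B$ to be \emph{finite} \'etale over $(\kcirc)^\sh$, not merely \'etale---for instance, the fraction field of a strictly henselian DVR is \'etale over it but certainly does not decompose this way. Finiteness holds here because $\lcirc$, being flat and finitely presented over the local ring $\kcirc$, is free of finite rank (equal to $[l:k]$). With that sentence added your argument is correct; it trades the paper's brevity for independence from an equivalence the paper states but does not prove.
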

\begin{proof}
By the very definition, $l/k$ is unramified if and only if $\lcirc/\kcirc$ is local-\'etale (or essentially
\'etale), hence we have only to show that the local-\'etale extension $\lcirc/\kcirc$ is \'etale. But this is a
consequence of Lemma \ref{finhtlem}.
\end{proof}

\begin{rem}\label{etalerem}
By definition, $l/k$ is unramified if and only if $\lcirc$ is local-\'etale over $\kcirc$. However, if the
heights are infinite the latter does not necessarily imply that $\lcirc/\kcirc$ is \'etale.
\end{rem}

An extension $K/k$ of valued fields will be called {\em bounded} if any non-zero element of $\Kcirc$ divides a
non-zero element of $\kcirc$. This condition is equivalent to requiring that $k\Kcirc$, which is the
localization of $\Kcirc$ at $k^\times\cap\Kcirc$ and hence a valuation ring of $K$, coincides with $K$. The main
result of \S\ref{fields} is the following statement, which will be called uniformization of one-dimensional
valued fields.

\begin{theor}
\label{valunif} Let $K/k$ be a finitely generated extension of valued fields of transcendence degree one. Assume
that $k$ is separably closed and the valuation ring $k\Kcirc$ is centered on a smooth point of the normal
projective $k$-model of $K$. Then there exists a transcendence basis $\{x\}$ of $K/k$ such that the finite
extension $K/k(x)$ is unramified.
\end{theor}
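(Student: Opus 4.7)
The idea is to construct an explicit candidate for $x$ from the geometry of the normal model, and then verify unramifiedness by decomposing the valuation via its composition structure and invoking the analytic black box Theorem~\ref{fieldunif1} to rule out defect coming from the valuation on $k$.

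Let $C$ be the normal projective $k$-model of $K$ and let $p\in C$ be the smooth closed point at which $kK^\circ$ is centered, so that $kK^\circ=\calO_{C,p}$ is a DVR. Since $k$ is separably closed and $p$ is smooth, $\kappa(p)=k$. I would pick a local uniformizer $t\in\gtm_p\setminus\gtm_p^2$ and try $x=t$. The element $t$ has positive order at $p$, while any element of $K$ algebraic over $k$ lies in $H^0(C,\calO_C)$ and is therefore a unit at every point of $C$; hence $t$ is transcendental over $k$, so $\{t\}$ is a transcendence basis of $K/k$ and $K/k(t)$ is finite. It remains to show that $K/k(t)$ is unramified at the induced place.

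By Corollary~\ref{approxcor}(ii) I would first reduce to the case that $k$ has finite height, since unramifiedness is preserved under the corresponding filtered colimit. At finite height, Lemma~\ref{composlem} lets me express the valuation on $K$ as a composition: on top sits the trivial-on-$k$ valuation $kK^\circ=\calO_{C,p}$, and the remaining pieces come from $k^\circ$ acting on the residue field $k$; similarly, $k(t)^\circ$ is the composition of $k[t]_{(t)}$ with $k^\circ$. This yields the fiber-product descriptions $K^\circ=\calO_{C,p}\times_k k^\circ$ and $k(t)^\circ=k[t]_{(t)}\times_k k^\circ$. The top piece is easy: $\calO_{C,p}$ is a localization of the integral closure of $k[t]_{(t)}$ in $K$ at the place $p$, both are DVRs sharing the uniformizer $t$ and residue field $k$, and Lemma~\ref{etalelem} then gives that $\calO_{C,p}$ is \'etale over $k[t]_{(t)}$. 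The only possible obstruction to unramifiedness of $K/k(t)$ therefore lies in the interaction with the $k$-valuation, where a nontrivial defect can in principle appear when the residue characteristic is positive or $|k^\times|$ is non-discrete.

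Controlling this defect is the core step, and it is precisely where I would invoke Theorem~\ref{fieldunif1}. After completing at the $k$-valuation, the analytic black box provides a concrete description of $\hat K/\hat k$ compatible with the geometric data at $p$, which forces the composed analytic extension $\hat K/\wh{k(t)}$ to remain unramified. One then descends from the completions back to the original fields, using henselianness of the relevant valuation rings and Lemma~\ref{etalelem}, to conclude that $K^\circ$ is \'etale over $k(t)^\circ$ and hence $K/k(t)$ is unramified. I expect the main obstacle to be exactly this analytic-to-algebraic descent: verifying that the geometric uniformizer $t$ at $p$ can be matched with the uniformizer produced by Theorem~\ref{fieldunif1}, and that defectlessness of $\hat K/\wh{k(t)}$ faithfully reflects defectlessness of $K/k(t)$ so that the composed fiber-product extension is indeed \'etale.
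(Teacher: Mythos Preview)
Your proposal has a genuine gap: you assume that $kK^\circ$ is centered on a \emph{closed} point of $C$, but the hypothesis only says it is centered on a \emph{smooth} point, which may well be the generic point of $C$. The case $kK^\circ=K$ (equivalently, the extension $K/k$ is bounded) is precisely the hard case, and your strategy of choosing a local uniformizer $t$ at the center $p$ simply does not apply there, since there is no DVR sitting above $k^\circ$. This case comprises essentially all of the content of the theorem: one must run an induction on the height of $k$, peel off the height-one layer $k_1\subset K_1$, distinguish whether $\tilK_1=\tilk_1$ or not, and in each sub-case invoke Theorem~\ref{fieldunif1} (parts (i)--(ii) in the first sub-case, part (iii) in the second) together with the decompletion criterion of Proposition~\ref{unramcrit1} and the composition criterion of Proposition~\ref{unramcrit2}. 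Your outline contains none of this structure.

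Conversely, in the closed-point case that you \emph{do} treat, your invocation of Theorem~\ref{fieldunif1} is misplaced. Once you know $\calO_{C,p}$ is \'etale over $k[t]_{(t)}$ and both have residue field $k$, the composed valuations are unramified by Proposition~\ref{unramcrit2} alone: the bottom layer is the trivial extension $k/k$, so there is no defect to control and no analytic input is needed. Your paragraph about ``completing at the $k$-valuation'' and ``matching the geometric uniformizer $t$ with the uniformizer produced by Theorem~\ref{fieldunif1}'' does not correspond to anything that theorem actually provides; Theorem~\ref{fieldunif1} produces a transcendence element inside a one-dimensional analytic $k$-field, which is exactly the situation of the bounded case you omitted, not the DVR-over-$k$ situation you are in here.
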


\begin{rem}
I expect that the Theorem is true for any $n=\trdeg(K/k)$. We will see that already in our case the proof is
difficult, and the case of $n>1$ is absolutely open. Establishing the case of $n>1$ would be a major
breakthrough that (almost surely) would enable one to prove a local version of the higher dimensional
semi-stable reduction theorem. Even when $k$ is trivially valued, this is open for $n>3$. This particular case
follows from (conjectural) local desingularization of varieties along valuations (so called, Zariski local
uniformization), and I expect it is not essentially easier than the Zariski local uniformization itself.
\end{rem}

Theorem \ref{valunif} will be proved in the next section. It admits the following analytic version, which we
call uniformization of one-dimensional analytic fields.

\begin{theor}\label{fieldunif1}
Let $k$ be an analytic algebraically closed field and $K$ be an analytic $k$-field which is finite over a
subfield $\ol{k(y)}$ topologically generated by an element. Then

(i) $K$ is finite and unramified over a subfield $\ol{k(x)}$ for some choice of $x\in K$,

(ii) moreover, there exists a positive $\veps$ such that for any $x'\in K$ with $|x-x'|<\veps$ the extension
$K/\ol{k(x')}$ is finite and unramified,

(iii) if $\tilK\neq\tilk$ and $x\in\Kcirc$ is any element such that $\tilx$ is transcendental over $\tilk$ and
$\tilK/\tilk(\tilx)$ is separable then $K/\ol{k(x)}$ is finite and unramified.
\end{theor}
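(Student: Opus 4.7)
The plan is to descend to an algebraic curve model over $k$, apply Theorem \ref{valunif}, and then complete back up. For (i), let $F\subset K$ be the relative algebraic closure of $k(y)$ in $K$. By Krasner's lemma together with finiteness of $K/\ol{k(y)}$, the extension $F/k(y)$ is finite, $\hat F=K$, and $[F:k(y)]=[K:\ol{k(y)}]$. Restrict the valuation of $K$ to $F$; then $F/k$ is a finitely generated extension of valued fields of transcendence degree one, and the overring $kF^\circ\subseteq F$ centers on a point of the normal projective $k$-model of $F$, which is necessarily smooth as $k$ is algebraically closed. Theorem \ref{valunif} thus produces $x\in F$ with $F/k(x)$ finite and unramified. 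Completions preserve value group and residue field, so $e_{K/\ol{k(x)}}=e_{F/k(x)}$ and $f_{K/\ol{k(x)}}=f_{F/k(x)}$, and
\[
[K:\ol{k(x)}]\le[F:k(x)]=e_{F/k(x)}f_{F/k(x)}\le[K:\ol{k(x)}].
\]
Equality throughout gives $K/\ol{k(x)}$ finite, defectless, with separable residue extension, which by Lemma \ref{etalelem} and henselianness of analytic fields means $K/\ol{k(x)}$ is finite and unramified.

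For (iii), Abhyankar's inequality (Lemma \ref{Abhlem}) applied to $K/k$, combined with $\trdeg(K/k)=1$ and $\tilx$ transcendental over $\tilk$, forces $F_{K/k}=1$ and $E_{K/k}=0$. Divisibility of $|k^\times|$ then yields $|K^\times|=|k^\times|=|\ol{k(x)}^\times|$, so $e_{K/\ol{k(x)}}=1$; the residue field of $\ol{k(x)}$ is $\tilk(\tilx)$, and $\tilK/\tilk(\tilx)$ is algebraic by Abhyankar, finite because $\tilK$ is finitely generated over $\tilk$ (as $K$ is finite over $\ol{k(y)}$), and separable by hypothesis. Therefore $K/\ol{k(x)}$ is finite with $e=1$ and separable residue extension, which together with henselianness of $\ol{k(x)}$ is the definition of finite unramifiedness.

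For (ii), take $x$ as produced by (i). If $\tilK\neq\tilk$, then $|F^\times|=|k^\times|$ by Abhyankar, so $|x|\in|k^\times|$ and rescaling by an element of $k^\times$ arranges $|x|=1$; unramifiedness of $F/k(x)$ together with $|x|=1$ then forces $\tilx$ transcendental over $\tilk$ (else $\widetilde{k(x)}=\tilk$ would contradict $\trdeg(\tilF/\tilk)=1$) and $\tilK/\tilk(\tilx)$ separable. For $|x-x'|<1$ we get $\tilx'=\tilx$, so (iii) applies to $x'$. In the case $\tilK=\tilk$, finite unramifiedness of $K/\ol{k(x)}$ forces $K=\ol{k(x)}$, and a short Krasner-style argument gives $\ol{k(x')}=\ol{k(x)}=K$ whenever $|x-x'|$ is sufficiently small.

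The chief obstacle is the passage from $F/k(x)$ to $K/\ol{k(x)}$: one must verify that completing an unramified extension of dense valued subfields introduces no defect and preserves the degree. This rests on defectlessness of unramified extensions and on preservation of value group and residue field under completion of height-one valued fields.
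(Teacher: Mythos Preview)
Your approach has a structural circularity and, separately, a genuine mathematical gap.

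\textbf{Circularity.} You invoke Theorem~\ref{valunif} to prove part (i), but in this paper Theorem~\ref{valunif} is itself deduced from Theorem~\ref{fieldunif1}: the proof in \S\ref{gencasesec} uses \ref{fieldunif1}(i),(ii),(iii) as a black box. So within the paper's logical architecture your argument for (i) is circular. The paper's route is the reverse of yours: it first establishes the deeper analytic result Theorem~\ref{fieldunif} by the direct methods of \S\ref{ansec} (control of immediate $p$-extensions via special cosets, Propositions~\ref{immprop} and~\ref{modramprop}), and then Theorem~\ref{fieldunif1} is a short corollary.

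\textbf{The gap in (iii).} Your argument for (iii) is incorrect, and this is independent of the circularity. You conclude that $K/\ol{k(x)}$ is unramified from $e_{K/\ol{k(x)}}=1$ together with separability of $\tilK/\tilk(\tilx)$, asserting that for henselian base fields this ``is the definition of finite unramifiedness.'' It is not: unramifiedness also requires defectlessness, i.e.\ $[K:\ol{k(x)}]=f_{K/\ol{k(x)}}$. Nothing in your argument rules out $d_{K/\ol{k(x)}}>1$. The paper closes exactly this gap by invoking Theorem~\ref{fieldunif}(iii), which says that a one-dimensional analytic $k$-field of type~$2$ is \emph{stable}; stability forces $[K:\ol{k(x)}]=e\cdot f=[\tilK:\tilk(\tilx)]$, after which separability of the residue extension does give unramifiedness. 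Establishing stability is precisely the hard content of \S\ref{ansec} (this is essentially the Grauert--Remmert stability theorem, cf.\ Remark~\ref{GRrem}), and your argument bypasses it entirely. Since your treatment of (ii) in the case $\tilK\neq\tilk$ reduces to (iii), that case inherits the same gap.
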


Our proof of Theorem \ref{fieldunif1} involves methods from non-archimedean analytic geometry, and for
expository reasons we postpone it until \S\ref{ansec}. For our current purposes the reader can view this Theorem
as the only analytic black box we use.

\subsection{Reduction to uniformization of analytic fields} \label{gencasesec}
The goal of \S\ref{gencasesec} is to prove Theorem \ref{valunif} modulo the black box given by Theorem
\ref{fieldunif1}. The proof contains two main steps: decompletion, i.e. proving the theorem for non-complete
fields of height one, and composition of valuations which allows induction on height. We start with proving two
criteria for an extension of valued fields to be unramified. These are a decompletion criterion and a
composition criterion. Note that in the decompletion criterion given below, the separability assumption is
necessary only when $\hatK$ is not separable over $K$.

\begin{prop}
\label{unramcrit1} A finite extension $L/K$ of height one valued fields is unramified if and only if it is
separable and the extension $\hatL/\hatK$ is unramified.
\end{prop}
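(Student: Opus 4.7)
The plan is to exploit the universal property of the strict henselization together with the fact that for height-one valued fields the completion $\hat\Kcirc$ is henselian and shares its residue field $\tilK$ and value group $|K^\times|$ with $\Kcirc$. By Lemma \ref{etalelem}, we may reformulate ``unramified'' as ``$\Lcirc/\Kcirc$ is essentially étale,'' and likewise for $\hat L/\hat K$; the proposition thereby becomes a descent-ascent statement about essentially étale structures along $\Kcirc \to \hat\Kcirc$.

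For the forward direction, suppose $L/K$ is unramified. Separability of $L/K$ is automatic since $K^\sh/K$ is separable. Write $\Lcirc = B_\gtn$ for a finite étale $\Kcirc$-algebra $B$ and a maximal ideal $\gtn$ with separable residue. Base changing to the henselian ring $\hat\Kcirc$, the algebra $B \otimes_{\Kcirc} \hat\Kcirc$ is finite étale over $\hat\Kcirc$ and splits as a product $\prod_i C_i$ of local finite étale $\hat\Kcirc$-algebras. The factor $C$ cut out by the image of $\gtn$ is precisely $\Lcirc \otimes_{\Kcirc} \hat\Kcirc$; being finite over the complete ring $\hat\Kcirc$ it is itself complete, and since the valuation of $\hat K$ extends uniquely to any finite field extension, one identifies $C$ with $\hat\Lcirc$. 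Thus $\hat\Lcirc/\hat\Kcirc$ is finite étale, i.e., $\hat L/\hat K$ is unramified.

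For the reverse direction, assume $L/K$ is separable and $\hat L/\hat K$ is unramified, so $\hat L$ embeds into the maximal unramified extension $\hat K^\nr$ of $\hat K$. Then $L \hookrightarrow \hat L \subseteq \hat K^\nr$ realizes $L$ inside $\hat K^\nr$ as a set of elements separable algebraic over $K$. I invoke the standard identification of $K^\sh$ with the separable algebraic closure of $K$ in $\hat K^\nr$---equivalently, the natural isomorphisms $\Gal(K^\sh/K^h) \cong \Gal(\tilK^s/\tilK) \cong \Gal(\hat K^\nr/\hat K)$---to conclude that $L \subseteq K^\sh$. This yields the desired local embedding $\Lcirc \hookrightarrow \Kcirc^\sh$, so $L/K$ is unramified. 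The main obstacle is precisely this invocation, and it is where the separability hypothesis becomes indispensable: without it, purely inseparable elements of $\hat K^\nr$ that are algebraic over $K$ could lie in $L$ yet outside $K^\sh$, spoiling the embedding.
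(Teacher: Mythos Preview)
Your forward direction is essentially the paper's argument: base-change the \'etale extension $\Lcirc/\Kcirc$ to $\hat\Kcirc$, use henselianity of $\hat\Kcirc$ to split off the local factor, and identify that factor with $\hat\Lcirc$.

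The reverse direction has a genuine gap. You assert that $K^\sh$ (the paper's $K^\nr$) coincides with the separable algebraic closure of $K$ inside $\hat K^\nr$, and you claim this is \emph{equivalent} to the Galois isomorphisms $\Gal(K^\nr/K^h)\cong\Gal(\tilK^s/\tilK)\cong\Gal(\hat K^\nr/\hat K)$. That equivalence is false. The Galois isomorphisms only classify the \emph{unramified} subextensions on each side; they say nothing about whether an element of $\hat K^\nr$ that merely happens to be separable algebraic over $K$ must already lie in $K^\nr$. Concretely: given such an $\alpha$, the Galois correspondence locates a finite unramified $L'/K^h$ with $\alpha\in\hat{L'}$, but you still need $\alpha\in L'$. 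This last step is the statement that a henselian height-one valued field is separably algebraically closed in its completion. That is true (approximate $\alpha$ by $a\in L'$ with $|f(a)|<|f'(a)|^2$ for the minimal polynomial $f$ of $\alpha$ over $L'$, then apply Hensel's lemma in $L'$), but it is exactly the non-trivial content of the decompletion you are trying to establish, and you have not supplied it. Your remark that separability is ``indispensable'' because of purely inseparable elements also misses the point: the issue is not inseparable elements sneaking in, but rather that separable algebraic elements of $\hat{L'}$ over $L'$ might a priori fail to lie in $L'$.

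For comparison, the paper's proof of the reverse direction is explicit rather than structural: it manufactures a primitive element $x\in\Lcirc$ (a residue-field generator for $\tilL/\tilK$ perturbed by a small multiple of a primitive element of $L/K$, chosen to lie in all the other maximal ideals of the integral closure of $\Kcirc$ in $L$), factors its minimal polynomial $f$ over $\hat K$, and checks by hand that each factor contributes a unit to $f'(x)$, so that $\Kcirc[x]_{f'(x)}$ is an \'etale $\Kcirc$-algebra admitting $\Lcirc$ as a localization. This bypasses any appeal to henselizations or to algebraic-closure comparisons between $K$ and $\hat K$.
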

\begin{proof}
Assume that the extension $L/K$ is unramified. In particular, it is separable and the homomorphism
$\Kcirc\to\Lcirc$ is \'etale by Lemma \ref{etalelem}. It follows that $A:=\Lcirc\otimes_{\Kcirc}\hatKcirc$ is
\'etale over $\hatKcirc$ and hence $A$ is normal. Note also that $L\otimes_K\hatK$ is a separable
$\hatK$-algebra with a direct factor isomorphic to $\hatL$. By normality of $A$ it has a direct factor $A'$ with
fraction field isomorphic to $\hatL$. Since $A'$ is normal and $\hatLcirc$ is the integral closure of
$\hatKcirc$ in $\hatL$ because $\hatK$ is henselian, we obtain that $A'\toisom\hatLcirc$. So, $\hatLcirc$ is
\'etale over $\hatKcirc$.

The converse implication is more involved. Let $F$ denote the field $L$ without the valued field structure (so,
$F/K$ is separable). Let $\Fcirc$ be the integral closure of $\Kcirc$ in $F$; it is a semi-local ring with
maximal ideals $m_1\. m_n$. The localizations $\Fcirc_i=F_{m_i}$ are the valuation rings of $F$ lying over
$\Kcirc$, and without loss of generality $\Lcirc=\Fcirc_1$. Let $F_i$ denote the valued field corresponding to
$\Fcirc_i$.

Let $y$ be a primitive element of the separable extension $F/K$ and $x_0$ be an element in the ideal $m_2\dots
m_n\subset\Fcirc$ such that its image in $\Fcirc/m_1\toisom\tilL$ is non-zero and generates $\tilL$ over
$\tilK$. Since all but finitely many linear combinations $x_0+zy$ with $z\in K$ are primitive for $F/K$, we can
find $z\in\Kcirccirc$ such that $x=x_0+zy$ is primitive and $zy\in m_1\dots m_n$, and so $x\in m_2\dots m_n$ and
the image of $x$ is a non-zero primitive element of $\tilL/\tilK$. Let $f(T)\in\Kcirc[T]$ be the minimal
polynomial of $x$ and $f=f_1\dots f_n$ be its decomposition in $\hatK[T]$. Since $\hatK\otimes_K
F\toisom\prod_{i=1}^n\hatF_i$, we can renumber $f_i$'s so that $\hatF_i\toisom\hatK[T]/(f_i(T))$. Now, we
consider the situation in $\hatLcirc$. Let $\hatx$ be the image of $x$ in $\hatLcirc$. Since $f_1(\hatx)=0$, we
have that $f'(\hatx)=f'_1(\hatx)f_2(\hatx)\dots f_n(\hatx)$. The element $f'_1(\hatx)$ is invertible because
$\hatLcirc$ is finite \'etale over $\hatKcirc$ with degree $[\hatL:\hatK]$ (so $\tilf_1\in\tilK[T]$ is the
minimal polynomial of $\tilx$). For any $2\le i\le n$, $f_i(T)$ is an irreducible polynomial of degree
$d_i=[\hatF_i:\hatK]$ whose roots are in $\hatFcirccirc_i$ (since $\hatKcirc[T]/(f_i)$ in $\hatF_i$ has $T$
corresponding to $x$, so $f_i$ has a root in $m\hatFcirc_i=\hatFcirccirc_i$). It follows that $f_i(T)\in
T^{d_i}+\hatKcirccirc[T]$. The residue of $\hatx$ in $\tilL$ is not zero, hence $\hatx$ is invertible and
therefore the elements $f_i(\hatx)$ are invertible in $\Lcirc$ for $i>1$.

We have proved that $f'(\hatx)$ is invertible in $\hatLcirc$, and therefore $f'(x)$ is invertible in $\Lcirc$.
We conclude that the ring $A=\Kcirc[x]_{f'(x)}$ is contained in $\Lcirc$. On the other hand, $A$ is \'etale over
$\Kcirc$ and hence is integrally closed. Thus, $A$ contains $\Fcirc$, so the localization of $A$ at its
(necessarily maximal) prime ideals over $\Kcirccirc$ are valuation rings, so one of these must be $\Lcirc$. It
follows that $\Lcirc$ is local-\'etale over $\Kcirc$ (and it is even \'etale by Lemma \ref{finhtlem}).
\end{proof}

Next we prove a composition criterion. We formulate just the statement that will be used. Note, however, that
much stronger results for "composed" rings can be found in \cite[\S2.3]{Tem2} and further generalizations were
obtained by D. Rydh.

\begin{prop} \label{unramcrit2}
Assume that $l/k$ is a finite extension of valued fields of finite height. Suppose that $\till$ and $\tilk$ are
provided with compatible structures of valued fields of finite height, and let $L$ and $K$ be the fields $l$ and
$k$ provided with the composed valuations. If the extensions $l/k$ and $\till/\tilk$ are unramified then $L/K$
is unramified.
\end{prop}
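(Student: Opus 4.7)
The plan is to show $\Lcirc/\Kcirc$ is \'etale; by Lemma \ref{etalelem} this is equivalent to $L/K$ being unramified. I would exploit the fiber-product presentations $\Kcirc = \kcirc\times_\tilk\tilkcirc$ and $\Lcirc = \lcirc\times_\till\tillcirc$ (so $\lcirccirc \subset \Lcirc$ with $\Lcirc/\lcirccirc = \tillcirc$, and analogously for $\Kcirc$) and verify the three defining properties of \'etaleness in turn.

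Flatness of $\Lcirc$ over $\Kcirc$ is automatic since $\Lcirc$ is torsion-free over the valuation ring $\Kcirc$ (Lemma \ref{composlem}(i)). For finite presentation, I would first show $\Lcirc$ is finite over $\Kcirc$ by a direct generation argument---lift finite $\tilkcirc$-module generators of $\tillcirc$ to elements of $\Lcirc$, subtract from an arbitrary $x \in \Lcirc$ an appropriate lift of $\tilde x$, and observe that the remainder lies in $\lcirccirc = \kcirccirc\lcirc$ (using $e_{l/k}=1$), where finite $\kcirc$-generation of $\lcirc$ takes over---and then apply Lemma \ref{finhtlem}.

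For \'etaleness, I would reduce to checking that every scheme-theoretic fiber of $\Spec(\Lcirc) \to \Spec(\Kcirc)$ is \'etale. This hinges on two localization identities: $\Lcirc\otimes_\Kcirc\kcirc = \lcirc$ (handling the primes $\gtq \subseteq \kcirccirc$) and $\Lcirc\otimes_\Kcirc\tilkcirc = \tillcirc$, equivalently $\kcirccirc\Lcirc = \lcirccirc$ (handling the primes $\gtq \supseteq \kcirccirc$). Granting these, every fiber is a base change of either $\lcirc/\kcirc$ or $\tillcirc/\tilkcirc$, both \'etale by the unramifiedness hypotheses via Lemma \ref{etalelem}.

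The main obstacle will be verifying $\kcirccirc\Lcirc = \lcirccirc$. The inclusion $\subseteq$ is routine since $aw$ has residue $0 \in \tillcirc$ whenever $a \in \kcirccirc$ and $w \in \Lcirc$. For $\supseteq$, I would use both unramifiedness hypotheses in tandem: given $y \in \lcirccirc$, the condition $e_{l/k}=1$ yields $y = bu$ with $b \in \kcirccirc$ and $u \in \lcirc^\times$; if $\tilde u \notin \tillcirc$ then $e_{\till/\tilk}=1$ provides $\tilde c \in \tilk^\times$ with $|\tilde c|_\till = |\tilde u|_\till$, and a lift $c \in \kcirc^\times$ rewrites $y = (bc)(u/c)$ with $bc \in \kcirccirc$ and $u/c \in \Lcirc$ (since the residue $\tilde u/\tilde c$ has norm $1$ in $\till$). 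The companion identity $\Lcirc\otimes_\Kcirc\kcirc = \lcirc$ is proved by an analogous rescaling, and it is precisely here that both layers of the valuation must be unramified in order to produce factors in $\kcirc^\times$ of the prescribed residue value.
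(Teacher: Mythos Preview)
Your approach is correct and close in spirit to the paper's, but packaged differently; one citation slip should be fixed. After you show $\Lcirc$ is a finite $\Kcirc$-module (your two-layer lifting works, provided you first rescale the chosen $\kcirc$-generators of $\lcirc$ into $\Lcirc$ via $e_{\till/\tilk}=1$, exactly as in your $\kcirccirc\Lcirc=\lcirccirc$ argument), the passage to finite presentation is ``flat $+$ finitely generated over a valuation ring $\Rightarrow$ finitely presented'', which is \cite[3.4.7]{RG} (recorded later in the paper as Lemma~\ref{valringlem}), not Lemma~\ref{finhtlem}.

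The paper verifies the same three properties---flat, finitely presented, unramified---but organizes the last one around the single identity $\Kcirccirc\Lcirc=\Lcirccirc$ at the closed point: writing $\kcirc=\Kcirc_s$ for a suitable $s$, one first gets $\lcirccirc=\kcirccirc\lcirc=\kcirccirc\Lcirc\subset\Kcirccirc\Lcirc$ from \'etaleness of $\lcirc/\kcirc$, and then reduces modulo $\lcirccirc$ and invokes \'etaleness of $\tillcirc/\tilkcirc$ to conclude. Your route instead proves the two base-change identities $\Lcirc\otimes_\Kcirc\kcirc=\lcirc$ and $\Lcirc/\kcirccirc\Lcirc=\tillcirc$, thereby exhibiting $\Spec(\Lcirc)\to\Spec(\Kcirc)$ as glued from the two given \'etale maps over the decomposition of $\Spec(\Kcirc)$ into the open $\Spec(\kcirc)$ and the closed $\Spec(\tilkcirc)$, and then checks \'etaleness on every fiber. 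Both arguments feed on the same ingredients ($e_{l/k}=1$, $e_{\till/\tilk}=1$, and finite generation on each layer); yours is slightly more structural and has the bonus that separability of the residue-field extension at the closed point---needed for the local \'etaleness criterion and used tacitly in the paper---comes for free from the fiberwise description.
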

\begin{proof}
To simplify the notation we rename the valuation rings and their maximal ideals as $A=\kcirc$, $B=\tilkcirc$ and
$C=\Kcirc$ (so $C$ is the composed valuation ring), $m=\kcirccirc$ and $n=\Kcirccirc$. In the same manner, we
set $A'=\lcirc$, $B'=\tillcirc$, $C'=\Lcirc$, $m'=\lcirccirc$, $n'=\Lcirccirc$. Finally, let $m_B$ and $m'_B$ be
the maximal ideals of $B$ and $B'$. By Lemma \ref{etalelem} the embeddings $f_A:A\to A'$ and $f_B:B\to B'$ are
\'etale and our aim is to prove that $f_C:C\to C'$ is \'etale. For this it is enough to show that $C'$ is
$C$-flat, $n'=nC'$ and $f_C$ is finitely presented. The first claim is obvious because any $C$-module without
torsion is flat.

By Lemma \ref{finhtlem} there exists an element $s\in C$ such that $A=C_s$. Clearly, we also have that
$A'=C'_s$. By the obvious bijection between $C$-submodules $m\subseteq D\subseteq A$ and $B$-submodules
$\tilD\subseteq K$ (given by $D\mapsto\tilD=D/m$) we see that $m=s^{-1}m\subset n$ and similarly
$m'=s^{-1}m'\subset n'$. Using that $m'=mA'$ by \'etaleness of $f_A$ we obtain that $m'=mC'_s=mC'\subset nC'$.
Now, to prove that the inclusion $nC'\subseteq n'$ is an equality it is enough to show that it becomes an
equality after quotient by $m'$. So, it remains to note that $n'/m'=m'_B$ and $nC'/m'$ contains $(n/m)B'=m_BB'$,
which is $m'_B$ by \'etaleness of $f_B$.

The last (and the most subtle) check is that $f_C$ is finitely presented. Let $a$ be a finite subset of $A'$
that generates it over $A$. Multiplying $a$ by an appropriate $s^n$, we can achieve that $a\subset C'$. Note
that $m'=mA'=mA[a]=m[a]\subset C[a]$, where $m[a]$ denotes the set of polynomials in $a$ with coefficients in
$m$. Pick up a finite subset $b\subset C'$ whose image generates $B'$ over $B$, then one easily sees $C'$
coincides with its subalgebra $C''=C[a,b]$ because $C''_s=A'=C'_s$ and $C''/m'=B'=C'/m'$. This proves that $f_C$
is of finite type. Since $f_C$ is flat, \cite[3.4.7]{RG} implies that $f_C$ is also finitely presented.
\end{proof}

\begin{proof}[Proof of Theorem \ref{valunif}]
The extension $K/k$ is induced from an extension of absolutely finitely generated valued fields, that is there
exist subfields $L\subset K$ and $l\subset k$ such that $l$ and $L$ are finitely generated over their prime
subfields with $L/l$ separable of transcendence degree $1$ such that $L\otimes_{l}k$ is integral and
$K=\Frac(L\otimes_{l}k)$. The valued field $L$ and $l$ are of finite height by Corollary \ref{approxcor}(i). We
claim that it suffices to prove the Theorem for the extension $L/l$. Indeed, assume that there exists $x\in L$
transcendental over $k$ and such that $L/l(x)$ is unramified. Then $\Lcirc$ is \'etale over $l(x)^\circ$ by
Lemma \ref{etalelem}, and since $\Kcirc$ is a localization of its subring $\Lcirc\otimes_{l(x)^\circ}k(x)^\circ$
(because both are localizations of the integral closure of $k(x)^\circ$ in $K$), we obtain that $K$ is
unramified over $k(x)$. Thus, we assume in the sequel that $k$ is of finite height $h$.

Let $C$ be the normal projective $k$-model of $K$, and consider the valuation ring $\calO=k\Kcirc$. If
$\calO\neq K$ (i.e. the extension $K/k$ is not bounded) then $\calO$ dominates the local ring $\calO_z$ of a
closed point $z$, which must be $k$-smooth by the assumption of the Theorem. In particular, $\calO$ is the DVR
$\calO_z$, and hence for any uniformizer $x$ at $z$, $\calO$ is \'etale over $\calO\cap k(x)$ (since $x$ induces
an \'etale morphism from a neighborhood of $z$ to $\bfA^1_k$). By Lemma \ref{composlem}(i), the valuation of $K$
is composed from the valuation induced by $\calO$ and from the valuation on its residue field $k(z)$ which
extends the valuation of $k$ (the latter is uniquely defined because $k(z)/k$ is finite and purely inseparable).
Notice that the residue field of the valuation ring $\calO\cap k(x)$ is also $k(z)$. Hence applying Proposition
\ref{unramcrit2} we obtain that $K$ is unramified over $k(x)$.

In the sequel we assume that $\calO=K$, and so it is centered on the generic point of $C$. Note that $K/k$ is
separable because the generic point of $C$ is $k$-smooth by our assumptions. Our proof will run by induction on
$h$. If $h=0$, then $K$ is necessarily of height $0$ too, hence $K/k(x)$ is unramified if and only if $x$ is a
separable transcendence basis of $K$ over $k$. It is well known that such a basis exists.

In the sequel we assume that $h>0$ (i.e the valuations are non-trivial). We will need the following well known
fact which hold for any extension $K/k$ which is separable, finitely generated and of transcendence degree one:
$K/k$ possesses a separable transcendence basis, and $\{x\}\subset K$ is a such a basis if and only if $x\notin
kK^p$, where $p=\cha(k)$ and we agree on notation $K^p=1$ when $p=0$. Let $k_1$ and $K_1$ denote the fields $k$
and $K$ provided with the induced valuations of height one. We provide the residue fields $\tilk_1$ and
$\tilK_1$ with the valuations induced from $k$ and $K$. Since $k_1$ is separably closed, its completion
$\hatk_1$ is algebraically closed, and we obtain in particular that $\tilk_1$ is algebraically closed. We will
also use the simple fact that $k_1K_1^p$ is a $k_1$-subspace of $K_1$ with empty interior. Indeed, it suffices
to check that $0$ is not in the interior, but for any $y\in K_1\setminus k_1K_1^p$ the set $yk_1^\times$ is
disjoint from $k_1K_1^p$ but contains $0$ in its closure.

Assume first that $\tilK_1=\tilk_1$. We claim that there exists $x\in K_1$ such that firstly $\hatK_1$ is
unramified over $\ol{k_1(x)}$, and secondly $x\notin k_1K_1^p$. If $\hatK_1=\hatk_1$ (i.e. $K_1$ is a valued
subfield in $\hatk_1$) then this is obvious since the first condition is empty. Otherwise, we apply Theorem
\ref{fieldunif1}(i) to find $x\in\hatK_1$ such that $\hatK_1/\ol{k_1(X)}$ is finite and unramified. Moreover,
part (ii) of the same Theorem implies that we can move $x$ slightly, and since $k_1K_1^p$ is nowhere dense we
can achieve that $x\in K_1$ and $x\notin k_1K_1^p$. This proves the above claim. Note that $K_1$ is a finite
separable extension of $k_1(x)$, and hence $K_1$ is unramified over $k_1(x)$ by Proposition \ref{unramcrit1}.
Now, to prove that $K/k(x)$ is unramified it remains to use Proposition \ref{unramcrit2} and the assumption that
$\tilK_1=\tilk_1$.

Finally we consider the case when $\tilK_1\neq\tilk_1$. Since $\tilk_1$ is algebraically closed, Abhyankar
inequality (Lemma \ref{Abhlem}) implies that $\trdeg(\tilK_1/\tilk_1)=1$. Thus, the extension $\tilK_1/\tilk_1$
satisfies the conditions of the Theorem, and we can use the induction assumption due to the fact that the height
of $\tilk_1$ equals to the height of $k$ decreased by one. Find $\tilx\in\tilK_1$ transcendental over $\tilk_1$
and such that the extension $\tilK_1/\tilk_1(\tilx)$ is unramified (in particular, it is separable), and lift it
to an element $x\in\Kcirc_1$ not contained in $k_1K_1^p$. Then $\hatK_1/\ol{k_1(x)}$ is finite and unramified by
Theorem \ref{fieldunif1}(iii). Now it remains to use Propositions \ref{unramcrit1} and \ref{unramcrit2}, as
earlier.
\end{proof}

\section{Riemann-Zariski spaces}\label{RZsec}

We describe the classical Riemann-Zariski spaces in \S\ref{absRZsec} and indicate in \S\ref{relRZsec} how they
can be generalized to a relative case. In \S\ref{Pmodsec} we describe a method of proving modification theorems,
which we illustrate in \S\ref{redfibsec} by proving a particular case of the reduced fiber theorem that will be
used later in the paper.

If $X$ is a scheme then by $X^0$ we denote the set of generic points of $X$. We refer the reader to \cite{egaI},
6.10.1 and 6.10.2, for the definitions of schematical image and density. Let us fix the notions of modification
and alteration (which can differ from paper to paper). By a {\em modification} (resp. {\em quasi-modification})
we mean a proper (resp. separated finite type) morphism which induces an isomorphism of open schematically dense
subschemes. By an {\em alteration} (resp. {\em quasi-alteration}) of an integral scheme $X$ we mean a proper
(resp. separated finite type) morphism $f:Y\to X$ with integral $Y$ and finite field extension $k(Y)/k(X)$. If
the latter extension is separable then we say that the (quasi-) alteration is {\em generically \'etale}. To
justify this terminology, we note such morphism $f:Y\to X$ is of finite presentation over an open subscheme of
$X$, and hence an easy approximation argument (see \S\ref{approxsec}) implies that $f$ is \'etale over a dense
open in $X$ when $f$ is generically \'etale in the preceding sense.

The combination of words "quasi-compact quasi-separated" will appear very often in the paper, so we will
abbreviate it with a single "word" {\em qcqs}. Note that Grothendieck called qcqs schemes {\em coherent}, but
this might be too confusing. Until the end of this paper $S$ is a scheme. We assume that $S$ is integral, qcqs
and with the generic point $\eta=\Spec(K)$ if not said explicitly. For an $S$-scheme $X$, by $X_\eta$ and $X_s$
we denote its fibers over $\eta$ and $s$, respectively, where $s\in S$ is any point. A modification $f:X'\to X$
is called an {\em $\eta$-modification} if its $\eta$-fiber $f_\eta:X'_\eta\to X_\eta$ is an isomorphism. A
reduced $S$-scheme $X$ is called {\em $\eta$-normal} if it does not admit non-trivial finite
$\eta$-modifications. Note that $\eta$-normality is a sort of partial normality condition "semi-orthogonal" to
normality of $X_\eta$ because $X$ is normal if and only if $X_\eta$ is normal and $X$ is $\eta$-normal.

\subsection{Noetherian approximation}
\label{approxsec} In this section we recall some results from \cite[$\rm IV_3$, \S8]{ega} on projective limits
of schemes. These results will be used very often in the sequel. Let $\{S_i\}$ be a filtered projective family
of schemes with affine transition morphisms and initial object $S_0$ (the latter assumption does not really
restrict the generality), then $S=\projlim S_i$ exists by \cite[$\rm IV_3$, 8.2.3]{ega}. We assume that $S_0$ is
qcqs, then by \cite[$\rm IV_3$, 8.8.2 and 8.5.2]{ega}, any finitely presented morphism $f:X\to S$ (resp.
finitely presented quasi-coherent $\calO_S$-module) is induced from a finitely presented morphism $f:X_i\to S_i$
(resp. finitely presented quasi-coherent $\calO_{S_i}$-module), and for any pair of finitely presented morphisms
$X_i\to S_i$, $Y_i\to S_i$ we have that
$$\injlim_{j\ge i}\Hom_{S_j}(Y_i\times_{S_i}S_j,X_i\times_{S_i}
S_j)\toisom\Hom_S(Y_i\times_{S_i}S,X_i\times_{S_i}S).$$ Moreover, by \cite[$\rm IV_3$, 11.2.6]{ega} $f$ is flat
if and only if there exists $i_0$ such that each $f_i$ with $i\ge i_0$ is flat. We will refer to all these
statements by the word "approximation". By "noetherian approximation" we mean these statements combined with
\cite[C.9]{TT}, which asserts that any qcqs scheme $S$ can be represented as the projective limit of a filtered
family of schemes of finite type over $\bfZ$ such that the transition morphisms are affine. A typical example of
an application of noetherian approximation is that any relative $S$-curve $C$ is induced from a relative curve
$C_0$ over a scheme $S_0$ of finite type over $\bfZ$ in the sense that $C\toisom C_0\times_{S_0}S$.

\begin{rem}\label{prem}
(i) Many properties of $f:X\to S$ descend to $f_i$'s with $i\ge i_0$. Let $\bfP$ be a property of schemes of
finite type over a field such that (a) for any field extension $l/k$, a finite type morphism $f:X\to\Spec(k)$
satisfies $\bfP$ if and only if $f\otimes_k l$ satisfies $\bfP$, (b) for any finitely presented morphism $X\to
S$ with qcqs $S$ the set of points $s\in S$ with the fiber $X_s$ satisfying $\bfP$ is constructible. For
example, $\bfP$ can be geometric reducedness since (a) is obvious and (b) is proved in \cite[$\rm IV_3$,
9.7.7(iii)]{ega}. Then $f$ is a $\bfP$-morphism (i.e. it is flat and its fibers satisfy $\bfP$) if and only if
each $f_i$ is a $\bfP$-morphism for $i\ge i_0$. Indeed, this is true for flatness by \cite[$\rm IV_3$,
11.2.6]{ega}, hence without restriction of generality we can assume that $f_0$ is flat. Now, if $E_i\subset S_i$
is the constructible set of points $s$ with non-$\bfP$ fiber $(X_i)_s$ then each $E_i$ is the preimage of $E_0$.
Since the preimage of $E_0$ in $S$ is empty, already some $E_i$ is empty by \cite[$\rm IV_3$, 8.3.3]{ega}.

(ii) A similar claim holds for suitable properties of diagrams of $S$-flat schemes, e.g. for modifications
$(C',D')\to(C,D)$ of multipointed $S$-curves.
\end{rem}

We will not care about it, but in all cases when noetherian approximation is used in this paper, it can be
easily seen that the case of an affine $S$ suffices. In this particular case noetherian approximation is easier
and appeared already in \cite[$\rm IV_3$, 8.9.1]{ega}.

\subsection{Absolute Riemann-Zariski spaces}
\label{absRZsec} We adopt the exposition of \cite[\S1]{Tem1} to the case of general schemes, but we will use
different notation. Fix an integral scheme $S$ (temporarily not necessarily qcqs) and a dominant morphism
$\oeta:\Spec(\oK)\to S$, where $\oK$ is a field. Let $\eta$ be the generic point of $S$ and $K=k(\eta)$. By a
{\em $\oK$-modification} (resp. {\em $\oK$-quasi-modification}) we mean a splitting of $\oeta$ into a
composition of a schematically dominant morphism $\Spec(\oK)\to S_i$ (so $S_i$ is integral) and a proper (resp.
separated finite type) morphism $S_i\to S$. By use of schematical images of $\Spec(\oK)$ in fiber products over
$S$ one shows that the projective family of all $\oK$-modifications (resp. $\oK$-quasi-modifications) is
filtered. If $K\toisom\oK$, $K^a\toisom\oK$ or $K^s\toisom\oK$ then $S_i$'s are modifications, alterations or
generically \'etale alterations of $S$, respectively (resp. quasi-versions of these notions). The topological
space $\gtS=\RZ_\oK(S)=\projlim S_i$, where $S_i$'s are the $\oK$-modifications, is called the {\em
Riemann-Zariski space} of $S$ with respect to $\oK$.

\begin{prop}
\label{quacomprop} The space $\gtS$ is qcqs if and only if the scheme $S$ is qcqs.
\end{prop}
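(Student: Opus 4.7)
The plan is to prove both implications separately; the main content lies in the direction \textquotedblleft$S$ qcqs $\Rightarrow\gtS$ qcqs\textquotedblright.

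For the easy direction, I first note that the identity morphism exhibits $S$ itself as a $\oK$-modification of $S$ (since $\oeta$ is dominant and hence schematically dominant as a map from a field), so there is a natural continuous projection $\pi\colon\gtS\to S$. Each transition $\pi_i\colon S_i\to S$ in the defining system is proper and dominant, hence surjective. To promote this to surjectivity of $\pi$, fix $s\in S$ and consider the fibers $F_i=(S_i)_s$. Each $F_i$ is proper over $\Spec(k(s))$ and is therefore qcqs, so its underlying topological space is spectral; the transition maps between the $F_i$'s are morphisms of qcqs schemes and hence spectral (i.e.~quasi-compact continuous) on the underlying spaces. A filtered projective limit of nonempty spectral spaces along spectral maps is nonempty (endow each $F_i$ with its constructible topology to obtain a compact Hausdorff space and invoke Tychonoff), so $\pi^{-1}(s)=\projlim_i F_i$ is nonempty and $\pi$ is surjective. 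Quasi-compactness of $S$ then transfers from $\gtS$ through the continuous surjection $\pi$, and for quasi-separatedness, if $U,V\subset S$ are quasi-compact opens then $U\cap V=\pi(\pi^{-1}(U)\cap\pi^{-1}(V))$ is the continuous image of a quasi-compact open of $\gtS$.

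For the substantive direction, assume $S$ is qcqs. Then each $S_i$ is proper, hence of finite type, over $S$, so $S_i$ is itself qcqs and its underlying topological space is spectral. Each transition morphism $S_j\to S_i$ is a morphism of qcqs schemes and therefore induces a spectral map of underlying topological spaces. A filtered projective limit of spectral spaces along spectral transition maps is again spectral, by the classical theorem of Hochster on limits in the category of spectral spaces; in particular, it is qcqs. Applied to our system, this gives that $\gtS$ is qcqs.

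The only real obstacle is the appeal to the limit theorem for spectral spaces, and this can be avoided entirely by verifying qc and qs of $\gtS$ directly. Quasi-compactness is obtained by equipping each $S_i$ with its constructible topology, under which it becomes compact Hausdorff; the topological projective limit in this finer topology is compact Hausdorff by Tychonoff, which gives quasi-compactness of $\gtS$ in its coarser Zariski-like topology. For quasi-separatedness, noetherian approximation (\S\ref{approxsec}) shows that every quasi-compact open of $\gtS$ is pulled back from a quasi-compact open in some $S_i$; given two such opens, a common index exists and their intersection is quasi-compact there, hence in $\gtS$. This is essentially an unpacking of Hochster's argument in the present setting.
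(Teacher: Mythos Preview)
Your argument is correct, and for the substantive direction ($S$ qcqs $\Rightarrow\gtS$ qcqs) it is the same as the paper's: pass to the constructible topology, use Tychonoff for quasi-compactness, and for quasi-separatedness pull two basic opens back to a common index in the filtered system. Your invocation of Hochster's theorem is simply a clean packaging of this.

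The one genuine difference is the surjectivity of $\pi\colon\gtS\to S$ in the converse. You argue purely topologically, using that a cofiltered limit of nonempty compact (constructible) spaces is nonempty. The paper instead picks a valuation ring of $\oK$ dominating $\calO_{S,x}$ (Zorn) and uses the valuative criterion of properness to produce compatible centers in all $S_i$. Both work; the paper's choice is natural because it immediately feeds into Corollary~\ref{bijcor}, identifying $\gtS$ with $\Val_\oK(S)$.

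Two small imprecisions worth tightening. In your quasi-separatedness argument for $S$ you need $\pi^{-1}(U)$ quasi-compact, and this uses the constructible-topology argument from the other direction applied to $U$; for that argument to run, $U$ must itself be qcqs (e.g.\ affine), not merely quasi-compact --- the paper makes this dependence explicit. And the fact that a quasi-compact open of $\gtS$ descends to some $S_i$ is a feature of the limit topology (finitely many basic opens, then cofilter to a common index), not of noetherian approximation as in \S\ref{approxsec}.
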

\begin{proof}
Assume that $S$ is qcqs. Let $f_i:\gtS\to S_i$ be the projections and $f_{ji}:S_j\to S_i$ with $j\ge i$ be the
transition maps. An open subscheme $U\into S_i$ is qcqs if and only if it is compact in the constructible
topology of $U$. If this is the case then each $f_{ji}^{-1}(U)$ is compact in the constructible topology and
hence $\gtU=f_i^{-1}(U)$ is compact in the constructible projective limit topology. Since the usual (Zariski)
topology of $\gtS$ is weaker, $\gtU$ is quasi-compact. In particular, $\gtS$ is quasi-compact. Preimages of the
sets $U$ as above form a basis of the topology of $\gtS$, hence for quasi-separatedness of $\gtS$ it is enough
to show that the intersection of two such preimages $\gtU$ and $\gtU'$ is quasi-compact. Enlarging $i$ enough we
can assume that $\gtU=f_i^{-1}(U)$ and $\gtU'=f_i^{-1}(U')$, but then $\gtU\cap\gtU'=f_i^{-1}(U\cap U')$ is
quasi-compact by what we proved above.

Now, let us assume that $\gtS$ is qcqs. Let us assume for a moment that the projection $f:\gtS\to S$ is
surjective (that is not automatic even though $f_{ji}$'s are surjective). Then $S$ is obviously quasi-compact,
and quasi-separatedness of $S$ follows from the fact that the preimage in $\gtS$ of an open quasi-compact set
$U\subset S$ is open and quasi-compact (by the same argument as we used in the direct implication). It remains
for a point $x\in S$ to show that $f^{-1}(x)$ is non-empty. Find a valuation ring $\calO$ of $\oK$ which
dominates $\calO_{S,x}\subset\oK$ (it exists by Zorn's lemma). Then the morphism $\Spec(\calO)\to S$ factors
through each $S_i$ by the valuative criterion of properness, hence the images of the closed point of
$\Spec(\calO)$ in each $S_i$ give rise to a compatible family of points $x_i\in S_i$. The point $\gtx\in\gtS$
corresponding to the family $\{x_i\}$ sits over $x\in S$, and we are done.
\end{proof}

In the sequel, we assume again that $S$ is qcqs. We provide $\gtS$ with a sheaf $\calO_\gtS=\injlim
\pi_i^{-1}(\calO_{S_i})$, where $\pi_i:\gtS\to S_i$ are the projections. The following easy approximation lemma
will be very useful in the sequel.

\begin{lem}\label{approxlem}
For any point $\gtx\in\gtS$, the scheme $\Spec(\calO_{\gtS,\gtx})$ is isomorphic to a projective limit of
$\oK$-quasi-modifications of $S$.
\end{lem}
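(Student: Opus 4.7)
The plan is to unravel the definition of the structure sheaf $\calO_\gtS$ on the Riemann-Zariski space and repackage its stalk at $\gtx$ as a filtered colimit over affine opens of the $\oK$-modifications. Writing $x_i:=\pi_i(\gtx)\in S_i$, the definition $\calO_\gtS=\injlim_i\pi_i^{-1}(\calO_{S_i})$ together with commutation of stalks with filtered colimits yields
\[
\calO_{\gtS,\gtx}=\injlim_i\calO_{S_i,x_i}.
\]
Each local ring on the right is in turn the filtered colimit of $\calO_{S_i}(U)$ over affine open neighborhoods $U$ of $x_i$ in $S_i$, so collapsing the two colimits rewrites $\calO_{\gtS,\gtx}$ as a single filtered colimit indexed by pairs $(i,U)$. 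Applying $\Spec$ to this presentation will then turn the right-hand side into a filtered projective limit $\projlim_{(i,U)}U$ of affine schemes, and the goal reduces to exhibiting each $U$ as an affine $\oK$-quasi-modification of $S$ in a way compatible with the transition maps.

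The second step is to verify that each $U$ naturally acquires the structure of an $\oK$-quasi-modification of $S$. The composition $U\into S_i\to S$ is separated of finite type, so the only thing to check is that $U$ inherits a schematically dominant $\oK$-point. This uses that the given morphism $\Spec(\oK)\to S_i$ is schematically dominant into the integral scheme $S_i$, hence its image is the generic point of $S_i$; since any non-empty open $U\subset S_i$ contains this generic point, the morphism $\Spec(\oK)\to S_i$ factors through $U\into S_i$ and remains schematically dominant onto $U$.

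The third step is to confirm that the indexing category of pairs $(i,U)$ is filtered and that the transition morphisms are indeed morphisms of $\oK$-quasi-modifications. Given two pairs $(i,U)$ and $(i',U')$, filteredness of the system of $\oK$-modifications (recalled above via schematic images of $\Spec(\oK)$ in fiber products over $S$) produces $j\ge i,i'$ with transition maps $f_{ji},f_{ji'}$; then $f_{ji}^{-1}(U)\cap f_{ji'}^{-1}(U')$ is an open neighborhood of $x_j$ in $S_j$, and any affine open $V$ in it containing $x_j$ gives a pair $(j,V)$ dominating both. Honestly there is no real obstacle here: the lemma is essentially definitional, and the only mildly delicate point is the verification in step two that restricting to an affine open preserves the $\oK$-quasi-modification structure, which hinges on the integrality of each $S_i$ and schematic dominance of the $\oK$-point.
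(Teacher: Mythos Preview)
Your argument is correct and follows the same route as the paper's proof: compute the stalk as $\injlim_i\calO_{S_i,x_i}$, rewrite each local ring as the filtered colimit of sections over (affine) open neighborhoods of $x_i$, and observe that such neighborhoods are $\oK$-quasi-modifications of $S$. The paper's proof is terser and leaves implicit the verification that the combined index category of pairs $(i,U)$ is filtered and that the $\oK$-point restricts schematically dominantly to each open $U$; you spell these out, but there is no genuine difference in strategy.
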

\begin{proof}
Since $\calO_{\gtS,\gtx}=\injlim\calO_{S_i,x_i}$ where $x_i=\pi_i(\mathfrak x)$, $\Spec(\calO_{\gtS,\gtx})$ is
isomorphic to the projective limit of the schemes $\Spec(\calO_{S_i,x_i})$. It remains to notice that each
$\Spec(\calO_{S_i,x_i})$ is isomorphic to the projective limit of its open neighborhoods in $S_i$, and the
latter are obviously $\oK$-quasi-modifications of $S$.
\end{proof}

This Lemma combined with an approximation argument often allows to reduce certain birational problems on $S$ to
problems over the local rings $\calO_{\gtS,\gtx}$. These rings are valuation rings, as we are going to prove.

\begin{lem}
\label{ratfunlem} For any element $f\in\oK$, there exists a $\oK$-modification $S'\to S$ such that $f$ is a
rational function on $S'$ giving rise to a morphism $S'\to\bfP^1_\bfZ$.
\end{lem}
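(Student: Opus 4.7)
\medskip

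\noindent\textbf{Proof proposal.} The plan is to realize $S'$ as the "graph closure" of $f$ over $S$, i.e.\ as the schematic image of the combined map $(\oeta,f)$ into $S\times_\bfZ\bfP^1_\bfZ$. Concretely, note that $f\in\oK$ gives a morphism $\Spec(\oK)\to\bfA^1_\bfZ\subset\bfP^1_\bfZ$, and together with $\oeta$ this defines
$$
g\,:\,\Spec(\oK)\;\longrightarrow\;S\times_\bfZ\bfP^1_\bfZ.
$$
Since the source is affine and the target is qcqs (as $S$ is qcqs and $\bfP^1_\bfZ$ is qcqs), $g$ is itself qcqs and the schematic image $S'\subset S\times_\bfZ\bfP^1_\bfZ$ is a well-defined closed subscheme through which $g$ factors schematically dominantly.

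Next I would verify the three properties needed for $S'$ to qualify as the required $\oK$-modification. First, $S'$ is integral: locally on the target, writing $S\times_\bfZ\bfP^1_\bfZ=\Spec(A)$ on an affine patch meeting the image of $g$, the schematic image is $\Spec(A/\gtp)$ where $\gtp$ is the kernel of the ring map $A\to\oK$ corresponding to $g$; this kernel is prime because $\oK$ is a field, so $S'$ is locally integral, and then globally integral since $g$ has a single image point. Second, $S'\to S$ is proper: it is a closed immersion into $S\times_\bfZ\bfP^1_\bfZ$ composed with the second projection, which is proper as a base change of $\bfP^1_\bfZ\to\Spec(\bfZ)$. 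Third, $\Spec(\oK)\to S'$ is schematically dominant by construction. Combining these, $S'\to S$ together with $\Spec(\oK)\to S'$ is a $\oK$-modification of $S$.

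Finally, for the "gives rise to a morphism to $\bfP^1_\bfZ$" clause, I would use the projection $p\colon S'\to\bfP^1_\bfZ$ inherited from $S\times_\bfZ\bfP^1_\bfZ$. Restricting to the generic point $\xi'\in S'$ and using that $\Spec(\oK)\to S'$ factors through $\xi'$ (so the inclusion $K(S')\hookrightarrow\oK$ is induced by $g$), the composition $\Spec(\oK)\to S'\xrightarrow{p}\bfP^1_\bfZ$ equals the original $f$; since $K(S')\hookrightarrow\oK$ is injective, this forces $f\in K(S')$, and $p$ is exactly the morphism determined by this rational function. There is no real obstacle here---the argument is the standard graph-closure construction, and the only point requiring a little care in the non-noetherian setting is the existence and integrality of the schematic image, which is automatic for morphisms out of the spectrum of a field into a qcqs scheme.
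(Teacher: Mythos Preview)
Your proposal is correct and follows exactly the same approach as the paper: the paper's proof is the one-line instruction to take $S'$ to be the schematical image of $(\oeta,f):\Spec(\oK)\to S\times_{\Spec(\bfZ)}\bfP^1_\bfZ$, and you have simply unpacked the verifications (integrality, properness, schematic dominance, and that the second projection realizes $f$) that the paper leaves implicit.
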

\begin{proof}
Consider the morphism $\Spec(\oK)\stackrel{(\oeta,f)}{\to} S\times_{\Spec(\bfZ)}\bfP^1_\bfZ$, and take $S'$ to
be its schematical image.
\end{proof}

\begin{cor}\label{314cor}
For any point $\gtx\in\gtS$, the ring $\calO_{\gtS,\gtx}$ is a valuation ring with the fraction field $\oK$.
\end{cor}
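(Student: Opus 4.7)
The plan is to establish three things in turn: that $\calO_{\gtS,\gtx}$ is an integral domain, that its fraction field equals $\oK$, and that for every $f \in \oK^\times$ at least one of $f$ or $f^{-1}$ lies in $\calO_{\gtS,\gtx}$.

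For the first two points I would unwind the identification $\calO_{\gtS,\gtx} = \injlim_i \calO_{S_i,x_i}$ where $x_i = \pi_i(\gtx)$ and the colimit is taken over the filtered system of $\oK$-modifications $S_i \to S$. By definition each $S_i$ is integral, and the schematically dominant factorization $\Spec(\oK)\to S_i$ produces an embedding $k(S_i)\hookrightarrow \oK$. The transition morphisms $S_j \to S_i$ then send the generic point of $S_j$ to the generic point of $S_i$ and are compatible with these embeddings, so they induce injective local ring homomorphisms $\calO_{S_i,x_i}\hookrightarrow \calO_{S_j,x_j}$. The filtered colimit of this injective system of domains is therefore a domain, whose fraction field is $\injlim_i k(S_i) \subseteq \oK$.

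To see that this colimit of function fields exhausts $\oK$, I would apply Lemma \ref{ratfunlem} to an arbitrary $f \in \oK$: it provides a $\oK$-modification $S'\to S$ on which $f$ is a rational function, i.e.\ $f \in k(S')$. Since $S'$ itself is an object of the filtered system defining $\gtS$, this gives $f \in \Frac(\calO_{\gtS,\gtx})$, hence $\Frac(\calO_{\gtS,\gtx}) = \oK$. For the valuation ring property I would exploit the further conclusion of Lemma \ref{ratfunlem}, namely the morphism $\phi:S' \to \bfP^1_\bfZ$ extending $f$. Covering $\bfP^1_\bfZ$ by its two standard affine charts $U_0 = \Spec\bfZ[t]$ and $U_\infty = \Spec\bfZ[t^{-1}]$, where $t$ pulls back to $f$, the image of the point $x' \in S'$ under $\gtx \to S'$ must lie in $U_0$ or in $U_\infty$, so either $f$ or $f^{-1}$ is regular at $x'$ and therefore belongs to $\calO_{S',x'}\subseteq \calO_{\gtS,\gtx}$.

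The one point needing care is the coordination of the ad hoc $S'$ produced by Lemma \ref{ratfunlem} with the fixed point $\gtx$; but this is automatic because $\gtx$ is by construction a compatible family of points indexed by every $\oK$-modification of $S$, including the specific $S'$ at hand. Once this bookkeeping is absorbed, the argument is essentially the classical one identifying points of a Riemann--Zariski space with valuations of the function field, and I do not anticipate a serious obstacle.
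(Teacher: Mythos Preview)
Your proposal is correct and follows essentially the same route as the paper: both proofs invoke Lemma~\ref{ratfunlem} to produce, for each $f\in\oK^\times$, a $\oK$-modification $S'$ carrying a morphism to $\bfP^1_\bfZ$ extending $f$, and then observe that the center $x'$ of $\gtx$ on $S'$ lands in one of the two standard charts, forcing $f$ or $f^{-1}$ into $\calO_{S',x'}\subset\calO_{\gtS,\gtx}$. The paper's version is simply terser---it does not separately verify that $\calO_{\gtS,\gtx}$ is a domain or that its fraction field is $\oK$, since both follow at once from the condition ``$f$ or $f^{-1}$ lies in $\calO_{\gtS,\gtx}$ for all $f\in\oK^\times$''---but the substance is identical.
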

\begin{proof}
Since $\calO_{\gtS,\gtx}=\injlim\calO_{S_i,x_i}$, where $\{S_i\}$ is the set of all $\oK$-modifications,
including the one in Lemma \ref{ratfunlem}, it follows that for any element $f\in\oK^\times$, either $f$ or
$f^{-1}$ is contained in $\calO_{\gtS,\gtx}$. Hence $\calO_{\gtS,\gtx}$ is a valuation ring and
$\Frac(\calO_{\gtS,\gtx})=\oK$.
\end{proof}

Let $Val_\oK(S)$ be the set of morphisms $\phi_\gtx:\Spec(\calO_\gtx)\to S$ such that $\calO_\gtx$ is a
valuation ring of $\oK$ and $\phi_\gtx$ has $\oeta$ as the generic fiber. By the above Corollary, we have a
natural map $\gtS\to Val_\oK(S)$. Conversely, any morphism $\phi_\gtx$ as above factors uniquely through any
$\oK$-modification of $S$ by the valuative criterion of properness. The images in all $S_i$'s of the closed
point of $\Spec(\calO_\gtx)$ give rise to a point $\gtx$ of the projective limit $\gtS$. We have constructed an
opposite map $Val_\oK(S)\to\gtS$ which is inverse because $\calO_{\gtS,\gtx}\toisom\calO_\gtx$. Indeed,
$\calO_\gtx$ dominates the local rings $\calO_{S_i,x_i}$, hence it dominates their union $\calO_{\gtS,\gtx}$,
but both are valuation rings with common fraction field, so they must coincide. We have thereby proved the
following statement.

\begin{cor}
\label{bijcor} The sets $\gtS$ and $Val_\oK(S)$ are naturally bijective.
\end{cor}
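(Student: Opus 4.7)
The plan is to exhibit mutually inverse maps between the two sets. In the forward direction, define $\Phi : \gtS \to \Val_\oK(S)$ by sending $\gtx$ to the composition $\Spec(\calO_{\gtS,\gtx}) \to \gtS \to S$; this is well defined because Corollary \ref{314cor} already tells us $\calO_{\gtS,\gtx}$ is a valuation ring of $\oK$, and the generic fiber is $\oeta$ since the fraction field is $\oK$ and the map to $S$ is dominant (being a composition of dominant projections). The only subtle point is that the map $\Spec(\calO_{\gtS,\gtx}) \to \gtS$ requires interpretation: for every $\oK$-modification $S_i$, the ring $\calO_{S_i,x_i}$ embeds in $\calO_{\gtS,\gtx}$ as a local subring, giving compatible maps $\Spec(\calO_{\gtS,\gtx}) \to S_i$, and hence one map to $S$ after projection.

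In the reverse direction, define $\Psi : \Val_\oK(S) \to \gtS$ by the valuative criterion of properness. Given $\phi_\gtx : \Spec(\calO_\gtx) \to S$ with generic fiber $\oeta$, for each $\oK$-modification $S_i \to S$ the schematically dominant morphism $\Spec(\oK) \to S_i$ extends, by properness of $S_i \to S$ applied to the valuation ring $\calO_\gtx$, to a unique $S$-morphism $\phi_i : \Spec(\calO_\gtx) \to S_i$. Uniqueness forces these to be compatible under the transition maps $S_j \to S_i$ (for $j \geq i$), so the images $x_i \in S_i$ of the closed point fit together into a single point $\gtx \in \projlim S_i = \gtS$.

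The content of the corollary is that $\Phi$ and $\Psi$ are mutually inverse, and the crux of the matter is the identification $\calO_{\gtS,\gtx} \toisom \calO_\gtx$ whenever $\gtx$ arises from $\phi_\gtx$ via $\Psi$. The inclusion $\calO_{\gtS,\gtx} \subseteq \calO_\gtx$ comes from the fact that the lifts $\phi_i$ exhibit each local ring $\calO_{S_i, x_i}$ as being dominated by $\calO_\gtx$, so their filtered union $\calO_{\gtS,\gtx} = \injlim \calO_{S_i, x_i}$ is also dominated. The reverse inclusion is then automatic, and this is the step I expect to be the key (though mild) obstacle: both rings are valuation rings sharing the common fraction field $\oK$, and a valuation ring cannot be properly dominated by another valuation ring of the same fraction field. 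Hence equality. This shows $\Phi \circ \Psi = \id$; the identity $\Psi \circ \Phi = \id$ is immediate, since starting from $\gtx \in \gtS$ and applying $\Psi$ to $\Phi(\gtx)$ reproduces the same compatible system $\{x_i\}$ by uniqueness in the valuative criterion.
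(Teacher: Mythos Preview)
Your proposal is correct and follows essentially the same approach as the paper: construct the forward map via Corollary~\ref{314cor}, the reverse map via the valuative criterion of properness, and verify they are inverse by showing $\calO_{\gtS,\gtx}\toisom\calO_\gtx$ using that a valuation ring cannot be properly dominated by another valuation ring with the same fraction field. Your write-up is somewhat more detailed than the paper's, but the argument is the same.
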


Any quasi-compact open subset $\gtS'\subset\gtS$ is induced from a quasi-compact open subscheme $S'$ of some
modification of $S$, and by Corollary \ref{bijcor} the natural map $\gtS'\to\RZ_\oK(S')$ is bijective. More
generally, by the valuative criterion of separatedness we obtain a natural injective map $\RZ_\oK(S')\into\gtS$
for any $\oK$-quasi-modification $S'\to S$.

\begin{prop}
\label{oetamodprop} Assume that $S_1\. S_n$ are $\oK$-quasi-modifications of $S$. Then there exists a
$\oK$-modification $S'\to S$ such that $S'$ contains open subschemes $S'_i$ which are $S$-isomorphic to
$\oK$-modifications of $S_i$'s.
\end{prop}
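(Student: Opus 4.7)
The plan is to compactify each $S_i\to S$ separately via Nagata's theorem and then combine the compactifications by taking a schematical image in their fibered product. No approximation or valuation-theoretic argument is required; once Nagata is invoked, the construction is formal.

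First, by Nagata's compactification theorem in the qcqs setting, each separated finite-type morphism $S_i \to S$ admits a factorization $S_i\into\oS_i\to S$ with the first arrow an open immersion and the second proper. Replacing $\oS_i$ by the schematical image of the composition $\Spec(\oK)\to S_i\into\oS_i$, I may assume $\oS_i$ is integral with $\oeta$ schematically dense in it; then $\oS_i\to S$ is itself a $\oK$-modification of $S$, and $S_i$ remains an open dense subscheme of $\oS_i$.

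Next, form $P:=\oS_1\times_S\cdots\times_S\oS_n$, which is proper over $S$, and let $S'\subset P$ be the schematical image of the tautological morphism $\Spec(\oK)\to P$ induced by $\oeta$ in each factor. Since $\Spec(\oK)$ is the spectrum of a field, $S'$ is an integral closed subscheme of $P$, proper over $S$, and $\oeta$ is schematically dense in it; hence $S'\to S$ is a $\oK$-modification. For each $i$, define $S'_i:=S'\times_{\oS_i}S_i$ using the $i$-th projection $S'\to\oS_i$. As the pullback of the open immersion $S_i\into\oS_i$, the inclusion $S'_i\into S'$ is an open immersion, and as the pullback of the proper morphism $S'\to\oS_i$, the induced morphism $S'_i\to S_i$ is proper. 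Because the image of $\oeta$ in $\oS_i$ lies in the open $S_i$, the open $S'_i$ contains the generic point of $S'$, so $S'_i$ is integral with schematically dense point $\oeta$. Therefore $S'_i\to S_i$ is a $\oK$-modification of $S_i$, as required.

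The main (and only) obstacle is the invocation of Nagata's compactification theorem for separated finite-type morphisms of qcqs schemes, which is where all the actual geometric content is hidden. Everything else is a formal manipulation of schematical images and fiber products: one first promotes every $\oK$-quasi-modification to a genuine $\oK$-modification, then takes the (essentially minimal) integral $\oK$-modification of $S$ sitting inside $\prod_S\oS_i$, and finally recovers the open subschemes $S'_i$ by base change along the original open inclusions $S_i\into\oS_i$.
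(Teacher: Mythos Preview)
Your proof is correct, but it takes a different route from the paper's argument. You invoke Nagata's compactification theorem (in the qcqs form, as in Conrad's paper cited here as [Con1]) to produce proper $\oS_i$ containing $S_i$ as open subschemes, and then take the schematical image of $\Spec(\oK)$ in the fibered product. This is clean and formally transparent once Nagata is granted.

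The paper avoids Nagata entirely and stays self-contained: it first reduces to a single affine $S_1=\Spec(B)$ mapping into an affine open $V=\Spec(A)\subset S$, then picks $A$-generators $f_1,\dots,f_l\in\oK$ of $B$ and uses the elementary Lemma~\ref{ratfunlem} to find a $\oK$-modification $\phi:S'\to S$ on which each $f_j$ extends to a morphism $F_j:S'\to\bfP^1_\bfZ$; the desired open $S'_1\subset S'$ is then $\phi^{-1}(V)\cap\bigcap_j F_j^{-1}(\bfA^1_\bfZ)$. In effect the paper is building the compactification by hand via a Chow-type embedding into a product of projective lines, which is exactly the kind of argument that underlies Nagata but is much lighter in this special situation. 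Your approach buys brevity and conceptual clarity at the cost of importing a substantial theorem; the paper's approach keeps the result entirely elementary within the framework already set up in \S\ref{absRZsec}.
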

\begin{proof}
Since $S$ is quasi-compact, $S$ and each $S_i$ possess a finite affine covering. Note that we can replace each
$S_i$ with its open subschemes $U_1\. U_m$ which cover $S_i$. For this reason it suffices to treat the case when
each $S_i$ is affine and its image is contained in an affine open subscheme of $S$. Next, we claim that it
suffices to find a $\oK$-modification $X_i\to S$ which satisfies the assertion of the Proposition for a single
$S_i$, because then we can take $S'$ to be any $\oK$-modification of $S$ which dominates all $X_i$'s. So, we can
assume that $n=1$, $S_1=\Spec(B)$ and the image of $S_1$ is contained in $\Spec(A)=V\into S$. Let $f_1\.
f_l\in\oK$ be generators of $B$ over $A$. By Lemma \ref{ratfunlem} we can find a $\oK$-modification $\phi:S'\to
S$ such that each $f_j$ induces a morphism $F_j:S'\to\bfP^1_\bfZ$. Then $S'$ is as required because it is easy
to check that $(\cap_{j=1}^l F_j^{-1}(\bfA^1_\bfZ))\cap\phi^{-1}(V)\into S'$ is a $\oK$-modification of $S_1$.
\end{proof}

\begin{cor}
\label{openembcor} For any $\oK$-quasi-modification $S'\to S$, the injection $\RZ_\oK(S')\to\RZ_\oK(S)$ is a
homeomorphism onto an open subspace.
\end{cor}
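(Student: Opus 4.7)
My plan is to write $T \to S$ for the $\oK$-quasi-modification denoted $S'$ in the statement (to avoid the notational clash), and to put $\gtT = \RZ_\oK(T)$ and $\gtS = \RZ_\oK(S)$. Injectivity of $\gtT \hookrightarrow \gtS$ is already noted in the paragraph preceding Proposition \ref{oetamodprop}, so I need to show that the image is open and that the canonical bijection onto it is a homeomorphism.

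The essential geometric input will be Proposition \ref{oetamodprop} applied to the single $\oK$-quasi-modification $T \to S$: it yields a $\oK$-modification $\sigma : R \to S$, an open subscheme $U \subseteq R$, and an $S$-isomorphism of $U$ onto a $\oK$-modification $\tau : U \to T$. I then claim that, under the identification $\gtS = \Val_\oK(S)$ of Corollary \ref{bijcor}, the image of $\gtT$ coincides with the open set $\pi_R^{-1}(U)$. Given $\gtx \in \gtS$ corresponding to $\phi_\gtx : \Spec(\calO_\gtx) \to S$, the valuative criterion of properness for $\sigma$ produces a unique lift to $R$; since any open subset of $\Spec(\calO_\gtx)$ containing the closed point is the whole spectrum, the center of the lift lies in $U$ if and only if $\phi_\gtx$ factors through $U$, which by the valuative criterion for the proper $\tau$ is in turn equivalent to $\phi_\gtx$ factoring through $T$, i.e., to $\gtx$ being in the image of $\gtT$. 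This establishes openness of the image.

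For the homeomorphism I would check continuity in both directions by matching basic opens. For continuity of $\gtT \to \gtS$, given a basic open $\pi_{S'}^{-1}(W)$ with $S' \to S$ a $\oK$-modification and $W \subseteq S'$ open, form the schematic image $T^+$ of $\oeta : \Spec(\oK) \to T \times_S S'$: the projection to $T$ makes $T^+ \to T$ into a $\oK$-modification (it is proper as a closed subscheme of the proper $T \times_S S' \to T$, and $\oeta$ factors through it schematically dominantly), and compatibility of the two lifts via the valuative criterion identifies the preimage of $\pi_{S'}^{-1}(W)$ in $\gtT$ with the basic open $\pi_{T^+}^{-1}(W^+)$, where $W^+$ is the pullback of $W$. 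For openness onto the image, given a basic open $\pi_{T'}^{-1}(W) \subseteq \gtT$ with $T' \to T$ a $\oK$-modification, apply Proposition \ref{oetamodprop} to the $\oK$-quasi-modification $T' \to T \to S$, obtaining a $\oK$-modification $R' \to S$ with an open subscheme $U' \subseteq R'$ that is $S$-isomorphic to a $\oK$-modification of $T'$; letting $W' \subseteq U' \subseteq R'$ be the preimage of $W$, the open $\pi_{R'}^{-1}(W') \subseteq \gtS$ intersects the image of $\gtT$ in exactly the image of $\pi_{T'}^{-1}(W)$, by the same valuative-criterion argument as in the openness step.

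The main obstacle I anticipate is not any single hard computation but rather the bookkeeping: keeping track of which projective system (of $T$, of $U$, or of $S$) each open set is indexed by, verifying that the schematic image $T^+$ really is a $\oK$-modification of $T$ and that its diagram with $S'$ identifies centers of valuation rings correctly, and applying Proposition \ref{oetamodprop} in just the right way so that the auxiliary modifications produced fit into the commutative framework needed for the valuative-criterion identifications to go through.
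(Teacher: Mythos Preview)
Your proof is correct and uses the same essential input as the paper, namely Proposition \ref{oetamodprop}. The paper condenses your three separate checks into the single observation that there is a cofinal family of $\oK$-modifications $S'_i\to T$ each of which admits an open immersion into a $\oK$-modification $S_i\to S$ (this is exactly what your step~3 produces for each $T'$), and from this the open-embedding statement for the projective limits follows at once without isolating continuity and openness separately.
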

\begin{proof}
It would suffice to know that there exists a cofinal family of $\oK$-modifications $S'_i\to S'$ such that each
$S'_i$ admits an open immersion $S'_i\into S_i$ compatible with $\oK$ into a $\oK$-modification of $S$. But the
latter is an obvious consequence of Proposition \ref{oetamodprop}.
\end{proof}

\begin{rem}
(i) Corollaries \ref{bijcor} and \ref{openembcor} imply that Riemann-Zariski spaces of affine schemes (we call
them {\em affine}) admit a usual valuation-theoretic description. Namely, if $X=\Spec(A)$ then $\gtS$ is the set
of all valuation rings of $\oK$ that contain $A$. The sets $\RZ(A[f_1\. f_n])$ with $f_i\in\oK$ form a basis of
open subsets of the topology of $\gtS$.

(ii) In particular, $\RZ_\oK=\RZ_\oK(\bfZ)$ (resp. $\RZ_\oK(k)$ for a subfield $k\subset\oK$) is the classical
Riemann-Zariski space of $\oK$ whose points are valuations on $\oK$ (resp. trivial on $k$).

(iii) By Proposition \ref{quacomprop}, a general Riemann-Zariski space is pasted from finitely many affine ones
via a finite gluing data.
\end{rem}

 We will not need the following result, but analogously to \cite[1.3]{Tem1} one can
strengthen our Corollary \ref{bijcor} as follows.

\begin{lem}
The topology on $\gtS$ is the weakest topology for which the natural maps $\phi:\gtS\to\RZ_\oK$ and $\gtS\to S$
are continuous. If $S$ is separated then $\phi$ is a homeomorphism onto its image (so, the topology is generated
only by  $\phi$).
\end{lem}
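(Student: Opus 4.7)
The plan is to pin down an explicit basis for the topology of $\gtS$ and match it against preimages under $\phi$ and $\pi_S:\gtS\to S$; for the separated case I will then use noetherian approximation to absorb the $\pi_S$-preimages into $\phi$-preimages.

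First I would check that both $\phi$ and $\pi_S$ are continuous for the projective-limit topology on $\gtS$. Continuity of $\pi_S$ is immediate, since it is one of the structure projections of the defining system. For $\phi$, by the description of basic opens of $\RZ_\oK$ in Remark (i) after Corollary \ref{openembcor}, it is enough to show that for each $f\in\oK$ the subset $\{\gtx\in\gtS:f\in\calO_\gtx\}$ is open in $\gtS$; this is supplied by Lemma \ref{ratfunlem}, which realizes the set as $\pi_{S'}^{-1}(F^{-1}(\bfA^1_\bfZ))$ for the morphism $F:S'\to\bfP^1_\bfZ$ associated to $f$ on a suitable $\oK$-modification $S'\to S$. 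For the reverse inclusion I would observe that a general basic open $\pi_{S_i}^{-1}(U_i)$ admits a refinement by affine opens $\Spec(B)\subseteq U_i$ whose image in $S$ is contained in an affine open $V=\Spec(A)\subseteq S$; since $S_i\to S$ is of finite type, $B=A[f_1,\ldots,f_n]$ with $f_j\in\oK$, and Corollary \ref{bijcor} identifies
\[\pi_{S_i}^{-1}(\Spec B)=\pi_S^{-1}(V)\cap\phi^{-1}(W),\qquad W=\{\calO\in\RZ_\oK:f_1,\ldots,f_n\in\calO\}.\]
Hence the topology on $\gtS$ is generated by preimages from $\phi$ and $\pi_S$, which settles the first claim.

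Assume now that $S$ is separated. Injectivity of $\phi$ is then immediate from the valuative criterion of separatedness (two distinct points with the same image would provide two distinct extensions $\Spec(\calO)\to S$ of $\oeta$). To conclude that $\phi$ is a homeomorphism onto its image it therefore suffices, in view of the basis just described, to realize each $\pi_S^{-1}(V)$, for an affine open $V\subseteq S$, as $\phi^{-1}(W)$ for some open $W\subseteq\RZ_\oK$. Using noetherian approximation (\S\ref{approxsec}), I would write $S=\projlim_j S_j$ with $S_j$ of finite type over $\bfZ$ and affine transitions; since $V$ is quasi-compact, it may be realized as the preimage of an open $V_j\subseteq S_j$, and the spreading-out of affineness (\cite[$\rm IV_3$, 8.10.5]{ega}) allows, after enlarging $j$, to take $V_j=\Spec(A_j)$ affine with $A_j$ a finitely generated $\bfZ$-subalgebra of $\oK$. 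Then
\[\pi_S^{-1}(V)=\{\gtx\in\gtS:A_j\subseteq\calO_\gtx\}=\phi^{-1}(W_{A_j}),\]
where $W_{A_j}=\{\calO:A_j\subseteq\calO\}$ is a basic open of $\RZ_\oK$ (the finite intersection of basic opens corresponding to generators of $A_j$). Combining this with the first part, every basic open of $\gtS$ is of the form $\phi^{-1}(W)$ for some open $W\subseteq\RZ_\oK$, so $\phi$ is a homeomorphism onto its image.

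The step I expect to be the most delicate is the noetherian-approximation argument in the last paragraph: everything else either reduces to Lemma \ref{ratfunlem} and Corollary \ref{bijcor} or is formal manipulation of preimage topologies, but reducing an arbitrary affine open of $S$ to the pullback of an affine open from a finite-type base genuinely requires the EGA spreading-out machinery together with some bookkeeping to ensure that the chosen $V_j$ may be taken affine.
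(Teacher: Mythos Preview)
The paper does not actually prove this lemma; it only states that the argument is analogous to \cite[1.3]{Tem1}. Your proof is essentially correct and follows the natural line of reasoning, so there is nothing to compare against. Two small points nevertheless deserve attention in the separated case.

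First, you write that $A_j$ is ``a finitely generated $\bfZ$-subalgebra of $\oK$'', but the noetherian approximants $S_j$ produced by \cite[C.9]{TT} need not be integral, so $A_j$ itself may not embed in $\oK$. What you want is the image $A_j'$ of $A_j$ under the composite $A_j\to A\hookrightarrow K\hookrightarrow\oK$; this $A_j'$ is a finitely generated $\bfZ$-subalgebra of $\oK$, and $W_{A_j'}$ is the basic open you need.

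Second, and more substantively, the backward implication in your displayed identity requires comparing two morphisms $\Spec(\calO_\gtx)\to S_j$: the given one $\Spec(\calO_\gtx)\xrightarrow{\phi_\gtx} S\to S_j$, and the one through $V_j$ coming from $A_j'\subseteq\calO_\gtx$. To conclude they agree you must invoke the valuative criterion of separatedness for $S_j$, not for $S$. You should therefore also arrange, when enlarging $j$, that $S_j$ is separated; this is possible because separatedness of $S$ spreads out to the $S_j$ for large $j$ by the same reference \cite[$\rm IV_3$, 8.10.5]{ega} you already cite for affineness. With these two adjustments the argument is complete.
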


\subsection{Relative Riemann-Zariski spaces}\label{relRZsec}
This section will not be used in this paper, but the relative spaces will play important role in \cite{Tem2}.
Throughout \S\ref{relRZsec} we fix a qcqs morphism of schemes $f:Y\to X$. In particular, the sheaf
$f_*(\calO_Y)$ is quasi-coherent by \cite[6.7.1]{egaI}. Consider the family of all factorizations of $f$ into a
composition of a schematically dominant qcqs morphism $f_i:Y\to X_i$ followed by a proper morphism $\pi_i:X_i\to
X$. We call the pair $(f_i,\pi_i)$ a {\em $Y$-modification} of $X$; usually it will be denoted simply $X_i$.
Given two $Y$-modifications of $X$, we say that $X_j$ dominates $X_i$ if there exists an $X$-morphism
$\pi_{ji}:X_j\to X_i$ compatible with $f_i,f_j,\pi_i$ and $\pi_j$. Notice that if $\pi_{ji}$ exists then it is
unique. The family of $Y$-modifications of $X$ is filtered because two $Y$-modifications $X_i,X_j$ are dominated
by the schematical image of $Y$ in $X_i\times_X X_j$. Also, this family has an initial object corresponding to
the schematical image of $Y$ in $X$. The same facts are valid for the more restrictive class of finite
$Y$-modifications. The projective limit of finite $Y$-modifications of $X$ exists in the category of schemes. We
will denote it $\Nor_Y(X)$ and call the {\em $Y$-normalization of $X$}. We define the {\em Riemann-Zariski space
of $X$ with respect to $Y$} to be the projective limit of the underlying topological spaces of all
$Y$-modifications of $X$; this space will be denoted $\RZ_Y(X)$. The proof of Proposition \ref{quacomprop}
carries over verbatim to prove the following Proposition.

\begin{prop}
\label{quacomproprel} The space $\gtX=\RZ_Y(X)$ is qcqs if and only if the scheme $X$ is qcqs.
\end{prop}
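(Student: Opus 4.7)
The plan is to mimic the proof of Proposition \ref{quacomprop}, substituting $Y$-modifications for $\oK$-modifications and residue fields of points of $Y$ for $\oK$. For the forward direction, assume $X$ is qcqs. Then each $X_i$ is proper over $X$ and therefore qcqs. Quasi-compact open subschemes $U \subseteq X_i$ are compact in the constructible topology, and their preimages under the proper transition morphisms $X_j \to X_i$ stay compact there; hence the preimage $\pi_i^{-1}(U)$ in $\gtX$ is compact in the projective-limit constructible topology and in particular quasi-compact in the coarser Zariski topology. Taking $U = X_i$ gives quasi-compactness of $\gtX$, while quasi-separatedness follows by intersecting two basic opens $\pi_i^{-1}(U), \pi_i^{-1}(U')$ at a common index and invoking quasi-compactness of $U \cap U'$ in the qcqs scheme $X_i$.

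For the reverse direction, assume $\gtX$ is qcqs. As in the absolute case, I would split the argument in two: first, granting surjectivity of the projection $\pi\colon \gtX \to X$ one obtains qcqs-ness of $X$ (quasi-compactness because $X$ is the image of the quasi-compact $\gtX$, quasi-separatedness because the preimage of any quasi-compact open $U \subseteq X$ is quasi-compact and intersections of two such preimages remain so by the forward direction); and second, one must establish the surjectivity itself. For the latter, given $x \in X$ in the closure of $f(Y)$, pick $y \in Y$ whose image specializes to $x$, and then use the standard existence theorem for valuation rings to find $\calO \subset k(y)$ dominating $\calO_{X,x}$; the resulting morphism $\Spec \calO \to X$ lifts uniquely through each proper $\pi_i\colon X_i \to X$ by the valuative criterion of properness, the generic-fibre lift $\Spec k(y) \to X_i$ being the canonical one coming from $f_i(y) \in X_i$. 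The closed-point images of these lifts then assemble, by uniqueness, into a compatible family, i.e., a point of $\gtX$ over $x$.

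The main obstacle I foresee is the surjectivity step in the reverse direction. In the absolute case, integrality of $S$ places every $\calO_{S,x}$ inside a single overfield $\oK$, so one valuation-ring construction suffices uniformly; in the relative setting, no such universal overfield is available, and the argument sketched above only reaches those $x \in X$ lying in the closure of $f(Y)$. The clean statement therefore implicitly requires $f\colon Y \to X$ to be schematically dominant (equivalently, $X$ to coincide with the initial $Y$-modification $\oY$, the schematic image of $f$); under this hypothesis the adaptation of the absolute proof goes through verbatim, and in general the proposition should be interpreted as an equivalence of qcqs-ness between $\gtX$ and $\oY$, through which every element of the inverse system factors.
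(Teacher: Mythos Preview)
Your approach is precisely the paper's: its entire proof is the single sentence ``The proof of Proposition~\ref{quacomprop} carries over verbatim to prove the following Proposition.'' Your further observation that the reverse implication genuinely requires schematic dominance of $f$---otherwise the equivalence is only with qcqs-ness of the schematic image $\oY$, through which every $Y$-modification factors---is correct and is a subtlety the paper does not address (recall that \S\ref{relRZsec} is explicitly declared unused in this paper; taking $Y=\emptyset$ and $X$ non-qcqs already gives a counterexample to the literal statement).

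One refinement to your surjectivity argument: from ``$x$ lies in the closure of $f(Y)$'' you cannot in general pick $y\in Y$ with $f(y)\leadsto x$, since topological closure and specialization-closure differ for arbitrary subsets of a scheme. What rescues this is the qcqs hypothesis on $f$: on an affine neighborhood $\Spec A\ni x$, schematic dominance gives an injection $A\into\Gamma(f^{-1}(\Spec A),\calO_Y)\into\prod_j B_j$ for a \emph{finite} affine cover $\{\Spec B_j\}$ of $f^{-1}(\Spec A)$; localizing at the prime of $x$ keeps this injective, so some $(B_j)_x\neq 0$, and any prime of that localization pulls back to a generization of $x$ hit by $f$.
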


Let $\pi:\gtX\to X$ and $i:Y\to\gtX$ be the natural maps. We provide $\gtX$ with the sheaf
$\calM_\gtX=i_*(\calO_Y)$ of "meromorphic functions" and the sheaf $\calO_\gtX=\injlim\pi_i^{-1}(\calO_{X_i})$
of "regular functions".

\begin{exam}
(i) The classical (absolute) version of the above notions is obtained when $X$ is integral and $Y$ is the
generic point of $X$.

(ii) Another example is obtained when $X$ is of finite presentation over a valuation ring $R$ and $Y=X_\eta$ is
the generic fiber of $X$. Then $\gtX:=\RZ_Y(X)$ is the projective limit of all $\eta$-modifications of $X$, so
this space arise naturally when one studies some problems involving $\eta$-modifications.

(iii) Assume that $R$ is of height one in (ii). One can show that if $\hatX$ is the formal completion of $X$
along the special fiber and $\hatX_\eta$ is its "generic" fiber in the category of adic spaces, then the special
$R$-fiber of $\RZ_Y(X)$ is homeomorphic to $\hatX_\eta$ (the generic $R$-fiber of $\RZ_Y(X)$ is, obviously,
$Y$).

(iv) Although $f$ is a monomorphism (probably not of finite type) in (i)--(ii), there exist other interesting
examples. In \cite[\S1]{Tem1} and in the previous section, we considered the case when $Y$ is a point and $f$ is
dominant but not necessarily a monomorphism.
\end{exam}

\subsection{A method of proving $P$-modification theorems} \label{Pmodsec}
This is the only section in the paper where we weaken our assumptions on $\eta$. We only assume that $S$ is a
qcqs scheme and $|\eta|\subset|S|$ is a quasi-compact subset which is closed under generalizations and such that
$\eta=(|\eta|,\calO_S|_{|\eta|})$ is a scheme. Then $\eta$ is isomorphic to the scheme-theoretical projective
limit of its open neighborhoods. In particular, the natural embedding morphism $i_\eta:\eta\to S$ is a
quasi-compact monomorphism and any morphism $X\to S$ with image in $|\eta|$ factors through $\eta$ uniquely.
(So, $\eta$ is in a sense a "pro-open subscheme" of $S$.) We assume in addition that $i_\eta$ is schematically
dominant, i.e. $\eta$ is not contained in a proper closed subscheme of $S$.

Let $P$ be a property of morphisms of schemes. By a {\em $P$-modification statement} over $S$ we mean a
statement that if the "generic fiber" $\phi_\eta:X_\eta=X\times_S\eta\to\eta$ of a flat finitely presented
morphism $\phi:X\to S$ satisfies $P$ then there exist a base change morphism $S'\to S$ of a certain class $Q$
and a modification $\psi:X'\to X\times_S S'$ such that $X'\to S'$ is flat and satisfies $P$. (Such kind of
statements is called a permanence principle in the introduction to \cite{BLR}.) The class $Q$ can be, for
example, the class of all $\eta$-modifications (i.e. proper morphisms $S'\to S$ such that $\eta'=\eta\times_SS'$
is schematically dense in $S'$ and is mapped isomorphically onto $\eta$), the class of all generically \'etale
alterations, of all finite flat morphisms, etc.

\begin{exam}
In the following two examples $Q$ is the class of morphisms of the form $S'\to S''\to S$, where $S''\to S$ is an
$\eta$-modification and $S'\to S''$ is a finitely presented flat surjective morphism which is \'etale over
$\eta$.

(i) The {\em semi-stable modification theorem} is obtained when $P$ is being a semi-stable curve.

(ii) The {\em reduced fiber theorem} is obtained when $P$ is having geometrically reduced fibers.

Note that any morphism $f:S'\to S$ from $Q$ is \'etale over $\eta$. This is rather restrictive and as a
drawback, one has to allow reducible $S'$'s even when $S$ is irreducible. In the particular case when $\eta$ is
a point, one can impose an extra-condition that $f^{-1}(\eta)$ is a point. Then $Q$ reduces to the class of all
generically \'etale alterations.
\end{exam}

\begin{rem}
The reduced fiber theorem was proved by Bosch, L\"utkebohmert and Raynaud in \cite[2.1']{BLR} (the main theorem
2.1 of loc.cit. deals with its formal version). Also, it was conjectured (or hoped) in loc.cit. in the end of
the introduction that a semi-stable modification exists in all relative dimensions. It follows from simple
examples with two-dimensional bases, see \cite{AK}, conjecture 0.2, that semi-stable modification does not exist
in general. One can also construct analogous examples over the "two-dimensional" base $S=\Spec(\Kcirc)$, where
$K$ is a valued field with $|K^\times|\toisom\bfZ^2$. A possible salvage of the situation is to extend the class
of semi-stable morphism. For example, one can consider a wider class of poly-stable morphisms from
\cite[1.2]{Ber3}. The author expects (or hopes) that poly-stable modification is possible over any qcqs base
scheme.
\end{rem}

We will prove the two above modification theorems only when $\eta$ is the spectrum of a field (so, $S$ is
integral) and $Q$ is the class of generically \'etale alterations. The case of an arbitrary qcqs scheme $S$ will
be deduced in a subsequent work \cite{Tem2} by use of relative Riemann-Zariski spaces $\RZ_\eta(S)$. Both
theorems are proved via a similar method which, as the author hopes, can be useful when studying other
$P$-modification problems. So, it seems plausible to describe this method briefly. Note also that our method
seems to be very close to the approach of K. Fujiwara and F. Kato, as outlined in \cite{FK}.

(i) Uniqueness: add extra conditions to your problem, so that the required modification $\psi$ becomes uniquely
defined or functorial in $S'$.

(ii) Analytic input: prove the theorem over the valuation ring of a compete algebraically closed field $K$ of
height one.

(iii) Decompletion: use approximation to deduce the theorem over valuation rings of height one.

(iv) Induction on height: deduce the theorem over valuation rings of finite height.

(v) Limit: deduce the theorem over valuation rings.

(vi) The general case: use the Riemann-Zariski space $\RZ_\eta(S)$ and the uniqueness property from (i) to
deduce the general case.

The first two steps are critical. Naturally, one can hope to incorporate some non-Archimedean analytic geometry
over $K$ into the second step (it will be so in our two cases). We stress that it is necessary to consider the
case of an arbitrary analytic $K$, including the cases when $K$ is not isomorphic to the completed algebraic
closure of a discretely valued field, e.g. the case when $\rk_\bfQ(|K^\times|)>1$.

In the case of the reduced fiber theorem, we will take the Grauert-Remmert finiteness theorem (see Step 2 in the
proof of Theorem \ref{redfibth}) as an analytic input. To achieve functoriality we will consider
$\eta$-normalizations instead of arbitrary modifications $\psi$. For the semi-stable modification theorem, we
achieve uniqueness by considering stable modifications rather then semi-stable ones. Our main analytic input
here is the uniformization of analytic fields established by Theorem \ref{fieldunif} from which we deduced in
\S\ref{gencasesec} uniformization of valued fields via steps (iii)-(iv). In \S\ref{valcurves} we will deduce
stable modification over valuation rings, and in \S\ref{mainsec} we will work out step (vi) for stable
modification (with (v) obtained as a by-product). For the sake of completeness, we note that alternatively one
could deduce from Theorem \ref{fieldunif} analytic stable modification theorem of Bosch-L\"utkebohmert (it is
easy, and was done in unpublished master thesis of the author), and then steps (iii)-(vi) of the method could be
worked out for the assertion of the stable modification theorem itself.

\subsection{Reduced fiber theorem}
\label{redfibsec} In this section we apply the method from \S\ref{Pmodsec} to the reduced fiber theorem
\cite[2.1']{BLR}, see Theorem \ref{redfibth}. Recall that in this paper we treat only the particular case when
$\eta=\Spec(K)$. So, we will show that up to a generically \'etale alteration of the base, any finitely
presented morphism $X\to S$ with geometrically reduced $\eta$-fiber can be {\em $\eta$-modified} to a flat
morphism $X'\to S$ with geometrically reduced fibers (i.e. $X'\to X$ is proper and induces an isomorphism
$X'_\eta\toisom X_\eta$ on $\eta$-fibers).

\begin{lem}
\label{valringlem} Let $R$ be a valuation ring and $A$ be an $R$-algebra. Then $A$ is $R$-flat if and only if
$A$ has no $\pi$-torsion for any non-zero element $\pi\in R$. If $A$ is $R$-flat then it is finitely presented
over $R$ if and only if it is finitely generated over $R$.
\end{lem}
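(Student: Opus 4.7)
The plan splits according to the two assertions of the lemma.

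For the first assertion, I will use the standard criterion that $A$ is $R$-flat if and only if $I\otimes_R A\to A$ is injective for every finitely generated ideal $I\subseteq R$. The key special feature of a valuation ring is that every such $I$ is principal: the ideals of $R$ are totally ordered by inclusion, so among any set of generators $\pi_1,\dots,\pi_n$ of $I$, the one of smallest value divides all the others and already generates $I$. Thus it suffices to check the case $I=(\pi)$ with $\pi\ne 0$, and here the isomorphism $R\toisom(\pi)$, $r\mapsto r\pi$, identifies $(\pi)\otimes_R A\to A$ with multiplication by $\pi$ on $A$. Injectivity of the latter is exactly the statement that $A$ has no $\pi$-torsion, which gives the claimed equivalence (the case $\pi=0$ being vacuous).

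For the second assertion, one implication is trivial. For the non-trivial direction --- finitely generated plus flat implies finitely presented --- the plan is simply to invoke \cite[3.4.7]{RG}, which the paper has already cited in the proof of Proposition \ref{unramcrit2} for precisely this purpose: over a valuation ring, a flat morphism of finite type is automatically of finite presentation. This is the Raynaud--Gruson theorem, and reusing it here keeps the proof consistent with the existing citations in the paper.

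The only step that looks like a genuine obstacle is this second part, because a direct proof would have to control the relations ideal $I$ in a presentation $A=R[x_1,\dots,x_n]/I$, and $R[x_1,\dots,x_n]$ is generally very far from Noetherian. Flatness of $A$ does force $I$ to be $R$-flat (via the short exact sequence $0\to I\to R[x_1,\dots,x_n]\to A\to 0$), which is the natural starting input to a Raynaud--Gruson style argument, but producing a fully self-contained proof would require reproducing nontrivial portions of \cite{RG}. Since the paper is already happy to cite \cite[3.4.7]{RG}, the cleanest plan is to defer to that reference rather than re-prove it.
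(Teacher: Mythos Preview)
Your proposal is correct and matches the paper's own proof: the paper omits the first part as ``easy'' (your argument via principal ideals is the standard one) and for the second part cites exactly \cite[3.4.7]{RG}, noting additionally that this holds over any integral domain. There is nothing to add.
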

\begin{proof}
The first part is easy, so we omit the proof. The second statement is much deeper. It holds more generally over
any integral ring $R$, as proved in \cite[3.4.7]{RG}.
\end{proof}

The following result is critical for the proof of the reduced fiber theorem; it ensures uniqueness of the
$\eta$-modification in the theorem. The author is indebted to \cite[2.3(v)]{BLR} and \cite[2.5(c)]{BL2} for an
elegant idea of a proof based on the theory of depth and $Z$-closures as developed in \cite[$\rm
IV_2$,\S\S5.9-5.10]{ega}.

\begin{prop}
\label{redfibprop} Assume that $S$ is normal. Let $\phi:X\to S$ be a flat finitely presented morphism with
reduced geometric fibers. Then $X$ is $\eta$-normal in the sense that any finite $\eta$-modification of $X$ is
an isomorphism.
\end{prop}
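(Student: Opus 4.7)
The plan is to follow the depth/$Z$-closure strategy from EGA $\rm IV_2$, \S\S5.9--5.10 suggested by the hint. Let $f : X' \to X$ be a finite $\eta$-modification, put $\calA := f_* \calO_{X'}$ and $\calB := \calA/\calO_X$. Schematic density of the iso-open $V \subset X$ and of $V' := f^{-1}(V) \subset X'$ yields both $\calO_X \hookrightarrow \calA$ and $\calA \hookrightarrow j_*\calO_V$, where $j : V \hookrightarrow X$. Put $Z := \Supp \calB$. Because $f_\eta$ is an isomorphism, $Z \cap X_\eta = \emptyset$, so $\phi(Z) \subset S \setminus \{\eta\}$, and the goal becomes $Z = \emptyset$. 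By noetherian approximation (preserving normality by passing to the normalization of a finite-type approximating base) I would reduce to the case $S$ noetherian normal, so that all local rings in sight are noetherian.

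The core computation is that $X$ is $(S_2)$ at every $x$ with $s := \phi(x) \neq \eta$. Flatness of $\phi$ gives
$$\depth \calO_{X,x} = \depth \calO_{S,s} + \depth \calO_{X_s,x}, \qquad \dim \calO_{X,x} = \dim \calO_{S,s} + \dim \calO_{X_s,x};$$
normality of $S$ gives $\depth \calO_{S,s} \geq \min(2, \dim \calO_{S,s}) \geq 1$ (as $s \neq \eta$); and geometric reducedness of $X_s$ (which implies $(S_1)$) gives $\depth \calO_{X_s,x} \geq \min(1, \dim \calO_{X_s,x})$. The same bookkeeping shows that whenever $\dim \calO_{X,x} = 1$ for $x$ over $s \neq \eta$, then $s$ is codimension one in $S$ (so $\calO_{S,s}$ is a DVR), $x$ is a generic point of the reduced fiber $X_s$, and $\calO_{X,x}$ is a one-dimensional noetherian local ring that is flat over a DVR with field residue, hence itself a DVR.

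These two ingredients combine in a bootstrap on the codimension of components of $Z$. At a generic point $y$ of a codimension-one component of $Z$, the previous paragraph identifies $\calO_{X,y}$ with a DVR and $(j_*\calO_V)_y$ with $\Frac \calO_{X,y}$; the finite $\calO_{X,y}$-subalgebra $\calA_y$ of this fraction field is then forced by normality to equal $\calO_{X,y}$, contradicting $y \in Z$. Hence every minimal prime over the ideal $I_Z$ has height $\geq 2$ in each $\calO_{X,x}$, so a standard prime-avoidance argument using the $(S_2)$ property produces a length-two regular sequence in $I_Z$; this gives $\depth_{I_Z}(\calO_{X,x}) \geq 2$ and $H^0_{I_Z}(\calO_{X,x}) = H^1_{I_Z}(\calO_{X,x}) = 0$. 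Since $(j_*\calO_V)_x$ (and hence $\calA_x$) is $I_Z$-torsion free ($I_Z$ generates the unit ideal on $V$), the long exact sequence of $H^*_{I_Z}$ applied to $0 \to \calO_{X,x} \to \calA_x \to \calB_x \to 0$, together with $H^0_{I_Z}(\calB_x) = \calB_x$, collapses to $\calB_x = 0$, contradicting $x \in Z$. Thus $Z = \emptyset$, i.e.\ $f$ is an isomorphism. The hard part, as this outline makes clear, is the bootstrap at codimension one: depth bounds alone fail there, and the DVR identification --- which depends on the precise ``flat over a DVR with reduced fiber implies DVR'' consequence of the normality/reducedness hypotheses --- is what allows one to upgrade the codimension of $Z$ and close the argument.
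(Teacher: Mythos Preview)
Your argument is correct and follows the same depth/$Z$-closure strategy as the paper, but with a different handling of the low-codimension step and two places that should be tightened.

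The paper defines $U = X_\eta \cup (X/S)^0$ (proving separately that $(X/S)^0$ is closed under generalization), observes that $\phi$ is smooth along $(X/S)^0$ so $X$ is normal there, and concludes directly that $f$ is an isomorphism over all of $U$. On $Z = X \setminus U$ every point then has depth $\geq 2$ at once (both fiber and base contribute $\geq 1$), and $Z$-closure from EGA $\rm IV_2$, 5.10.4 finishes. You instead set $Z = \Supp\calB$ and eliminate codimension-one components by a direct DVR argument before invoking depth $\geq 2$. Both routes work; the paper's is slightly cleaner because it avoids the codimension-one bootstrap, while yours is a bit more self-contained in that it does not need the lemma on $(X/S)^0$ or the ``smooth over normal is normal'' input.

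Two caveats in your write-up. First, you only establish the $(S_2)$ inequality at points $x$ with $\phi(x) \neq \eta$; over $\eta$ the fiber is merely reduced, so the ring $\calO_{X,x}$ need not be $(S_2)$ globally. The prime-avoidance step still goes through, but for a different reason: after choosing a non-zerodivisor $a_1 \in I_Z$, any associated prime $p$ of $\calO_{X,x}/a_1$ either corresponds to a point over $\eta$ (hence not in $Z$, so $I_Z \not\subset p$) or to a point over some $s' \neq \eta$ (where your depth bound forces $\htt p \leq 1 < 2$, so again $I_Z \not\subset p$). You should make this dichotomy explicit rather than appealing to a global $(S_2)$. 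Second, the identification $(j_*\calO_V)_y = \Frac\calO_{X,y}$ is not literally correct; what you actually need, and what does hold, is the embedding $\calA_y \hookrightarrow \Frac\calO_{X,y}$. This follows because $\calA_y$ is reduced with every minimal prime lying over a generic point of $X$ (schematic density of $V' \subset X'$ forces every irreducible component of $X'$ to meet $V'$), so $\calA_y$ is a domain with fraction field $\Frac\calO_{X,y}$.
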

\begin{proof}
Clearly, $X$ is reduced. Assume, on the contrary, that $X$ is not $\eta$-normal and pick up a non-trivial finite
$\eta$-modification $f:X'\to X$. Let us check that it is harmless to assume that $S,X$ and $X'$ are noetherian.
By noetherian approximation, see \S\ref{approxsec}, there exists a normal noetherian scheme $S_0$ with a
morphism $S\to S_0$ such that $f$ is induced from a modification $f_0:X'_0\to X_0$ of finitely presented
$S_0$-schemes. Moreover, by Remark \ref{prem}(i) we can achieve that $X_0$ has geometrically reduced
$S_0$-fibers. It now suffices to contradict that $f_0$ is not an isomorphism, so replacing $S$ and $f$ with
$S_0$ and $f_0$, respectively, we can assume that $S$, $X$ and $X'$ are noetherian.

\begin{lem}\label{genlem}
Let $\phi:X\to S$ be a flat finitely presented morphism of schemes. Then the set $(X/S)^0$ of the generic points
of the $S$-fibers of $X$ is closed under generalizations.
\end{lem}
\begin{proof}
The claim is local on $S$, so we can assume that it is qcqs. Then by noetherian approximation we reduce the
question to the case of a universally catenary noetherian $S$ (e.g. of finite type over $\bfZ$). For any point
$x\in X$ with $s=\phi(x)$ the inequality $\dim(\calO_{X,x})\ge\dim(\calO_{S,s})$ holds, and $x$ is in $(X/S)^0$
if and only if this is an exact equality. For any generalization $x'\succ x$ with $s'=\phi(x')$ the dimension
drop $\dim(\calO_{X,x})-\dim(\calO_{X,x'})$ cannot be smaller than $\dim(\calO_{S,s})-\dim(\calO_{S,s'})$
because any chain $s'\succ s_1\succ\dots\succ s$ can be lifted to a chain $x'\succ x_1\succ\dots\succ x$ by the
going down theorem \cite[9.5]{Mat} applied to the homomorphism
$\calO_{S,s}/m_{s'}\to\calO_{X,x}/m_{s'}\calO_{X,x}$. Therefore, if $x\in(X/S)^0$ and $x'$ is any its
generalization then the two dimension drops are equal and $\dim(\calO_{X,x'})=\dim(\calO_{S,s'})$. Thus,
$x'\in(X/S)^0$, as claimed.
\end{proof}

The Lemma implies that the set $U=X_\eta\cup(X/S)^0$ is closed under generalizations. By \cite[$\rm IV_4$,
17.5.1]{ega} $\phi$ is smooth at $(X/S)^0$, hence $X$ is normal at the points of $(X/S)^0$ by \cite[$\rm IV_2$,
6.8.3(i)]{ega}, and therefore $f$ is an isomorphism over $(X/S)^0$. Since $f$ is an $\eta$-modification, it is
an isomorphism over the whole $U$. Let $z$ be a point of $Z=X\setminus U$, $s=\phi(z)$ and $Y=X_s$ the $s$-fiber
of $X$. Then, $\prof(\calO_{X,z})=\prof(\calO_{Y,z})+\prof(\calO_{S,s})$ by \cite[$\rm IV_2$, 6.3.1]{ega} (set
$M=A=\calO_{S,s}$ and $N=B=\calO_{X,z}$ in loc.cit.). Since $z\notin Y^0$ and $Y$ has no embedded components,
$\prof(\calO_{Y,z})\ge 1$. Also, $\prof(\calO_{S,s})\ge 1$ because $s\neq\eta$ and $S$ is integral. Hence
$\prof(\calO_{X,z})\ge 2$, and \cite[$\rm IV_2$, 5.10.4]{ega} implies that $\calO_X$ is $Z$-closed. Recall that
the latter means that $\calO_X\toisom\calH^0_{X/Z}=\injlim (\pi_i)_*(\calO_{U_i})$, where $\{U_i\}$ is the
family of open neighborhoods of $U$ and $\pi_i:U_i\to X$ are the open immersions. It remains to notice that
$f_*(\calO_{X'})\subset\calH^0_{X/Z}$ because $f$ is an isomorphism near each $u\in U$. So,
$f_*(\calO_{X'})=\calO_X$, and therefore $f$ is an isomorphism.
\end{proof}

Proposition \ref{redfibprop} gives a new insight on the reduced fiber theorem. Note that $\eta$-normality does
not have to be preserved by base changes $f:S'\to S$ with normal $S'$, but the Proposition implies that if an
$\eta$-normal $S$-scheme $X$ has reduced geometric fibers then its base changes with normal $S'$'s are
$\eta$-normal. Thus, an $\eta$-normal $X$ with reduced geometric $S$-fibers can be viewed as {\em stably
$\eta$-normal} with respect to $S$. Then the reduced fiber theorem can be interpreted as a stabilization theorem
which states that if $X$ is finitely presented over $S$ and has geometrically reduced generic fiber, then it can
be made $\eta$-normal by a generically \'etale alteration of the base $S$ and subsequent $\eta$-normalization of
the base change of $X$. This also explains why the heart of the proof is the finite presentation result of
Grauert-Remmert (see Step 2 in the proof of Theorem \ref{redfibth}) -- we have to assure that stabilization can
be achieved after a reasonably small base change (e.g. after an alteration of the base).

\begin{cor}
\label{redfibcor} Let $S$ be an integral qcqs scheme, $\phi:X\to S$ be a flat finitely presented morphism and
$f_i:X_i\to X$, $i=1,2$, be two finite $\eta$-modifications. Assume that $X_1$ and $X_2$ are $S$-flat with
geometrically reduced fibers. Then there exists a finite modification $S'\to S$ such that $X_1\times_S S'$ and
$X_2\times_SS'$ are $X\times_S S'$-isomorphic and such an isomorphism is unique.
\end{cor}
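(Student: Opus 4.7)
The plan is to rigidify the problem via Proposition \ref{redfibprop}, after reducing to a normal base using noetherian approximation. The key geometric input is the schematic image $Z$ of $X_\eta$ in the fibered product $X_1\times_X X_2$: since $f_1$ and $f_2$ are finite $\eta$-modifications, the two projections $p_i:Z\to X_i$ are themselves finite $\eta$-modifications. Thus, if one can arrange that each $X_i$ becomes $\eta$-normal after a finite modification of the base, each $p_i$ is forced to be an isomorphism, and composing the two gives the desired isomorphism $X_1\times_S S'\toisom X_2\times_S S'$ over $X\times_S S'$.

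To produce $\eta$-normality, I would reduce to a normal base. By noetherian approximation (\S\ref{approxsec}), the data $(X,X_1,X_2,f_1,f_2)$ descends to an integral noetherian base $S_\alpha$ of finite type over $\bfZ$ (after replacing $S_\alpha$ by the schematic image of $S$), with $X_\alpha\to S_\alpha$ flat finitely presented, each $f_{i,\alpha}:X_{i,\alpha}\to X_\alpha$ a finite $\eta_\alpha$-modification, and each $X_{i,\alpha}$ flat over $S_\alpha$ with geometrically reduced fibers (the last property descends by Remark \ref{prem}(i)). Since $S_\alpha$ is excellent, its normalization $\tilde S_\alpha\to S_\alpha$ is a finite modification. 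Base changing to $\tilde S_\alpha$ preserves all relevant properties, so by Proposition \ref{redfibprop} each $\tilde X_{i,\alpha}:=X_{i,\alpha}\times_{S_\alpha}\tilde S_\alpha$ is $\eta_\alpha$-normal, and the pulled-back projections $\tilde Z_\alpha\to\tilde X_{i,\alpha}$ must be isomorphisms. This yields a canonical $\tilde X_\alpha$-isomorphism $\tilde X_{1,\alpha}\toisom\tilde X_{2,\alpha}$.

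Now I would set $S':=S\times_{S_\alpha}\tilde S_\alpha$. Since $\tilde S_\alpha\to S_\alpha$ is finite and restricts to an isomorphism over $\eta_\alpha$, the base change $S'\to S$ is finite and restricts to an isomorphism over $\eta$. Replacing $S'$ by the closure of $\eta$ with its reduced scheme structure, which is still a finite modification of $S$, I may assume $S'$ is integral. Pulling back the isomorphism over $\tilde S_\alpha$ then produces the sought isomorphism $X_1\times_S S'\toisom X_2\times_S S'$ over $X\times_S S'$.

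For uniqueness, note that any two such isomorphisms coincide on the generic fiber, since both restrict to the identity on $X_\eta=X_{1,\eta}=X_{2,\eta}$. Because $X_1\times_S S'$ is flat over the integral scheme $S'$ with geometrically reduced fibers, it is reduced, and the open subscheme $X_\eta$ is schematically dense in it. The equalizer of the two isomorphisms is then a closed subscheme containing this schematically dense open, hence equals $X_1\times_S S'$. The main subtlety is arranging a normal (and then integral) base after a \emph{finite} modification; this is precisely what excellence of schemes of finite type over $\bfZ$ buys us, making the noetherian reduction the essential step of the proof.
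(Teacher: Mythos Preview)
Your proof is correct and follows the paper's approach: reduce to a normal base, apply Proposition~\ref{redfibprop} to force each $X_i$ to be $\eta$-normal, conclude they coincide, then descend to a finite modification of $S$. The paper normalizes $S$ directly and then approximates $\tilS$ by finite modifications (phrasing the identification as ``each $\tilX_i$ is the $\eta$-normalization of $\tilX$'' rather than via your explicit $Z$), whereas you first descend to a noetherian base and exploit excellence to make normalization finite---but this is only a cosmetic reordering of the same argument.
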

\begin{proof}
Let $\tilS$ be the normalization of $S$. Then each $\tilX_i=X_i\times_S\tilS$ is a finite $\eta$-modification of
$\tilX=X\times_S\tilS$, which is $\eta$-normal by Proposition \ref{redfibprop}. So, each $\tilX_i$ is the
$\eta$-normalization of $\tilX$, i.e. $\tilX_1\toisom\tilX_2$. By approximation, this isomorphism descends to an
isomorphism $X_1\times_S S'\toisom X_2\times_S S'$ for a finite modification $S'\to S$.
\end{proof}

Probably, $X_i$ are already $X$-isomorphic, but proving this will require a new argument (similarly to the
situation with stable modifications, see Remark \ref{uniqrem}).

\begin{theor}
\label{redfibth} Let $X\to S$ be a dominant finitely presented morphism with a geometrically reduced generic
fiber. Then there exists a generically \'etale alteration $S'\to S$, and a finite $\eta$-modification $X'\to
X\times_S S'$ such that $X'$ is flat, finitely presented and has reduced geometric fibers over $S'$.
\end{theor}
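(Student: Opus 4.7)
Following the method of \S\ref{Pmodsec}, the plan proceeds in two stages: a local analysis over valuation rings, and a global gluing over a Riemann--Zariski space. Uniqueness is already in hand, since by Proposition \ref{redfibprop} and Corollary \ref{redfibcor} any two flat finitely presented $\eta$-modifications of $X$ with geometrically reduced fibers are canonically isomorphic after a finite modification of the base. The resulting canonical construction (the $\eta$-normalization) is exactly what is needed to later glue local answers into a global one.

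Set $\oK := K^s$ and consider $\gtS := \RZ_{\oK}(S)$. By Corollary \ref{314cor}, each $\gtx \in \gtS$ corresponds to a valuation ring $\calO_\gtx$ of $\oK$, and by Lemma \ref{approxlem}, $\Spec(\calO_\gtx)$ is a filtered limit of $\oK$-quasi-modifications of $S$. The local target is, for every $\gtx$, a finite $\eta$-modification $X'_\gtx \to X \times_S \Spec(\calO_\gtx)$ which is flat, finitely presented, with reduced geometric fibers. I would obtain it by running substeps (ii)--(v) of \S\ref{Pmodsec}: step (ii) takes the Grauert--Remmert finiteness theorem over the valuation ring of a complete algebraically closed analytic field of height one as analytic input, directly producing the $\eta$-normalization with the required properties; step (iii) descends this to arbitrary height-one valued fields with separably closed fraction field by approximation, with Proposition \ref{redfibprop} ensuring canonicality so that descent works; step (iv) inducts on the height via Lemma \ref{composlem}(iii), using Corollary \ref{redfibcor} to reassemble the $\eta$-normalization from its height-one constituents; and step (v) passes to arbitrary valuation rings by writing them as filtered unions of subrings of finite height (Corollary \ref{approxcor}(ii)) and applying noetherian approximation together with Remark \ref{prem}.

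For the global gluing, step (vi), noetherian approximation (\S\ref{approxsec}) spreads each $X'_\gtx$ to a finite $\eta$-modification over some $\oK$-quasi-modification $U_\gtx \to S$ without loss of properties. The open subsets $\RZ_{\oK}(U_\gtx) \subset \gtS$ (Corollary \ref{openembcor}) cover $\gtS$, and quasi-compactness (Proposition \ref{quacomprop}) yields a finite subcover; Proposition \ref{oetamodprop} then realizes these simultaneously as open subschemes of a single generically \'etale alteration $S' \to S$. On overlaps, Corollary \ref{redfibcor} produces the canonical identifications needed to glue, possibly after replacing $S'$ by a further finite modification, and finitely many such adjustments glue the $X'_{\gtx_i}$ into the desired $X' \to X \times_S S'$.

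The main obstacle lies in the local step, specifically in verifying that the Grauert--Remmert $\eta$-normalization is compatible with both completion (step iii) and composition of valuations (step iv) when the residue characteristic is positive or mixed; this is precisely where the canonicality guaranteed by Proposition \ref{redfibprop} becomes indispensable, because it is what forces the completion and the composition operations to commute with the formation of the $\eta$-normalization. Once that compatibility is established, the passage through arbitrary valuation rings and the Riemann--Zariski gluing are essentially formal, made rigid by the uniqueness statements of Proposition \ref{redfibprop} and Corollary \ref{redfibcor}.
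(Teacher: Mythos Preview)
Your overall architecture matches the paper's proof: the six-step method of \S\ref{Pmodsec}, with Grauert--Remmert as analytic input and Riemann--Zariski gluing for the global statement. The global step (vi) is essentially what the paper carries out in its Step~5 (and, in a simpler form, already in Step~1 to reduce to the affine case).

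The gap is in your handling of steps (iii) and (iv). You assert that Proposition~\ref{redfibprop} ``forces the completion and the composition operations to commute with the formation of the $\eta$-normalization,'' and propose to descend from $\hatR$ to $R$ and to reassemble over composed valuations by invoking this canonicality together with Corollary~\ref{redfibcor}. But those results are \emph{uniqueness} statements: they only apply once you already possess a flat finitely presented $\eta$-modification with reduced geometric fibers. They give no purchase on the existence question, which here is precisely whether $\Acirc=\Nor_{A_K}(A)$ is finitely presented over $R$. You cannot compare $\Acirc\otimes_R\hatR$ with $\calAcirc$ via Proposition~\ref{redfibprop} unless you already know the former is finite over $\hatA$, which is the point at issue. (Faithfully flat descent along $R\to\hatR$ does not rescue this either: the required descent datum would live over $\hatR\otimes_R\hatR$, which is not normal, so Proposition~\ref{redfibprop} does not produce it.)

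The paper's Steps~3 and~4 are instead hands-on computations. In Step~3 one writes $\calAcirc=\hatA[a_1/\pi_1,\dots,a_n/\pi_n]$ using Grauert--Remmert, perturbs the $a_i$ into $A$ by density, sets $B=A[a_1/\pi_1,\dots,a_n/\pi_n]$, and verifies $B=\Acirc$ directly, the key device being that $\omega\hatC\cap C=\omega C$ for any flat finitely presented $R$-algebra $C$ and $\omega\in R$. In Step~4 one decomposes the valuation via the minimal nonzero prime $m\subset R$, arranges $A_1=\Acirc_1$ after adjoining finitely many elements, and then shows by an explicit lifting of integral relations that $\tilAcirc=\Nr_{\tilA_1}(\tilA)$, whence $\Acirc$ is finitely generated over $A$. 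These concrete arguments are the missing ingredients; once they are in place the remainder of your plan goes through exactly as you describe.
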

\begin{proof}
Step 0. {\sl Flattening. There exists a modification $S'\to S$ and a finite $\eta$-modification $X'\to X\times_S
S'$ such that $X'$ is $S'$-flat.} It is a particular case of the flattening by blow ups theorem of
Raynaud-Gruson, see \cite[5.2.2]{RG}. Thus, we can assume that $X$ is $S$-flat.

Step 1. {\sl Localization. We can assume that $S=\Spec(R)$ and $X=\Spec(A)$ are affine.} Find a finite affine
covering $\{S_i\}$ of $S$ and finite affine coverings $\{X_{ij}\}$ of $\phi^{-1}(S_i)$. If the affine case is
established then we can find generically \'etale alterations $S'_{ij}\to S_i$ such that
$X_{ij}\times_{S_i}S'_{ij}$ admit finitely presented $\eta$-modifications $X'_{ij}$ with geometrically reduced
fibers. Notice that the same properties hold for any further alteration $S''_{ij}\to S'_{ij}$, hence by
Proposition \ref{oetamodprop} we can alter $S'_{ij}$ so that they are open subschemes of a generically \'etale
alteration $S'$ of $S$. Note that $\RZ_{K^s}(S'_{ij})=\RZ_{K^s}(S_i)$ form an open covering of
$\RZ_{K^s}(S)=\RZ_{K^s}(S')$, hence the schemes $S'_{ij}$ form a covering of $S'$. We did not rule out the
possibility that $X'_{ij}$ do not agree on intersections (i.e. that the preimages of $X_{ij}\cap X_{kl}$ in
$X'_{ij}$ and $X'_{kl}$ are not isomorphic), but we know from Corollary \ref{redfibcor} that $X'_{ij}$ do agree
after an additional finite modification $S''\to S'$ of the base. Then $X''_{ij}=X'_{ij}\times_{S'} S''$ glue to
a required $\eta$-modification $X''\to X\times_S S''$. This completes the step, and in the sequel we assume that
$S=\Spec(R)$ and $X=\Spec(A)$.

Step 2. {\sl Analytic input. Grauert-Remmert finiteness theorem.} We make ultimate use of the following fact,
which is a consequence of Grauert-Remmert theorem, see \cite[\S6.4 and 6.4.1/4]{BGR}: assume that $K$ is an
algebraically closed complete field of height one, $\calA$ is a reduced affinoid $K$-algebra and
$A\subset\calAcirc$ is a topologically finitely generated $\Kcirc$-subalgebra with
$A\otimes_{\Kcirc}K\toisom\calA$, then $\calAcirc$ is finite over $A$ (where $\calAcirc\subset\calA$ denotes the
subalgebra of power-bounded elements).

Step 3. {\sl Decompletion. The Theorem holds when $K$ is a separably closed valued field of height one and
$R=\Kcirc$.} Set $A_K=A\otimes_R K$, and let $\Acirc=\Nor_{A_K}(A)$ denote the integral closure of $A$ in $A_K$.
It suffices to show that $\Acirc$ is finitely presented over $R$ and has geometrically reduced fibers (actually
we know from Proposition \ref{redfibprop} that this is the only way the Theorem can hold for $S=\Spec(R)$ since
such $S$ has no non-trivial separable alterations). Choose a non-zero $\pi\in m_R$ and provide $A$ and $R$ with
the $\pi$-adic topology. We can assume that $\pi^{-1}\notin A$ because otherwise $A_K=A$ and there is nothing to
prove. Let $\hatA$ and $\hatR$ be the completions, then $\hatA$ is topologically finitely presented and flat
over $\hatR$, $\hatK=\hatR[\pi^{-1}]$ is the completion of $K$, and the $\hatK$-affinoid algebra
$\calA=\hatA[\pi^{-1}]$ is reduced by the following argument. It suffices to prove reducedness of the formal
completion of $\calA$ along a maximal ideal $m$. Set $A_\hatK=A\otimes_R\hatK$, then $\calM(\calA)$ is an
affinoid domain in the analytification of the $\hatK$-variety $\Spec(A_\hatK)$ given by $|f_i(x)|\le 1$ where
$f_1\. f_n$ is a set of generators of $A$ over $R$. Since $m$ corresponds to a Zariski closed (or rigid) point
of $\calM(\calA)$, the ideal $p=m\cap A_\hatK$ is a maximal ideal and $\wh{(A_\hatK)}_p\toisom\hatcalA_m$. But
the excellent ring $A_\hatK$ is reduced by our assumptions, hence so is any its completion along a maximal
ideal.

By Step 2, $\calAcirc$ is finite over $\hatA$, and since $\calAcirc\subseteq\calA=(\hatA)_\pi$ we obtain that
$\calAcirc=\hatA[a_1/\pi_1\dots a_n/\pi_n]$ for some choice of $a_i\in\hatA$ and $\pi_i\in R$. We can replace
$a_i$ with any element $a'_i$ with $a_i-a'_i\in\pi_i\hatA$. Since $A$ is dense in $\hatA$, we can achieve that
$a_i\in A$. Now, we will prove that $\Acirc$ is finitely presented over $R$ by showing that it coincides with
$B=A[a_1/\pi_1\. a_n/\pi_n]$, which is finitely presented by Lemma \ref{valringlem}. Note that for any finitely
presented and flat $R$-algebra $C$ and an element $\omega\in R$ we have that $\omega\hatC\cap C=\omega C$.
Indeed, if a sequence $\omega x_j$ of elements of $C$ converges to $x\in C$ then $x-\omega x_j$ is divisible by
$\omega$ for sufficiently large $j$, hence $x$ is divisible by $\omega$ as well. Now fix $1\le i\le n$. Since
$a_i/\pi_i$ is integral over $\hatA$, there exist $m\in\bfN$ and $b_j\in\hatA$ such that
$x:=a_i^m+b_1a_i^{m-1}\pi_i+\dots +b_{m-1}a_i\pi_i^{m-1}\in\pi_i^m\hatA$. The inclusion survives when we move
$b_j$'s slightly, hence we can achieve that $b_j\in A$. It then follows that $x\in\pi_i^m\hatA\cap A=\pi_i^m A$,
and therefore $a_i/\pi_i$ is integral over $A$. We obtain that $B\subseteq\Acirc$, so $B$ is integral over $A$.
Since $B$ is finitely generated over $A$ it is finite. Moreover, since $B$ is finitely presented over $R$, it is
finitely presented over $A$ as an algebra, and then it is finitely presented over $A$ as a module. The latter
implies that $\hatB=\hatA[a_1/\pi_1\dots a_n/\pi_n]=\calAcirc$. Let $b\in B$ and $\omega\in R$ be such that
$b/\omega\in\Acirc$, then $b/\omega\in\hatB$ and, as we saw earlier (with $B=C$), this implies that $b\in\omega
B$. Thus, $b/\omega\in B$, and we have proved that $B=\Acirc$, as required.

It remains to show that $\Acirc\otimes_R\tilK$ is geometrically reduced. Since $\tilK$ is algebraically closed
we have to prove that any non-zero element $\tila\in\Acirc\otimes_R\tilK$ is not nilpotent. If it is not so then
there exists an element $a\in\Acirc\setminus m_R\Acirc$ such that $a^n$ is divisible by an element $x\in m_R$.
Since $|K^\times|$ is divisible, we can replace $x$ with $y^n$ such that $|x|=|y|^n$. Then $a/y$ is in $\Acirc$
because $\Acirc$ is integrally closed in $\Acirc[\pi^{-1}]$, and we obtain a contradiction to the assumption
that $a\notin m_R\Acirc$.

Step 4. {\sl Composition and induction on height. The Theorem holds when $K$ is a separably closed valued field
of finite height and $R=\Kcirc$.} We will use Lemma \ref{composlem} to carry out induction on the height of $K$.
(Note that this process respects the separably closed hypothesis on fraction fields of valuation rings of finite
height under consideration.) An easy inductive argument on height proves that the group $|K^\times|$ is
divisible, and then the same proof as in the end of the previous step shows that $\Acirc\otimes_R\tilK$ is
reduced, where $\Acirc=\Nor_{A_K}(A)$, as earlier. Thus, we have only to prove that $\Acirc$ is finitely
presented over $R$, and by Lemma \ref{valringlem} it is enough to check that $\Acirc$ is finitely generated.
While proving the latter we can obviously replace $A$ with any finitely generated ring $B$ with $A\subseteq
B\subseteq\Acirc$. If $m$ is the minimal non-zero prime ideal of $R$ then $R_1:=R_m$ is the localization of $R$
of height one and its maximal ideal $mR_m$ coincides with $m$. Furthermore, $\tilR=R/m$ is a valuation ring with
the separably closed fraction field $L:=R_1/m$, and the valuation induced by $R$ is composed from those induced
by $R_1$ and $\tilR$.

Since the heights of $\tilR$ and $R_1$ are smaller than the height of $R$, we can assume that the Theorem holds
for $\tilR$ and $R_1$. So, for $A_1:=A\otimes_R R_1$ the algebra $\Acirc_1:=\Nor_{A_K}(A_1)$ is $R_1$-finitely
presented with a geometrically reduced special fiber $\tilA_1=\Acirc_1/m\Acirc_1$. Set $T=R\setminus m$, then
$R_1=R_T$ and $A_1=A_T$. Note also that $\Acirc_1=(\Acirc)_T$ because normalization is compatible with
localization. Let $a_1\. a_k$ be $R_1$-generators of $\Acirc_1$. Multiplying $a_i$'s by elements from $T$ we can
achieve that $a_i\in\Acirc$. Then we replace $A$ with $A[a_1\. a_k]$ achieving that $A_1=\Acirc_1$. Now, $A$ and
$\Acirc$ coincide after inverting $T$ and next we will study the situation modulo $m$. Since $m$ is
$T$-divisible due to the structure of $\Spec(R)$, $m\Acirc_1$ equals to both $mA$ and $m\Acirc$, and hence
$\tilA:=A/mA$ is embedded into $\tilAcirc:=\Acirc/m\Acirc$. Note that it is enough to prove that
$\tilAcirc=\tilA[\tilb_1\.\tilb_l]$ because it would follow immediately that $\Acirc=A[b_1\. b_l]$ for any
choice of liftings $b_i\in\Acirc$ of $\tilb_i$.

Note that $\tilAcirc$ is integral over $\tilA$ because $\Acirc$ is integral over $A$. Note also that applying
$\otimes_\tilR L$ to both rings we obtain the geometrically reduced $L$-algebra $\tilA_1$. It follows, that
$\tilAcirc$ is contained in $\Nr_{\tilA_1}(\tilA)$, which is finite over $\tilA$ by the induction assumption
(applied to $\tilR$). We claim that actually, $\tilAcirc=\Nr_{\tilA_1}(\tilA)$ and proving this will finish Step
4. Any element of $\Nr_{\tilA_1}(\tilA)$ is of the form $\tila/\tilpi$ where $a\in A$, $\pi\in T$ and
$\tila\in\tilA$, $\tilpi\in\tilR\setminus\{0\}$ are their reductions modulo $m$. Furthermore, there exist
$\tilb_1\.\tilb_n\in\tilA$ such that $\tilb_n=1$ and $\sum_{i=1}^n \tilb_i\tila^i\tilpi^{n-i}\in\tilpi^n\tilA$.
Lifting $\tilb_i$'s to some elements $b_i\in A$ with $b_n=1$ and using that $m\subset\pi^n A$ we obtain that
$\sum_{i=1}^nb_ia^i\pi^{n-i}\in\pi^nA$ and hence $a/\pi\in\Acirc$. Thus, the reduction $\tila/\tilpi$ lies
already in $\tilAcirc$ and we are done.

Step 5. {\sl A limit argument. The Theorem holds in general.} Since any valuation ring coincides with the union
of all its valuation subrings of finite height by Lemma \ref{finhtlem}(ii), the Theorem holds when $K$ is an
arbitrary valued field and $R=\Kcirc$. Let us pass to the general case. The Riemann-Zariski space
$\gtS=\RZ_{K^s}(S)$ introduced in \S\ref{absRZsec} is homeomorphic to the projective limit of all generically
\'etale alterations of $S=\Spec(R)$. To give a point $\gtx\in\gtS$ is equivalent to give a valuation ring
$\calO_{\gtS,\gtx}$ of $K^s$ which contains $R$. For any point $\gtx\in\gtS$ set
$A_\gtx=A\otimes_R\calO_{\gtS,\gtx}$. We know from the previous step that the $\calO_{\gtS,\gtx}$-algebra
$A'_\gtx=\Nor_{A_K}(A_\gtx)$ is finitely presented and has geometrically reduced fibers. It follows from Lemma
\ref{approxlem} by approximation that the morphism $\Spec(\calO_{\gtS,\gtx})\to S$ factors through a generically
\'etale quasi-alteration $S_\gtx=\Spec(R_\gtx)\to S$ satisfying the following condition: there exists a finite
$\eta$-modification $X'_\gtx\to X\times_S S_\gtx$ such that the geometric fibers of the morphism $X'_\gtx\to
S_\gtx$ are reduced.

By Corollary \ref{openembcor}, the Riemann-Zariski space $\gtS_\gtx=\RZ_{K^s}(S_\gtx)$ can be naturally
identified with an open subspace of $\gtS$ containing $\gtx$. Since $\gtS$ is quasi-compact by Proposition
\ref{quacomprop}, we can find finitely many points $\gtx_i\in\gtS$ such that the corresponding quasi-alterations
$S_{\gtx_i}$ are such that their Riemann-Zariski spaces cover $\gtS$. Now we act exactly as in Step 1. By
Proposition \ref{oetamodprop}, replacing $S_{\gtx_i}$ with generically \'etale alterations, we can achieve that
they glue to a generically \'etale alteration $S'\to S$. Then after an additional finite modification of the
base, the schemes $X'_{\gtx_i}$ glue to an $\eta$-modification $X'\to X\times_S S'$ which is as required.
\end{proof}

\section{Desingularization of curves over valuation rings} \label{valcurves}
Throughout \S\ref{valcurves} we assume that $S=\Spec(R)$ for a valuation ring $R$ of finite height and with
separably closed fraction field $K$. In particular, $|K^\times|$ is divisible and each point $s\in S$ has
separably closed residue field $k(s)$, which is even algebraically closed if $s\neq\eta$ (this is easily seen
for analytic fields, and the general case follows by decompletion and induction on height). For such $S$ we will
prove Theorems \ref{minstabtheor} and \ref{stabmodtheor}. The first proof is easy, and the second one runs in
two main stages. The first stage is standard, but slightly technical: we prove that any semi-stable modification
of $(C,D)$ can be blown down successively until a stable modification is obtained. The heart of the second stage
is Proposition \ref{locunifprop} which is an analog of local uniformization. It asserts that locally along a
valuation (which is interpreted as a point in a Riemann-Zariski space) $(C,D)$ admits a semi-stable
quasi-modification. We deduce this Proposition from uniformization of valued fields established in Theorem
\ref{valunif}. In the sequel, $(C,D)$ is a multipointed $S$-curve with reduced $C$ and $D$ and structure
morphism $(\phi:C\to S,\phi_D:D\to S)$. Other multipointed $S$-curves will be denoted as $(C',D')$, $(\oC,\oD)$,
etc.

\begin{remsect}\label{etarem}
Note that any $\eta$-modification $f:C'\to C$ extends uniquely to an $\eta$-modification $(C',D')\to(C,D)$ by
taking $D'$ to be the schematical closure of $D_\eta$ in $C'_\eta\toisom C_\eta$ (we use that $D'$ is finitely
presented by Lemma \ref{valringlem}). For this reason, we will often denote an $\eta$-modification
$(f_C,f_D):(C',D')\to(C,D)$ only by use of the $\eta$-modification $f=f_C:C'\to C$.
\end{remsect}

\subsection{Reduction to the case of a smooth generic fiber} \label{smfibsec}
Since $K$ is separably closed, it follows that any \'etale morphism $S'\to S$ is Zariski locally an isomorphism.
Also, normalization $\oC$ of $C$ is finitely presented over $S$ by Theorem \ref{redfibth} (and Proposition
\ref{redfibprop}), and so it is an $S$-curve. The semi-stable generic fiber $C_\eta$ can be obtained from its
normalization $\oC_\eta$ by gluing together pairs of points $(x_1,y_1)\.(x_n,y_n)$ where $z=\{z_1\.
z_n\}=(C_\eta)_\sing$ and $\oz=x\cup y$ is the preimage of $z$. In other words, $C_\eta$ is the pushout of the
diagram $\oC_\eta\la\oz\to z$ with the second map taking $x_i$ and $y_i$ to $z_i$. Let $X,X_i,Y,Y_i$ denote the
closures of $x,x_i,y,y_i$ in $\oC$, and define $Z\into C$ similarly (everything is reduced, so we can use the
usual Zariski closure instead of the schematical one). Also, let $\pi:\oC\to C$ be the projection and
$\oD=\pi^{-1}(D)\cup X\cup Y$. Note that if $(C',D')\to(C,D)$ is a semi-stable $\eta$-modification then $D'$ is
disjoint from $Z'$ (which is the closure of $z$ in $C'$) and hence the normalization $\oC'$ of $C'$ underlies a
semi-stable modification $\of:(\oC',\oD')\to(\oC,\oD)$. We will show that the converse is true when $\of$ is
projective, that is for any projective semi-stable modification $(\oC',\oD')\to(\oC,\oD)$ there exists a unique
semi-stable $\eta$-modification $f:(C',D')\to(C,D)$ such that $\oC'$ is the normalization of $C'$.

Let $\oX_i$ and $\oY_i$ be the closures of $x_i$ and $y_i$ in $\oC'$. Obviously, the only possible way for
defining $C'$ is to glue the closed subschemes $\oX=\sqcup\oX_i$ and $\oY=\sqcup\oY_i$ in $\oC'$ (disjointness
follows from semi-stability of $(\oC',\oD'$)). First, we note that indeed each $\oX_i$ is isomorphic to $\oY_i$
because they are $S$-\'etale and proper over $Z_i$. So, set $\oZ=\oX\sqcup\oY$ and let $\oZ\to Z$ be the
morphism identifying $\oX$ and $\oY$. Our aim now is to paste $\oX$ and $\oY$, that is to define a scheme $C'$
as the pushout of $\oC'\la\oZ\to Z$. (We will not need this, but it is well known that such pushout $C'$ always
exists in the category of algebraic spaces. In our case, $C'$ will be a scheme due to projectivity of
$\oC'\to\oC$.) We can work locally over $C$, so assume that $C$, and hence, $\oC$ are affine. Furthermore, it
suffices to construct the pushout $\pi':\oC'\to C'$ on a neighborhood of $\oZ$ in $\oC'$. By our assumption,
$\oC'$ is projective over the affine scheme $\oC$, hence the finite set $\oZ_0$ of closed points of $\oZ$
possesses an affine neighborhood. Since $\oZ$ is a semi-local scheme, we can replace $\oC'$ with this affine
neighborhood. We will no longer need the projectivity, so our problem reduces to the case when $\oC'=\Spec(A)$,
and then the affine pushout is defined as $C'=\Spec(B)$, where $B$ consists of all elements $h\in A$ such that
the restrictions of $h$ on the closed subschemes $\oX$ and $\oY$ are compatible with the isomorphism
$\oX\toisom\oY$. It is well known that $C'$ is the pushout in the category of schemes, so we omit this check. We
claim that $(C',D')$ is a semi-stable $\eta$-modification of $(C,D)$, where $D'=\pi'(\oD'\setminus\oZ)$. Indeed,
gluing along $S$-flat closed subschemes commutes with base changes, so we can check semi-stability on the
$S$-fibers, and then the claim is trivial.

Since any stable modification morphism is projective by Theorem \ref{projtheor}, it now suffices to prove
Theorems \ref{minstabtheor} and \ref{stabmodtheor} for the multipointed curve $(\oC,\oD)$ and the case of
$(C,D)$ will follow by the above pushout construction (we leave it to interested reader to check on $S$-fibers
that the pushout of $\oC_\st$ produces $C_\st$ and the normalization of $C_\st$ is $\oC_\st$). Thus, we reduced
the problem to the case of curves with smooth generic fiber.

\subsection{Modifications of curves over valuation rings}
Until the end of \S\ref{valcurves} we assume that $C_\eta$ is smooth. This assumption will slightly simplify the
terminology because it implies that any modification of $C$ is an $\eta$-modification. Nevertheless, all
intermediate results of \S\ref{valcurves} can be easily generalized at cost of replacing $\RZ_{k(C)}(C)$,
normality, modifications, etc., with $\RZ_{C_\eta}(C)$, $\eta$-normality, $\eta$-modifications, etc. In this
section we also assume that $D$ is empty. We can restrict ourselves to the case of a connected $C$, and then
$C_\eta$ is also connected by $S$-flatness of $C$. Thus, $C$ is irreducible, and we denote by $L$ the field of
rational functions of $C$. Note that any modification $C'$ of $C$ is an $S$-curve because $\calO_{C'}$ has no
$R$-torsion (and so $C'$ is $S$-flat and hence finitely presented).

Let $\gtC=\RZ_L(C)$ be the Riemann-Zariski space of $C$ as defined in \S\ref{absRZsec}. By Proposition
\ref{quacomprop} $\gtC$ is qcqs. The space $\gtC$ is provided with a sheaf of rings $\calO_\gtC$ whose stalks
are valuation rings of $L$. To give a point $\gtx\in\gtC$ is equivalent to give a valuation ring
$\calO_{\gtC,\gtx}$ containing $R$ and a morphism $\psi_\gtx:\Spec(\calO_{\gtC,\gtx})\to C$ respecting $L$.

Next, we attach to $C$ the set $(C/S)^0=\cup_{s\in S}C_s^0$, which is closed with respect to generalization by
Lemma \ref{genlem}. For any modification $C'\to C$, by $\Gamma(C')$ we denote the preimage of $(C'/S)^0$ in
$\gtC$. Note that $\Gamma(C')$ is contained in the subset $\gtC^0\subset\gtC$ which is defined as follows: a
point $\gtx\in\gtC$ is in $\gtC^0$ if the valuation ring $\calO=\calO_{\gtC,\gtx}$ satisfies
$\trdeg((\calO/m_\calO)/(R'/m_{R'}))=1$, where $R'=\calO\cap K$. In this case the valuation ring $\calO$ is
bounded over $R'$ in the sense of the definition above Theorem \ref{valunif}. We remark that analogs of $\gtC^0$
are the set of type $2$ points of a non-Archimedean analytic curve, or the set of divisorial valuations in the
Riemann-Zariski space of an algebraic surface. Let $C_0$ denote the set of closed points of $C$, and define
$\gtC^0_0$ and $\Gamma_0(C')$ to be the preimages of $C_0$ in $\gtC^0$ and $\Gamma(C')$, respectively. Although
we will not use that, we note that it is not difficult to show that $\gtC^0$ and $\gtC_0^0$ are the unions of
the sets $\Gamma(C')$ and $\Gamma_0(C')$, respectively, where $C'$ runs over all modifications of $C$.

Given a modification $f:Y\to X$, by the {\em modification locus} of $f$ we mean the minimal closed set $Z\subset
X$ such that $f$ is an isomorphism over $X\setminus Z$. Sometimes, we will treat $Z$ as a reduced closed
subscheme.

\begin{lem}
\label{normmodlem} Assume that $C$ is normal, $x\in(C/S)^0$ is a point and $f:C'\to C$ is a modification with
the modification locus $Z$. Then

(i) the set $f^{-1}(x)$ consists of a single point $x'$ and $\calO_{C,x}\toisom\calO_{C',x'}$;

(ii) $Z$ is quasi-finite over $S$, and so $|Z|$ is a finite set.
\end{lem}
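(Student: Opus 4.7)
The plan for (i) is to localize at $x$ and reduce to the valuative lemma that any proper morphism to the spectrum of a valuation ring which is an isomorphism on the generic point is itself an isomorphism. The crucial point is that $V := \calO_{C,x}$ is itself a valuation ring of $L$. To establish this, recall from the proof of Proposition \ref{redfibprop} that $\phi$ is smooth (of relative dimension one) at $x$, so after shrinking we may factor $\phi$ through an \'etale morphism $\pi: C \to \bbA^1_S$; the image $y := \pi(x)$ is the generic point of the fiber $\bbA^1_{k(s)}$ with $s = \phi(x)$, and $\calO_{\bbA^1_S, y}$ is readily identified with the Gauss extension of the valuation $\calO_{S,s}$ to $K(T)$, hence a valuation ring. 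Locally $\pi$ arises from a standard \'etale presentation $\calO_{\bbA^1_S, y}[P]/(f(P))$ which is normal at $x$; combined with the classical fact that for a valuation ring $W$ and a finite extension $M/\Frac W$, the integral closure of $W$ in $M$ is a Pr\"ufer semilocal domain whose localizations at maximal ideals are exactly the valuation rings of $M$ extending $W$, one deduces that $\calO_{C,x}$ is a valuation ring of $L$ extending $\calO_{S,s}$.

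Granting this, the valuative criterion of properness applied to $f$ provides a unique lift $\sigma: \Spec V \to C'$ of the inclusion $\Spec V \hookrightarrow C$ extending the (unique) generic-point lift $\Spec L \to C'$; set $x' := \sigma(\text{closed point})$. The composition $V \xrightarrow{f^*} \calO_{C', x'} \xrightarrow{\sigma^*} V$ inside $L$ is the identity with both factors inclusions of subrings, so $\calO_{C', x'} = V$. For any other $x'' \in f^{-1}(x)$, the local ring $\calO_{C', x''}$ is a local overring of $V$ in $L$, hence equals $V_p$ for some prime $p \subset V$; locality of $f^*$ at $x''$ forces $p = m_V$, so $\calO_{C', x''} = V$ as well. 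The tautological map $\Spec \calO_{C', x''} \to C'$ then provides a second lift of $\Spec V \hookrightarrow C$ through $f$ agreeing with $\sigma$ on the generic point, and separatedness of $f$ (valuative criterion of separatedness) forces the two lifts to coincide, so $x'' = x'$. This proves (i).

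For (ii), part (i) combined with Zariski's Main Theorem upgrades the local-ring isomorphism to an isomorphism of $f$ in a Zariski neighborhood of every point of $(C/S)^0$, so $Z \cap (C/S)^0 = \emptyset$. For each $s \in S$, the fiber $C_s$ is a Noetherian one-dimensional scheme (being finite type and qcqs over the field $k(s)$), so any closed subset avoiding its finitely many generic points consists of finitely many closed points; hence $Z_s$ is finite, $Z \to S$ is quasi-finite, and since $|S|$ is finite ($R$ has finite height), $|Z|$ is a finite set. The subtlest step throughout is identifying $\calO_{C,x}$ as a valuation ring, which relies on the local structure of \'etale algebras over valuation rings and the Pr\"ufer description of their integral closures.
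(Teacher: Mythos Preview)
Your argument is essentially correct and takes a genuinely different route from the paper's, but one step is underjustified.

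The gap: you invoke the proof of Proposition~\ref{redfibprop} for smoothness of $\phi$ at $x$, but that argument requires geometrically reduced fibers, which the present lemma does not assume directly. In the ambient context of \S\ref{valcurves} this does follow from normality: since $K$ is separably closed, $S$ admits no nontrivial generically \'etale alteration, so Theorem~\ref{redfibth} shows that the $\eta$-normalization of $C$ has geometrically reduced fibers; as $C$ is normal it coincides with its $\eta$-normalization. (This is the content of the later Lemma~\ref{compaclem}(i), whose proof uses only \S\ref{RZsec}, so no circularity arises.) You should make this dependence explicit, since over a general base normality alone does not force generically reduced fibers --- e.g.\ $R$ a DVR with uniformizer $\pi$ and $C=\Spec R[x,y]/(y^2-\pi x)$ is normal with non-reduced special fiber.

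The paper's argument is shorter and needs only normality, not smoothness or the reduced fiber theorem. It observes that for any $y \in (C/S)^0$ the fiber $f^{-1}(y)$ must consist of generic points of $C'_{\phi(y)}$ (a dimension count on the $S$-fibers of $f$), hence is finite; since $\Spec(\calO_{C,x}) \subset (C/S)^0$ by Lemma~\ref{genlem}, the base change of $f$ to $\Spec(\calO_{C,x})$ is proper with finite fibers, hence finite by \cite[$\rm IV_3$, 8.11.1]{ega}, and a finite birational map to a normal target is an isomorphism. Your route instead first establishes the stronger fact that $\calO_{C,x}$ is a valuation ring and then runs the valuative criterion. In the paper that valuation-ring structure is deduced \emph{from} this lemma (via the identification with a stalk of $\gtC$ in the paragraph following the proof), so you have inverted the logical flow: what the paper extracts as a corollary you feed in as an ingredient.
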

\begin{proof}
Note that for any point $y\in(C/S)^0$, the fiber $f^{-1}(y)$ is a finite set of generic points of
$C'_{\phi(y)}$. Since $Y:=\Spec(\calO_{C,x})$ is contained in $(C/S)^0$ by Lemma \ref{genlem}, we obtain that
$Y'=Y\times_C C'$ has finite fibers over $Y$, i.e. the morphism $f':Y'\to Y$ is quasi-finite. By \cite[$\rm
IV_3$, 8.11.1]{ega}, the modification $f':Y'\to Y$ is finite. Since $Y$ is normal, $f'$ is an isomorphism. This
proves (i), and (ii) follows.
\end{proof}

The Lemma implies that any point $x\in(C/S)^0$ possesses a unique preimage $\gtx$ in $\gtC$ and
$\calO_{C,x}\toisom\calO_{\gtC,\gtx}$. In particular, $\calO_{C,x}$ is a valuation ring. The following obvious
observation will often be used in the sequel: for any neighborhood $U$ of $Z$ the modification $f$ is defined by
its restriction $f_U:f^{-1}(U)\to U$.

\begin{lem}\label{gamlem}
Assume that $C'$ is normal. Then $\Gamma(C')\toisom(C'/S)^0$, the set $\Gamma_0(C')$ is finite, and
$\Gamma(C')=gen(\Gamma_0(C'))\cup\Gamma(C)$, where $gen(\Gamma_0(C'))\subset\gtC^0$ is the set of all
generalizations of the points of $\Gamma_0(C')$.
\end{lem}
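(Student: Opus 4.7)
The plan is to establish the three assertions in turn, with the substantive work in the inclusion $\subset$ of (3). For (1), I would apply Lemma \ref{normmodlem}(i) to the cofiltered system of modifications of the normal scheme $C'$ itself. Since the Riemann-Zariski space $\gtC = \RZ_L(C)$ coincides canonically with $\RZ_L(C')$, each $x \in (C'/S)^0$ has a unique preimage $\gtx \in \gtC$ with $\calO_{\gtC, \gtx} \toisom \calO_{C', x}$, which in particular is a valuation ring. For (2), I would use that $S$ has a unique closed point $s_0$ (because $R$ has finite height), so the closed points of $C$ are precisely the closed points of $C_{s_0}$. Under the bijection of (1), $\Gamma_0(C')$ corresponds to the set of $x \in (C'/S)^0$ whose image $f(x)$ is closed in $C_{s_0}$; since $f$ preserves $S$-fibers such $x$ must be a generic point of $C'_{s_0}$, and $C'_{s_0}$ has only finitely many irreducible components, being of finite type over the field $k(s_0)$.

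For the inclusion $\supset$ of (3), I would verify two facts. First, $\Gamma(C) \subset \Gamma(C')$: for $\gtx \in \Gamma(C)$ with center $x_C$ generic in $C_s$, the $C'$-center $x_{C'}$ satisfies $f(x_{C'}) = x_C$, hence $k(x_{C'}) \supset k(x_C)$ has transcendence degree at least one over $k(s)$; as $\dim C'_s = 1$, $x_{C'}$ is forced to be generic there. Second, $gen(\Gamma_0(C')) \subset \Gamma(C')$ follows from Lemma \ref{genlem}, since $(C'/S)^0$ is closed under generalization and hence so is its preimage $\Gamma(C')$ in $\gtC$.

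The inclusion $\subset$ of (3) is the heart of the lemma, which I would prove by cases. Take $\gtx \in \Gamma(C')$ with centers $x_{C'}$ on $C'$ and $x_C = f(x_{C'})$ on $C$, both lying over some $s \in S$. If $x_C$ is generic in $C_s$ then $\gtx \in \Gamma(C)$; otherwise $\dim C_s = 1$ forces $x_C$ to be a closed point of $C_s$, and if $s$ is the closed point $s_0$ of $S$ then $x_C$ is closed in $C$ and $\gtx \in \Gamma_0(C')$. The remaining (and nontrivial) case is when $s$ is strictly between $s_0$ and $\eta$. Here I would construct a specialization $\gtx' \in \Gamma_0(C')$ of $\gtx$ as follows: with $d := \dim \calO_{S, s}$, so $0 < d < h$ for $h$ the height of $R$, the $S$-flatness of $C'$ and the fact that $x_{C'}$ is generic in $C'_s$ give $\dim \calO_{C', x_{C'}} = d$, so $Y' := \overline{\{x_{C'}\}} \subset C'$ has dimension $h + 1 - d$; the fiber dimension inequality for the dominant morphism $Y' \to \overline{\{s\}}$ (whose target has dimension $h - d$), together with $\dim C'_{s_0} \leq 1$, forces $\dim(Y' \cap C'_{s_0}) = 1$, so $Y'$ contains a generic point $z'$ of some irreducible component of $C'_{s_0}$. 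Normality of $C'$ makes $V' := \calO_{C', z'}$ a valuation ring of $L$, and $z' \in \overline{\{x_{C'}\}}$ gives $V' \subset V := \calO_{C', x_{C'}}$; the corresponding $\gtx' \in \gtC$ is therefore a specialization of $\gtx$ in $\Gamma(C')$. Finally, $f(z') \in \overline{\{x_C\}} \cap C_{s_0}$, and since $k(x_C)/k(s)$ is finite the normalization of $\overline{\{x_C\}}$ is finite over $\overline{\{s\}}$, so this intersection is a finite set of closed points of $C_{s_0}$; hence $f(z')$ is closed in $C$, $\gtx' \in \Gamma_0(C')$, and $\gtx \in gen(\Gamma_0(C'))$. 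The main obstacle is the dimension estimate $\dim(Y' \cap C'_{s_0}) = 1$, which I would justify by reducing to the noetherian case via the approximation techniques of \S\ref{approxsec} and invoking the standard dimension formula for dominant morphisms of finite type.
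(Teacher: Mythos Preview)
Your treatment of (1), (2), and the inclusion $\supset$ in (3) is fine and matches the paper. The problem is in the hard inclusion $\subset$, specifically in the ``remaining case'' where $x_C$ is closed in $C_s$ with $s$ strictly between $\eta$ and $s_0$.

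Your argument there rests on two assertions that are not secured. First, the formula $\dim Y' = h+1-d$ presupposes that $C'$ is catenary and equidimensional of dimension $h+1$; over a non-noetherian valuation ring this is not available for free, and your proposed noetherian approximation does not help, since passing to a finite-type $\bfZ$-scheme $S_0$ destroys the height structure of $S$ (the image of $s$ in $S_0$ need not sit at codimension $d$, and the image of $s_0$ need not be the closed point). Second, and more seriously, you implicitly assume $Y'\cap C'_{s_0}\neq\emptyset$. Since $C\to S$ is not assumed proper, the closure $\overline{\{x_C\}}$ need not meet $C_{s_0}$ at all (for instance, on $\bfA^1_R$ the closed point $(\pi T-1)$ lies over $\eta$, and analogous phenomena occur over intermediate points), so neither need $Y'$ meet $C'_{s_0}$. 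Your normalization-finiteness claim is also suspect: integral closures of valuation rings in finite extensions are not module-finite in general.

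The paper sidesteps all of this by not insisting on reaching $s_0$. It lets $s'$ be the closed point of the image $\phi'(X)\subset S$ (which exists since $|S|$ is finite), so that $X_{s'}$ is automatically closed in $X$ and hence in $C'$. That $X_{s'}$ is one-dimensional follows from Chevalley's upper semi-continuity of fiber dimension (\cite[$\rm IV_3$, 13.1.3]{ega}, no noetherian hypothesis needed) applied to the proper morphism $X\to f(X)$: the generic fiber is one-dimensional because $x$ is a generic point of $C'_s$ mapping to a closed point of $C_s$, so every fiber has dimension $\ge 1$, while $f(X)$ has zero-dimensional $S$-fibers by Lemma~\ref{genlem}. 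Taking any irreducible curve $Y\subset X_{s'}$, its image $f(Y)$ is closed in $C$ by properness of $f$ and is a single point since $f(X)_{s'}$ is zero-dimensional; the generic point $y$ of $Y$ is then the desired specialization in $\Gamma_0(C')$. The key point you missed is that properness of $f:C'\to C$, not properness over $S$, is what makes $f(y)$ closed in $C$, so there is no need to descend all the way to the closed fiber of $S$.
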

\begin{proof}
The bijection is explained above and obviously the sets $\Gamma(C')$ and $\Gamma_0(C')$ are finite. Furthermore,
the set $(C'/S)^0$ is closed under generalizations by Lemma \ref{genlem}, hence its preimage
$\Gamma(C')\subset\gtC$ is also closed under generalizations. In particular,
$gen(\Gamma_0(C'))\cup\Gamma(C)\subseteq\Gamma(C')$. To establish an equality it suffices to prove that if
$x\in(C'/S)^0$ is {\em contracted} in $C$ (i.e. is mapped to a closed point  in some $C_s$) then $x$ possesses a
specialization $y\in(C'/S)^0$ which is mapped to a closed point of $C$. The closure $X$ of $x$ has
one-dimensional $S$-fibers (i.e. any non-empty $S$-fiber is one-dimensional), but the image of $X$ in $C$ has
zero-dimensional $S$-fibers by Lemma \ref{genlem}. Let $s\in S$ be the closed point of the image of $X$, and
find an irreducible curve $Y\subset X_s$. Then $Y$ is closed in $X$, and it is contracted to a point in $C$
which is closed by properness of the morphism $C'\to C$. Thus, the generic point $y$ of $Y$ is as required.
\end{proof}

Suppose that $f_i:C_i\to C$, $i=1,2$, are two modifications. If $f_1$ factors through $C_2$ then we say that
$C_1$ {\em dominates} $C_2$. Since $f_i$ are modifications, the domination morphism $C_1\to C_2$ is unique.

\begin{prop}
\label{schmodprop} Let $C'$ and $C''$ be two modifications of $C$ with $\Gamma_0(C')\subseteq\Gamma_0(C'')$, and
assume that $C''$ is normal. Then $C''$ dominates $C'$.
\end{prop}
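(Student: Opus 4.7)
The plan is to realise the required dominating morphism $C''\to C'$ as the projection from the closure of the graph of the birational rational map $C''\dashrightarrow C'$ (both schemes share the function field $L$ and are identified over a common schematically dense open of $C$). Concretely, let $W$ be the schematic closure of the diagonal image of $\Spec L$ inside $C''\times_C C'$; then $W$ is integral with function field $L$, and the two projections $\pi_1\colon W\to C''$ and $\pi_2\colon W\to C'$ are proper and birational, as they are base changes of the modifications $C''\to C$ and $C'\to C$ along a closed integral subscheme of $C''\times_C C'$. Thus it suffices to prove $\pi_1$ is an isomorphism, whereupon $\pi_2\circ\pi_1^{-1}$ furnishes the desired $C$-morphism.

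To force $\pi_1$ to be an isomorphism I would argue by contradiction. Applying Lemma \ref{normmodlem} to the modification $\pi_1$ of the normal scheme $C''$, its modification locus $Z$ is finite and disjoint from $(C''/S)^0$; moreover $Z$ misses $C''_\eta$, because both $C'_\eta$ and $C''_\eta$ are isomorphic to the smooth curve $C_\eta$, forcing $\pi_1$ to be an isomorphism over $C''_\eta$. Pick $z\in Z$: then $z$ is closed in some fibre $C''_s$ with $s\neq\eta$. I would next observe that $\pi_1^{-1}(z)$ carries a one-dimensional component $E$ --- otherwise $\pi_1$ is quasi-finite (hence finite, being proper) near $z$, and any finite birational morphism onto a normal scheme is an isomorphism, contradicting $z\in Z$. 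Write $e$ for the generic point of $E$ and set $e':=\pi_2(e)\in C'_s$.

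The key geometric observation, where the separably closed hypothesis is used in an essential way, is the following dichotomy. Since $K$ is separably closed and $R$ has finite height, the residue field $k(s)$ is algebraically closed for $s\neq\eta$, so every closed point of $C''_s$ or $C'_s$ has residue field equal to $k(s)$ itself. If $e'$ were closed in $C'_s$, then $E$ would lie inside the fibre of $W\subseteq C''\times_C C'$ over $(z,e')$, which is
\[
\Spec\bigl(k(z)\otimes_{k(s)}k(e')\bigr)\;=\;\Spec k(s),
\]
a single point --- contradicting $\dim E=1$. Therefore $e'\in(C'/S)^0$.

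To close the argument I would produce, by Zorn's lemma, a valuation ring $\calO\subseteq L$ dominating the local ring $\calO_{W,e}$. Because $\calO$ then also dominates $\calO_{C,\bar z}$, where $\bar z$ is the image of $z$ in $C$ (a closed point, by properness of $C''\to C$), it defines a point $\gte\in\gtC=\RZ_L(C)$; and the valuative criterion of properness applied to the modifications $C'\to C$ and $C''\to C$ identifies the centres of $\gte$ in $C''$, $C'$ and $C$ as $z$, $e'$ and $\bar z$ respectively. Since $e'\in(C'/S)^0$ and $\bar z$ is closed in $C$ we conclude $\gte\in\Gamma_0(C')$, so the hypothesis yields $\gte\in\Gamma_0(C'')$, i.e.\ the centre of $\gte$ in $C''$ lies in $(C''/S)^0$. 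But that centre is $z$, which is closed in $C''_s$ and hence \emph{not} in $(C''/S)^0$ --- the sought contradiction. I expect the main obstacle in executing the plan will be the fibre-wise residue-field calculation of Case~1 (and the careful bookkeeping needed to ensure $W$ is integral and of finite presentation over $S$); the valuation-theoretic endgame then proceeds essentially formally from Lemmas \ref{normmodlem} and \ref{gamlem}.
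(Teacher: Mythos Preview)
Your approach coincides with the paper's: form the graph closure $W$ in $C''\times_C C'$ (the paper calls it $\oC$) and show $\pi_1\colon W\to C''$ is an isomorphism, the core being the observation that a point of $W_s$ projecting to closed points in both $C''_s$ and $C'_s$ must itself be closed, because for $s\neq\eta$ the field $k(s)$ is algebraically closed and the relevant fibre product of residue fields collapses to a point. The paper packages this as the equality $\Gamma(\oC)=\Gamma(C'')$; you run it as a direct contradiction via a specific exceptional curve $E$.

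There is, however, a small but real gap. To conclude $\gte\in\Gamma_0(C')$ you need the centre $\bar z$ on $C$ to lie in $C_0$, the set of \emph{closed points of the scheme $C$}, not merely of the fibre $C_s$. Your justification ``by properness of $C''\to C$'' would require $z$ to be closed in $C''$, but you have only arranged that $z$ is closed in $C''_s$; when $s$ is not the closed point of $S$ this is not enough. The fix is immediate: pick $z$ to be a closed point of $Z$ (one exists, since $Z$ is non-empty and closed in the qcqs scheme $C''$); then $z$ is closed in $C''$, so $\bar z\in C_0$ by properness, and the rest of your argument goes through verbatim---in particular $\pi_1^{-1}(z)$ is still one-dimensional by the semicontinuity argument you gave, which applies to every point of $Z$. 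The paper sidesteps this wrinkle altogether by first upgrading the hypothesis to $\Gamma(C')\subseteq\Gamma(C'')$ via Lemma~\ref{gamlem}, after which only membership in $\Gamma(C')$ is needed and no closedness on $C$ enters.
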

\begin{proof}
Lemma \ref{gamlem} implies that $\Gamma(C')\subseteq\Gamma(C'')$. Let $\oC$ be the schematical closure of
$\Spec(L)$ in $C'\times_C C''$; it is a modification of $C$ which dominates both $C'$ and $C''$. If
$\Gamma(\oC)=\Gamma(C'')$ then the modification $f':\oC\to C''$ is finite because its $S$-fibers $f'_s:\oC_s\to
C''_s$ do not contract components. By normality of $C''$, $f'$ is an isomorphism, hence $C''$ dominates $C'$, as
required.

It remains to prove that indeed $\Gamma(\oC)=\Gamma(C'')$. Suppose on the contrary that
$\gtx\in\Gamma(\oC)\setminus\Gamma(C'')$. Let $s$ and $x$ be the images of $\gtx$ in $S$ and $\oC$ then
obviously $x\in(\oC/S)^0$. From other side the images of $x$ in $C'_s$ and $C''_s$ are closed points because
$\gtx$ is not in $\Gamma(C'')$ and $\Gamma(C')$. Therefore $x$ has to be a closed point of $\oC_s$, and the
contradiction finishes the proof.
\end{proof}

\subsection{Local uniformization}\label{locunifsec}
For any affine scheme $\oS=\Spec(A)$ consider two examples of multipointed semi-stable $\oS$-curves:
$C'=\Spec(A[T])$ and $D'=\{T=0\}$, or $C'=\Spec(A[U,V]/(UV-a))$, $a\in A$ and $D'=\emptyset$. A multipointed
$S$-curve $C$ is called {\em strictly semi-stable}, if locally on $C$ and $\oS$, $C$ admits an \'etale morphism
$f:(C,D)\to(C',D')$ (i.e. both $f_C$ and $f_D$ are \'etale) to one of the above curves. Obviously, strict
semi-stability implies semi-stability. If $\oS=\Spec(k)$ for a separably closed field then the strictness
condition means that the irreducible components of $C$ are smooth.

\begin{prop}
\label{cofinprop} Assume that $C$ is a strictly semi-stable $S$-curve with smooth generic fiber $C_\eta$. Then
strictly semi-stable blow ups of $C$ (i.e. blow ups $C'\to C$ with $C'$ strictly semi-stable over $S$) are
cofinal in the family of all its modifications.
\end{prop}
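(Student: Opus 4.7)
My plan is to reduce cofinality to a combinatorial statement about divisorial valuations, and then verify this statement by an explicit local construction using the \'etale local models of strict semi-stability.

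\textbf{Step 1: Reduction via Proposition \ref{schmodprop}.} Let $f: C'' \to C$ be an arbitrary modification. Since $C_\eta$ is smooth and $C''_\eta \to C_\eta$ is a modification of a smooth curve, it is an isomorphism, so $C''$ is an $\eta$-modification. The normalization $\wt C'' \to C''$ is a finite modification (finite by Theorem \ref{redfibth} applied to $C''$, which is dominant finitely presented over $S$ with geometrically reduced generic fiber), and the composition $\wt C'' \to C$ is again a modification. So it suffices to dominate $\wt C''$, i.e.\ to assume $C''$ is normal. By Proposition \ref{schmodprop}, any normal modification $C'$ of $C$ with $\Gamma_0(C') \supseteq \Gamma_0(C'')$ automatically dominates $C''$. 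By Lemma \ref{gamlem} the set $\Gamma_0(C'')$ is finite, so the Proposition reduces to the following: given any finite subset $T \subseteq \gtC_0^0$, there exists a strictly semi-stable blow up $C' \to C$ with $T \subseteq \Gamma_0(C')$.

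\textbf{Step 2: Induction on $|T|$ and local reduction.} Since composing a strictly semi-stable blow up with another strictly semi-stable blow up of the target produces a (strictly semi-stable) modification which is again a blow up of $C$, I would induct on $|T|$, adding one point $\gtx \in T$ at a time. For a single point $\gtx \in \gtC_0^0$ with image $x \in C$, I may work locally near $x$: strict semi-stability of $C$ provides an open $U \ni x$ equipped with an \'etale morphism to one of the two standard models
\[
C'_1 = \Spec(A[T]), \qquad C'_2 = \Spec(A[U,V]/(UV - a)),
\]
and the \'etaleness reduces the construction of the blow up to the corresponding construction on the model (a blow up of an ideal near $x$ pulls back to the desired blow up of the model, and \'etaleness preserves strict semi-stability).

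\textbf{Step 3: Realization of a single divisorial valuation.} The valuation ring $\calO_{\gtC,\gtx}$ lives above $\calO_{C,x}$, and the condition $\gtx \in \gtC^0$ forces its residue field to have transcendence degree one over the residue field of $R \cap \calO_{\gtC,\gtx}$. In the smooth case this is the situation of a valuation on a regular surface singularity (possibly of higher height) and $\gtx$ is realized by an iterated sequence of point blow ups, each of which is clearly strictly semi-stable (adding a $\bbP^1$ chain to a smooth fiber). In the nodal case, one must either blow up a smooth point on one of the two branches of the node (staying in the smooth analysis above) or, when $\gtx$ lies "over the node itself," perform a blow up of an ideal of the form $(U, V, \pi)$ for a suitable $\pi \in R$, which inserts a chain of smooth rational curves between the two branches and keeps the total space strictly semi-stable, as one verifies chart by chart.

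\textbf{Main obstacle.} The hard part will be the nodal local case: in the model $\Spec(A[U,V]/(UV-a))$ one must match a given divisorial valuation $\gtx$ to the generic point of an explicit exceptional component of a carefully chosen blow up, while checking on each affine chart that the resulting scheme is again strictly semi-stable. This is the classical picture for toroidal desingularization of $A_n$-singularities when $R$ is a DVR, but here $R$ only has finite height and $|K^\times|$ is merely divisible, so the combinatorics of divisorial valuations over the node is richer and one must argue that the finitely many points of $T$ are all realized by a single global blow up (taking $\veps$-neighborhoods in $\gtC$ and using quasi-compactness to patch the local blow ups into a global one, as in the proof of Theorem \ref{redfibth}, Step 5).
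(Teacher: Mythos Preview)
Your Steps 1--2 are essentially correct and match the paper's approach: reduce via Proposition~\ref{schmodprop} (using that strictly semi-stable $S$-curves with smooth generic fiber are normal) to realizing a single $\gtx\in\gtC^0_0$ on some strictly semi-stable blow up, and then localize near the center $x$ via the \'etale model.

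Step 3 has a genuine gap. Over a non-Noetherian valuation ring $R$ the phrase ``iterated sequence of point blow ups'' is not meaningful: the maximal ideal at a closed point of $C$ is not finitely generated, so there is no canonical ``point blow up'' to iterate, and even if you pick ad hoc ideals like $(T-a,\pi)$ with $\pi\in m_R$, there is no reason an iterative procedure terminates (think $|K^\times|\simeq\bfQ$). The paper resolves this with a valuation-theoretic argument you are missing: by Abhyankar's inequality applied to the extension $L=k(C)/K$ one has $E_{L/K}=0$, and since $|K^\times|$ is divisible this forces $|L^\times|=|K^\times|$. In the nodal model $R[u,v]/(uv-a)$ this produces a \emph{single} $\pi\in R$ with $|u|=|\pi|$, and the blow up $\Bl_{(u,\pi)}(C)$ (a two-generator ideal, not your $(U,V,\pi)$) is strictly semi-stable with the center of $\gtx$ now lying in the smooth locus. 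In the smooth model $R[T]$ one similarly finds $a\in R$ and $\pi_a\in R$ such that the residue of $(T-a)/\pi_a$ is transcendental over $\tilK$, and then $\Bl_{(T-a,\pi_a)}(C)$ realizes $\gtx$ as the generic point of the new $\bfP^1_{k(s)}$-component. In each case a \emph{single} well-chosen blow up suffices; this is exactly where separably-closedness of $K$ (hence divisibility of $|K^\times|$) is used.

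Your ``Main obstacle'' paragraph misidentifies the difficulty: there is no need for quasi-compactness of $\gtC$ or global patching in the style of Theorem~\ref{redfibth}. The argument is purely local at the closed point $x$; one localizes $R$ so that $x$ lies in the closed fiber, constructs the blow up there, and extends it trivially to the rest of $C$.
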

\begin{proof}
We claim that it is enough to prove the following claim: for any element $\gtx\in\gtC^0_0$ there exists a
strictly semi-stable blow up $h:C'\to C$ such that $\gtx\in\Gamma_0(C')$. Indeed, it is well known that blow ups
are preserved under compositions (see, for example, \cite[1.2]{Con}), hence given any finite subset
$F\subset\gtC^0_0$ we can apply the claim iteratively to construct a strictly semi-stable blow up $C'\to C$ with
$F\subset\Gamma_0(C')$. Then Proposition \ref{cofinprop} would follow from Proposition \ref{schmodprop} and
normality of semi-stable $S$-curves with smooth generic fiber.

Now, let $x\in C$ be the center of $\gtx\in\gtC^0_0$. We will act in two stages. At first stage we will blow $C$
up so that it becomes $S$-smooth at the center of $\gtx$ and at the second stage we will make the center a
generic point of its $S$-fiber. Note that if at some stage $x$ is not a closed point of its fiber then it is
already a generic point of its $S$-fiber and we are done. We will build these blow ups so that $x$ is the only
closed point of the modification locus; this allows to work locally at $x$ since each such blow up extends
trivially from a neighborhood of $x$ to all of $C$. In particular, we can replace $R$ with its localization
beneath $x$, so now $x$ is in the special fiber.

At first stage we assume that $C$ is not $S$-smooth at $x$. Localizing we can assume that there exists an
\'etale morphism $C\to\oC$ with $\oC=\Spec(A)$, $A=R[u,v]/(uv-a)$, $a\in R$, $a\neq 0$ and such that the image
of $x$ is the point $\ox$ with $u(\ox)=v(\ox)=0$. Let $\ogtx$ be the image of $\gtx$ in $\RZ_{k(\oC)}(\oC)$,
that is $\calO_\ogtx=\calO_\gtx\cap k(\oC)$. It is enough to find a blow up $\of:\oC'\to\oC$ such that $\ogtx$
is centered on a smooth point $\ox'\in\oC'$ and $\ox$ is the only closed point of the modification of $\of$.
Indeed, if such $\of$ exists then $C':=\oC'\times_\oC C$ is a strictly semi-stable blow up of $C$ and the center
$x'\in\ C'$ of $\gtx$ is smooth since it is the preimage of $\ox'$ under the \'etale morphism $C'\to\oC'$. Thus,
replacing $C$ and $\gtx$ with $\oC$ and $\ogtx$, we can assume that $C=\Spec(A)$. Consider the valued fields $L$
and $K$ with the valuations induced by $\calO=\calO_{\gtC,\gtx}$ and $R$. Since
$\trdeg(L/K)=1=\trdeg(\tilL/\tilK)$, $E_{L/K}=0$ by the Abhyankar inequality. In particular, the group
$G=|L^\times|/|K^\times|$ is a torsion group. By our assumption $K$ is separably closed, hence $|K^\times|$ is
divisible, $G=1$ and $|R|=|\calO|$. In particular, we can find an element $\pi\in R$ such that $|u|=|\pi|$ in
$L$. Note that $|uv|=|a|$ and $|v|\le 1$, hence $|\pi|\ge |a|$ and so $\pi|a$ in $R$. Let $C'=\Bl_{(u,\pi)}(C)$
be the blow up of $C$ along the ideal $(u,\pi)$. Since $C'$ is covered by the charts
$\Spec(A[v,\frac{u}\pi]/(v\frac{u}\pi-\pi^{-1}a))$ and $\Spec(A[u,\frac\pi{u}]/(u\frac\pi{u}-\pi))$ (where the
fractions serve as indeterminants, as is customary for blow up formulas), we obtain that $C'$ is semi-stable.
Finally, $x$ is the only closed point of the modification locus of $C'\to C$, and the center $x'\in C'$ of
$\gtx$ is contained in the $S$-smooth open subscheme
$\Spec(A[\frac{u}\pi,\frac\pi{u}]/(\frac{u}\pi\frac\pi{u}-1))$ of $C'$ because $\frac\pi u\in\calO^\times$.

Now we assume that the center $x$ is smooth and we want to make it the generic point of an irreducible component
in the fiber. Let $s\in S$ be the image of $x$. Locally at $x$ there exists an \'etale morphism $C\to\Spec(A)$
with $A=R[T]$, and the same argument as in the first stage shows that we can actually assume that $C=\Spec(A)$.
Consider the valued fields $L$ and $K$ as earlier. Note that $\tilL$ is generated over $\tilK$ by the residues
of the elements $f(T)/\pi$, where $f$ is an irreducible monic polynomial and $\pi\in R$. Indeed, any element
$f(T)\in L\setminus R$ with $|f(T)|=1$ can be represented as $\prod (f_i(T_i)/\pi_i)^{\ve_1}$, where $f_i$ are
irreducible over $K$, $\pi_i\in R$, $\ve_i\in\{\pm 1\}$ and $|f_i/\pi_i|=1$. Moreover, we can take only
separable irreducible polynomials because the residues of $f(T)/\pi$ and $(f(T)+\omega T)/\pi$ coincide for any
$\omega\in\pi m_R$. Since any separable irreducible polynomial of $K[T]$ is linear, we can find $a\in K$ and
$\pi_a\in R$ such that the residue of $(T-a)/\pi_a$ is transcendental over $\tilK$. Note that $|\pi_a|<1$ as
otherwise $\pi_a\in R^\times$ and $\gtx$ would be centered on the generic point of $C_s=\Spec(k(s)[T])$,
contradicting the assumption that $x$ is closed. Also, if $\tila\in k(s)$ is such that $x$ is the point of $C_s$
given by $T=\tila$ and $a'\in R$ is a lifting of $\tila$, then $|T-a'|<1$ and hence $|a-a'|<1$. So, $a\in R$ and
we can consider the blow up $C'=\Bl_{(\pi_a,T-a)}(C)$ covered by the charts $\Spec(A[\frac{T-a}{\pi_a}])$ and
$\Spec(A[T,\frac{\pi_a}{T-a}]/((T-a)\frac{\pi_a}{T-a}-\pi_a))$. Now one checks straightforwardly that $C'$ is a
strictly semi-stable modification of $C$, $x$ is the closed point given by $T-a=0$ and it is the only closed
point of the modification locus of $h:C'\to C$, $Z=h^{-1}(x)$ is a $\bfP^1_{k(s)}$-component of $C'$ and $\gtx$
is centered at the generic point of $Z$, as required.
\end{proof}

We will also need the following Lemma, where a nonempty $D$ is allowed.

\begin{lem}\label{divlem}
Assume that for a multipointed $S$-curve $(C,D)$ with a semi-stable generic fiber $(C_\eta,D_\eta)$ the
$S$-curve $C$ is strictly semi-stable with smooth generic fiber $C_\eta$. Then there exists a blow up $C'\to C$
underlying a strictly semi-stable modification $(C',D')\to(C,D)$.
\end{lem}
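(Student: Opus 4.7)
The plan is to localize the problem to finitely many closed points of $C$ and apply the blow-up techniques used in the proof of Proposition \ref{cofinprop}. Since $D_\eta$ is étale over $\Spec(K)$ with $K$ separably closed, $D_\eta=\{x_1,\dots,x_n\}$ is a finite disjoint union of $K$-rational points of the smooth curve $C_\eta$. Writing $D_i$ for the scheme-theoretic closure of $x_i$ in $C$, one has $D=\bigcup_i D_i$ (scheme-theoretic union) with each $D_i$ an $S$-flat integral subscheme of relative dimension zero. Call a closed point $y\in C_s$, for closed $s\in S$, \emph{bad} if either it lies in the non-smoothness locus of $C\to S$ and on some $D_i$, or it lies in $D_i\cap D_j$ for some $i\neq j$, or $D_i\to S$ fails to be étale at $y$. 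The bad locus $B\subset C$ is a finite set because $S$ has finitely many closed points, each $C_s$ has finitely many nodes, and the $D_i$ are finitely many.

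For each $y\in B$ I would work in an étale neighborhood of $y$ of one of the two standard strictly semi-stable forms. In the nodal model $\Spec(A[U,V]/(UV-a))$, the hypothesis that $D_\eta$ lies in the smooth locus of $C_\eta$ forces each $D_i$ passing through the node $y=\{U=V=0\}$ to lie in the closure of a single one of the two local branches, and a blow-up along an ideal $(U,\pi)$ as in the first stage of the proof of Proposition \ref{cofinprop} desingularizes the node into a chain of $\bbP^1$'s, thereby reducing the situation to the smooth case. In the smooth model $\Spec(A[T])$, the $D_i$ passing through $y$ correspond to distinct $K$-points $T=a_i\in K$; iteratively blowing up at ideals $(T-a_i,\pi_i)$, as in the second stage of the proof of Proposition \ref{cofinprop} with each $\pi_i\in R$ chosen so that $|T-a_i|=|\pi_i|$ along the relevant valuation, inserts new $\bbP^1$-components of the special fibre onto which the $D_i$ specialize as pairwise disjoint smooth sections. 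Each such blow-up preserves strict semi-stability, extends from a neighborhood of $y$ trivially to all of $C$ since $y$ is the unique closed point of its modification locus, and the successive local blow-ups compose into a single blow-up $C'\to C$ by the standard composition of blow-ups.

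Let $D'\subset C'$ denote the scheme-theoretic closure of $D_\eta$; by construction $D'=\bigsqcup_i D'_i$ with each $D'_i$ a smooth section into a component of the special fibre $C'_s$ disjoint from the non-smoothness locus, and the $D'_i$ pairwise disjoint. Consequently $(C',D')$ is a strictly semi-stable modification of $(C,D)$. The main obstacle I anticipate is the bookkeeping and termination of the smooth-case procedure when several $D_i$ collide at one $y$: since the $a_i$ are distinct in $K$, after finitely many blow-ups the residues of $(T-a_i)/\pi$ and $(T-a_j)/\pi$ differ and the branches land on distinct exceptional components, but verifying that strict semi-stability persists through the intermediate steps amounts to rerunning the same two-stage local analysis already carried out in the proof of Proposition \ref{cofinprop}.
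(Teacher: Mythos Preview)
Your plan is in the same spirit as the paper's proof, but the execution has a genuine gap in the specification of the blow-up centers. In Proposition \ref{cofinprop} the element $\pi\in R$ was dictated by the fixed valuation $\gtx$ (one took $|\pi|=|u|$ in $\calO_{\gtC,\gtx}$). Here there is no valuation in play: you are trying to separate sections and move them off nodes, not center a point of $\gtC$. The phrase ``$\pi_i$ chosen so that $|T-a_i|=|\pi_i|$ along the relevant valuation'' does not name any valuation, and different choices of $\pi_i$ produce blow-ups with very different effects on the strict transforms of the other $D_j$. What actually governs the correct choice is the sections themselves (for two colliding sections $T=a_1,\,T=a_2$ one wants $\pi=a_1-a_2$; for a section $u=\alpha$ through a node one wants $\pi=\alpha$), not any auxiliary valuation. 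Your nodal discussion is also muddled: since $C_\eta$ is smooth, $\Spec(R[U,V]/(UV-a))$ is an integral scheme with no ``two local branches'' through which a horizontal $D_i$ could pass; the two branches live only in the special fiber, and a section hitting the node $U=V=0$ sits on neither.

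The paper avoids all of this bookkeeping by a cleaner two-step reduction. First it separates the components $D_i$ by blowing up the pairwise intersections $D_{i_1}\times_C D_{i_2}$ (the standard strict-transform trick, \cite[1.4]{Con}); this modification may destroy strict semi-stability, but Proposition \ref{cofinprop} then furnishes a dominating strictly semi-stable blow-up of $C$, and the already-disjoint $D_i$ stay disjoint under further modification. After this reduction the only remaining defect is a \emph{single} $D_i\simeq S$ through a node, locally $u=\pi$ in $\Spec(R[u,v]/(uv-a))$ with $|a|<|\pi|<1$; the one blow-up $\Bl_{(u,\pi)}(C)$ lands $D'_i$ in the smooth chart $\Spec(R[\tfrac{u}{\pi},\tfrac{\pi}{u}]/(\tfrac{u}{\pi}\cdot\tfrac{\pi}{u}-1))$. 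So the iteration you worry about in your last paragraph is entirely absorbed into the already-proved Proposition \ref{cofinprop}, and only one explicit local computation remains.
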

\begin{proof}
We will construct the required blow up by composing few intermediate blow ups. The assumptions that $K$ is
separably closed and $D_\eta$ is $K$-smooth imply that $D_\eta$ is a union of few smooth $K$-points. It is well
known that there exists a modification $f:C'\to C$ which separates the irreducible components $D_i$ of $D$. For
example, since $D'$ is the strict transform of $D$ it follows from \cite[1.4]{Con} that one can separate each
pair $D_{i_1}$ and $D_{i_2}$ by blowing up $C$ along $D_{i_1}\times_C D_{i_2}$. Applying this procedure few
times we can separate all components. By Proposition \ref{cofinprop} $C'$ admits a strictly semi-stable
modification $C''$ which is a blow up of $C$, hence we can replace $(C,D)$ with a modification $(C'',D'')$
achieving that $C$ is strictly semi-stable over $S$ and the irreducible components of $D$ are disjoint. Using
Lemma \ref{valringlem} to show that $D\to S$ is finitely presented, we obtain that $D$ is Zariski locally
isomorphic to $S$. Thus, the only problem can arise if $D$ intersects the singular locus of $\phi:C\to S$.

We will show that there exists a semi-stable blow up $(C',D')\to(C,D)$ whose center is contained in $D$. Note
that it suffices to prove this stronger claim locally at a closed point $d\in D\cap C_\sing$. In particular,
similarly to the argument in Proposition \ref{cofinprop} it suffices to study the following model situation:
$C=\Spec(R[u,v]/(uv-a))$ and $D\toisom S$ is given by $u=\pi$ for an element $\pi\in R$ with $|a|<|\pi|<1$. Then
(again, similarly to the proof of Proposition \ref{cofinprop}) the blow up $C'=\Bl_{(\pi,u)}(C)$ is a strictly
semi-stable modification of $C$ such that the pair $(C',D')$ is as required because $D'$ is contained in
$\Spec(R[\frac{u}\pi,\frac\pi{u}]/(\frac{u}\pi\frac\pi{u}-1))$ and hence is disjoint form the singular locus of
$C'$.
\end{proof}

Finally, we establish a local uniformization of an $S$-curve along a valuation.

\begin{prop}
\label{locunifprop} Let $C\to S$ be a curve with smooth $C_\eta$. Then there exist quasi-modifications $C_1\to
C\. C_m\to C$ such that $C_i$ are strictly semi-stable $S$-curves and any point of $\gtC$ is centered on some
$C_i$.
\end{prop}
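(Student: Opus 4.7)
The plan is to establish the statement pointwise and then invoke quasi-compactness of $\gtC$ (Proposition \ref{quacomprop}). Since Corollary \ref{openembcor} shows that any quasi-modification $C' \to C$ induces an open immersion $\RZ_L(C') \hookrightarrow \gtC$, it suffices to construct, for each $\gtx \in \gtC$, a strictly semi-stable quasi-modification $C_\gtx \to C$ such that $\gtx$ lies in the image of $\RZ_L(C_\gtx) \hookrightarrow \gtC$; a finite subfamily of such $C_\gtx$ then covers $\gtC$.

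Fix $\gtx \in \gtC$, let $\calO = \calO_{\gtC,\gtx}$ viewed as a valuation ring of $L$, and set $R' = \calO \cap K$. First I will apply Theorem \ref{valunif} to the extension $L/K$ equipped with the valuations from $\calO$ and $R'$: the hypotheses hold because $L/K$ is finitely generated of transcendence degree one (as $C_\eta$ is a smooth $K$-curve and we have reduced to the connected case in \S\ref{smfibsec}), $K$ is separably closed by standing assumption, and the center of $K\cdot\calO$ on the normal projective $K$-model of $L$ is automatically smooth, that model being a smooth compactification of $C_\eta$. Theorem \ref{valunif} then yields $x \in L$ transcendental over $K$ with $L/K(x)$ finite and unramified; replacing $x$ by $1/x$ if needed, I may assume $x \in \calO$.

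Next I interpret $x$ geometrically. Viewing $x$ as a rational $S$-map $C \dashrightarrow \bfP^1_S$ and taking the schematic closure of its graph (cf.\ Lemma \ref{ratfunlem}) yields a modification $\Gamma \to C$ together with a morphism $\psi: \Gamma \to \bfP^1_S$. Since $x \in \calO$, the center of $\gtx$ on $\Gamma$ lies in the open subscheme $\Gamma' := \psi^{-1}(\bfA^1_S)$, so $\Gamma' \to C$ is a quasi-modification equipped with an $S$-morphism $\phi: \Gamma' \to \bfA^1_S = \Spec R[x]$. The heart of the argument is then to show that, after possibly replacing $\Gamma'$ by a further quasi-modification, $\phi$ becomes étale on a Zariski open neighborhood $C_\gtx$ of the center of $\gtx$. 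By Lemma \ref{etalelem}, unramifiedness of $L/K(x)$ translates into étaleness of the valuation-ring map $\calO \cap K(x) \to \calO$; by Lemma \ref{approxlem} both rings are filtered colimits of local rings of finite-type $S$-models (compatible quasi-modifications of $\bfA^1_S$ and of $\Gamma'$), and by Corollary \ref{approxcor}(i) together with Lemma \ref{finhtlem} these local rings are finitely presented over $R$. A standard spreading-out argument then descends étaleness of the colimit map to étaleness at some finite level, which in turn spreads to an étale morphism of open neighborhoods $C_\gtx \to V \subset \bfA^1_S$.

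Finally, $V$ is strictly semi-stable over $S$, being open in $\bfA^1_S = \Spec R[T]$ (the standard affine semi-stable model with $D = \emptyset$), and strict semi-stability is preserved under étale morphisms; hence $C_\gtx$ is strictly semi-stable over $S$, and by construction $\gtx$ is centered on $C_\gtx$, completing the pointwise construction. The main obstacle is the spreading-out step: one must track compatible towers of quasi-modifications on both the source and target so that the étaleness of the valuation-ring map actually descends to an étaleness of a map between finitely presented local rings. The finite-height hypothesis on $R$, via Corollary \ref{approxcor}(i) and Lemma \ref{finhtlem}, is exactly what makes this routine rather than delicate, and this is why a direct approach over an arbitrary base must first reduce (as is done earlier in the paper) to the finite-height valuation-ring case treated here.
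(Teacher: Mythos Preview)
Your overall architecture matches the paper's: reduce to one point by quasi-compactness, invoke Theorem~\ref{valunif} to get an unramified transcendence element $x$, and then spread out the \'etaleness of $\ocalO:=\calO\cap K(x)\to\calO$ to a finite level. The gap is in the last two sentences of your spreading-out paragraph. You write that the descent yields an \'etale morphism $C_\gtx\to V\subset\bfA^1_S$ with $V$ open in $\bfA^1_S$ itself, and then conclude strict semi-stability because $\bfA^1_S$ is strictly semi-stable. But the valuation ring $\ocalO$ is in general strictly larger than the local ring of $\bfA^1_S$ at the center $\ox_0$ of $\ogtx$; equivalently, the center on $\bfA^1_S$ may be a closed point of a fiber, and the map $\calO_{\bfA^1_S,\ox_0}\to\calO_{\Gamma',x_0}$ is then typically not \'etale even though $\ocalO\to\calO$ is. What approximation actually gives you (this is how the paper argues, via Lemma~\ref{approxlem} and \cite[$\rm IV_4$,~17.7.8]{ega}) is \'etaleness over the local ring at the center of $\ogtx$ on some \emph{modification} $\oC_i$ of $\bfA^1_S$, not over $\bfA^1_S$ itself. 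That modification has no reason to be strictly semi-stable.

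The missing ingredient is precisely Proposition~\ref{cofinprop}: since $\bfA^1_S$ is strictly semi-stable, any modification of it is dominated by a strictly semi-stable blow-up, so after enlarging $i$ you may take $\oC_i$ strictly semi-stable; then the base change $C_i=C\times_{\oC}\oC_i$ is \'etale over $\oC_i$ near the center of $\gtx$, and that locus is the desired strictly semi-stable quasi-modification. A secondary point you gloss over is that in order for the base-changed ring $A\otimes_{\oA}\ocalO$ to actually equal $\calO$ (so that \'etaleness at the limit is the statement you want to descend), the paper first arranges $A$ to be $\oA$-flat via the flattening theorem and to contain a chosen finite set of $\ocalO$-generators of $\calO$; without this, the ``standard spreading-out'' is not quite routine.
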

\begin{proof}
We can assume that $C=\Spec(A)$ is affine. Since $\gtC$ is quasi-compact and $\RZ_L(C_i)\into\gtC$ is an open
space embedding by Corollary \ref{openembcor}, it suffices to show that any point $\gtx\in\gtC$ can be centered
on a strictly semi-stable quasi-modification of $C$. To simplify notation we set $\calO=\calO_{\gtC,\gtx}$.
Also, we provide $L=k(C)$ with a valuation induced by $\calO$.

By Theorem \ref{valunif} the one-dimensional valued $K$-field $L$ is uniformizable because $K\calO$ is centered
on the smooth $K$-curve $C_\eta$. Therefore there exists an element $T\in L$ such that $L$ is unramified over
$k(T)$, and so $\calO$ is \'etale over $\calO\cap k(T)$ by Lemma \ref{finhtlem}. Replacing $T$ with $T^{-1}$, if
necessary, we can assume that $T\in\calO$. Furthermore, replacing $A$ with $A[T]\subset L$ and $C$ with its
quasi-modification $\Spec(A[T])$ we achieve that $T\in A$ while $\calO$ is still centered on $C$. In particular,
we obtain a morphism $T:C\to\oC=\Spec(\oA)$, where $\oA=k[T]$. Let $\ogtC=\RZ_{k(T)}(\oC)$ be the
Riemann-Zariski space of $\oC$, then a natural map $\gtC\to\ogtC$ arises, and we denote the image of $\gtx$ by
$\ogtx$. Notice that $\ocalO=\calO_{\ogtC,\ogtx}$ coincides with $\calO\cap k(T)$, so $\calO$ is \'etale over
$\ocalO$. In the sequel, we will few times replace $C$ and $\oC$ with their affine quasi-modifications whose
preimages in $\gtC$ and $\ogtC$ contain $\gtx$ and $\ogtx$, respectively. For simplicity, the new curves will be
also denoted $C=\Spec(A)$ and $\oC=\Spec(\oA)$.

Let $T_1\. T_l$ be $\ocalO$-generators of $\calO$. Replacing $C$ with a quasi-modification if necessary, we can
assume that $T_i\in A$. By flattening theorem, there exists a modification $\oC'\to\oC$ such that the
schematical closure of $\Spec(L)$ in $C\times_\oC\oC'$ is flat over $\oC'$. Thus, replacing $C$ and $\oC$ with
their quasi-modifications, we can achieve that $A$ is $\oA$-flat. It follows that $A\otimes_\oA\ocalO$ is a
subalgebra of $L$ containing $T_i$'s, hence $A\otimes_\oA\ocalO\toisom\calO$. Let $\{\oC_i\}_{i\in I}$ denote
the family of modifications of $\oC$, and let $\ox_i$ denote the centers of $\ogtx$ on $\oC_i$, then $\cup_{i\in
I}\calO_{\oC_i,\ox_i}\toisom\ocalO$. By \cite[$\rm IV_4$, 17.7.8]{ega}, $A\otimes_\oA\calO_{\oC_i,\ox_i}$ is
\'etale over $\calO_{\oC_i,\ox_i}$ for some $i\in I$. Moreover, by Proposition \ref{cofinprop} enlarging $i$ we
can achieve that $\oC_i$ is strictly semi-stable. Then the center of $\gtx$ on $C_i=C\times_{\oC}\oC_i$ is
contained in the strictly semi-stable locus of $C_i$. So, this locus is a required quasi-modification of $C$.
\end{proof}

\subsection{Blowing down to a stable modification} \label{blowdownsec}
In this section we will prove that any semi-stable modification can be blown down to a stable modification. In
addition, we will prove Theorem \ref{minstabtheor} (in the case of $S=\Spec(\Kcirc)$). Similarly to
Castelnuovo's contraction of exceptional curves on surfaces, this involves some cohomological technique. We will
need the notion of $\bfP_k^1$-trees introduced in Appendix \ref{B}.

\begin{lem} \label{compaclem}
Let $C$ be an $S$-curve with geometrically reduced $C_\eta$.

(i) $\Nr_{C_\eta}(C)$ is an $S$-curve (i.e. it is finitely presented over $S$) with geometrically reduced
fibers, and $\Nr_{C_\eta}(C)=\Nr(C)$ if $C_\eta$ is smooth.

(ii) If $C$ is normal and affine then it can be embedded into a projective $S$-curve $\oC$ with geometrically
reduced $S$-fibers.
\end{lem}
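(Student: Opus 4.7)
For part (i), my plan is to extract the needed finiteness and reduced-fiber properties directly from the proof of the reduced fiber theorem (Theorem \ref{redfibth}). Over a valuation ring $R$ of finite height with separably closed fraction field $K$ — precisely our setting — Step 4 of that proof produces the algebra $\Nor_{A_K}(A)$ as finitely presented over $R$ with geometrically reduced fibers, without any alteration of the base. This algebra is, Zariski-locally on $C=\Spec(A)$, the $C_\eta$-normalization $\Nr_{C_\eta}(C)$, so the first assertion will follow immediately. (Alternatively, one could invoke Theorem \ref{redfibth} as a black box, note that a generically étale alteration of $S$ is trivial since $K^s=K$, and identify the finite $\eta$-modification so produced with $\Nr_{C_\eta}(C)$ via Proposition \ref{redfibprop}.)

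For the identification $\Nr_{C_\eta}(C)=\Nr(C)$ when $C_\eta$ is smooth, I plan a short algebraic argument. Working locally with $C=\Spec(A)$, note that $\eta$ is open in $S$ (valuation rings of finite height being finite as topological spaces), so $A_K=A\otimes_R K$ is a localization of $A$ and $\Frac(A_K)=\Frac(A)$. By construction $\Nr_{C_\eta}(C)=\Spec(\Nor_{A_K}(A))$, and the inclusion $\Nor_{A_K}(A)\subseteq\Nor_{\Frac(A)}(A)$ is trivial. The reverse inclusion uses smoothness: $A_K$ is normal, hence integrally closed in $\Frac(A_K)=\Frac(A)$, so any element of $\Frac(A)$ integral over $A$ is in particular integral over $A_K$ and thus lies in $A_K$, yielding $\Nr(C)\subseteq\Nr_{C_\eta}(C)$.

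Part (ii) will be a standard projective compactification combined with (i). Using that $C$ is affine and finitely presented, I fix a closed embedding $C\hookrightarrow\bfA^n_S\subset\bfP^n_S$ and take $\tilC$ to be the schematic closure of $C$ in $\bfP^n_S$: then $\tilC$ is $S$-proper, $R$-flat (torsion free), finitely presented by Lemma \ref{valringlem}, and its generic fiber is the projective closure of $C_\eta$, so $\tilC$ is a projective $S$-curve with smooth generic fiber containing $C$ as a dense open. Applying (i) to $\tilC$ yields $\oC:=\Nr_{\tilC_\eta}(\tilC)$, a projective $S$-curve with geometrically reduced fibers. To identify $C$ with an open of $\oC$, I form $\oC|_C:=\oC\times_{\tilC}C$, which is finite over $C$ and (being $S$-flat with generic $S$-fiber $C_\eta$) has $C_\eta$ as a schematically dense open, so is a finite $C_\eta$-modification of $C$. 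The universal property of $\Nr_{C_\eta}(C)$ produces a morphism $\Nr_{C_\eta}(C)\to\oC|_C$, and by (i) applied to the normal $C$ with smooth generic fiber we have $\Nr_{C_\eta}(C)=\Nr(C)=C$; this yields a section of $\oC|_C\to C$ that forces $\oC|_C=C$ and gives the desired open immersion $C\hookrightarrow\oC$. The main obstacle I anticipate is the first step of (i) — harvesting from the proof of Theorem \ref{redfibth} that finite presentation with reduced geometric fibers holds for $\Nr_{C_\eta}(C)$ over the given $R$ without any nontrivial alteration; the remainder reduces to bookkeeping of universal properties of normalizations.
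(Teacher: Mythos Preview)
Your approach is essentially the paper's: for (i) you invoke Theorem~\ref{redfibth} over $S=\Spec(R)$ (where generically \'etale alterations are trivial since $K=K^s$) and identify the resulting finite $\eta$-modification with $\Nr_{C_\eta}(C)$ via Proposition~\ref{redfibprop}; for (ii) you compactify and apply (i), exactly as the paper does.

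Two minor inaccuracies to fix. First, the projective closure $\tilC_\eta$ of a smooth affine curve need not be smooth (e.g.\ a plane cubic can acquire a singularity at infinity), so your claim ``$\tilC$ is a projective $S$-curve with smooth generic fiber'' is false as stated; what you actually need, and what holds, is that $\tilC_\eta$ is geometrically reduced (it is integral with a dense open $C_\eta$ that is geometrically reduced, and geometric reducedness is a generic condition for reduced schemes). Second, in your proof of (ii) you invoke the clause ``$\Nr_{C_\eta}(C)=\Nr(C)$ if $C_\eta$ is smooth'' from (i), but smoothness of $C_\eta$ is not a hypothesis of (ii). This is unnecessary anyway: since $C$ is normal, the ring $A$ is already integrally closed in $\Frac(A)$ and a fortiori in $A_K$, so $\Nr_{C_\eta}(C)=C$ directly. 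With that in hand, your section argument works, though the paper's one-liner ``$\oC|_C\to C$ is finite birational with normal target, hence an isomorphism'' is quicker.
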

\begin{proof}
We will prove (ii) and the proof of (i) is similar (using Proposition \ref{redfibprop}). Since a reduced
$K$-scheme is geometrically reduced if and only if it is generically so, we can easily find an $S$-projective
compactification $C\into C'$ with geometrically reduced $C'_\eta$. Recall that any separable alteration of $S$
is an isomorphism, hence by Theorem \ref{redfibth} there exists a finite $\eta$-modification $\oC\to C'$ that
has geometrically reduced $S$-fibers. This map is an isomorphism over $C$, so $\oC$ is as required.
\end{proof}

\begin{lem}
\label{stabcritlem} Let $f:(C',D')\to(C,D)$ be a modification of a multipointed $S$-curve $(C,D)$, $x\in C$ be a
point, $Z=f^{-1}(x)$ and $s=\phi(x)$. Assume that $C$ is normal and $Z$ is a curve contained in the semi-stable
locus of $C'$. Then $(C,D)$ is semi-stable at $x$ if and only if $Z$ is an exceptional $\bfP^1_{k(s)}$-tree in
the sense of Appendix \ref{B} (i.e. $|\partial_{C'_s}(Z)|+|D'_s\cap Z|\le 2$).
\end{lem}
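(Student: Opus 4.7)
The assertion is local on $C$ at $x$, so I would begin by replacing $S$ with $\Spec(\calO_{S,s})$ and $C$ with an affine neighborhood of $x$ in which $x$ is the only closed point of $C_s$ supported over $x$. Since semi-stability and the combinatorial invariants $|\partial_{C'_s}(Z)|$, $|D'_s\cap Z|$ are both preserved under faithfully flat extension of the residue field, I would next pass to the henselization of $R$ at $s$ and enlarge $k(s)$ to its algebraic closure. Normality of $C$ together with Zariski's main theorem then give $f_*\calO_{C'}=\calO_C$; in particular $Z$ is connected (Stein factorization) and the germ of $C$ at $x$ is precisely the contraction of $Z$ in $C'$.

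For the direction ($\Rightarrow$), assume $(C,D)$ is semi-stable at $x$. If $x$ is smooth in $C_s$, then the unique branch of $C_s$ through $x$ has a strict transform in $C'_s$ meeting $Z$ at exactly one point, so $|\partial_{C'_s}(Z)|=1$; moreover $|D'_s\cap Z|\in\{0,1\}$ according as $x\in D$ or not, since $D'\to D$ is a modification and $D\to S$ is \'etale at $x$. If instead $x$ is a node of $C_s$, the two branches give two distinct exits from $Z$ and $x\notin D$ forces $|D'_s\cap Z|=0$. In each case the sum is $\le 2$. That $Z$ is then a $\bfP^1_{k(s)}$-tree follows from connectedness, from the fact that $Z$ lies in the semi-stable locus, and from normality of $C$ at $x$: any loop in the dual graph of $Z$ would contract to a non-nodal singularity of $C$, contradicting the semi-stability assumption, and a component of $Z$ of positive geometric genus would similarly force a non-semi-stable local ring at $x$; so by the classification in Appendix~\ref{B}, $Z$ must be a tree of projective lines.

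For the direction ($\Leftarrow$), assume $Z$ is an exceptional $\bfP^1_{k(s)}$-tree in $C'_s$. The results of Appendix~\ref{B} on contraction of such trees in a semi-stable curve show that contracting $Z$ in $C'_s$ produces a point that is smooth when $|\partial_{C'_s}(Z)|+|D'_s\cap Z|\le 1$, a node when the sum equals $2$ with both exits lying in $\partial_{C'_s}(Z)$, or a smooth point on the image of $D'$ when the sum equals $2$ with one exit in $\partial$ and the other in $D'\cap Z$ (the degenerate case in which $Z$ is a connected component of $C'_s$ is excluded because $C$ is an $S$-curve of pure relative dimension one). To promote this fibrewise statement to semi-stability of $(C,D)$ at $x$ over $S$, I would combine $S$-flatness of $C$ with $f_*\calO_{C'}=\calO_C$ and lift the standard nodal/smooth local model $R[u,v]/(uv-a)$ through the henselian structure of $\calO_{S,s}$, using \'etaleness of $D\to S$ to track the divisor.

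The main obstacle, I expect, is the ($\Leftarrow$) direction: translating the fibrewise contraction picture over $k(s)$ into the correct relative local model of $(C,D)$ over $S$. This rests on two ingredients, namely the structural classification in Appendix~\ref{B} and a relative lifting argument using flatness together with the henselian approximation. A secondary subtlety in ($\Rightarrow$) is ruling out loops and higher-genus components in $Z$, and this is where normality of $C$ at $x$, and not merely on the generic fibre, is essential.
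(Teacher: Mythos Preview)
Your approach diverges from the paper's in a way that leaves a real gap. The paper does not stay local or pass to a henselization; it \emph{compactifies}. After localizing so that $x$ is the only closed point of the modification locus (via Lemma~\ref{finhtlem}), it embeds the affine normal $C$ into a connected normal $S$-projective curve using Lemma~\ref{compaclem}, extends $f$ trivially, and $\eta$-normalizes $C'$ so that all $S$-fibers become geometrically reduced. The reason for compactifying is that the Euler--Poincar\'e characteristic is constant in the flat projective family, whence $h^1(C'_s)=h^1(C_\eta)=h^1(C_s)$. This arithmetic-genus equality is precisely the hypothesis of Corollary~\ref{stabdowncor}, which then delivers both directions at once for the fiber map $C'_s\to C_s$: the point $x$ is an ordinary $n$-fold point of $C_s$ if and only if $Z$ is a $\bfP^1_{k(s)}$-tree with $|\partial_{C'_s}(Z)|=n$.

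The genuine gap in your plan is the step you flagged as secondary. In ($\Rightarrow$) you assert that a loop in the dual graph of $Z$, or a component of positive genus, would force the contracted point to be worse than a node. This is correct, but it is \emph{exactly} what the equality $p_a(C'_s)=p_a(C_s)$ is used for: in the proof of Corollary~\ref{stabdowncor} one combines it with Lemma~\ref{lastlem} to obtain $g(x)=\sum_{z\in Z}g_{C'_s}(z)-|Z^0|$, and only then does $g(x)\le 1$ force $h^1(\Gamma)=0$ and all component-genera to vanish. Without compactifying you have no access to $p_a$, and you offer no local substitute for this identity. Your branch-count can establish $|\partial_{C'_s}(Z)|=|\tilx|$, but that alone does not constrain the internal genus of $Z$.

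Two smaller points. In the setting of \S\ref{valcurves} the fraction field $K$ is separably closed and every non-generic residue field $k(s)$ is already algebraically closed, so your henselization and residue-field-extension steps are vacuous. And the ``promotion from fibrewise to relative'' in ($\Leftarrow$) is unnecessary: semi-stability of a flat finitely presented morphism is checked on geometric fibers, so once $C_s$ is shown to have at worst a node at $x$ and $D_s$ avoids it, you are done. What ($\Leftarrow$) actually requires, and you do not supply, is that $C_s$ at $x$ really is the contraction of $Z$ in $C'_s$; this is a base-change question for $f_*\calO_{C'}$, and the paper again bypasses it entirely by arguing through the genus equality on the compactified fibers rather than through the pushforward.
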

\begin{proof}
The question is local in $x$. Obviously, $x$ is closed in its fiber $C_s$. Localizing $R$ we can assume that $s$
is the closed point of $S$, and then $x$ is closed in $C$. Shrinking $C$ we can assume that $C$ is connected,
normal and affine. The modification locus $V$ of $f$ is quasi-finite over $S$, hence by Lemma \ref{finhtlem}
$V_x:=\Spec(\calO_{V,x})$ is an open neighborhood of $x$ in $V$. Thus, shrinking $C$ again, we can achieve that
$x$ is the only closed point of $V$. By Lemma \ref{compaclem}, we can embed $C$ into a connected normal
$S$-projective curve $\oC$. Define $\oD$ as the closure of $D$ in $\oC$. Since $V$ is closed in $\oC$, we can
extend $f$ trivially outside of $C$ obtaining a modification $f':\oC'\to\oC$. So, it suffices to solve our
problem for the projective multipointed $S$-curves $\oC$ and $\oC'$.

Now, we can assume that $C$ is $S$-projective. By Lemma \ref{compaclem}(i), we can replace $C'$ with
$\Nr_{C'_\eta}(C')$ (no change near $Z$ by Proposition \ref{redfibprop}) achieving that $C'$ has geometrically
reduced $S$-fibers. Note that any finite, connected and dominant $S$-scheme maps bijectively onto $S$. Applying
the Zariski connectedness theorem, we obtain that the $S$-fibers of $C'$ are connected (usually, Zariski
connectedness theorem is formulated for a noetherian base scheme $S$, e.g. \cite[$\rm III_1$, 4.3.2]{ega}, but
$C'$ comes from a curve defined over a noetherian base). It follows that $h^0(C'_t)=1$ for any $t\in S$. Since
the Euler-Poincare characteristic of the fibers is constant on $S$ (again, \cite[$\rm III_2$, 7.9.4]{ega} is
formulated for noetherian schemes, but the general case follows by noetherian approximation), we see that the
arithmetic genus $h^1(C'_t)$ of the fibers is constant on $S$. Hence $h^1(C'_s)=h^1(C_\eta)=h^1(C_s)$, and the
case of connected $Z$ follows from Corollary \ref{stabdowncor}. Finally, $Z$ is connected because otherwise $C$
is not normal at $x$ by Zariski connectedness theorem.
\end{proof}

\begin{cor} \label{sepcloscor}
Assume that $C_\eta$ is smooth and $(C,D)$ has a stable modification $(C_\st,D_\st)$ and a semi-stable
modification $(C',D')$.

(i) If both modifications are dominated by a semi-stable modification $(\oC,\oD)$ then $C'$ dominates $C_\st$.

(ii) The assumption of (i) is satisfied when $C$ is a modification of a strictly semi-stable $S$-curve $C_1$.
\end{cor}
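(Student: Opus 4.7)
The plan is to prove (i) by establishing that $g := (\oC \to C_\st)$ is constant on the fibers of $f := (\oC \to C')$, producing the desired morphism $u : C' \to C_\st$; the compatibility of $u$ with the divisor parts is automatic, since $\oD$, $D_\st$, $D'$ are the schematical closures of $D_\eta$ in the respective modifications. The curve $C'$ is normal because it is semi-stable with smooth generic fiber, so the factorization is equivalent to $g$ being constant on each fiber $Z := f^{-1}(y)$ for $y \in C'$ closed. By Lemma \ref{stabcritlem} applied to $f$, either $Z$ is a point (and there is nothing to prove) or $Z$ is an exceptional $\bfP^1_{k(s)}$-tree in $\oC$ satisfying $|\partial_{\oC_s}(Z)| + |\oD_s \cap Z| \le 2$.

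Assume $Z$ is a tree, and let $Z^g \subseteq Z$ be the union of irreducible components of $Z$ not contracted by $g$. The key claim is that if $Z^g$ is non-empty, then $g(Z^g)$ is an exceptional tree in $C_\st$, contradicting the stability of $(C_\st, D_\st)$. Each $V \in Z^g$ is a $\bfP^1$, and Lemma \ref{normmodlem}(i) applied to $g$ (whose target $C_\st$ is normal) shows that $g|_V$ is birational onto an irreducible component $g(V) \subseteq (C_\st)_s$. The union $g(Z^g)$ is connected, is contracted in $C$ (since $Z$ lies over the image of $y$ in $C$), and I would argue that it carries at most $|\partial_{\oC_s}(Z)| + |\oD_s \cap Z| \le 2$ external special points in $C_\st$: each such special point pulls back via $g$ to a point of some $V \in Z^g$ which must be external to $Z$ in $\oC$ or lie on $\oD_s$, since a point of $V$ internal to $Z$ meets a component of $Z$ adjacent to $V$ and produces an interior (not external) point of the image. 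Granting that $g(Z^g)$ is genuinely a tree of $\bfP^1$'s, a leaf of that tree carrying at most one external special point of $C_\st$ is then an exceptional component of $C_\st$, the desired contradiction. Hence $Z^g = \emptyset$, and $g(Z)$ is a single point.

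The main obstacle I anticipate is verifying that $g(Z^g)$ is in fact a $\bfP^1$-tree rather than merely a connected union of rational curves: the map $V \to g(V)$ can identify two points of some $V$ (creating a self-node in $g(V)$), which adds a cycle to the dual graph. Such identifications arise precisely when $V$ meets a $g$-contracted exceptional tree at two distinct points; since $Z$ itself is acyclic, this attached tree must extend outside $Z$, thereby consuming intersections from the external budget of $Z$. A refined combinatorial analysis of how these self-nodes interact with the special-point count should nonetheless expose a leaf of $g(Z^g)$ that is exceptional in $C_\st$ and still contradicts stability.

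For (ii), let $\tilde C$ be the schematical closure of $C_\eta$ in $C_\st \times_C C'$. Then $\tilde C \to C$ is an $\eta$-modification dominating both $(C_\st, D_\st)$ and $(C', D')$; since $C$ is by hypothesis a modification of the strictly semi-stable $C_1$, so is $\tilde C$. Proposition \ref{cofinprop} then produces a strictly semi-stable blow-up of $C_1$ that dominates $\tilde C$, and applying Lemma \ref{divlem} to the result supplies a strictly semi-stable modification $(\oC, \oD) \to (C, D)$ that still dominates both $(C_\st, D_\st)$ and $(C', D')$, verifying the hypothesis of (i).
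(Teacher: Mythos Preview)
Your argument for (ii) is correct and matches the paper's proof essentially verbatim.

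For (i), your overall strategy --- show that $g$ is constant on each fibre $Z=f^{-1}(y)$ and then factor through the normal $C'$ --- is sound, and indeed the claim ``every component of $\oC_s$ contracted in $C'$ is also contracted in $C_\st$'' is exactly what the paper proves. The gap is in how you establish this claim. Your direct combinatorial analysis of $g(Z^g)$ is incomplete, as you yourself flag: you do not verify that $g(Z^g)$ is a $\bfP^1$-tree rather than a configuration with self-nodes, and the ``refined combinatorial analysis'' you gesture at is not carried out. The bookkeeping of how external special points of $g(Z^g)$ pull back is also more delicate than you indicate, since a boundary node of $g(Z^g)$ in $(C_\st)_s$ may sit under an entire $g$-contracted tree in $\oC_s$ that weaves in and out of $Z$.

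The paper avoids all of this by invoking Lemma~\ref{stabblowdown}. Fibre by fibre, Lemma~\ref{stabcritlem} shows that both $C'_s$ and $(C_\st)_s$ are obtained from $\oC_s$ by contracting disjoint exceptional $\bfP^1_{k(s)}$-trees (relative to $C_s$); since $(C_\st)_s$ has no exceptional components, it is the stable blow down of $\oC_s\to C_s$, and the proof of Lemma~\ref{stabblowdown} shows that the stable blow down receives a morphism from \emph{any} semi-stable blow down. This immediately gives that components contracted in $C'_s$ are contracted in $(C_\st)_s$, hence $\Gamma_0(C_\st)\subseteq\Gamma_0(C')$, and then Proposition~\ref{schmodprop} (rather than a hand-rolled rigidity argument) yields the domination $C'\to C_\st$. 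In short: the combinatorics you are trying to redo are already packaged in Lemma~\ref{stabblowdown}, and the factorization step is already packaged in Proposition~\ref{schmodprop}.
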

\begin{proof}
For (i), we note that Lemmas \ref{stabcritlem} and \ref{stabblowdown} imply for any $s\in S$ the following
claim: if an irreducible component $Z\subset\oC_s$ is contracted in $C'_s$ then it is contracted in $(C_\st)_s$
too. Therefore, $C'$ dominates $C_\st$ by Proposition \ref{schmodprop}. To prove (ii) we note that both $C'$ and
$C_\st$ are modifications of $C_1$ and so they are dominated by a strictly semi-stable modification $\tilC$ by
Proposition \ref{cofinprop}. Let $\tilD$ be the Zariski closure of $D_\eta$ in $\tilC$. Then the multipointed
curve $(\tilC,\tilD)$ possesses a semi-stable modification $(\oC,\oD)$ by Lemma \ref{divlem}. Obviously, the
latter dominates both $(C_\st,D_\st)$ and $(C',D')$.
\end{proof}

Let us assume that $(C',D')$ is a semi-stable modification of $(C,D)$. We will show that by successive blowing
down exceptional components of $S$-fibers of $C'$ one can construct a stable modification $(C_\st,D_\st)$ of
$(C,D)$. Let $E=E(C',D')$ be the set of exceptional components of the fibers $C'_s$. We identify $E$ with a
subset of $\Gamma(C')$ and set $E_0=E_0(C',D')=E\cap\Gamma_0(C')$.

\begin{lem}\label{emptylem}
Assume that $C_\eta$ is smooth. A semi-stable modification $f:(C',D')\to(C,D)$ is stable if and only if $E_0$ is
empty.
\end{lem}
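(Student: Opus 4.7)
The forward implication is immediate from $E_0 \subseteq E$, so $f$ stable forces $E_0 = \emptyset$. For the converse, the plan is to argue the contrapositive: assuming that some exceptional component $Z \subset C'_s$ exists, I will exhibit an element of $E_0$ and derive a contradiction.

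First I would dispose of the easy cases. Since $f_\eta$ is an isomorphism, $s \neq \eta$; and if $s = s_0$, then the contraction point of $Z$ is already closed in $C$, so $Z \in E_0$ directly. So I may assume $s \neq \eta, s_0$, and let $\gtx \in \Gamma(C')$ be the generic point of $Z$. The semi-stable $S$-curve $C'$ is normal (being regular in codimension one and Cohen--Macaulay, since its only codimension one points are closed points of the smooth generic fiber and generic points of the height-one fiber, both of which are regular), so Lemma \ref{gamlem} applies and gives $\Gamma(C') = gen(\Gamma_0(C')) \cup \Gamma(C)$. The case $\gtx \in \Gamma(C)$ is ruled out, since it would force the generic point of $Z$ to map to a generic point of a fiber of $C$, contradicting that $Z$ is contracted. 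Therefore $\gtx$ specializes in $\gtC$ to some $\gty \in \Gamma_0(C')$, whose corresponding point $y' \in C'$ is the generic point of a component $W \subset C'_{s_0}$ contracted to a closed point $x_0 \in C_{s_0}$ of $C$; the inclusion $\calO_{\gtC,\gtx} \supset \calO_{\gtC,\gty}$ places $y'$ inside the scheme-theoretic closure $\ol{Z} := \ol{\{x'_Z\}}$ of $Z$ in $C'$, so $W \subset \ol{Z} \cap C'_{s_0}$.

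The heart of the argument will be to promote some component of $\ol{Z} \cap C'_{s_0}$ into an element of $E_0$. The proper morphism $\ol{Z} \to \ol{\{x\}}$ has generic fiber $Z \cong \bfP^1_{k(s)}$; semi-stability of $C' \to S$ together with flatness of $\ol{Z}$ over a suitable localization of $\ol{\{x\}}$ at $x_0$ forces the fiber $\ol{Z} \cap C'_{s_0}$ to be a connected nodal curve of arithmetic genus zero, i.e.\ a $\bfP^1_{k(s_0)}$-tree in the sense of Appendix \ref{B}. Every component of this tree is contracted by $f$, because $f(\ol{Z} \cap C'_{s_0}) \subset \ol{\{x\}} \cap C_{s_0}$ is zero-dimensional. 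Since $D' \to S$ is \'etale and disjoint from $C'_{\sing}$, the specialization preserves the count of external special points on the tree (namely the intersections with $D'_{s_0}$ and with components of $C'_{s_0}$ outside the tree); their total equals the generic-fiber count $|D'_s \cap Z| + |\partial_{C'_s}(Z)| \le 2$. A pigeonhole argument on the leaves of the tree then yields some leaf $W'$ satisfying $|\partial_{C'_{s_0}}(W')| + |D'_{s_0} \cap W'| \le 2$ (a leaf has exactly one internal node, and in a tree with at most two external special points some leaf has at most one such external point); since $W'$ is contracted to a closed point of $C$, $W' \in E_0$, contradicting $E_0 = \emptyset$.

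The hard part will be justifying the flatness of $\ol{Z}$ over $\ol{\{x\}}$ near $x_0$ and the exact preservation of external special-point counts under the specialization $Z \leadsto \ol{Z} \cap C'_{s_0}$; both rest on the local structure of semi-stable families of curves together with the \'etaleness-and-disjointness properties of $D'$, and will need careful verification before the clean pigeonhole step can be invoked.
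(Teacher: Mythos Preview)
Your overall strategy --- specialize the exceptional $Z$ via Lemma~\ref{gamlem} and produce an exceptional component sitting over a closed point of $C$ --- is exactly the paper's. The execution differs precisely where you flag difficulties. First, the paper replaces $C$ by its normalization (harmless, since any semi-stable modification of $(C,D)$ factors through it); you should do the same. Second, and this is what dissolves your two ``hard parts'', the paper does not analyse $\ol Z\to\ol{\{x\}}$ directly but instead reuses the shrink--compactify--compare-genera device from the proof of Lemma~\ref{stabcritlem}. Once $C$ is compactified to an $S$-projective $\oC$, the closure $\ol Z\subset\oC'$ is proper over $S$ and hence over $T=\Spec(R/p_s)$. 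Since $T$ is the spectrum of a valuation ring and $\ol Z$ is integral, flatness over $T$ is automatic --- working over $\ol{\{x\}}$ is awkward because its local rings need not be valuation rings --- and constancy of the Euler characteristic forces $\ol Z_t$ to be a connected $\bfP^1$-tree. The special-point bound then drops out of the \'etale-local models $R[u,v]/(uv-b)$ of the semi-stable $C'$ at nodes, just as you anticipate. So the missing ingredient in your plan is this compactification step.

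Two small corrections. Your case split should be on whether $x=f(Z)$ is a closed point of $C$, not on whether $s=s_0$: when $C$ is not $S$-proper these conditions differ (a closed point of $C$ can lie over an intermediate $s$, and then $Z\in E_0$ already; conversely the specialization $\gty\in\Gamma_0(C')$ furnished by Lemma~\ref{gamlem} need not lie over $s_0$). And your pigeonhole-on-leaves step is in fact the right way to finish: $\ol Z_t$ can genuinely be a nontrivial tree, since new internal nodes appear wherever $C'$ has local model $uv=b$ with $b\in p_t\setminus p_s$, so an interior vertex of $\ol Z_t$ may meet three or more neighbours and fail to be exceptional as a component. The paper's sentence ``any irreducible component $Z'$ \dots\ is exceptional'' glosses over this; extracting a leaf is how one actually lands in $E_0$.
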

\begin{proof}
Only the converse implication needs a proof. Any semi-stable modification factors through the normalization of
$C$, hence we can replace $C$ with its normalization (and update $D$ accordingly). Assume that $Z\subset C'_s$
is an exceptional component. Acting as in the proof of Lemma \ref{stabcritlem} one establishes the following
claim: if $t\in S$ is a specialization of $s$ then any irreducible component $Z'\subset C'_t$ lying in the
Zariski closure of $Z$ is exceptional. Indeed, $Z'$ lies in the modification locus of $f$, hence it is
contracted to a point $x\in C$ by normality of $C$. Now similarly to the mentioned proof we shrink $C$ about
$x$, compactify it and compute genera. It remains to notice that the generic point of $Z$ has a specialization
$z\in\Gamma_0(C')$, and therefore $z$ is the generic point of an exceptional component belonging to $E_0$. The
contradiction shows that $E$ is empty, i.e. the modification $f$ is stable.
\end{proof}

In the sequel, by {\em exceptional blow down} of $(C',D')$ we mean a normal modification $(C'',D'')\to(C,D)$
such that $C'$ dominates $C''$, the morphism $C'\to C''$ contracts exactly one point of $\Gamma_0(C')$ and that
point is in $E=E(C',D')$ (so it is even in $E_0$).

\begin{lem}
If $(C'',D'')$ is an exceptional blow down of $(C',D')$ then $(C'',D'')$ is semi-stable.
\end{lem}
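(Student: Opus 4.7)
The plan is to apply Lemma \ref{stabcritlem} directly to the contraction morphism $g\colon C'\to C''$, viewed as a modification of $(C'',D'')$. The idea is that semi-stability is a local property on $C''$, and away from the point $x\in C''$ which is the image of the contracted component the morphism $g$ is an isomorphism, so semi-stability there is inherited from $(C',D')$. The only non-trivial check is at the single point $x$.

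First I would set up notation: let $z\in\Gamma_0(C')\cap E$ be the unique point contracted by $g$, let $s=\phi'(z)$, and let $Z\subset C'_s$ be the Zariski closure of $z$. By the definition of exceptional blow down, $g$ contracts exactly $z$, so set-theoretically $g^{-1}(x)=Z$, where $x=g(Z)\in C''$. Since $z\in E_0(C',D')$, the component $Z$ is an exceptional $\mathbb{P}^1_{k(s)}$-component of $(C'_s,D'_s)$: in particular $Z\cong\mathbb{P}^1_{k(s)}$ and $|\partial_{C'_s}(Z)|+|D'_s\cap Z|\le 2$, so $Z$ is trivially an exceptional $\mathbb{P}^1_{k(s)}$-tree in the sense of Appendix \ref{B}. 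Moreover $Z$ is contained in the semi-stable locus of $C'$ because all of $(C',D')$ is semi-stable by hypothesis.

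Now I would apply Lemma \ref{stabcritlem} to the modification $g\colon(C',D')\to(C'',D'')$ at the point $x$: its hypotheses are satisfied since $C''$ is normal by the definition of exceptional blow down, $Z=g^{-1}(x)$ is a curve lying in the semi-stable locus of $C'$, and $Z$ is an exceptional $\mathbb{P}^1_{k(s)}$-tree. The lemma then yields that $(C'',D'')$ is semi-stable at $x$.

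It remains to observe that on the open set $C''\setminus\{x\}$ the morphism $g$ restricts to an isomorphism onto its image, and the complementary open in $C'$ inherits semi-stability from $(C',D')$; thus $(C'',D'')$ is semi-stable on $C''\setminus\{x\}$ as well. Combining the two cases gives semi-stability of $(C'',D'')$. I do not foresee a genuine obstacle here: the argument is essentially a packaging of Lemma \ref{stabcritlem} applied in the ``reverse'' direction, with the mild bookkeeping that the contracted locus is a single $\mathbb{P}^1$-component, which is the simplest possible exceptional tree.
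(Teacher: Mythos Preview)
Your approach is the same as the paper's: verify semi-stability at the image point $x$ of the contracted component via Lemma~\ref{stabcritlem}, and elsewhere by inheritance from $(C',D')$. The application of Lemma~\ref{stabcritlem} at $x$ is exactly right.

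There is, however, a small gap in the ``elsewhere'' part. You assert that $g$ restricts to an isomorphism over the open set $C''\setminus\{x\}$, i.e.\ that the modification locus of $g\colon C'\to C''$ is exactly $\{x\}$. This is not justified and need not hold when the height of $R$ is at least~$2$: the definition of exceptional blow down only constrains which point of $\Gamma_0(C')$ is contracted, and by Lemma~\ref{normmodlem}(ii) the modification locus of $g$ is a finite set whose unique \emph{closed} point is $x$, but it may well contain non-closed points (generic points of components of intermediate fibers $C'_t$, $t\neq s_0$, that are also contracted in $C''$). Over such a point $g$ is not an isomorphism, so your inheritance argument does not apply there.

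The paper fixes this with one extra sentence: semi-stability is an open condition on $C''$ by Lemma~\ref{openloclem}(i), so it suffices to check it at closed points. At a closed point $y\neq x$ the morphism $g$ is a local isomorphism (since $x$ is the only closed point of the modification locus), hence semi-stability is inherited; at $x$ one uses Lemma~\ref{stabcritlem} exactly as you do. Inserting this openness step makes your argument complete and identical to the paper's.
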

\begin{proof}
Semi-stability is an open condition by Lemma \ref{openloclem}(i) below (no circular reasoning occurs here),
hence it suffices to prove that $(C'',D'')$ is semi-stable at closed points. Let $Z\in E_0$ be the component
contracted in $C''$ and $x\in C''$ be its image, then $x$ is the only closed point of the modification locus of
$C'\to C''$. It remains to note that $(C'',D'')$ is semi-stable at $x$ by Lemma \ref{stabcritlem}.
\end{proof}

\begin{prop}
\label{blowdownprop} Assume that $C_\eta$ is smooth. If $(C,D)$ admits a semi-stable modification $(C',D')$ then
it admits a stable modification $(C_\st,D_\st)$.
\end{prop}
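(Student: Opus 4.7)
The plan is to argue by induction on the cardinality of the finite set $\Gamma_0(C',D')$ (finiteness supplied by Lemma~\ref{gamlem}). If $E_0(C',D')=\emptyset$, Lemma~\ref{emptylem} says $(C',D')$ is itself stable and we set $(C_\st,D_\st)=(C',D')$. Otherwise, pick $z\in E_0$ and produce an exceptional blow down $(C'',D'')$ of $(C',D')$ contracting $z$; the lemma immediately preceding this proposition guarantees $(C'',D'')$ is again a semi-stable modification of $(C,D)$, and since $|\Gamma_0(C'')|=|\Gamma_0(C')|-1$ by the definition of exceptional blow down, the induction hypothesis applied to $(C'',D'')\to(C,D)$ finishes the argument.

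The real content is the construction of an exceptional blow down contracting a given $z\in E_0$, with corresponding $\bfP^1_{k(s)}$-tree $Z\subset C'_s$ having $m:=|\partial_{C'_s}(Z)|+|D'_s\cap Z|\le 2$, and image $x=f(Z)\in C_s$. The plan is as follows. First, since any semi-stable modification factors through the normalization, replace $C$ by its normalization, and use Lemma~\ref{normmodlem}(ii) (quasi-finiteness of the modification locus over $S$) to shrink $C$ to an affine neighborhood in which $x$ is the only closed point of the modification locus. By Lemma~\ref{compaclem}(ii), embed this neighborhood into an $S$-projective normal curve $\oC$ and extend $f$ trivially off the neighborhood to a projective modification $\pi\colon\oC'\to\oC$; the blow down can be constructed in this projective setting and then restricted back.

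In the projective setting, construct an invertible sheaf $\calL$ on $\oC'$ with $\deg(\calL|_Z)=0$ and $\deg(\calL|_W)>0$ for every other irreducible component $W$ of every geometric fiber of $\oC'\to S$ --- for instance a high tensor power of $\omega_{(\oC',\oD')/S}$ twisted by a sufficiently positive horizontal divisor supported away from $\pi^{-1}(x)$. The key degree computation is the adjunction identity $\deg(\omega_{(\oC',\oD')/S}|_Z)=m-2\le 0$, which lets $\omega$ be compensated to degree zero on $Z$, while Theorem~\ref{projtheor} controls ampleness on the remaining stable locus. Then set $\oC''=\bfProj_\oC(\bigoplus_{n\ge 0}\pi_*\calL^{\otimes n})$: the morphism $\oC'\to\oC''$ contracts exactly $Z$ by the degree conditions, and $\oC''$ inherits normality from $\oC'$. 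Restricting back to the original neighborhood of $x$ produces $(C'',D'')$, and the dominance $C'\to C''$ over $C$ is confirmed by Proposition~\ref{schmodprop}, applied with $C'$ normal (semi-stability plus smoothness of $C_\eta$) and $\Gamma_0(C'')=\Gamma_0(C')\setminus\{z\}\subsetneq\Gamma_0(C')$.

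The main obstacle I anticipate is justifying the $\bfProj$ contraction step over a valuation ring of arbitrary finite height rather than a Dedekind or noetherian base: classical Castelnuovo/Artin contraction machinery is stated over noetherian bases, so the plan is to descend the entire projective situation to a noetherian subring of $R$ via the approximation technology of \S\ref{approxsec}, perform the classical contraction there, and lift back via Remark~\ref{prem}. A secondary subtlety is the case $m=0$ in which $Z$ is already a connected component of $\oC'_s$: here the contraction amounts to collapsing $Z$ to the single closed point $x\in\oC$, and one must check that this yields a scheme rather than merely an algebraic space --- which is handled uniformly by the $\bfProj$ construction once $\calL$ is chosen trivial on $Z$ and strictly positive off $Z$.
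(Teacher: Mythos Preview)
Your overall architecture matches the paper exactly: induction on $|\Gamma_0(C')|$, reduction to normal $C$, localization so that $x$ is the unique closed point of the modification locus, projective compactification via Lemma~\ref{compaclem}(ii), and trivial extension of $f$. The divergence is in how you build the contraction of $Z$, and your proposed construction has real problems.

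First, after the trivial extension the curve $\oC'$ agrees with $\oC$ outside the original neighborhood, and $\oC$ is only normal with geometrically reduced fibers, not semi-stable; so $\omega_{(\oC',\oD')/S}$ need not be invertible there, and the adjunction computation $\deg(\omega|_Z)=m-2$ is only available on the semi-stable locus. Second, even on $C'_s$ itself, the fiber $f^{-1}(x)$ may contain components other than $Z$, and some of these may also be exceptional; your twist by a horizontal divisor ``supported away from $\pi^{-1}(x)$'' does nothing for them, so $\calL$ will not have strictly positive degree on every component other than $Z$, and your $\bfProj$ will contract more than the single component you want. Third, your appeal to Theorem~\ref{projtheor} is misplaced: that theorem concerns stable modifications, whereas $(\oC',\oD')$ is only semi-stable. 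Finally, the $\bfProj$ route forces you into a noetherian-descent detour to justify finite generation of $\bigoplus_n\pi_*\calL^{\otimes n}$.

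The paper avoids all of this with a more elementary device. Let $Z_1,\ldots,Z_n$ be the other irreducible components of $C'_s$. Since $R$ is a valuation ring with separably closed fraction field and $C'\to S$ is smooth at a dense open of each $Z_i$, one can choose $K$-points $P_i\in C'_\eta$ whose closures $\oP_i$ specialize to smooth points $p_i\in Z_i$; then $\calL=\calO_{C'}(m\sum\oP_i)$ already has degree $0$ on $Z$ and degree $m>0$ on every $Z_i$, with no reference to $\omega$. A direct cohomology-and-base-change argument (using $h^1(\calL_s)=0$ for large $m$ and noetherian approximation for the Grauert--Grothendieck theorem) shows $H^0(\calL)\to H^0(\calL_s)$ is surjective, so one lifts a section $h_s$ nonvanishing at the $p_i$ to a global $h$. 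This single function gives a morphism $\oh\colon C'\to\bfP^1_C$ contracting exactly $Z$, and $C''$ is taken to be the normalization of the image $\oh(C')$. No $\bfProj$, no $\omega$, no descent beyond what is needed for the base-change theorem.
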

\begin{proof}
Assume that $(C',D')$ is not stable. Let $Z\in E_0$ be an exceptional component of $C'_s$; it exists by Lemma
\ref{emptylem}. It suffices to find an exceptional blow down $C'\to C''$ which contracts $Z$. Indeed:
$(C'',D'')$ is semi-stable by the above Lemma, and if $E_0(C'',D'')$ is not empty then we can blow down $C''$
further, etc. This process must stop because we can perform at most $|\Gamma_0(C')|$ contractions.

Now we will reduce to the case of $S$-projective $C$ as in the proof of Lemma \ref{stabcritlem}. Since $f:C'\to
C$ factors through $\Nor(C)$, we can assume that $C$ is normal. Let $x$ be the image of $Z$ in $C$ and
$s=\phi(x)$. The problem is local in $x$, hence we can localize $S$ and shrink $C$, so that $s$ is closed, $C$
is connected and affine and $x$ is the only closed point of the modification locus $V$ of $f$ ($V$ is
quasi-finite over $S$ and we use Lemma \ref{finhtlem}). Embed $C$ into a connected normal $S$-projective curve
$\oC$ (and take $\oD$ to be the closure of $D$) and define $(\oC',\oD')\to(\oC,\oD)$ as the trivial extension of
$(C',D')\to(C,D)$. It now suffices to find an exceptional blow down $\oC'\to\oC''$ which contracts $Z$. To
simplify the notation we replace $\oC'\to\oC$ with $C'\to C$ achieving that $C$ is $S$-projective.

Let $Z_1\. Z_n$ be other irreducible components of $C'_s$. For any $1\le i\le n$, find a point $P_i\in C'_\eta$
such that the $s$-fiber of its closure $\oP_i$ is a smooth point $p_i\in Z_i$, and so $\oP_i$ lies in the smooth
locus of $C'$. In particular, $\cup\oP_i$ is a Cartier divisor. Note that $Y:= C'_s=(\cup Z_i)\cup Z$,
$Z\toisom\bfP^1_{k(s)}$ intersects $\cup Z_i$ transversally, and the intersection is exactly
$\partial_{C'_s}(Z)$, so it contains at most two points. Using that $h^1(Z,\calO_Z)=0$ it follows easily that
$H^1(\calO_Y(m\sum p_i))=0$ for sufficiently large $m$.

Consider the $R$-flat sheaf $\calL=\calO_{C'}(m\sum\oP_i)$. Then $h^1(\calL_s)=0$ and therefore
$h^1(\calL_\eta)=0$ by semi-continuity. Since the Euler-Poincare characteristic of $\calL$ is constant,
$h^0(\calL_\eta)=h^0(\calL_s)$. Applying the theorem of Grauert and Grothendieck on base changes and direct
images, see \cite[III.12.9]{Har}, or \cite[$\rm III_2$, 7.6.9 and 7.7.5]{ega}, we obtain that the homomorphism
$H^0(\calL)\to H^0(\calL_s)$ is onto (the cited results are formulated in noetherian setting, so we use
noetherian approximation).

For sufficiently large $m$, there exists a section $h_s\in H^0(\calL_s)$ which does not vanish at $p_i$'s. Find
a lifting $h\in H^0(\calL)$ and consider it as a meromorphic function on $C'$. The pole divisor of $h$ is at
most $\oP=m\sum\oP_i$, and the zero divisor $V$ does not pass through the points $p_i$ because $h_s$ has poles
of order exactly $m$ at $p_i$. Thus, $V\cap\oP\cap C'_s=\emptyset$, and therefore the intersection of $V$ and
$\oP$ is empty. It follows that $h$ defines a morphism $C'\to\bfP^1_S$, the induced morphism $\oh:C'\to\bfP^1_C$
contracts $Z$ to a point and $Z$ is the only component of $C'_s$ contracted by $\oh$. Hence, one can take $C''$
to be the normalization of $\oh(C')$ and define $D''$ accordingly (with semi-stability of $(C',D')$ following
from Lemma \ref{stabcritlem}).
\end{proof}

\subsection{Gluing local models}\label{gluesec}

\begin{prop}
\label{sepclosprop} Theorems \ref{minstabtheor} and \ref{stabmodtheor} hold for $S=\Spec(\Kcirc)$ where $K$ is a
separably closed valued field of a finite height.
\end{prop}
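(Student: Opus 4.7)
The plan is to combine three tools established in this section: local uniformization (Proposition \ref{locunifprop}), the blow-down procedure (Proposition \ref{blowdownprop}), and the domination criterion (Corollary \ref{sepcloscor}), using the Riemann-Zariski space to pass from local constructions to a global stable modification and to verify uniqueness. By the reduction in \S\ref{smfibsec} I may and will assume throughout that $C_\eta$ is smooth. Since $K$ is separably closed, $S = \Spec(\Kcirc)$ admits no nontrivial generically \'etale alteration, so in Theorem \ref{stabmodtheor} one takes $S' = S$.

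First I establish the local picture. Applying Proposition \ref{locunifprop} produces finitely many strictly semi-stable quasi-modifications $\pi_j \colon C_j \to C$, $j = 1, \ldots, m$, such that every point of $\gtC = \RZ_{k(C)}(C)$ is centered on some $C_j$. Writing $D_j$ for the Zariski closure of $D_\eta$ in $C_j$, Lemma \ref{divlem} yields a strictly semi-stable modification of $(C_j, D_j)$, and then Proposition \ref{blowdownprop} yields a stable modification $(C_{j,\st}, D_{j,\st}) \to (C_j, D_j)$. Since $C_j$ is itself strictly semi-stable, Corollary \ref{sepcloscor}(ii) (applied with $C_1 = C_j$) gives local uniqueness: any semi-stable modification of $(C_j, D_j)$ admits a unique morphism to $(C_{j,\st}, D_{j,\st})$, so in particular the local stable modification is unique up to unique isomorphism.

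To deduce the theorems globally I glue these local models through the Riemann-Zariski space. By Proposition \ref{oetamodprop} the $C_j$'s may be arranged as open subschemes of a common modification $\tilde C \to C$, and by Corollary \ref{openembcor} the $\RZ_{k(C)}(C_j)$ form an open cover of $\gtC$, which is quasi-compact by Proposition \ref{quacomprop}. For existence (Theorem \ref{stabmodtheor}), the local stable modifications agree on overlaps by local uniqueness, and therefore glue to a stable $\eta$-modification $(C_\st, D_\st) \to (C, D)$. For uniqueness (Theorem \ref{minstabtheor}), given a semi-stable modification $(C', D')$ of $(C, D)$, local uniqueness on each $C_j$ produces unique factorizations which are forced to be compatible on overlaps and so glue to a global morphism $(C', D') \to (C_\st, D_\st)$. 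The main obstacle will be the rigorous execution of the gluing: quasi-modifications are not open immersions into $C$, so local data must be patched through $\gtC$ before being descended to an honest $S$-scheme, and I will need to check carefully that the appropriate fibered products over $C_j$ inherit the semi-stability needed to invoke the local uniqueness in the overlap compatibility check.
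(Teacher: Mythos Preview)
Your outline assembles the right ingredients, but there are two genuine gaps where the argument as written would fail.

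\textbf{Existence.} Proposition \ref{oetamodprop} does not embed the $C_j$'s themselves as open subschemes of a modification $\tilde C$ of $C$; it only gives open subschemes $C'_j\subset\tilde C$ that are $C$-isomorphic to \emph{modifications} of the $C_j$. So the stable modifications you built over $(C_j,D_j)$ are not the right objects to glue. The paper first passes to the $C'_j$'s, then constructs stable modifications $(C''_j,D''_j)\to(C'_j,D'_j)$ there (using that $C'_j$ is a modification of the strictly semi-stable $C_j$: apply Proposition \ref{cofinprop}, then Lemma \ref{divlem}, then Proposition \ref{blowdownprop}). These glue, by Corollary \ref{sepcloscor}, to a stable modification $(C'',D'')\to(\tilde C,\tilde D)$. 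But this is only a \emph{semi-stable} modification of $(C,D)$: components of $\tilde C_s$ contracted in $C_s$ can become exceptional when $(C'',D'')$ is viewed over $C$. Hence one needs a final application of Proposition \ref{blowdownprop} to blow $(C'',D'')$ down to the stable modification of $(C,D)$. Your claim that the glued object is already stable over $C$ is not justified.

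\textbf{Minimality.} Your plan for Theorem \ref{minstabtheor} --- restrict a given semi-stable modification $(C',D')\to(C,D)$ to each $C_j$ and invoke local uniqueness --- does not work, because $C_j\to C$ is only a quasi-modification and the fiber product $C'\times_C C_j$ has no reason to be a semi-stable $S$-curve (or even reduced). The paper instead uses the already-proved existence: given stable $(C_\st,D_\st)$ and semi-stable $(C',D')$, choose any modification $(C'',D'')$ dominating both, apply Theorem \ref{stabmodtheor} to $(C'',D'')$ to get a semi-stable $(\oC,\oD)$ dominating both $(C_\st,D_\st)$ and $(C',D')$, and then Corollary \ref{sepcloscor}(i) forces $(C',D')$ to dominate $(C_\st,D_\st)$.
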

\begin{proof}
We showed in \S\ref{smfibsec} that the case of smooth $C_\eta$ implies the general case by a pushout procedure.
So, we assume in the sequel that $C_\eta$ is smooth. Note that if $C$ is a modification of a strictly
semi-stable $S$-curve then Theorem \ref{minstabtheor} holds true for $(C,D)$ by Corollary \ref{sepcloscor}.
Next, let us prove Theorem \ref{stabmodtheor}. It suffices to prove that $(C,D)$ possesses a semi-stable
modification because then we can construct the stable modification by Proposition \ref{blowdownprop}. By
Proposition \ref{locunifprop} there exist strictly semi-stable quasi-modifications $C_1\. C_m$ of $C$ such that
any element of $\gtC$ is centered on some $C_i$. Let $D_i$ be the closure of $D_\eta\cap (C_i)_\eta$ in $C_i$.

By Proposition \ref{oetamodprop} there exists a modification $C'\to C$ and open subschemes $C'_i\subset C'$ such
that $C'_i$ are $C$-isomorphic to modifications of $C_i$. Let $D'_i\into C'_i$ be the closure of $(D_i)_\eta$ in
$C'_i$. By our assumptions, $\gtC$ is covered by the subspaces $\RZ_L(C_i)$, where $L=k(C)$. Hence $C'_i$'s
cover $C'$ by the valuative criterion of properness and surjectivity of the projection $\gtC\to C'$. Consider
the curve $(C_i,D_i)$ with the modification $(C'_i,D'_i)$. Applying to them first Proposition \ref{cofinprop}
and then Lemma \ref{divlem} we can construct a semi-stable modification of $(C'_i,D'_i)$. Then Proposition
\ref{blowdownprop} implies that each $(C'_i,D'_i)$ admits a stable modification
$f_i:(C''_i,D''_i)\to(C'_i,D'_i)$. In this case, stable modification is unique by Corollary \ref{sepcloscor}
(with $C_i$ being a ground strictly semi-stable curve), hence $f_i$'s agree over the intersections $C'_i\cap
C'_j$, and so they glue to a stable modification $(C'',D'')\to(C',D')$. Thus, $(C'',D'')$ is a semi-stable
modification of $(C,D)$, and it remains to use Proposition \ref{blowdownprop} once again to construct a stable
blow down of $(C'',D'')\to(C,D)$.

Finally, let us prove Theorem \ref{minstabtheor} in general. Given a stable modification $(C_\st,D_\st)$ and a
semi-stable modification $(C',D')$ of $(C,D)$, find a modification $(C'',D'')$ which dominates them. By Theorem
\ref{stabmodtheor} there exists a stable modification $(\oC,\oD)$ of $(C'',D'')$ which obviously dominates both
$(C',D')$ and $(C_\st,D_\st)$. Hence $(C',D')$ dominates $(C_\st,D_\st)$ by Corollary \ref{sepcloscor}, and we
are done.
\end{proof}

We will not need the following aside remark, so we omit a detailed proof of its assertion.

\begin{rem}
It follows from Proposition \ref{sepclosprop} that if $\bfP$ denotes semi-stability then for a multipointed
relative $S$-curve $(C,D)$ with a $\bfP$ $\eta$-fiber the family of its $\bfP$ modifications is cofinal in the
family of all its modifications. (Actually, we established particular cases of this result while proving the
Proposition.) Using pushout technique of \S\ref{smfibsec} one can show that a similar cofinality result holds
also for strict semi-stability and for ordinary relative curves, where we say that $(C,D)$ is {\em ordinary}
over $S$ if the geometric fibers $(C_\os,D_\os)$ are ordinary in the sense of Appendix \ref{B}. We will not make
any use of ordinary curves, but it is described in Appendix \ref{A3} how they appear in some proofs of the
stable reduction theorem.
\end{rem}

\section{Proof of the main results} \label{mainsec}
In this section we only assume that $S$ is integral qcqs and with generic point $\eta$. Also, $K=k(\eta)$.

\begin{lemsect}\label{openloclem}
Let $(C',D')\to (C,D)$ be a modification of multipointed $S$-curves then:

(i) the set of points $x\in C'$ at which $(C',D')$ is semi-stable is open;

(ii) the set of points $s\in S$ for which $(C'_s,D'_s)$ is semi-stable and $(C'_\os,D'_\os)$ has no exceptional
components for a geometric point $\os$ lying over $s$ is constructible.
\end{lemsect}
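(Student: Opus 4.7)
Part (i) is a local openness statement on $C'$; part (ii) will follow from (i) combined with Chevalley's theorem and constructibility of properties of geometric fibers, after Noetherian approximation.

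For (i), the condition of being semi-stable at $x \in C'$ decomposes as: (a) $\phi' \colon C' \to S$ is flat at $x$; (b) the geometric fiber of $\phi'$ through $x$ has at worst an ordinary double point at $x$; and (c) if $x \in D'$ then $D' \to S$ is \'etale at $x$ and $\phi'$ is smooth at $x$. Condition (a) is open by EGA IV$_3$, 11.3.1; the conditions in (c) are open by EGA IV$_4$, 17.3.1 together with closedness of the non-smoothness locus of $\phi'$. Within the flat locus, condition (b) is also open: after Noetherian approximation, this reduces to the classical fact that in a flat family of reduced curves the locus of geometric fibers that are smooth or nodal at a given point is open (nodes are rigid isolated complete intersection singularities of embedding dimension two). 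The finite intersection of these open subsets is open, proving (i).

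For (ii), I would use Noetherian approximation (\S\ref{approxsec} and Remark \ref{prem}) to descend $(C',D') \to (C,D)$ over $S$ to a model over a base $S_0$ of finite type over $\bbZ$; since constructibility descends under pullback along $S \to S_0$, it suffices to prove the statement over $S_0$. The fiberwise semi-stable locus in $S_0$ is then constructible: by (i) the set of non-semi-stable points of $C'$ is closed, and its image under the finite-type morphism $C' \to S_0$ is constructible by Chevalley, so its complement (the locus of $s$ with $(C'_s, D'_s)$ semi-stable) is constructible. For the exceptional-components condition, let $W \subset C$ be the (closed) modification locus of $f_C$ and $W' := f_C^{-1}(W) \subset C'$; then $W' \to W$ is proper and every exceptional component of any geometric fiber $(C'_\os, D'_\os)$ is an irreducible component of $W'_\os$. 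The existence of such a component is a condition on the geometric fiber that depends only on discrete invariants (number, dimension and genera of irreducible components, together with incidence data with $D'$ and the singular locus of $C'$), so it is constructible on $S_0$ by stratifying along these invariants and applying Chevalley on each stratum.

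The main obstacle is the rigorous setup of the constructibility of the ``no exceptional component'' condition, since one must extract the combinatorial data of irreducible components of geometric fibers uniformly in $s$. I would handle this by stratifying $S_0$ so that the geometric Hilbert polynomial and irreducible-component data of $W' \to S_0$ are constant on each stratum (a finite stratification by standard semicontinuity/Chevalley arguments in the Noetherian setting), then checking the $\bbP^1$-shape and intersection-count conditions stratum by stratum; alternatively, one can argue via the relative Hilbert scheme parametrizing $\bbP^1$-like subschemes of $W'$ and invoke constructibility of its image in $S_0$.
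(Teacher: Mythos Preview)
Your proposal is correct and follows essentially the same route as the paper: Noetherian approximation to a finite-type base, openness of the semi-stable locus via the standard decomposition (nodal fibers, \'etaleness of $D'$, disjointness from the singular locus), Chevalley for the fiberwise semi-stable locus in $S$, and then constructibility of the ``no exceptional component'' condition by tracking the combinatorial invariants (incidence graph and genera) of the contracted locus. The only cosmetic differences are that the paper works directly with the closed set $Z\subset C'$ of fiber components contracted in $C$ (invoking \cite[$\rm IV_3$, 13.1.3]{ega} for closedness) rather than your $W'=f_C^{-1}(W)$, and that the paper pins down the constructibility of the combinatorial data via \cite[$\rm IV_3$, \S9.7]{ega} and a flattening stratification plus semi-continuity for the genera, whereas you leave the stratification a bit more implicit and offer a Hilbert-scheme alternative.
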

\begin{proof}
By approximation we can assume that $S$ is of finite type over $\bfZ$. It is well known that the semi-stable
locus of $C'\to S$ is open (e.g. use the local description from \cite[2.23]{dJ}). It follows that the
semi-stable locus of $(C',D')$ is also open because \'etaleness of $D'\to S$ and disjointness of $D'$ from
$(C'/S)_\sing$ are open conditions. This proves (i), and we also obtain that $(C',D')$ is semi-stable over a
constructible set whose complement is the image of the not semi-stable locus of $(C',D')$.

It suffices now to prove that exceptional components show up in precisely the geometric fibers over a
constructible subset $T\subset S$ when $(C',D')$ is semi-stable. Let $Z$ be the union of all irreducible
components in the fibers $C'_s$ that are contracted in $C$ to a point. Thus, $Z$ has proper $S$-fibers and is
closed in $C'$ by \cite[$\rm IV_3$, 13.1.3]{ega}. To each geometric fiber $Z_\os$ we associate the combinatorial
data consisting of its incidence graph -- vertex per generic point and edge per self-intersection, and also we
provide each vertex with the weight equal to the arithmetic genus of its irreducible component. We claim that
the combinatorial data of the geometric fiber over a point $s\in S$ is a locally constructible function of $S$.
For the topological data without weights this follows from the results of \cite[$\rm IV_3$, \S9.7]{ega}, see
loc.cit. 9.7.9 and 9.7.12; and for the genera this follows from existence of a stratification of $S$ (by reduced
locally closed subschemes) which flattens the morphism $Z\to S$ and from the semi-continuity theorem \cite[$\rm
III_2$, 7.6.9]{ega} (to prove that flattening exists use that the flat locus is open by \cite[$\rm IV_3$,
11.1.1]{ega}). Since each exceptional component is an irreducible component in some $Z_\os$ which satisfies
obvious combinatorial properties, it follows that $T$ is constructible.
\end{proof}

Recall that the Riemann-Zariski space $\gtS=\RZ_{K^s}(S)$ is homeomorphic to the projective limit of all
generically \'etale alterations of $S$, and a point $\gtx\in\gtS$ is defined by a valuation ring
$\calO_{\gtS,\gtx}$ of $K^s$ and a morphism $\phi_\gtx:\gtS_\gtx=\Spec(\calO_{\gtS,\gtx})\to S$ which agrees
with $\Spec(K^s)\to S$. By $C_\gtx=(C_\gtx,D_\gtx)$ we will denote the multipointed $\gtS_\gtx$-curve
$(C,D)\times_S\gtS_\gtx$. We start with an analog of Proposition \ref{schmodprop}.

\begin{propsect}
\label{valcrit} Let $S$ be an integral noetherian scheme with generic point $\eta=\Spec(K)$, and let $X\to C$
and $Y\to C$ be $\eta$-modifications, where $C,X,Y$ are reduced flat $S$-schemes of finite type. Assume that $Y$
is $\eta$-normal, and for any $\gtx\in\gtS=\RZ_K(S)$ the modification $Y_\gtx\to C_\gtx$ factors through
$X_\gtx$. Then $Y$ dominates $X$.
\end{propsect}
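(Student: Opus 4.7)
The plan is to realize the desired domination as a section of a natural proper $\eta$-modification. Form the schematic image $\Gamma$ of the locally closed immersion
\[
Y_\eta \hookrightarrow Y_\eta \times_{C_\eta} X_\eta = (Y\times_C X)_\eta \hookrightarrow Y\times_C X,
\]
where the first map is the graph of the canonical isomorphism $Y_\eta\toisom X_\eta$. Then the projection $p_1:\Gamma\to Y$ is a proper $\eta$-modification, and if it turns out to be an isomorphism, the composition $p_2\circ p_1^{-1}:Y\to X$ is a $C$-morphism exhibiting the domination. By $\eta$-normality of $Y$, once $p_1$ is shown to be finite it is automatically an isomorphism; and since $p_1$ is proper, finiteness reduces to quasi-finiteness.

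The main input is a spreading-out and gluing construction. For each $\gtx\in\gtS$ the hypothesis provides $h_\gtx:Y_\gtx\to X_\gtx$ over $C_\gtx$. Since $Y,X,C$ are finitely presented over the noetherian scheme $S$, Lemma \ref{approxlem} together with standard approximation lets me descend $h_\gtx$ to a $C_U$-morphism $h_U:Y_U\to X_U$ defined over an open subscheme $U$ of a quasi-modification $S_\gtx\to S$ with $\gtx\in\RZ_K(U)$. By quasi-compactness of $\gtS$ (Proposition \ref{quacomprop}) finitely many open subspaces $\RZ_K(U_i)$ cover $\gtS$, and by Proposition \ref{oetamodprop} I may arrange the $U_i$ as open subschemes $S'_i$ of a single modification $S'\to S$, still covering $S'$ (since their preimages cover $\RZ_K(S')=\gtS$). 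On each overlap $S'_i\cap S'_j$ two $C_{S'_i\cap S'_j}$-morphisms $Y_{S'_i\cap S'_j}\to X_{S'_i\cap S'_j}$ extending the generic isomorphism must coincide, because $X\to C$ is separated and $Y_\eta$ is schematically dense in $Y_{S'_i\cap S'_j}$ by flatness. Hence the local morphisms glue to a single $C_{S'}$-morphism $h':Y_{S'}\to X_{S'}$.

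The graph $G$ of $h'$ is a closed subscheme of $(Y\times_C X)_{S'}$ isomorphic to $Y_{S'}$ via $p_1$, and its generic fiber is the diagonal $Y_\eta$. Being $S'$-flat over the integral scheme $S'$, $G$ equals the schematic closure of $Y_\eta$ in $(Y\times_C X)_{S'}$; in particular it is contained in any closed subscheme of $(Y\times_C X)_{S'}$ containing $Y_\eta$, so $G\subseteq \Gamma_{S'}:=\Gamma\times_S S'$. Composing with $G\toisom Y_{S'}$ yields a section $\sigma:Y_{S'}\hookrightarrow\Gamma_{S'}$ of $p_1$.

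The hard part will be upgrading this section to a proof of quasi-finiteness of $p_1:\Gamma\to Y$. The plan is a rigidity argument using valuative criteria. Suppose, for contradiction, that some fiber $\Gamma_y$ has a point $z$ not in the image of $\sigma$ after any base change; then a DVR $R\subset k(\Gamma)$ centered at $z$ with generic point in $Y_\eta$ produces a lift $\Spec R\to\Gamma$ whose image under $p_2$ disagrees with the lift to $X$ obtained from $h_\gtx$ for $\gtx=v|_K\in\gtS$ (using the morphism $\Spec R\to Y_\gtx\to X_\gtx\to X$). These are two $C$-morphisms $\Spec R\to X$ agreeing on the generic point of $\Spec R$, and by the valuative criterion of separatedness applied to the proper morphism $X\to C$ they must be equal — contradicting the choice of $z$. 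Consequently each $\Gamma_y$ consists of a single (reduced) point, so $\Gamma\to Y$ is quasi-finite, hence finite, and $\eta$-normality of $Y$ forces it to be an isomorphism. Composing with $p_2$ then produces the required $C$-morphism $Y\to X$, uniqueness of which follows once more from separatedness of $X\to C$.
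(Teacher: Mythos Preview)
Your spreading-out and gluing to obtain $h':Y_{S'}\to X_{S'}$ over a modification $S'\to S$ is correct and matches the paper's first step exactly. The reformulation via the schematic image $\Gamma\subset Y\times_C X$ is also a reasonable way to encode the descent problem. The gap is in the ``rigidity argument'' that is supposed to yield quasi-finiteness of $p_1:\Gamma\to Y$.

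The valuative criterion does not produce a disagreement; it produces an \emph{agreement}. Concretely: given $z\in\Gamma$ and a valuation ring $R$ of $k(\Gamma)$ centered at $z$, the induced $\gtx=R\cap K$ is centered at some $s'\in S'$, so $\Spec R$ lifts to $\Gamma_{S'}$ with image $z'$, and the valuative criterion for the separated morphism $X_{S'}\to C_{S'}$ forces $p_{2,S'}(z')=h'(p_{1,S'}(z'))$, i.e.\ $z'=\sigma(y')\in G$. So what your argument actually proves is that $G\to\Gamma$ is \emph{surjective}. It does not prove that $\Gamma_y$ is a single point: the fiber $G_y\cong(Y_{S'})_y=\Spec k(y)\times_S S'$ surjects onto $\Gamma_y$, but $(Y_{S'})_y$ is typically positive-dimensional (it contains a copy of $S'_s$), so nothing yet prevents its image in $\Gamma_y\hookrightarrow\{y\}\times_{C}X$ from being more than one point. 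Your sentence ``whose image under $p_2$ disagrees with the lift to $X$ obtained from $h_\gtx$'' assumes exactly the opposite of what the valuative criterion gives, and the contradiction you derive is with that unjustified assumption, not with the hypothesis on $z$.

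What is missing is precisely the descent step the paper carries out directly. Using $\eta$-normality of $Y$, the map $Y_{S'}\to Y$ is its own Stein factorization, so the fiber $(Y_{S'})_y$ is connected. Its image under $(Y_{S'})_y\xrightarrow{h'}X_{S'}\to X$ is closed and connected in $X_s$ and consists entirely of points closed in their $S$-fiber; hence it is a single point $x$. In your language this gives $\Gamma_y=\{(y,x)\}$, whence $p_1$ is a bijection, so finite, so an isomorphism by $\eta$-normality. Thus your graph framework can be completed, but only by importing the paper's connectedness-plus-closed-points argument; the valuative step alone does not suffice.
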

\begin{proof}
By Lemma \ref{approxlem} and approximation, for any point $\gtx\in\gtS$ there exists a quasi-modification
$S_\gtx\to S$ such that $Y_\gtx=Y\times_SS_\gtx$ dominates $X_\gtx=X\times_SS_\gtx$, i.e. the modification
$Y_\gtx\to C_\gtx$ factors through $X_\gtx$. Since $\RZ_K(S)$ is quasi-compact and $\RZ_K(S_\gtx)\into\RZ_K(S)$
is a neighborhood of $\gtx$ by Corollary \ref{openembcor}, we can find a finite set of quasi-modifications
$S_i\to S$ such that $\RZ_K(S)=\cup_i\RZ_K(S_i)$ and $Y\times_SS_i\to C\times_SS_i$ factors through
$X\times_SS_i$. By Proposition \ref{oetamodprop}, replacing $S_i$ with their modifications we can achieve that
$S_i$ are open subschemes of a modification $S'$ of $S$. Set $C'=C\times_SS'$, $X'=X\times_SS'$ and
$Y'=Y\times_SS'$, then $Y'\to C'$ factors through $X'$.

We want now to pull down the $\eta$-modification $f':Y'\to X'$ to an $\eta$-modification $Y\to X$. For an
$S$-scheme $T$ let $(T/S)_0\subset T$ denote the set of points closed in the $S$-fibers of $T$. Choose a closed
point $y\in Y$ and let $s$ be its image in $S$, so $y$ is closed in the fiber $Y_s$. Since $Y$ is $\eta$-normal,
$Y'\to Y$ is its own Stein factorization and so the preimage $Y'_y$ of $y$ in $Y'$ is connected. Clearly
$(Y'/S')_0$ contains the preimage of $(Y/S)_0$ under the projection $Y'\to Y$, and similarly for $X'$ and $X$.
So $Y'_y\subset(Y'/S')_0$, hence $f'(Y'_y)\subset (X'/S')_0$, yet it is a closed set in $X_s\times S'_s$, so it
must be quasi-finite over $S'_s$. Thus its image in $X_s$ must be $s$-quasi-finite. By connectedness of $Y'_y$
this image must be a single point $x\in X_s$. Obviously, $\calO_{X,x}\subset\calO_{Y'}(Y'_y)$, where
$\calO_{Y'}(Y'_y)=\injlim_U\calO_{Y'}(U)$ for $U$ running through all open neighborhoods of $Y'_y$, and also
$\calO_{Y,y}=\calO_{Y'}(Y'_y)$ by Stein factorization. Thus, we have proved that for any closed point $y\in Y$
there exists a point $x\in X$ whose local ring is contained in the local ring of $y$. It follows easily that the
modification $Y\to C$ factors through $X$ (and $y$ is taken to $x$).
\end{proof}

\begin{proof}[Proof of Theorem \ref{minstabtheor}.]
First we reduce to the case of $S$ of finite type over $\bfZ$. Indeed, $(C,D)$, $(C_\st,D_\st)$ and $(C',D')$
are induced from multipointed curves $(\oC,\oD)$, $(\oC_\st,\oD_\st)$ and $(\oC',\oD')$ over a scheme $\oS$ of
finite type over $\bfZ$, and by Lemma \ref{openloclem}(ii) and Remark \ref{prem}(ii) one can also achieve that
$(\oC_\st,\oD_\st)$ is stable and $(\oC',\oD')$ is semi-stable. Now in order to show that $C'\to C$ factors
through $C_\st\to C$ it is enough to show that $\oC'\to\oC$ factors through $\oC_\st\to\oC$. So, we can replace
$S,C,D$, etc., with $\oS,\oC,\oD$, etc., achieving that $S$ is of finite type over $\bfZ$. In particular, any
valuation in $\gtS$ is of finite height by Abhyankar inequality \ref{Abhlem}. Since $C'$ is semi-stable over
$S$, it is $\eta$-normal by Proposition \ref{redfibprop}. For any $\gtx\in\gtS$, the modification
$((C_\st)_\gtx,(D_\st)_\gtx)\to(C_\gtx,D_\gtx)$ is stable and the $\gtS_\gtx$-curve $(C'_\gtx,D'_\gtx)$ is
semi-stable. By Corollary \ref{sepcloscor}, $C'_\gtx$ dominates $(C_\st)_\gtx$, hence $C'$ dominates $C_\st$ by
Proposition \ref{valcrit}.
\end{proof}

Next we deduce the Corollaries from the Introduction. To prove Corollary \ref{ccc} we note that if
$(C'_\st,D'_\st)$ is another stable modification then $C_\st$ and $C'_\st$ dominate each other. Hence
$C_\st\toisom C'_\st$, and we get (i). The semi-stable locus $U$ of $(C,D)\to S$ is an open subscheme of $C$ by
Lemma \ref{openloclem}(i), so $f^{-1}(U)$ is a stable modification of $U$ and $U_\st\toisom U$ by minimality of
the stable modification.

Corollary \ref{minstabcor} follows from Theorem \ref{minstabtheor} as follows. The normalization $S''$ of $S$ is
the projective limit of finite modifications. Hence by approximation, the domination morphism $\oC\times_S
S''\to C_\st\times_S S''$ can be defined already over a finite modification $S'$ of $S$.

\begin{remsect}
\label{uniqrem} It is not clear if the above modification $S'\to S$ is necessary at all. Simple examples, see
\cite[3.17]{AO}, show that extension of a stable curve $C_\eta$ to a stable $S$-curve $C\to S$ can be not unique
when $S$ is not normal. However, stable modification is a more subtle creature (for example, it exists when
$C\to S$ is not proper). The author does not know examples where stable modification is not unique.
\end{remsect}

\begin{proof}[Proof of Theorem \ref{stabmodtheor}.]
Since $(C,D)$ is induced from a multipointed curve over some $S_0$ of finite type over $\bfZ$, it suffices to
build a stable modification over $S_0$. So, it suffices to deal with $S$ of finite type over $\bfZ$, and we will
assume that this is the case. In particular, each valuation from $\gtS=\RZ_{K^s}(S_0)$ is now of finite height.
For any point $\gtx\in\gtS$, the relative multipointed curve $(C_\gtx,D_\gtx)\to\gtS_\gtx$ possesses a stable
modification $(C'_\gtx,D'_\gtx)$ by Proposition \ref{sepclosprop}. By Lemma \ref{approxlem} and approximation,
there exist a generically \'etale quasi-alteration $S'\to S$ and an $\eta$-modification $(C',D')\to(C,D)\times_S
S'$ such that $\gtx$ is centered on a point $x\in S'$ and $(C'_\gtx,D'_\gtx)\to\gtS_\gtx$ is the base change of
$(C',D')\to S'$. Moreover, by Lemma \ref{openloclem}(ii) and Remark \ref{prem}(ii) we can achieve that $(C',D')$
is a stable modification of $(C,D)\times_SS'$.

Now we can act similarly to the proof of Proposition \ref{valcrit}. Since $\RZ_{K^s}(S')\into\gtS$ is a
neighborhood of $\gtx$ and $\gtS$ is quasi-compact, we can find a finite set of generically \'etale
quasi-alterations $S_i\to S$ such that $\gtS=\cup_i\RZ_{K^s}(S_i)$ and the multipointed curves $(C,D)\times_S
S_i$ admit stable modifications $(C_i,D_i)$. Replacing $S_i$ with their generically \'etale alterations, we can
achieve that $S_i$ are open subschemes of a generically \'etale alteration $S'$ of $S$. Let $\oS$ be the
normalization of $S'$ and $\oS_i$ be the preimages of $S_i$ in $\oS$ then the curves $(C,D)\times_S\oS_i$ admit
stable modifications $(\oC_i,\oD_i)$ which agree over the intersections $\oS_i\cap\oS_j$ by Corollary \ref{ccc}
(i). So, they glue to a stable modification of the $\oS$-curve $(C,D)\times_S\oS$.
\end{proof}

Finally, if $S'$ is a normal alteration of $S$ and a stable modification $(C'_\st,D'_\st)\to (C,D)\times_S S'$
exists then it is unique, whence Corollary \ref{equivcor}.

\section{Uniformization of one-dimensional analytic fields}\label{ansec}
Throughout \S\ref{ansec} we fix an analytic ground field $k$ and set $p=\cha(\tilk)$. All analytic fields are
understood to be $k$-fields. Our general goal in \S\ref{ansec} is to establish analytic one-dimensional local
uniformization. In particular, we will fulfil our earlier promise to prove Theorem \ref{fieldunif1} (in the very
end of \S\ref{ansec}). We will see that the main difficulty is in treating immediate extensions, and we propose
a way to control them in \S\ref{immsec}.

\subsection{Immediate extensions of degree $p$} \label{immsec}
Our aim is to gain some control on immediate algebraic extensions of an analytic field $K$. For example, we
would like to obtain a criterion when $K$ is {\em stable}. Recall that stability means that any finite extension
$L/K$ is Cartesian, i.e. $e_{L/K}f_{L/K}=[L:K]$ or $d_{L/K}=1$ (see also \cite[\S3.6]{BGR}). In particular, if
$K$ is a DVR or $p=\cha(\tilK)=0$ then $K$ is stable. It easily follows from simple ramification theory that $K$
is not stable if and only if there exists a finite extension $L/K$ such that $L$ admits an immediate extension
of degree $p$ (it may happen, though, that $K$ itself does not admit finite immediate extensions but $K$ is not
stable). We will assume that $p>0$ until the end of \S\ref{immsec} (note, however, that many statements become
trivial but make sense if one takes $p$ to be the exponential characteristic).

If $L/K$ is Cartesian wildly ramified of degree $p$ then there exists $b\in K$ such that $\min|b+K^p|=|b|$ and
$\inf|b+L^p|<|b|$. Indeed, either $e_{L/K}=p$ and then we take $b$ such that
$|b|^{1/p}\in|L^\times|\setminus|K^\times|$, or $\tilL/\tilK$ is inseparable and then we take $b\in K$ such that
$|b|=1$ and $(-\tilb)^{1/p}\in\tilL\setminus\tilK$. This fact has the following analog.

\begin{prop}
\label{immprop0} Let $L/K$ be an immediate extension of degree $p$. Then there exist elements $a,b\in K$ and
$\alpha\in L$ such that the infimum $s=\inf_{x\in K}|x^p-ax+b|$ is not achieved on $K$, $|pb|<s$ and
$|\alpha^p-a\alpha+b|<s$. In addition, one can achieve that either $|a|=s^{\frac{p-1}{p}}$, or $a=0$.
\end{prop}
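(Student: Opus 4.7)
The plan is to construct $a,b,\alpha$ by an approximation/optimization argument exploiting the central feature of immediate extensions: for any $\beta \in L\setminus K$ the quantity $\inf_{x\in K}|\beta-x|$ is strictly positive (since $K$ is complete, as $k$-fields are assumed analytic) and is \emph{never attained}. The latter fact is a quick consequence of $|L^\times|=|K^\times|$ and $\tilL=\tilK$: if the infimum equalled $|\beta-x_0|$ for some $x_0\in K$, then $\gamma=\beta-x_0$ could be rescaled by an element of $K^\times$ of matching absolute value to have $|\gamma|=1$; the residue $\tilde\gamma\in\tilL=\tilK$ would then lift to $d\in K$ with $|\gamma-d|<1$, contradicting minimality. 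The same ``approximation by one unit'' trick will be the main workhorse below.

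First I would fix any $\alpha\in L\setminus K$ and, by Tschirnhaus translation $\alpha\leadsto\alpha-c$ with $c\in K$, arrange $|\alpha|$ to lie arbitrarily close to $\inf_{x\in K}|\alpha-x|$. Next, regard the map $(a,b)\mapsto|\alpha^p-a\alpha+b|$ on $K\times K$ and let $\tau$ denote its infimum. The hypothesis $\tilL=\tilK$ applied successively to the residues of $(\alpha^p+b)/\alpha$ (for the choice of $a$) and of $(a\alpha-\alpha^p)$ (for the choice of $b$) shows that $\tau$ can be pushed below any threshold one wishes. The task is then to choose $(a,b)$ realising some value $|\alpha^p-a\alpha+b|<s$ where $s:=\inf_{x\in K}|x^p-ax+b|$; the $s$ here is a datum attached to the chosen $(a,b)$, not to $\alpha$, and the whole game is to ensure the polynomial behaves strictly worse on $K$ than it does on $\alpha$.

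The dichotomy $|a|=s^{(p-1)/p}$ or $a=0$, together with the inequality $|pb|<s$, falls out of Newton polygon analysis of $f(X):=X^p-aX+b$. For a chosen $(a,b)$, the Newton polygon forces $|\alpha|^p=s=|b|$ (the dominant non-linear terms have to compete), so $|a\alpha|\le s$, i.e.\ $|a|\le s^{(p-1)/p}$; strict inequality means the $a\alpha$ term is negligible and $a$ may be set to $0$ without changing $s$ or $|f(\alpha)|$, producing the Kummer-like case. The bound $|pb|<s$ is trivial in equicharacteristic ($p=0$), and in mixed characteristic it is imposed by requiring that one cannot improve $\alpha$ further by a translation: the first-order effect of $\alpha\mapsto\alpha+c$ on $f(\alpha)$ is $pc\alpha^{p-1}-ca+O(c^2)$, whose dominant part must not beat $|f(\alpha)|$, and this precisely forces $|p\alpha^{p-1}|<s/|\alpha|$, equivalently $|pb|<s$ since $|b|=|\alpha|^p$.

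The main obstacle is the ``$s$ is not attained in $K$'' requirement, which conjugates all the other conditions. Were it attained at some $x_0\in K$, the Krasner-style comparison of $f(x_0)$ with $f(\alpha)$ (using $|f(\alpha)|<s$) together with the factorisation of $f$ over $K^a$ would let one manufacture a root of $f$ in $L$ that is closer to $\alpha$ than any translate $\alpha-c\in K$ we started from, giving an improvement of the starting data that contradicts the optimality built into the first step. Making this contradiction watertight---simultaneously managing the Newton slopes, the unachieved-infimum property for $\alpha$, and the wild-ramification bounds in mixed characteristic---is the delicate heart of the proof and is where the hypothesis $[L:K]=p$ is used essentially (as with a factor of degree $<p$ the induction would collapse without producing the approximating polynomial of the prescribed two-term shape).
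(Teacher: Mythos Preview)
Your sketch has the right opening move---pick $\alpha\in L\setminus K$ and use that $r:=\inf_{x\in K}|\alpha-x|$ is positive and unattained---but the subsequent argument does not close, and it diverges from the paper's route at the first substantive step.

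Two concrete problems. First, your Newton polygon claim ``$|\alpha|^p=s=|b|$'' is false: one always has $s\le|g(0)|=|b|$, and in fact the correct value is $s=r^p$, which is \emph{strictly} less than $|b|=|\alpha|^p$ since $|\alpha|>r$ after any finite translation. This misidentification contaminates your derivation of $|pb|<s$. Second, and more fundamentally, the two-parameter optimization of $|\alpha^p-a\alpha+b|$ over $(a,b)\in K^2$ is not shown to produce anything usable. Your alternating-residue argument does not establish that the infimum $\tau$ is $0$ (for $p>2$ this would require $\alpha^p$ to lie in the closure of $K+K\alpha$, which is not a consequence of immediacy alone), and even granting a small value of $|g(\alpha)|$ you have no reason for $g(T)=T^p-aT+b$ to be \emph{irreducible}---and irreducibility is exactly what makes the Krasner argument for ``$s$ unattained'' work. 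You flag this last point as ``the delicate heart'' without resolving it; that is the genuine gap.

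The paper's approach is different and constructive: it starts from the \emph{minimal polynomial} $f(T)=T^p+\sum_{i=1}^{p-1}a_iT^i+b$ of $\alpha$ rather than optimizing over abstract $(a,b)$. The key observation is that each derivative $f^{(i)}$ with $1\le i<p$ has degree $<p$, hence has no root on the $K$-disc $E_K(\alpha,r)$ (every Zariski-closed point there generates a non-Cartesian, hence degree~$\ge p$, extension), hence is invertible on a slightly larger split disc $E(0,r_0)$. Invertibility forces $|a_i|$ to be unchanged under translations $\alpha\mapsto\alpha-c$ with $|c|\le r_0$; shrinking $r_0\searrow r$ then makes the middle coefficients satisfy $|a_i|<r^p/r_0^i$ for $2\le i\le p-1$ and also $|p|r_0^p<r^p$. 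One now simply \emph{drops} the middle terms to get $g(T)=T^p-aT+b$ with $a=-a_1$, and $|g(\alpha)|<r^p$ follows. Because $g$ is close to $f$ on the disc, $g$ has a root $\beta$ with $|\beta-\alpha|\le r$; then $\inf|\beta-K|=r$ is unattained, so $K(\beta)/K$ is non-Cartesian of degree $p$, forcing $g$ irreducible. Krasner now gives $|g(c)|=|c-\beta|^p$ for every $c\in K$, whence $s=\inf_{c\in K}|g(c)|=r^p$ is unattained; and $|pb|=|p|r_0^p<r^p=s$ drops out for free. The dichotomy on $|a|$ comes from whether $|a_1|=r^{p-1}$ or $|a_1|<r^{p-1}$, the latter allowing $a$ to be discarded as well.
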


The role of the oddly looking condition $|pb|<s$ will be seen later; it allows to replace $b$ with $c^p-ac+b$
for any $c\in K$ such that $|pc^p|<s$. Before proving the Proposition we prefer to reformulate it in a less
intuitive form that is more convenient for applications.

Let $K$ be an analytic field of positive characteristic $p$. Given an element $a\in K$, the set
$S_a=S_a(K)=\{c^p-ac|c\in K\}$ is an additive group whose cosets $b+S_a$ contain some information about $K$. If
$K$ is of mixed characteristic then we have a group structure only approximately. One can remedy the problem by
switching to characteristic $p$ and studying the ring $\Kcirc/p\Kcirc$ and its modules $x\Kcirc/px\Kcirc$. We
prefer another approach which is nearly equivalent. Given an element $a\in K$ and a positive number $s$, the set
$S_{a,s}(K)=\{c^p-ac+d|\ c,d\in K,|pc^p|<s,|d|<s\}$ is an additive subgroup of $K$ because
$|(c_1+c_2)^p-c_1^p-c_2^p|<s$ as soon as $|pc_i|^p<s$. We will study its cosets $b+S_{a,s}(K)$, and we say that
a coset is {\em trivial} or {\em split} if it contains zero. Note that because of the $d$-term a coset is split
if and only if it contains an element $x$ with $|x|<s$. We say that a non-split coset $b+S_{a,s}(K)$ is {\em
critical} if $\inf|b+S_{a,s}(K)|=s>0$, that is the coset contains elements of absolute value arbitrary close to
the lowest possible bound. If in addition either $a=0$, or $a=1$ and $s=1$ then the coset is called {\em
special}. Note that for any non-split coset we can ignore the $d$-term when finding the infimum and so a coset
is critical if and only if $s=\inf_{c\in K,|pc^p|<s}|c^p-ac+b|$ (with $s>0$).

\begin{lem} \label{212lem}
Assume that the valuation on $K$ is not discrete.

(i) A coset $b+S_{a,s}(K)$ is critical if and only if $|pb|<s=\inf_{c\in K}|c^p-ac+b|$.

(ii) Any critical coset satisfies $|a|\le s^{\frac{p-1}{p}}$.
\end{lem}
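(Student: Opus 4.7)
The plan is to establish (ii) first via a Newton correction and then bootstrap (i) from (ii). Throughout I will use the reformulation (implicit in the text) that $b+S_{a,s}(K)$ is non-split exactly when $|b+c^p-ac|\ge s$ for every $c\in K$ with $|pc^p|<s$, and critical precisely when additionally $\inf_{c:|pc^p|<s}|b+c^p-ac|=s$.

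For (ii), I would argue by contradiction, supposing the coset is critical and $|a|>s^{(p-1)/p}$. Given any $c_0\in K$ with $|pc_0^p|<s$ realizing $|b+c_0^p-ac_0|$ within a small $\varepsilon$ of $s$, I set $\delta=-(b+c_0^p-ac_0)/a$ and $c_1=c_0+\delta$. Expanding $c_1^p-ac_1+b$, the leading cancellation $-a\delta=b+c_0^p-ac_0$ leaves only $\delta^p$ in equicharacteristic $p$ (since Frobenius is additive), plus cross terms $\binom{p}{i}c_0^{p-i}\delta^i$ in mixed characteristic, all of which carry a factor of $p$. The core estimate $|\delta^p|<s$ reduces precisely to $|a|^p>s^{p-1}$; the mixed-characteristic correction terms should be dispatched using $|c_0|<(s/|p|)^{1/p}$, $|\delta|<(s+\varepsilon)/|a|$, and the negated hypothesis $|a|>s^{(p-1)/p}$. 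A parallel expansion will show $|pc_1^p|<s$, so $c_1$ satisfies the constraint with $|b+c_1^p-ac_1|<s$, contradicting non-splitness.

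For the forward direction of (i), I would split into proving $|pb|<s$ and proving that the unconstrained infimum also equals $s$. For the former, writing $pb=p(b+c^p-ac)-pc^p+pac$ for a $c$ realizing a near-infimum, each of the three terms is strictly below $s$: the first as $|p|(s+\varepsilon)$, the second by the constraint, and the third as at most $|p|^{(p-1)/p}s$ via (ii) and $|c|<(s/|p|)^{1/p}$. For the latter, if some $c$ gave $|c^p-ac+b|<s$ with $|pc^p|\ge s$, then $|c^p|\ge s/|p|$ would need to cancel against $ac$ or $b$, forcing either $|c|^{p-1}\le|a|\le s^{(p-1)/p}$ (contradicting $|pc^p|\ge s$) or $|b|\ge s/|p|$ (contradicting the just-established $|pb|<s$); and if $|pc^p|<s$, the coset becomes split, another contradiction. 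The reverse direction of (i) should be dual but simpler: given $|pb|<s$ and unconstrained infimum $s$, non-splitness is immediate, and for any $c$ with $|b+c^p-ac|<s+\varepsilon$ the identity $c^p=(c^p-ac+b)+ac-b$ combined with a short case analysis on whether $|c^p|\le|ac|$ yields $|c^p|\le\max(s+\varepsilon,|b|)$, whence $|pc^p|\le\max(|p|(s+\varepsilon),|pb|)<s$ for small $\varepsilon$.

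The hard part will be the mixed-characteristic bookkeeping in (ii): each cross term $\binom{p}{i}c_0^{p-i}\delta^i$ must be shown to remain below $s$, and the bounds rely essentially on the constraint $|pc^p|<s$ encoded in the definition of $S_{a,s}(K)$, which is the real structural reason the constraint is imposed in the first place. In equicharacteristic $p$ the entire argument collapses, since $|p|=0$ kills every correction term outright.
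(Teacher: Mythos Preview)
Your Newton-iteration approach to (ii) is sound and is a genuine alternative to the paper's route. The paper instead proves a preliminary claim via Hensel's lemma: whenever $b'$ lies in a critical coset and $|c^p-ac+b'|<|b'|$ with $|pc^p|<s$, one must have $|b'|>|ac|$; from this (ii) and the forward half of (i) fall out quickly. Your method is the explicit Newton step underlying that Hensel argument, and your forward direction of (i) is cleaner than the paper's since it uses only (ii) and the identity $pb=p(b+c^p-ac)-pc^p+pac$. (Minor slip: your $\delta$ should be $+(b+c_0^p-ac_0)/a$, not $-$, so that $(c_0^p-ac_0+b)-a\delta$ actually cancels.)

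There is, however, a real gap in your reverse direction of (i). You claim that for any $c$ with $|c^p-ac+b|<s+\varepsilon$ a case split on $|c^p|\lessgtr|ac|$ yields $|c^p|\le\max(s+\varepsilon,|b|)$. This fails when $|c^p|=|ac|$ and $|pc^p|\ge s$: then $|c^p|\ge s/|p|>|b|$ (using $|pb|<s$) and $|c^p|>s+\varepsilon$, so your bound is simply false. What is true is that this case is \emph{impossible} under the standing hypothesis $\inf_{c}|c^p-ac+b|=s$, but showing this requires work: one more Newton step $c_1=c+(c^p-ac+b)/a$ gives, exactly as in your proof of (ii), $|c_1^p-ac_1+b|<s$ (here the needed bound $|a|=|c|^{p-1}\ge(s/|p|)^{(p-1)/p}>s^{(p-1)/p}$ comes from the case hypothesis itself, and the cross terms are controlled as before), contradicting the unconstrained infimum. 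So the fix is to re-invoke your own (ii)-style iteration, not a mere case analysis. Amusingly, the paper's write-up stumbles at the very same spot: it disposes of this case by citing (ii), which as stated applies only to critical cosets and is therefore not yet available; the intended argument is to re-invoke the Hensel claim directly, which produces a root of $T^p-aT+b$ in $K$ and hence forces the unconstrained infimum to $0$.
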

\begin{proof}
First, we claim that given an element $b'$ of a critical coset $b+S_{a,s}(K)$ and an element $c\in K$ such that
$|pc^p|<s$ and $|c^p-ac+b'|<|b'|$ one necessarily has that $|b'|>|ac|$ and, in particular, $|b'|=|c^p|$. Without
loss of generality $b=b'$. Then to justify our claim it is enough to show that if, to the contrary, $a,b,c\in K$
satisfy $|c^p-ac+b|<|b|$ and $|b|\le |ac|$ then there exists a root $c_0$ of $f(T)=T^p-aT+b$ with $|c-c_0|<|c|$.
Indeed, $|pc_0^p|=|pc^p|\le s$ and so $b+S_{a,s}(K)$ is split, contradicting our assumption. Now let us prove
that $c_0$ exists. Note that we can re-scale $a,b,c$ by replacing them with $c'=uc$, $a'=u^{p-1}a$ and $b'=u^pb$
for any $u\in K^\times$ because $|c'^p-a'c'+b'|<|b'|$, $|b'|\le |a'c'|$ and any root $c'_0$ of $T^p-a'T+b'$
corresponds to a root $c_0=c'_0/u$ of $f(T)$. In particular, taking $u=c^{-1}$ we can make $c'=1$. To simplify
notation we will denote the new triple as $a,b,c$. Since $|1-a+b|<|b|$ and $|b|\le |a|$ one of the following is
true: (a) $|a|=1\ge |b|$, (b) $|a|=|b|>1$. In case (a) the residue polynomial $\tilf(T)=T^p+\tila T+\tilb$ is
separable with root $1$. In particular, it has a root $c_0$ by Hensel's lemma. In case (b) we consider the
non-monic polynomial $g(T)=-a^{-1}f(T)$. The reduction $\tilg(T)=T-\wt{(b/a)}$ has a simple root, hence $g(T)$
has a root by the non-monic version of Hensel's lemma (the usual proof with lifting a root works fine).

\begin{rem}
A more elegant proof avoiding re-scalings can be given by use of graded Hensel's lemma. In such case one
considers the graded reduction $\tilk\to\tilk_\gr=\oplus_{r\in\bfR_+^\times}\{x\in k|\ |x|\le r\}/\{x\in k|\
|x|<r\}$ as defined in \cite[\S1]{Tem1.5} and lifts homogeneous roots of the graded reductions of $f(T)$ (one
reduction per each value of $|T|$). The graded version of Hensel's lemma is proved very similarly to its
classical version, and we refer to \cite[1.9]{Duc} for details
\end{rem}

Now let us prove the Lemma. We start with (ii). Suppose to the contrary that $|a|>s^{\frac{p-1}{p}}$. Replacing
$b$ with another element of the coset we can assume that $s<|b|<|a|^{\frac{p}{p-1}}$. Then there exists $c$ with
$|c^p-ac+b|<|b|$ and we proved above that $|ac|<|b|=|c^p|$. In particular, $|a^p|<|b|^{p-1}$ and we obtain a
contradiction.

Next we prove the direct implication in (i). Assume that a coset $b+S_{a,s}(K)$ is critical. If $|pb|\ge s$ then
$p\neq 0$ and $|b|>s$. So, the norm of $b$ can be decreased by adding an element of the form $c^p-ac$ with
$|pc^p|<s$, and then $|c^p|<|p|^{-1}s\le|b|$. In particular, we must have $|ac|=|b|$ contradicting the proved
above claim that $|b|>|ac|$. This proves that $|pb|<s$. If $\inf_{c\in K}|c^p-ac+b|<s$ then there exists $c\in
K$ with $|pc^p|\ge s$ and $|c^p-ac+b|<s$. We proved in (ii) that $|a|\le s^{\frac{p-1}{p}}$ and it follows that
for any $c$ with $|c^p|>s$ we have that $|c^p|>|ac|$. In particular, $|c^p|=|b|$ and therefore $|pc^p|=|pb|<s$.
This contradiction establishes the second claim of the direct implication.

It remains to prove the converse implication in (i), so assume that $|pb|<s=\inf_{c\in K}|c^p-ac+b|$. We should
only prove that $s'=\inf_{c\in K,|pc^p|<s}|c^p-ac+b|$ equals to $s$. Assume to the contrary that $s'>s$, and let
$c\in K$ be such that $|pc^p|\ge s$ and $|c^p-ac+b|<s'\le |b|$. Certainly, $|c^p-ac|=|b|$, hence either
$|c^p|=|ac|>|b|$ or $|c^p|,|ac|\le|b|$ with at least one an equality. In the first case,
$$|a|=|c|^{p-1}>|b|^{(p-1)/p}\ge s'^{(p-1)/p}>s^{(p-1)/p}$$ which gives a contradiction with (ii). In the second
case, we cannot have that $|c^p|=|b|$ because $|pc^p|\ge s>|pb|$. Thus, the only remaining possibility is that
$|c^p|<|b|=|ac|$. But then $|a|>|c|^{p-1}$, and hence $s=|a|^{p/(p-1)}>|c|^p>|pc^p|$, contrary to our
assumption.
\end{proof}

Now we can give a more precise version of Proposition \ref{immprop0}. It is easily checked that the old
statement follows from the new one.

\begin{prop}
\label{immprop} Let $L/K$ be a wildly ramified extension of degree $p$.

(i) There exists a critical coset $b+S_{a,s}(K)$ which splits over $L$ (i.e. $b+S_{a,s}(L)$ is split) and such
that one of the following possibilities holds:

(a) $a=0$;

(b) $|a|=s^{\frac{p-1}{p}}$ and $s\notin|b+S_{a,s}(K)|$.

(ii) In case (ia) the coset is special by the definition and in case (ib) the wildly ramified extension
$L(a^{\frac{1}{p-1}})/K(a^{\frac{1}{p-1}})$ (obtained from $L/K$ by the moderately ramified base field extension
$K(a^{\frac{1}{p-1}})/K$) admits a special coset satisfying the conditions of (ib).

(iii) In case (ib) $L\toisom K[T]/(T^p-aT+b)$.

(iv) The invariants $d=d_{L/k}$, $e=e_{L/K}$ and $f=f_{L/K}$ satisfy $def=p$ and in the situation of (i) they
can be found as follows:

(a) The extension $L/K$ is immediate (i.e. $d=p$) if and only if $s\notin|b+S_{a,s}(K)|$. In particular, it is
the case in (ib).

(b) $e=p$ if and only if $s\in|b+S_{a,s}(K)|$ and $s\notin|K^\times|^p$. In this case $s^{1/p}$ generates
$|L^\times|$ over $|K^\times|$.

(c) $f=p$ if and only if $s\in|b+S_{a,s}(K)|$ and $s\in|K^\times|^p$. In this case for any $y\in b+S_{a,s}(K)$
and $c\in K$ such that $|y|=s$ and $|c|=s^{1/p}$ the element $(\wt{y/c^p})^{1/p}$ generates the purely
inseparable extension $\tilL/\tilK$.
\end{prop}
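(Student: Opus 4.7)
The plan is to proceed by case analysis on the invariants $(d,e,f)$ of $L/K$, which satisfy $def = [L:K] = p$ prime, so exactly one of them equals $p$. For the immediate case ($d = p$), I would feed the output of Proposition \ref{immprop0} into Lemma \ref{212lem}(i): the hypotheses $|pb|<s$ and $s = \inf_K|x^p-ax+b|$ make the coset $b + S_{a,s}(K)$ critical, while the inequality $|\alpha^p - a\alpha + b|<s$ from Proposition \ref{immprop0} exhibits the splitting over $L$, once one checks $|p\alpha^p|<s$ (this follows from $|\alpha^p| \le \max(|a\alpha|,|b|)$ together with $|pb|<s$ and Lemma \ref{212lem}(ii), using $|a| \le s^{(p-1)/p}$ to bound $|pa\alpha|$). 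Non-attainment of the infimum on $K$ translates to $s \notin |b + S_{a,s}(K)|$ in the case $|a| = s^{(p-1)/p}$, giving (ib); when $a=0$ we land in (ia). For the non-immediate cases I would set $a = 0$ and construct the coset directly. When $e = p$, pick $\beta \in L^\times$ with $|\beta|^p \in |K^\times| \setminus (|K^\times|)^p$; from $\tilde L = \tilde K$ one writes $\beta^p = \pi u_0 + \rho$ with $\pi u_0 \in K^\times$ and $|\rho| < |\pi u_0|$, and sets $b = -\pi u_0$, $s = |\pi u_0|$. When $f = p$, the extension $\tilde L/\tilde K$ is inseparable of degree $p$, so pick $\beta \in L^\circ$ with $\tilde\beta \notin \tilde K$ but $\tilde\beta^p \in \tilde K$, lift $\tilde\beta^p$ to $c_0 \in K^\circ$, and set $b = -c_0$, $s = 1$. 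In both constructions the relation $|\beta^p + b|<s$ gives splitting over $L$, and criticality reduces to ruling out $c \in K$ with $|c^p + b|<s$: such $c$ would force $|c|^p = s$ with $|c| \in |K^\times|$, contradicting $|\beta| \notin |K^\times|$ in the $e=p$ case, or $\tilde c^p = \tilde c_0 = \tilde\beta^p$, contradicting $\tilde\beta \notin \tilde K$ in the $f=p$ case.

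For part (ii), in case (ia) the coset is special by definition. In case (ib) I would apply the substitution $T = u T'$ with $u = a^{1/(p-1)}$, which transforms $T^p - aT + b$ into $u^p(T'^p - T' + b/u^p)$ and rescales the parameters to $(a',s') = (1,\, s/|u|^p) = (1,1)$, using $|u|^p = |a|^{p/(p-1)} = s$. The extension $K(u)/K$ is moderately ramified of degree dividing $p-1$ (coprime to $p$), so $L(u)/K(u)$ remains a wildly ramified degree $p$ extension and the new coset is special. For part (iii) in case (ib), irreducibility of $f(T) = T^p - aT + b$ over $K$ follows because any root $\gamma \in K$ would satisfy $|p\gamma^p|<s$ (by the Newton polygon estimate $|\gamma| \le \rho^{1/p} s^{1/p}$ where $|b| = s\rho$, combined with $|pb|<s$), forcing $\gamma^p - a\gamma \in S_{a,s}(K)$ and hence splitting the coset in $K$, contrary to criticality. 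To produce a root in $L$ I would apply Newton iteration starting from the $\alpha$ given in (i): the estimate $|p\alpha^{p-1}| < |a|$ gives $|f'(\alpha)| = |a| = s^{(p-1)/p}$, and expanding $f(\alpha+\delta)$ with $|\binom{p}{k}| \le |p|$ for $0<k<p$ shows that a Newton step from $\alpha_n$ with $|f(\alpha_n)| \le |p|^n s$ produces $\alpha_{n+1}$ with $|f(\alpha_{n+1})| \le |p|^{n+1} s$; thus the iteration converges in the complete field $L$ to a root $\gamma$, and $L = K(\gamma) \cong K[T]/(f)$ by degree comparison.

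Part (iv) falls out of part (i): the three conditions on $(s, |b + S_{a,s}(K)|, |K^\times|^p)$ in (a), (b), (c) are mutually exclusive and exhaustive, and the three subcases of my construction of (i) each match one of these conditions. Specifically, the immediate case produces the non-attainment condition of (a) (directly from Proposition \ref{immprop0}); the $e = p$ construction produces $s = |\pi u_0| \in |K^\times|$ with $s^{1/p} = |\beta| \notin |K^\times|$, matching (b); and the $f = p$ construction produces $s = 1 \in (|K^\times|)^p$ with the residue-level equation $(\widetilde{y/c^p})^{1/p} = \pm\tilde\beta$ confirming (c). The main obstacle is the Newton iteration in part (iii): in mixed characteristic the naive Hensel hypothesis $|f(\alpha)| < |f'(\alpha)|^2$ would demand $|f(\alpha)| < s^{2(p-1)/p}$, which for $p \ge 2$ is strictly stronger than $|f(\alpha)|<s$, so one genuinely needs the extra room provided by $|pb|<s$ to drive the contraction factor $|p|$ per step. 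A secondary subtlety is the choice of coset representatives in the non-immediate cases: representative elements must be pulled from $K$ rather than $L$ using the trivialities $\tilde L = \tilde K$ (for $e=p$) and Teichm\"uller-type lifting of $\tilde\beta^p$ (for $f=p$), after which criticality becomes a clean norm-and-residue computation.
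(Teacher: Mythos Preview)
Your proposal has a genuine circularity in the immediate case of part (i), which is the heart of the proposition. You cite Proposition~\ref{immprop0} as input, but in the paper \ref{immprop0} is never proved on its own: the text states \ref{immprop0}, then says ``before proving the Proposition we prefer to reformulate it'' as Proposition~\ref{immprop}, and finally remarks that ``the old statement follows from the new one''. So \ref{immprop0} is a \emph{corollary} of \ref{immprop}, and invoking it to establish \ref{immprop} is empty. What you are skipping is precisely the hard content: starting from an arbitrary $\alpha\in L\setminus K$ with $r=\inf_{x\in K}|\alpha-x|$ not attained, one must analyze the minimal polynomial $f(T)=T^p+\sum_{i=1}^{p-1}a_iT^i+b$ and show that after shifting $\alpha$ toward its best $K$-approximation the middle coefficients $a_2,\dots,a_{p-1}$ become negligible (of size $<r^p/r_0^i$) while $|a_1|\le r^{p-1}$ and $|p|r_0^p<r^p$. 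The paper does this via invertibility of the higher derivatives $f^{(i)}$ on a split $K$-disc $E_0\supsetneq E_K(\alpha,r)$, together with an argument that the truncated polynomial $g(T)=T^p-aT+b$ (with $a=-a_1$) still has a root $\beta$ at distance $\le r$ from $\alpha$. None of this appears in your outline.

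Two smaller points. Your Newton iteration for (iii) does not contract by $|p|$ per step as claimed: the top term $\delta^p$ in the Taylor expansion carries no binomial factor, and with $|\delta|=|f(\alpha_n)|/|a|$ one gets $|\delta|^p=|f(\alpha_n)|^p/s^{p-1}$, which need not be $\le|p|\cdot|f(\alpha_n)|$ at the first step (and in equal characteristic $|p|=0$, so the claim would force one-step convergence). The paper instead reuses the rescaled Hensel argument already carried out inside the proof of Lemma~\ref{212lem}. And in (iv) you only establish the forward implications (your constructed coset in each case has the matching property); the statement is an if-and-only-if for \emph{any} coset meeting the conditions of (i), so you still owe the converse of (iva), namely that $s\notin|b+S_{a,s}(K)|$ forces $L/K$ to be immediate. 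The paper proves this by exhibiting an $L$-point $\beta$ in the non-split $K$-disc $\{|T^p-aT+b|\le s\}$ and observing that $\inf_{c\in K}|\beta-c|$ is then not attained.
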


Before proving the Proposition we will make few aside remarks that will not be used later.

\begin{rem}
(i) The Cartesian cases of the Proposition are classical and very easy. Although we will not need them in the
sequel, they are included for the sake of comparison and completeness.

(ii) In the immediate case our method is to start with any $\alp\in L\setminus K$ and to partially orthogonalize
it with respect to $K$ by subtracting elements $c\in K$. When $|\alp-c|$ is close enough to its infimum, the
minimal polynomial of $\alp$ is essentially affected only by the terms $T^p$, $aT$ and $b$, so it can be used to
construct a critical coset over $K$ that is split over $L$. As was explained to me by F.-V. Kuhlmann, this
method was used already by Kaplansky, though the formulation of the Proposition seems to be new.

(iii) The Proposition admits various generalizations and complements which we plan to discuss elsewhere. Here we
only remark that using the method of our proof one can easily show that the case (ib) (which has no Cartesian
analog) happens exactly when the $L/K$ is almost unramified in the sense of Faltings (or $\Lcirc/\Kcirc$ is
almost \'etale in the sense of \cite[Ch. 6]{GR}). Other equivalent conditions for case (ib) are that $L/K$ has
zero different, or $\Omega^1_{\Lcirc/\Kcirc}=0$ (the latter a priori could be stronger than vanishing of the
different).
\end{rem}

\begin{proof}
If $L/K$ is Cartesian then there exists non-zero $\alp\in L$ orthogonal to $K$, i.e. $|\alp|\le |\alp-c|$ for
any $c\in K$. Obviously, $\alp^p$ is "orthogonal" to the set $K^p$ in the sense that $|\alp^p|\le|\alp^p-c^p|$
for $c\in K$. On the other hand, $\alp^p$ is not orthogonal to $K$ because either $e_{L/K}=p$ and then
$|\alp^p|\in|K^\times|$ and $\tilL=\tilK$, or $f_{L/K}=p$ and then $|\alp^p|=|c^p|$ for $c\in K$, and
$\wt{\alp^p/c^p}\in\tilK$. All in all, $|\alp^p-b|<|\alp^p|=s$ for some $b\in K$ and then $b$ is orthogonal to
$K^p$. In particular, $b+S_{0,s}(K)$ is a special coset split by $L$. This proves (i) for Cartesian extensions.
Also, we note that $def=[L:K]=p$. Hence there are three cases which exclude each other: $d=p$, $e=p$ and $f=p$,
and therefore it is enough to prove only converse implications in (iva), (ivb) and (ivc). The Cartesian cases
(b) and (c) are done similarly to the above argument, so we skip the details.

Next and main, we are going to establish the remaining case of (i), so assume that $L/K$ is immediate. If $L/K$
is inseparable then set $a=0$ and choose $b\in K$ such that $L=K(b^{1/p})$. Since the extension is immediate the
infimum $r=\inf_{c\in K}|c+b^{1/p}|$ is not achieved. Also, $r>0$ by completeness of $K$. Since $\cha(K)=p$ we
have that $s:=\inf_{c\in K}|c^p+b|=r^p>0$, hence $b+S_{0,s}(K)$ is a special coset. Finally, this coset is split
by $L$ because $-b=(-b^{1/p})^p\in S_{0,s}(L)$.

In the sequel, we assume that $L/K$ is separable. Choose an element $\alpha\in L\setminus K$ and set
$r=\inf_{x\in K}|\alpha-x|$ and $r_0=|\alpha|$. As earlier, the infimum $r$ is not attained on $K$ and $r>0$.
Let $f(T)=T^p+\sum_{i=1}^{p-1}a_iT^i+b$ be the minimal monic polynomial of $\alpha$. Recall that the {\em closed
disc} $E_{\hatK^a}(\alpha,r)$ of radius $r$ and with center at $\alpha$ in the Berkovich affine line
$\bfA^1_{\hatK^a}$ with a fixed coordinate $T$ is the affinoid domain given by the condition $|T-\alpha|\le r$
(see \cite[1.4.4]{Ber1}). By the {\em $K$-disc} $E$ of radius $r$ and with center at $\alpha$ we mean the image
of $E_{\hatK^a}(\alpha,r)$ under the morphism $\bfA^1_{\hatK^a}\to\bfA^1_K$, so the preimage $E'$ of $E$ in
$\bfA^1_{\hatK^a}$ is the union of discs of radii $r$ with centers at the conjugates of $\alp$ (i.e. the roots
of $f(T)$). It is well known that for any polynomial $f(T)$ the Weierstrass domain
$W_s=\bfA^1_{\hatK^a}\{s^{-1}f(T)\}$ is the union of closed discs with centers at the roots of $f$, and by the
symmetry in our case all these discs are of equal radius which monotonically depends on $s$. Thus, $E'=W_s$ for
a certain $s$, and hence $E=\bfA^1_k\{s^{-1}f(T)\}$. (Note that $K$-discs are also introduced in
\cite[3.6]{Ber2}, but the radius there is defined to be $s^{1/\deg(f(T)}$, and in general it does not equal to
the radius in our sense.) We say that a $K$-disc is {\em split} (or $K$-split) if it has a $K$-point. Note that
by our assumption on $K$ and $\alpha$ the disc $E=E_K(\alpha,r)$ is not split, but any larger disc
$E_K(\alpha,r+\ve)$ is split.

For any point $x\in E$, the infimum $\inf_{z\in K}|(T-z)(x)|$ equals to $r$ and is not achieved. If $x$ is
Zariski closed then we obtain that the finite extension $\calH(x)/K$ is not Cartesian, in particular,
$[\calH(x):K]\ge p$. On the other side, the derivative $f'(T)$ does not vanish identically and is of degree
smaller than $p$, hence it is invertible on $E$. It follows that $f'$ is invertible on a disc
$E_0=E(c,r_1)\supset E$ for $c\in K$ and $r_1>r$. Replacing $T$ with $T-c$, $\alpha$ with $\alpha-c$ and $r_0$
with $|\alpha-c|$, we can assume that $E_0=E(0,r_0)$ with $r_0>r$. By the same reasoning we can assume that if
$1\le i<p$ and the higher derivative $f^{(i)}$ is not identically zero then $f^{(i)}$ is invertible on $E_0$.

The condition on the derivatives of $f$ implies the following important property: if $c\in K$ satisfies $|c|\le
r_0$ and $f(T)=\sum_{i=0}^p a'_i(T-c)^i$ then $|a_i-a'_i|<|a_i|$ for $i<p$. Indeed, for a $K$-splits disc
$\calM(k\{s^{-1}T\})$ a function $f(T)=\sum_{i=0}^\infty b_iT^i$ is invertible if and only if $|b_0|>|b_i|s^i$
for $i>0$, and hence $|b_0-f(x)|<|b_0|$ for any point $x$ in that disc. In our situation, $a'_i=f^{(i)}(c)/i!$
and $a_i$ is the constant coefficient of the polynomial $f^{(i)}(T)/i!$ which is invertible on the $K$-split
disc $E_0$.

Thus, the values of $|a_i|$ are fixed when we shrink $E_0$ and change the coordinate accordingly. The condition
on $f'$ implies that $|ia_i|r_0^i=|a_i|r_0^i<|a_1|r_0$ for $1<i<p$. Since $f(T)$ is irreducible, we have that
$|a_1\alpha|\le|\alpha^p|$ by \cite[3.2.4/3]{BGR}, hence $|a_1|\le r_0^{p-1}$. Shrinking $E_0$ we can make $r_0$
arbitrary close to $r$ while $|a_i|$'s remain fixed. Therefore, we can assume that $|a_1|\le r^{p-1}$ and
$|a_i|<\frac{|a_1|}{r_0^{i-1}}<r^{p-i}$ for $i>1$. By additional shrinking of $E_0$ we can, furthermore, achieve
that $|a_i|<\frac{r^p}{r_0^i}$ for $i>1$ and $|p|r_0^p<r^p$.

Set $a=-a_1$, then $g(T)=T^p-aT+b$ is obtained from $f(T)$ by removing the $a_iT^i$ terms for $1<i<p$. We will
show that $a,b$ and $s:=r^p$ are as required. Notice that $\alpha$ is a root of $f$ and
$|a_i\alpha^i|<\frac{r^p}{r_0^i}r_0^i=s$ for any $1<i<p$, hence $|g(\alpha)|<s$. It follows that $-b\in
S_{a,s}(L)$. Moreover, if $|a_1|<r^{p-1}$ then we remove the $-aT$ term using the same argument, achieving that
either (a) or (b) in (i) is satisfied. Notice that $|f(T)-g(T)|<s$ on $E$, $\max_{x\in E}|f(x)|\ge |b|=r_0^p>s$
and $f$ has a root in $E$. It follows that $|g(T)|$ is not a constant on $E$ and, therefore, it is not constant
on the disc $E':=E_{\hatK^a}(\alpha,r)$ (which is a connected component of the preimage of $E$ in
$\bfA^1_{\hatK^a}$). It follows that $g(T)$ has a zero on $E'$, i.e. it has a root $\beta$ with
$|\alpha-\beta|\le r$. The latter inequality and the definition of $r$ imply that $\inf|\beta-K|=r$ and the
infimum is not attained. In particular, $K(\beta)/K$ is not Cartesian, hence its degree is divided by $p$. Since
$\beta$ is annihilated by the polynomial $g(T)$ of degree $p$, we obtain that $g(T)$ is the minimal polynomial
of $\beta$ and, in particular, $g(T)$ is irreducible. The distance between $\beta_1=\beta$ and other roots
$\beta_2\.\beta_p$ of $g(T)$ does not exceed $\inf|\beta-K|=r$ by Krasner's lemma, hence for any $c\in K$ the
numbers $|c-\beta_i|$ are equal for $1\le i\le p$. In particular, $|g(c)|=|c-\beta|^p$, and we obtain that
$\inf_{c\in K}|g(c)|=(\inf_{c\in K}|c-\beta|)^p=r^p=s$, and the infimum is not achieved. Finally, the inequality
$|pb|=|p|r_0^p<s$ implies that $b+S_{a,s}(K)$ is critical by Lemma \ref{212lem}(i). This finishes the proof of
(i).

The claim of (ii) for $a=0$ follows from the definition, and for $a\neq 0$ set $b'=b/a^{\frac{p}{p-1}}$ and
$K'=K(a^{\frac{1}{p-1}})$ and observe that the coset $b'+S_{1,1}(K')$ is a required special coset. In (iii) the
inequality $|g(\alpha)|<s$ implies that $\inf|b+S_{a,s}(L)|<s=|a|^{\frac{p}{p-1}}$. As we saw in the proof of
Lemma \ref{212lem}, it then follows from Hensel's lemma that $0\in b+S_{a,s}(L)$, i.e. $g(T)$ has a root in $L$.
Therefore $K(\beta)\toisom L$ as claimed. It remains to prove (iv) and we earlier reduced this to proving that
if the infimum $s$ is not attained on the coset $b+S_{a,s}(K)$ then the extension is immediate. Set
$f(T)=T^p-aT+b$, then the disc $E:=\bfA^1_K\{s^{-1}f(T)\}$ is a non-split $K$-disc such that any larger disc
$\bfA^1_K\{(s+\ve)^{-1}f(T)\}$ is split. Therefore, for any element $\beta\in K^a$ which belongs to $E$ (i.e.
satisfies $|f(\beta)|\le s$) the infimum $\inf_{c\in K}|c-\beta|$ equals to the radius of $E$ and is not
achieved. Since the coset is split over $L$, the disc $E$ contains an $L$-point $\beta$. Since $\inf|\beta-K|$
is not achieved the extension $L/K$ is not Cartesian, and so $d_{L/K}=p$ as claimed.
\end{proof}

\begin{cor}
\label{immcor} An analytic field $K$ is stable if and only if either $p=\cha(\tilK)$ is zero or for any finite
Cartesian extension $K'/K$ any special coset $b+S_{a,s}(K')$ contains an element of minimal absolute value.
\end{cor}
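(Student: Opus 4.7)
The plan is to reduce stability of $K$ to the absence of immediate degree-$p$ extensions over finite Cartesian extensions of $K$, and then to translate this condition into the language of special cosets via Proposition~\ref{immprop}. The case $p=\cha(\tilK)=0$ is trivial, since then $k^\mr=k^a$ for every analytic field, so every finite extension is moderately ramified and hence Cartesian; thus $K$ is automatically stable and the disjunction in the Corollary covers this case. Assume $p>0$ in the sequel.

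I would first establish the reformulation that $K$ is stable if and only if no finite \emph{Cartesian} extension $K'/K$ admits an immediate extension of degree $p$. The direct implication is clear. For the converse, given a finite non-Cartesian $L/K$ the maximal moderately ramified subextension $L^\mr/K$ is Cartesian while $L/L^\mr$ is a $p$-extension with $d_{L/L^\mr}=d_{L/K}>1$; filtering $L/L^\mr$ by a tower of degree-$p$ steps and using multiplicativity of the defect, the lowest step at which the defect jumps produces an intermediate field $K'\subseteq L$ with $K'/K$ Cartesian and the next step $L'/K'$ immediate of degree $p$.

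For the direct implication of the Corollary I would argue contrapositively: given a finite Cartesian $K'/K$ and a special coset $b+S_{a,s}(K')$ with $s\notin|b+S_{a,s}(K')|$, pick any root $\beta$ of $g(T)=T^p-aT+b$ in $\overline{K'}$. The disc-theoretic argument from the proof of Proposition~\ref{immprop}(iva) shows that the disc $E=\bfA^1_{K'}\{s^{-1}g(T)\}$ is non-split while any strictly larger disc is split, so $\inf_{c\in K'}|c-\beta|$ equals the radius $s^{1/p}$ of $E$ and is not attained. Hence $K'(\beta)/K'$ is not Cartesian, so its defect $d>1$; since $d$ is a power of $p$ and $efd=[K'(\beta):K']\le\deg g=p$, we must have $[K'(\beta):K']=p$ with $d=p$ and $ef=1$, i.e.\ $K'(\beta)/K'$ is immediate of degree $p$, contradicting stability of $K$ by the reformulation.

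For the converse implication assume the special-coset hypothesis and suppose for contradiction that $K$ is not stable. By the reformulation fix a finite Cartesian $K'/K$ with an immediate degree-$p$ extension $L'/K'$, and apply Proposition~\ref{immprop} to obtain a critical coset $b+S_{a,s}(K')$ split over $L'$ in form (ia) or (ib), with $s\notin|b+S_{a,s}(K')|$ by part (iva). Form (ia) yields a special coset over the Cartesian $K'/K$ directly violating the hypothesis. In form (ib), Proposition~\ref{immprop}(ii) produces a special coset with the same non-attainment over $K''=K'(a^{1/(p-1)})$, which is moderately ramified over $K'$ and hence Cartesian over $K$, again violating the hypothesis. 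The main obstacle is the direct implication, in particular verifying that $K'(\beta)/K'$ has degree exactly $p$: this hinges on the power-of-$p$ structure of the defect combined with the disc-theoretic translation of $s\notin|b+S_{a,s}(K')|$ into the failure of $\inf_{c\in K'}|c-\beta|$ to be attained.
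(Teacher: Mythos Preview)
Your overall strategy matches the paper's closely, and the direct implication as well as the final appeal to Proposition~\ref{immprop}(i),(ii),(iva) are essentially identical to the paper's argument.

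The genuine gap is in the hard direction of your reformulation. You assert that the totally wildly ramified extension $L/L^\mr$ can be ``filtered by a tower of degree-$p$ steps'', but this is not true in general: a totally wildly ramified extension of $p$-power degree need not admit any proper intermediate field. For a group-theoretic obstruction with $p=2$, suppose $\Gal(K^s/K)$ has $S_4$ as a finite quotient in which the wild inertia $\Gal(K^s/K^\mr)$ maps onto the normal Klein four-group $V_4\lhd S_4$. The maximal subgroup $S_3\subset S_4$ then corresponds to a degree-$4$ extension $L/K$ with $L^\mr=K$ (because $S_3\cdot V_4=S_4$, so $L\cap K^\mr=K$) but with no intermediate field whatsoever (because $S_3$ is maximal in $S_4$). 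Your filtration simply does not exist here.

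The paper avoids this by first base-changing to a sufficiently large \emph{moderately ramified} extension $K_1/K$ rather than staying inside $L$. Since $\Gal(K^s/K^\mr)$ is a pro-$p$ group, every finite (possibly inseparable) extension of $K^\mr$ splits into a tower of degree-$p$ steps, and this descends to $LK_1/K_1$ for $K_1$ large enough. One then locates the first non-Cartesian step $K_{i+1}/K_i$ in \emph{that} tower; since $K_1/K$ is moderately ramified it is Cartesian, so some $K_i/K$ is Cartesian with $K_{i+1}/K_i$ immediate of degree $p$. From this point your argument (passing to $K_i(a^{1/(p-1)})$ in case (ib), etc.) goes through unchanged.
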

\begin{proof}
Since any finite extension is Cartesian when $p=0$, we have only to deal with the case of non-zero $p$. Set
$f(T)=T^p-aT+b\in K'[T]$ and assume that $s=\inf_{z\in K'} |f(z)|$ is not achieved. Then, as we observed in the
end of proof of Proposition \ref{immprop}, $E:=\bfA^1_{K'}\{s^{-1}f(T)\}$ is a non-split $K'$-disc such that any
larger disc is split, and for any $\beta\in K^a$ contained in $E$ the extension $K'(\beta)/K'$ is not Cartesian.
In particular, $K$ is not stable then.

Conversely, assume that $K$ is not stable. We claim that some its Cartesian extension $F$ admits an immediate
extension of degree $p$. Indeed, if $L/K$ is not Cartesian then for sufficiently large moderately ramified
extension $K_1/K$ the extension $LK_1/K_1$ splits into a tower of $p$-extensions $K_1\subset K_2\subset\dots
\subset LK_1$ (we use that $\Gal(K^s/K^\mr)$ is a pro-$p$-group and hence any (maybe inseparable) extension of
$K^\mr$ splits into a tower of $p$-extensions). Then there exists $i$ such that $K_i/K$ is Cartesian and
$K_{i+1}/K_i$ is not Cartesian. Since the latter is of degree $p$, it is immediate and we can take $F=K_i$. By
Proposition \ref{immprop}(ii), a field $K'$ of the form $F(a^{\frac{1}{p-1}})$ possesses a special coset without
minimal element. It remains to note that $K'/F$ is Cartesian because $[K':F]<p$, and hence $K'/K$ is Cartesian.
\end{proof}

\subsection{Analytic fields topologically generated by an element}\label{anensec}
An analytic $k$-field $K$ is {\em topologically generated} by an element $T$ if $K$ coincides with the
topological closure $\ol{k(T)}$ of the subfield $k(T)$ in $K$. If $K$ is finite over a subfield of the form
$\ol{k(T)}$ and $T\notin\whka$ then we say that $K$ is {\em one-dimensional}. For example, if $x$ is a point on
a $k$-analytic curve $C$ and the preimages of $x$ in $C\wtimes_k\whka$ are not Zariski closed then the analytic
$k$-field $\calH(x)$ is one-dimensional. We claim that the sum of $F=F_{K/k}=\trdeg(\tilK/\tilk)$ and
$E=E_{K/k}=\dim_\bfQ\left((|K^\times|/|k^\times|)\otimes_\bfZ\bfQ\right)$ does not exceed one. Indeed, this
follows from Abhyankar's inequality (see Lemma \ref{Abhlem}) applied to $k(T)$ and the fact that the numbers
$E_{K/k}$ and $F_{K/k}$ are preserved by replacing $K$ with the completion or a finite extension. Thus,
similarly to \cite[1.4.4]{Ber1} we divide one-dimensional fields to three types as follows: $K$ is of type $2$
(resp. $3$, resp. $4$) if $E=0,F=1$ (resp. $E=1,F=0$, resp. $E=F=0$). (Type $1$ points from \cite{Ber1}
correspond to subfields of $\whka$.) Note that the type of a one-dimensional field is preserved by passing to a
finite extension. We say that $K$ is {\em $k$-split} if for any $T\in K$ we have that $\inf|T-k|=\inf|T-k^a|$,
where the right hand side makes sense due to uniqueness (up to an automorphism) of the isometric embedding
$\whka\into\hatK^a$. The next two sections are devoted to uniformization of one-dimensional analytic fields of
the following form.

\begin{assum}\label{assum}
Assume that $K$ is a one-dimensional analytic $k$-field and one of the following conditions is satisfied:

(i) $k=k^a$;

(ii) $K$ is of type $4$ and $k$-split, $k=k^\mr$, and either $p=0$, or $p>0$ and $\kcirc=p\kcirc+(\kcirc)^p$. If
the last condition is satisfied then we say that $k$ is {\em deeply ramified}.
\end{assum}

\begin{rem}\label{rem0}
Note that the assumption implies that $\tilk$ is algebraically closed and $|k^\times|$ is divisible. Indeed,
$\tilk$ is separably closed and $|k^\times|$ is divisible by any prime $l$ with $(l,p)=1$ because $k=k^\mr$. On
the other side, the deep ramification condition implies that $\tilk$ is perfect and $|k^\times|$ is
$p$-divisible.
\end{rem}

\begin{rem}\label{rem}
We will need only case (i) in this paper, but case (ii) does not require any extra-work and it will play a
central role in a subsequent work \cite{Tem3} on inseparable local uniformization. Our definition of deeply
ramified fields agrees with its analog in \cite[6.6.1]{GR} by \cite[6.6.6]{GR}. It is proved there that deeply
ramified valued fields can be characterized by many other equivalent properties. For example, if $k$ is not
discrete valued then two other equivalent properties are that $\Omega^1_{(k^s)^\circ/\kcirc}=0$, or any
separable algebraic extension of $k$ is almost unramified (i.e. has trivial different).
\end{rem}

In this section we always assume in addition to \ref{assum} that $p>0$ and $K$ is topologically generated by an
element, say $K=\ol{k(z)}$. Equivalently, $K\toisom\calH(x)$ for a not Zariski closed point
$x\in\bfA^1_k=\calM(k[z])$. It follows from the classification of points, see \cite[1.4.4]{Ber1} and
\cite[3.6]{Ber2} for details, that the type of $x$ is the type of $K$ as defined above. So, if $x$ is of type
$2$ or $3$ then $k=k^a$ by the assumption, and $x$ is the generic point of a disc of rational or irrational
radius, respectively (i.e. the radius is or is not contained in $|k^\times|=\sqrt{|k^\times|}$). Then
$K\toisom\wh{\Frac(k\{T\})}$ or $K\toisom k\{r^{-1}T,rT^{-1}\}$ for some $r\notin\sqrt{|k^\times|}$,
respectively, where the valuation used to complete $\Frac(k\{T\})$ is the extension of the Gauss (or spectral)
norm on $k\{T\}$. In the situation of \ref{assum}(ii), $x$ is the intersection of a decreasing sequence
$E_0\supsetneq E_1\supsetneq E_2\supsetneq\dots$ of closed discs in $\bfA^1_k$. If $E_i=E(\alp_i,r_i)$ then
$r:=\lim_{i\to\infty} r_i$ equals to $\inf|z-k^a|$ (where the absolute value is computed in $\wh{K^a}$), and the
infimum is not attained on $k^a$. If some $E_i$ is not split then $r_i\le\inf|\alp_i-k|$ and for each $j>i$ we
already have that $r_j<\inf|\alp_j-k|$. In particular, $|(z-\alp_j)(x)|\le r_j<\inf|\alp_j-k|$, and we obtain
that $K=\calH(x)$ is not $k$-split. The contradiction proves that all discs $E_i$ are $k$-split, hence we can
re-choose the centers so that $\alp_i\in k$. Now it is also clear that
$r=\lim_{i\to\infty}|z-\alpha_i|=\inf|z-k|$ and the infimum is not achieved since it is not attained even on
$k^a$.

\begin{prop}
\label{modramprop} Assume that $p>0$ and $K=\ol{k(z)}$ is as in \ref{assum}, and let $L$ be a finite moderately
ramified extension of $K$. If $K$ is of type $2$ or $3$ then any critical coset in $L$ contains an element of
minimal absolute value. If $K$ is of type $4$ then any special coset in $L$ contains either an element of $k$ or
a topological generator of $L$ over $k$.
\end{prop}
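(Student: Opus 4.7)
The plan is to exploit the fine structure of $L$ as a moderately ramified extension of $K = \ol{k(z)}$ and reduce the assertion to intrinsic properties of $L$ as an analytic $k$-field. By Remark \ref{rem0}, $\tilk$ is algebraically closed and $|k^\times|$ is divisible, so a moderately ramified extension of $K$ is again topologically generated over $k$ by a single element and has the same type as $K$; thus $L$ inherits the structural description of $K$ (a Tate-like model in types 2 and 3, or a nested intersection of $k$-split discs in type 4). This reduction lets us work directly with $L$ and its cosets.

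For types 2 and 3, the core claim I would establish is that $L$ is \emph{stable}: $L$ admits no proper immediate algebraic extension. This is classical when $k$ is algebraically closed. In type 2, $L$ is a complete discretely valued field whose residue field is a function field of transcendence degree one over $\tilk$; in type 3, $L$ has value group of the form $|k^\times|\cdot r^\bfZ$ with $r$ incommensurable with $|k^\times|$, and its residue field is $\tilk$. In both cases stability follows from the algebraically-closed-residue-field hypothesis. Granted this, I would argue by contrapositive: if a critical coset $b + S_{a,s}(L)$ contained no element of minimal absolute value then, by Proposition \ref{immprop}(iv)(a), the polynomial $T^p - aT + b$ would define an immediate extension $L[T]/(T^p - aT + b)$ of $L$ of degree $p$, contradicting stability.

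For type 4, write $L = \ol{k(w)}$ with $r := \inf|w - k| > 0$ not attained, and fix a special coset $b + S_{a,s}(L)$. The strategy is dichotomous: either a judicious choice of $c \in L$ (and $d$ with $|d| < s$) produces a coset element in $k$, or every coset element is a topological generator of $L/k$. The second alternative rests on the observation that an element $y \in L \setminus k$ that is not a topological generator must lie in a proper closed analytic subfield $\ol{k(y)} \subsetneq L$; since $\ol{k(y)}$ is a closed $k$-subfield of the type-4 field $L$ (so $E = F = 0$), it is again of type 4. Inside such a subfield one can solve the coset equation approximately by an inductive procedure along the nested $k$-split discs $E_i = E(\alpha_i, r_i)$ with $r_i \searrow r$ describing $L$: either the approximations converge to an exact solution in $k$ (yielding an element of the coset in $k$), or they violate the infimum defining $r$. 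The subcase split $a = 0$ versus $a = 1, s = 1$ of Proposition \ref{immprop}(i) dictates the exact form of the approximation, the purely inseparable case using $p$-th roots and the Artin–Schreier case using $c \mapsto c^p - c$.

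The central difficulty will be the type-4 analysis, where there is no clean structural description of $L$ beyond the nested-disc representation, forcing an approximation argument that carefully controls the interaction of the degree-$p$ polynomial $c^p - ac$ with the decreasing radii $r_i \searrow r$. A secondary technical point is verifying stability of $L$ in types 2 and 3 in sufficient generality; while this is well documented in the non-archimedean literature for $k = k^a$, the exposition needs to be reconciled with the definitions used here (cosets $b + S_{a,s}$ rather than direct use of defect theory).
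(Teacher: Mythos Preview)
Your approach has real gaps in both regimes.

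\textbf{Types 2 and 3.} The logical skeleton --- if $L$ were known to be stable, then by Corollary \ref{immcor} (or the end of the proof of Proposition \ref{immprop}) a critical coset without minimum would produce a non-Cartesian extension, contradiction --- is sound. But two things go wrong. First, within the paper this is circular: stability of type 2 and 3 fields is Theorem \ref{fieldunif}(iii), whose proof relies on Proposition \ref{modramprop} (see also Remark \ref{GRrem}, which says stability is ``essentially equivalent'' to the uniformization being developed here). Second, and independently, your stated reasons for stability are incorrect. In type 2 the field $L$ is \emph{not} discretely valued: its value group is $|k^\times|$, which is divisible since $k=k^a$. And having algebraically closed residue field does not imply stability --- type 4 fields over $k=k^a$ have residue field $\tilk$ and are typically unstable. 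The paper instead proves the minimum directly: Proposition \ref{schaudprop} supplies an orthogonal Schauder basis $\{1\}\sqcup U\sqcup U^p\sqcup\dots$ of $L$ over $k$ with $\Span_k(U)$ orthogonal to $L^p$, and one pushes $b$ into $\Span_k(U)$ using the transformation $b\mapsto b-(c^p-ac)$; orthogonality to $L^p$ then forces $|b|$ to be minimal.

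\textbf{Type 4.} Here the proposal does not contain a working mechanism. Your dichotomy is misstated (the claim is that \emph{some} coset element is a generator, not all), and the nested-disc approximation you describe does not control the coset. The paper's argument is of a different nature. First $L=K$ automatically, since $\tilK=\tilk$ is algebraically closed and $|K^\times|=|k^\times|$ is divisible. One then writes $b=\sum_{i=0}^m b_i z^i$ and reduces the degree $m$ step by step: if $(m,p)=1$ and $m>1$, Lemma \ref{dirtylem} gives the strict bound $|b_m|r^m<s$, so the top term can be killed by subtracting $b_m(z-z_0)^m$ with $|z-z_0|$ close to $r$; if $p\mid m$, one subtracts $c^p-ac$ with $c$ approximating $b_m^{1/p}z^{m/p}$, using the deep-ramification hypothesis $\kcirc=p\kcirc+(\kcirc)^p$ to find approximate $p$-th roots in $k$. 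Iterating drops the degree to $\le 1$, yielding an element of $k$ or a topological generator. The crucial estimate is Lemma \ref{dirtylem}, proved by base-changing to an algebraically closed $l\supset K$ where $K\wtimes_k l\simeq l\{r^{-1}w\}$ acquires the orthogonal Schauder basis $w^{\bfN}$; your sketch has no analogue of this step.
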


It is for the sake of simplicity that in the case of type $4$ fields we consider only special cosets. To prove
the Proposition (in the end of \S\ref{anensec}) will need a good explicit description of analytic $k$-fields
topologically generated by an element. Recall that a subset $B$ of a normed $k$-vector space $V$ is called
orthogonal (resp. orthonormal) Schauder basis if any element $v\in V$ admits a unique representation of the form
$v=\sum_{b\in B}a_b b$ and $\|v\|=\max_{b\in B}|a_b|\|b\|$ (resp. $\|v\|=\max_{b\in B}|a_b|$). It is easy to see
that any analytic field $\ol{k(z)}$ of type $2$ or $3$ admits a Schauder basis over $k$ (e.g. for
$K=\wh{\Frac(k\{T\})}$ one can take the union of the sets $B_\infty=\{T^i\}_{i\ge 0}$ and
$B_\tila=\{(T-a)^{-i}\}_{i\ge 1}$ where $\tila$ runs over $\tilk$ and $a$ is a fixed lifting of $\tila$ to $k$),
but we will need a Schauder basis of a special form.

\begin{prop}
\label{schaudprop} Let $k,K$ and $L$ be as in Proposition \ref{modramprop} and assume that $K$ is of type $2$ or
$3$. Then there exists a set $U\subset L$ such that the set $B=\{1\}\sqcup U\sqcup U^p\sqcup U^{p^2}\dots$ is an
orthogonal Schauder basis of $L$ over $k$ and any element $u\in\Span_k(U)$ is orthogonal to $L^p$, i.e.
$|u+c^p|\ge|u|$ for any $c\in L$.
\end{prop}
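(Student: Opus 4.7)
The plan is to build $U$ out of an explicit Laurent-type description of $L$. Since $\tilk$ is algebraically closed and $|k^\times|$ is divisible (see Remark \ref{rem0}), divisibility of the ordered group $|k^\times|$ forbids any nontrivial torsion-free finite extension, so in type $2$ one has $e_{L/K}=1$ and moderate ramification means $L/K$ is unramified with $\tilL$ a finite separable extension of $\tilk(\tilT)$; in type $3$ one has $\tilL=\tilk$, so $L/K$ is totally tame and $L\toisom k\{r'^{-1}\pi,r'\pi^{-1}\}$ with $\pi=T^{1/n}$, $r'=r^{1/n}$, $(n,p)=1$.

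For type $3$ one takes $U=\{\pi^i:i\in\bfZ,\ p\nmid i\}$; the unique factorization $i=p^nm$ with $p\nmid m$ shows $B=\{1\}\sqcup U\sqcup U^p\sqcup U^{p^2}\sqcup\cdots=\{\pi^i:i\in\bfZ\}$ is the standard orthogonal Schauder basis of $L$ over $k$. For type $2$, $[\tilL:\tilL^p]=p$ since $\tilL$ is a function field of transcendence degree one over the perfect field $\tilk$. Pick any $\tilk$-complement $\tilV$ of $\tilL^p$ in $\tilL$ and a $\tilk$-basis $\tilU$ of $\tilV$; using that the Frobenius $\phi\colon\tilL\to\tilL^p$ is $\tilk$-semilinear over the perfect $\tilk$, one gets $\Span_\tilk(\phi(\tilV))=\phi(\tilV)=\Span_\tilk(\tilU^p)$, and applying $\phi$ to $\tilL=\tilV\oplus\tilL^p$ yields $\tilL^p=\phi(\tilV)\oplus\tilL^{p^2}$. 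Iterating and invoking $\bigcap_n\tilL^{p^n}=\tilk$ (which holds since $\tilL/\tilk$ admits a separating transcendence basis, forcing transcendentals out of some $\tilL^{p^n}$, and $\tilk$ is algebraically closed) gives $\tilL=\tilk\oplus\bigoplus_{n\ge 0}\Span_\tilk(\tilU^{p^n})$. Lifting each $\tilu\in\tilU$ to a unit $u\in(\Lcirc)^\times$ and using $|L^\times|=|k^\times|$ in type $2$, the standard reduction criterion makes $B=\{1\}\cup U\cup U^p\cup\cdots$ an orthonormal $k$-Schauder basis of $L$.

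Orthogonality of $\Span_k U$ to $L^p$ is immediate in characteristic $p$: for $c=\sum_{b\in B}\gamma_b b$ we have $c^p=\sum\gamma_b^p b^p\in\Span_k(\{1\}\cup U^p\cup U^{p^2}\cup\cdots)$, which is $B$-orthogonal to $\Span_k U$, so $|u+c^p|=\max(|u|,|c^p|)\ge|u|$. In characteristic zero, additional cross terms appear in $c^p$ with multinomial coefficients divisible by $p$; since products of basis elements expand in $B$ with $B$-coefficients of norm $\le 1$ (by the Gauss-type spectral property, automatic from orthonormality in type $2$ and trivial in type $3$), each cross term contributes a norm at most $|p|\cdot|c|^p$ to $|u+c^p|$. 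Splitting on whether $|u|>|p|\cdot|c|^p$: in the first case the $U$-coefficients of $u$ in the $B$-expansion of $u+c^p$ survive undisturbed, giving $|u+c^p|\ge|u|$; in the second case $|c^p|\ge|u|/|p|\ge|u|$, and a dominant pure-$p$-th-power term $\gamma_b^p b^p$ of $c^p$ (lying outside $\Span_k U$) makes $|u+c^p|\ge|c^p|\ge|u|$.

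The main obstacle I expect is the characteristic-zero cross-term estimate, which requires the product bound on basis elements together with a careful case split tracking $|p|$-bounded cross-term contributions inside $\Span_k U$ against dominant $p$-th-power contributions outside it. In type $2$ with $\tilL$ the function field of a curve of positive genus, a secondary task is to make the abstract choice of $\tilV$ concrete enough to verify these estimates, which can be done via a Mittag-Leffler partial-fraction basis of $\tilL$ based at a chosen closed point of the underlying $\tilk$-curve; such a basis exists because $\tilk$ is algebraically closed.
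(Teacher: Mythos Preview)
Your type~3 argument matches the paper's. In type~2 there are two genuine gaps.

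The first is the claim that $\tilL=\tilk\oplus\bigoplus_{n\ge 0}\phi^n(\tilV)$ for an \emph{arbitrary} $\tilk$-complement $\tilV$ of $\tilL^p$. Knowing $\bigcap_n\tilL^{p^n}=\tilk$ does not suffice: the iterated remainders $f_n\in\tilL^{p^n}$ in $f=v_0+\cdots+v_{n-1}+f_n$ need not eventually lie in $\tilk$. For example, in $\tilL=\tilk(t)$ modify the natural complement by replacing each generator $t^m$ (for $p\nmid m$) by $t^m+t^{mp}$; then for $f=t$ one finds $f_n=(-1)^n t^{p^n}$, so $t$ does not lie in the resulting direct sum. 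The paper's Lemma~\ref{basislem} remedies this by building $\tilU$ relative to a pole-order seminorm $\|\cdot\|$ on $\tilL$ coming from a proper normal model of the curve: the projection onto $\Span_\tilk(\tilU)$ is then norm-nonincreasing, and an element of $\tilL^{p^n}$ with $\|\cdot\|<p^n$ is forced into $\tilk$, so the process terminates.

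The second gap is the ``standard reduction criterion'': over a densely valued $k$, a lift of a $\tilk$-basis of $\tilL$ to $(\Lcirc)^\times$ is automatically orthonormal but need not span. In $L=\wh{\Frac(k\{T\})}$, lift $t^n$ to $b_n=T^n+p_nT^{n+1}$ with $|p_n|=|p|^{2^{-n}}$; the only candidate expansion of $T$ in this system has coefficients $c_n=(-1)^{n-1}p_1\cdots p_{n-1}$ with $|c_n|\to|p|\ne 0$, so $T$ lies outside the closed span. The criterion in \cite[2.7.5]{BGR} you have in mind requires $k$ to be discretely valued. The paper therefore constructs a discretely valued subfield $k_0\subset k$ with $\tilk_0=\tilk$, lifts $\tilU$ inside the unramified subextension $L_0$ of $K_0=\ol{k_0(T)}$, invokes the discrete criterion there, and transports the Schauder basis to $L$ via $L\toisom L_0\wtimes_{k_0}k$. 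Once $B$ is known to be a Schauder basis, orthogonality of $\Span_k(U)$ to $L^p$ is simpler than your cross-term analysis: rescale by an element of $k^p=k$ to make $|u|=1$, reduce modulo $\Lcirccirc$, and invoke $\Span_\tilk(\tilU)\cap\tilL^p=0$.
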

\begin{proof}
If $K$ is of type $3$ then $K\toisom k\{r^{-1}S,rS^{-1}\}$ for some $r\notin|k^\times|$. In particular,
$\tilK=\tilk$ is algebraically closed and hence $L$ is totally ramified. Note also that
$\sqrt{|K^\times|}/|K^\times|\toisom r^\bfQ/r^\bfZ$. A well known description of moderately ramified extensions
(see for example \cite[3.4.4(iii)]{Ber2}) implies that $K$ possesses a unique totally ramified extension of
degree $n:=[L:K]$. Clearly $L$ and $K(S^{1/n})$ are two such extensions, hence $L\toisom K(S^{1/n})\toisom
k\{s^{-1}T,sT^{-1}\}$ for $s={r^{1/n}}$ and $T=S^{1/n}$. Therefore, we can take $U=T^{\bfZ\setminus p\bfZ}$
(i.e. all integral powers of $T$ with exponent prime to $p$) and then $B=T^\bfZ$. Assume now that
$K=\wh{\Frac(k\{T\})}$ (i.e. $T$ topologically generates $K$ over $k$ and the induced norm on $k[T]$ is the
Gauss norm); then $|K^\times|=|k^\times|$ is divisible, hence $L$ is unramified over $K$. We will need the
following Lemma.

\begin{lem}
\label{basislem} Let $E$ be a perfect field of characteristic $p>0$ and $F$ be a finitely generated extension of
$E$ such that $E$ is algebraically closed in $F$. Then there exists a set $U\subset F\setminus F^p$ such that
$\{1\}\sqcup U\sqcup U^p\sqcup\dots$ is a basis of $F$ over $E$ and $\Span_E(U)\cap F^p=0$.
\end{lem}
\begin{proof}
A naive attempt would be to take any basis $\tilU$ of $F/F^p$ and to lift it to a subset $U\subset F$
arbitrarily. The set $B=\{1\}\sqcup U\sqcup U^p\dots$ is indeed linearly independent, and $F=F^{p^n}+\Span_E(B)$
for any $n$. Nevertheless, this is not enough for $B$ being a basis. In particular, we must take finite
generatedness of $F$ into account to exclude the possibility that $\cap_{n=1}^\infty F^{p^n}$ is strictly larger
than $E$. For this reason, we refine the above strategy by introducing a norm on $F$ related to finite
generatedness of $F/E$ and lifting $\tilU$ by use of an orthogonal complement procedure.

Choose a proper normal model $X$ of $F$ over $E$. For any $f\in F$, let $\|f\|$ be the maximal order of poles of
$f$ on the points of $X$ of codimension one. Then $\|\ \|$ induces on the $E$-vector space $F$ a non-Archimedean
semi-norm whose kernel coincides with $E$. The residue semi-norm on $V=F/F^p$ is actually a norm. Indeed, the
kernel consists of the images of elements $f\in F$ such that $\|f-g^p\|=0$, but then $c=f-g^p$ is a constant and
by perfectness of $E$ we obtain that $c=b^p$, whence $f\in F^p$ and its image in $V$ is zero. The residue norm
induces an increasing exhausting filtration $V_1\subset V_2\subset\dots$ on $V$ by balls of radii less than $n$
for $n=1,2,\dots$. Find a subset $U=U_1\sqcup U_2\sqcup\dots$ of $F\setminus F^p$ such that each $U_n$ consists
of elements of norm $n$ and the image of $\sqcup_{i=1}^n U_i$ in $V$ is a basis of $V_n$. We will see that $U$
is as required. For any $f\in F$ there exists a unique element $f_0\in\Span_E(U)$ such that $f-f_0\in F^p$. In
particular, $\Span_E(U)\cap F^p=0$. Moreover, it follows from the construction that $\|f_0\|\le \|f\|$. Next,
there exists a unique $f_1\in\Span_E(U^p)$ such that $f-f_0-f_1\in F^{p^2}$, and then $\|f_1\|\le\|f\|$, etc. It
remains to notice that at some stage we obtain $f_i\in F^{p^i}$ which satisfies $\|f_i\|\le\|f\|<p^i$, and then
$f_i$ is necessarily a constant. So, $\{1\}\sqcup U\sqcup U^p\sqcup\dots$ is indeed a basis of $F$.
\end{proof}

We use Lemma \ref{basislem} to find $\tilU\in\tilL\setminus\tilL^p$ such that $1$ and $p^n$-th powers of $\tilU$
for $n\ge 1$ form a basis $\tilB$ of $\tilL$ over $\tilk$. Though one could expect that it suffices to lift
$\tilU$ to $L$ in an arbitrary way, some care should be exercised at this point. We will use in the sequel that
any analytic field $F$ is henselian, and so, as we noted in \S\ref{basicsec}, for any finite separable extension
$E^\sim/\tilF$ there exists a unique unramified extension $E/F$ with $\tilE=E^\sim$. Furthermore, by \cite[$\rm
IV_4$, 18.8.4]{ega} for any analytic $F$-field $E'$ with an $\tilF$-embedding $\tili:\tilE\into\tilE'$ there
exists a lifting $i:E\into E'$ to an embedding of analytic $F$-fields.

Recall that $k=k^a$ by \ref{assum}(i). Set $\pi=p$ in the mixed characteristic case, and choose any non-zero
$\pi\in\kcirccirc$ in the equal characteristic case. It is easy to find a discrete valued field $k_0\subset k$
such that $\tilk_0=\tilk$ and $\pi$ is a uniformizer. For example, let $\tilS=\{\tilS_i\}_{i\in I}$ be a
transcendence basis of $\tilk/\bfF_p$ with a lifting $S\subset k$. Then $k$ contains a discrete valued field
$k_1$ topologically generated over the prime field by $\pi$ and the elements $S_i^{1/p^n}$ for $n\in\bfN$ and
$i\in I$ and we have that $\tilk_1=\bfF_p(S^{1/p^\infty})$. Since $\tilk=\tilk_1^s$, the extension
$\tilk/\tilk_1$ lifts to an unramified extension $k_0/k_1$ with $k_0\subset k$, and the field $k_0$ is as
required.

Let $K_0$ be the closure of $k_0(T)$ in $K$, so $\tilK_0=\tilk(\tilT)=\tilK$, and let $L_0$ be the subfield of
$L$ such that $L_0/K_0$ is the unramified extension corresponding to $\tilL/\tilK$. By \cite{BGR}, 2.7.3/2 and
2.7.5/2, any lifting of a $\tilk$-basis of $\tilL=\tilL_0$ gives an orthonormal Schauder basis of $L_0$ over
$k_0$. So, we take $U\subset L_0$ to be any lifting of $\tilU$, obtaining an orthonormal Schauder basis
$B=\{1\}\sqcup U\sqcup U^p\sqcup U^{p^2}\sqcup\dots$ of $L_0$ lifting the basis $\tilB$. Note that
$K_0\toisom\wh{\Frac(k_0\{T\})}$ because $K$ induces the Gauss norm on $k_0[T]$, hence $K_0\wtimes_{k_0}k\toisom
K$ and so $L_0\wtimes_{k_0}k\toisom L$. The latter isomorphism implies that $B$ is also an orthonormal Schauder
basis of $L$ over $k$. It remains to check that $\Span_k(U)$ is orthogonal to $L^p$. Assume to the contrary that
there exists $u=\sum a_iu_i$ with $a_i\in k^\times$, $u_i\in U$ and $v\in L$ such that $|u-v^p|<|u|$. Since
$|L^\times|=|k^\times|$ is divisible, we can re-scale this using an element from $k^p$ so that $|u|=|v|=1$, and
then by the orthonormality $|a_i|\le 1$. It follows that $\sum\tila_i\tilu_i=\tilv^p$, which is an absurd since
$\Span_\tilk(\tilU)\cap\tilL^p=0$.
\end{proof}

Next, we assume that $K=\ol{k(z)}$ satisfies \ref{assum}(ii). We have observed earlier that $r=\inf_{\alpha\in
k}|z-\alpha|=\inf_{\alpha\in k^a}|z-\alpha|$ and neither infimum is achieved. Consider a sequence
$0=\alpha_0,\alpha_1,\dots$ of elements of $k$ such that the sequence $r_i=|z_i|$, where $z_i=z-\alpha_i$,
monotonically decreases and tends to $r$. Clearly, the discs $E_i=E_k(\alpha_i,r_i)\subset\bfA_k^1$ have a
unique common point $x$ and $\calH(x)\toisom K$. The field $K$ contains a dense subfield $\kappa(x)=\cup
k\{r_i^{-1}z_i\}$, in particular, $K=\ol{k[z]}$. Caution: the elements $1,z_i,z_i^2\dots$ do not form a Schauder
basis, even worse, they do not form a topological generating system in the sense of \cite[2.7.2]{BGR}. Any
non-zero element $b\in k[z]$ is invertible in a neighborhood of $x$, hence replacing $z$ with some $z_j$ we can
achieve that $b=\sum_{i=0}^m b_iz^i$ and the element $b(T)=\sum_{i=0}^mb_iT^i\in k\{|z|^{-1}T\}$ is invertible.
We will need the following Lemma, which implies in particular that $|b|\ge|b_m|r^m$.

\begin{lem}
\label{type4lem} Let $L$ be an analytic $k$-field, $b=\sum_{i=0}^m b_iz^i$ be an element of $L$ such that
$b_i\in k$ and $z\in L$ satisfies $\inf|z-k^a|=r>0$, where the distance is measured in $\wh{L^a}$. Then
$|b|\ge|b_m|r^m$ and the inequality is strict if $m>0$ and $|z-a|>r$ for any $a\in k^a$.
\end{lem}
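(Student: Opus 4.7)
The plan is to reduce the bound to a product formula via factorization of the polynomial $b(T)=\sum_{i=0}^m b_iT^i\in k[T]$ over the algebraic closure. Writing $b(T)=b_m\prod_{i=1}^m(T-\alpha_i)$ with $\alpha_i\in k^a$ and then substituting $T=z$, one obtains
\[
|b|=|b(z)|=|b_m|\prod_{i=1}^m|z-\alpha_i|,
\]
where all absolute values are taken in $\wh{L^a}$. The key point that makes this computation legitimate is that the valuation on $\wh{L^a}$ is multiplicative and extends uniquely both the valuation on $L$ (hence computing $|b|$ correctly) and the valuation on $\wh{k^a}$ (so that each difference $z-\alpha_i$ has a well-defined absolute value).

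Next I would invoke the hypothesis $\inf|z-k^a|=r$ directly: each $\alpha_i\in k^a$, so $|z-\alpha_i|\ge r$ for every $i$. Multiplying these inequalities and combining with the factorization displayed above yields $|b|\ge|b_m|r^m$, which is the main inequality. For the strictness claim when $m>0$ and $|z-a|>r$ holds for every $a\in k^a$, the same product argument applies with each factor now satisfying $|z-\alpha_i|>r$ strictly, so the product is strictly bigger than $r^m$.

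The only conceptual obstacle, and it is minor, lies in justifying that the unique valuation extensions on $k^a$ and on $L^a$ really do glue inside $\wh{L^a}$ so that the multiplicativity used above is legitimate; this is standard ramification theory for henselian (in particular, analytic) fields and requires no further input. No delicate estimate or induction is needed, so this lemma is essentially a direct corollary of the factorization of $b$ over $k^a$ together with the definition of $r$.
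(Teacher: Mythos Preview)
Your proof is correct and actually takes a more direct route than the paper. The paper first reduces to the case where $k$ is algebraically closed (replacing $k$ by $\wh{k^a}$ and $L$ by $\wh{k^aL}$), then splits into two cases: if the infimum $\inf_{a\in k}|z-a|$ is attained at some $a$, the norm on $k[z-a]$ is the Gauss norm of radius $r$ and the bound follows from $|b|=\max_i|b_i|r^i$; if the infimum is not attained, the paper invokes the structure theory of type~4 fields developed just before the lemma to shift $z$ so that $\sum b_iT^i$ becomes a unit in $k\{|z|^{-1}T\}$, forcing $|b|=|b_0|>|b_mz^m|>|b_m|r^m$.

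Your factorization argument bypasses this case distinction entirely: writing $b=b_m\prod_{i=1}^m(z-\alpha_i)$ in $\wh{L^a}$ and using $|z-\alpha_i|\ge r$ (respectively $>r$) gives both the inequality and the strictness in one stroke. The only thing you use is multiplicativity of the valuation on the field $\wh{L^a}$ and the fact that $k^a$ embeds isometrically into it, both of which are immediate since $k$ and $L$ are henselian. The paper's approach has the minor advantage of tying into the ambient discussion of type~4 points and Gauss norms, but yours is shorter and more transparent. (Both arguments implicitly assume $b_m\neq 0$; otherwise the claim is vacuous.)
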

\begin{proof}
Since $r>0$, we have that $\inf|z-k^a|$ is achieved if and only if $\inf|z-\whka|$ is achieved. So, we can
replace $k$ and $L$ with $\whka$ and $\wh{k^aL}$, achieving that $k$ is algebraically closed. Note that when
replacing $z$ with $z-a$ for $a\in k$ we do not change $b_m$. If there exists $a\in k$ with $|z-a|=r$ then the
induced norm on $k[z-a]$ is the Gauss norm of radius $r$, hence $|b|\ge|b_m|r^m$. If the infimum $\inf_{a\in
k}|z-a|$ is not achieved then $\ol{k(z)}$ is of type $4$ over $k$, and we saw before the Lemma that replacing
$z$ with some $z-a$ we can achieve that $\sum b_iT^i$ is invertible in $k\{|z|^{-1}T\}$. Then
$|b|=|b_0|>|b_iz^i|$ for any $i>0$. In particular, $|b|>|b_mz^m|>|b_m|r^m$ for $m>0$.
\end{proof}

A type $4$ field $K$ is not Cartesian over $k$ (and if it is split then it contains finite dimensional subspaces
which have no orthogonal bases). Sometimes it is convenient to enlarge the ground field so that
orthogonalization becomes possible. Let $l$ be an analytic $k$-field with an isometric embedding $\phi:K\to l$,
and let $\oz=\phi(z)$. Set $L=K\wtimes_k l$ and $w=z-\oz$ (i.e. $w=z\otimes 1-1\otimes\oz$), then we claim that
$L\toisom l\{r^{-1}w\}$. Recall that $K=\ol{k[z]}$ and the norm on $k[z]$ is the infimum of the Gauss norms on
$E_k(\alp_i,r_i)$, hence $L=\ol{l[z]}$ and the norm on $l[z]$ is the infimum of the Gauss norms on
$E_l(\alp_i,r_i)$. But the latter discs have common Zariski closed point corresponding to $\oz$, hence
$E_l(\alp_i,r_i)=E_l(\oz,r_i)$ and the infimum norm is the Gauss norm of the closed disc $E_l(\oz,r)$. (On the
geometric side, we showed that the preimage of the point $x=\cap_{i=1}^\infty E_k(\alp_i,r_i)$ under the
projection $\bfA^1_l\to\bfA^1_k$ is the closed disc $E(\oz,r)$.) In particular, we see that the set $w^\bfN$ is
an orthogonal Schauder basis of $L$ over $l$. We remark that $L$ is an $l$-affinoid algebra with a
multiplicative norm, and it admits an isometric embedding into the $l$-field $\wh{\Frac(L)}$ of type $2$ or $3$,
depending on whether $r\in\sqrt{|l^\times|}$ or not.

\begin{lem}
\label{lemrank1} Keep the above notation and assume that $|pz|<\inf|z-k|$. Then $z^{1/p}\notin K$.
\end{lem}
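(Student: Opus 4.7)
The plan is to derive a contradiction from the assumption $u \in K$ with $u^p = z$ by working inside $L = K \wtimes_k K \toisom K\{r^{-1}w\}$ (taking $l = K$ in the setup preceding the lemma, so $w = z\otimes 1 - 1\otimes z$ and the ground field of the disc algebra is the second factor; in particular the element $z := 1 \otimes z$ appears as a ``constant'' in $L$ while $z\otimes 1 = z + w$). I would promote $u$ to $u \otimes 1 \in L$, which satisfies $(u\otimes 1)^p = z + w$, and then divide by the nonzero $u \in K \subset L$ to produce $X = (u\otimes 1)/u \in L$ with $X^p = 1 + w/z$. The augmentation $L \to K$ given by $a \otimes b \mapsto ab$ corresponds under the identification to $w \mapsto 0$ and sends $u\otimes 1$ to $u$, so $X|_{w=0} = 1$; writing $X = 1 + Y$ yields $Y \in L$ with $Y_0 := Y(0) = 0$. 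Since $|w/z| = r/|z| < 1$, we have $|1 + w/z| = 1$, hence $|X| = 1$ and $|Y| \le 1$.

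The heart of the argument is then to extract the coefficient of $w^1$ from the identity $(1+Y)^p = 1 + w/z$. Expanding the left-hand side as $1 + pY + \binom{p}{2}Y^2 + \cdots + Y^p$ and using $Y_0 = 0$ to see that each $Y^k$ with $k \ge 2$ starts in $w$-degree at least $2$, only the linear term $pY$ contributes to the $w^1$-coefficient, so $pY_1 = 1/z$, where $Y = \sum_{k\ge 1} Y_k w^k$.

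This immediately yields the contradiction, bifurcating by characteristic. In equal characteristic $p > 0$ we have $p = 0$ in $K$, forcing $0 = 1/z \ne 0$. In mixed characteristic, $Y_1 = 1/(pz)$, so the Gauss-norm estimate $|Y| \ge |Y_1|\,r = r/|pz|$ combined with the hypothesis $|pz| < r = \inf|z - k|$ yields $|Y| > 1$, contradicting $|Y| \le 1$.

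The real subtlety has already been dispatched in the preceding paragraphs: recognizing that the completed tensor $K \wtimes_k K$ admits the clean disc-algebra description $K\{r^{-1}w\}$. With this tool in hand, the lemma reduces to a transparent calculation showing that the first two coefficients of a hypothetical $p$-th root of $1 + w/z$ in $L$ are incompatible with $|pz| < r$, and no further input beyond the normed-ring structure of $L$ is required.
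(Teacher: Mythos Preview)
Your proof is correct. Both you and the paper exploit the disc-algebra description $L\toisom l\{r^{-1}w\}$, but the arguments diverge from there. The paper takes $l$ algebraically closed so that $(-\oz)^{1/p}\in l$, forms $u=z^{1/p}+(-\oz)^{1/p}\in L$, and uses the binomial estimate to get $|u^p-w|\le|pz|<r=|w|$; the contradiction then comes from the coarse invariants of $L$ (either $r\notin|l^\times|^p$ or, after rescaling, $\tilw\notin\tilL^p$). You instead take $l=K$ itself, use the assumed root $u=z^{1/p}$ \emph{twice} --- once in each tensor factor --- to obtain an exact equation $X^p=1+w/z$ with $X=(u\otimes 1)/(1\otimes u)$, pin down $X(0)=1$ via the multiplication map $K\wtimes_k K\to K$, and then read off the $w^1$-coefficient. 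Your route avoids enlarging $l$ and replaces the value-group/residue-field dichotomy by a clean characteristic split (in equal characteristic $p$ the coefficient equation $pY_1=1/z$ is already absurd; in mixed characteristic the Gauss-norm bound $|Y|\ge|Y_1|r=r/|pz|>1$ does the job). One small point you use implicitly is $|z|>r$, which holds because $r=\inf|z-k|$ is not attained while $|z|=|z-0|$; this is the only unstated step, and it is immediate from the type~$4$ setup.
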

\begin{proof}
Assume, on the contrary, that $z^{1/p}\in K$. Choose an embedding $\phi:K\to l$ with an algebraically closed $l$
and set $\oz=\phi(z)$. As earlier, let $w=z-\oz\in L=K\wtimes_k l$, and let also $u=z^{1/p}+(-\oz)^{1/p}$. Using
that $\binom{p}{i}$ is divided by $p$ for $0<i<p$ one easily obtains the inequality $|u^p-w|\le|pz|<r=|w|$ in
$L\toisom l\{r^{-1}w\}$. The latter is impossible. Indeed, if $r\notin|l^\times|$ then $r=|w|$ is not a $p$-th
power in $|L^\times|=r^\bfN|l^\times|$, and if $r\in|l^\times|$ then after re-scaling by an element of $l^p=l$
we can achieve that $r=1$ and then $\tilw$ is not a $p$-th power in $\tilL=\till[\tilw]$.
\end{proof}

The following unpleasant Lemma plays a key role in our treating of type $4$ fields.

\begin{lem}
\label{dirtylem} Keep the above notation, and let $b+S_{a,s}(K)$ be a special coset. Assume that $b=\sum_{i=0}^m
b_iz^i\in k[z]$ and $(p,m)=1$. If $a=1$ then we assume in addition that $|pb_iz^i|<1$ for $i\ge 1$. Then
$|b_m|r^m\le s$, and the inequality is strict when $m>1$.
\end{lem}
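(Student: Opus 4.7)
My plan is a proof by contradiction: assume $|b_m|r^m > s$ (for the weak inequality), or $|b_m|r^m \geq s$ with $m > 1$ (for the strict part), and derive a contradiction with the criticality of the special coset $b + S_{a,s}(K)$. The goal is to show that even after subtracting any admissible element of $S_{a,s}(K)$, the absolute value of what remains is bounded below by $|b_m|r^m$ (strictly so when $m>1$), which will force $\inf|b + S_{a,s}(K)| \ge |b_m|r^m > s$, contradicting that the infimum equals $s$.

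The strategy is to pull the problem into the $z_i = z - \alpha_i$ coordinate, where $|z_i| = r_i \searrow r$ and the infimum $\inf|z_i - k^a| = r$ is not attained, so Lemma \ref{type4lem} applies directly. For each $i$, expand $b = \sum_{l=0}^m c_l^{(i)} z_i^l$ with $c_m^{(i)} = b_m$; then $K = \ol{k[z_i]}$, and Lemma \ref{type4lem} gives $|f| \geq |C_M|r^M$ for any $f = \sum C_l z_i^l \in k[z_i]$ of degree $M$, strictly when $M > 0$. Criticality furnishes, for every $\varepsilon > 0$, elements $c \in K$ and $d \in K$ with $|pc^p| < s$, $|d| < s$ (and $|c| \le 1$ when $a = 1$) such that $|b - c^p + ac - d| < s + \varepsilon$. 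Using density of $k[z_i]$ in $K$, I will replace $c$ by a polynomial $c' \in k[z_i]$ of degree $< m/p$, absorbing truncation and approximation errors into $d$. The decisive point is the hypothesis $(m,p) = 1$: the $z_i^m$-coefficient of $c'^p = \sum \gamma_l^p z_i^{lp} + (\text{cross terms})$ is purely a $p$-divisible cross-term contribution, and the $z_i^m$-coefficient of $ac'$ vanishes either automatically ($a = 0$) or by arranging $c'$ to have no $z_i^m$-term ($a = 1$). Consequently $b - c'^p + ac' - d \in k[z_i]$ is a polynomial of degree $\leq m$ whose $z_i^m$-coefficient is $b_m + O(p)$, and Lemma \ref{type4lem} yields
\[
s + \varepsilon \;>\; |b - c'^p + ac' - d| \;\geq\; |b_m + O(p)|\, r^m,
\]
with strict inequality for $m > 1$. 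Letting $\varepsilon \to 0$ and checking $|b_m + O(p)| = |b_m|$ gives the contradiction.

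The principal obstacle is controlling all the mixed-characteristic errors simultaneously. First, approximating $c \in K$ by $c' \in k[z_i]$ must preserve $|pc'^p| < s$ and must be tight enough that the error propagated into $d$ via $(c^p - c'^p) + a(c - c')$ remains below $s$; this requires bounding every cross term of $c'^p$ in terms of $|pc'^p|$. Second, one must verify that the $p$-divisible correction hiding inside $b_m + O(p)$ is strictly smaller than $|b_m|$, so it does not affect the leading-coefficient estimate. The three hypotheses of the lemma are precisely calibrated for this: $|pb| < s$ (supplied by criticality via Lemma \ref{212lem}(i)) and, in the case $a = 1$, $|pb_iz^i| < 1$ together ensure that the cross terms from both $b$ and $c'^p$ are absorbable, while the deep-ramification condition $\kcirc = p\kcirc + (\kcirc)^p$ enables extraction of the approximate $p$-th roots in $k$ needed to realize the coefficients $\gamma_l$ inside the polynomial $c'$. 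I expect that threading these three size constraints through the approximation of $c$ by $c'$ — in particular, making the bookkeeping of binomial cross-term sizes uniform in $i$ as $r_i \to r$ — will be the main technical difficulty.
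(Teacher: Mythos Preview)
Your approach has a genuine gap at its core step, and even if that step could be repaired the argument would not deliver the strict inequality for $m>1$.

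\textbf{The truncation step fails.} You propose to approximate $c\in K$ by a polynomial $c'\in k[z_i]$ of degree $<m/p$ and absorb the discrepancy into $d$. But in a type $4$ field the monomials $1,z_i,z_i^2,\dots$ do \emph{not} form an orthogonal system --- the paper explicitly warns that they are not even a topological generating system. So if you first approximate $c$ by a polynomial $\tilde c=\sum_\ell\gamma_\ell z_i^\ell$ of some degree $N$ and then truncate to $c'=\sum_{\ell<m/p}\gamma_\ell z_i^\ell$, the tail $e=\tilde c-c'$ is not controlled by $|\tilde c|$ at all; it can be as large as you like. Consequently there is no reason for $|c^p-c'^p|$ (or $|a(c-c')|$) to be $<s$, and the error cannot be absorbed into $d$. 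The difficulty is not a mixed-characteristic bookkeeping issue, as you suggest, but the absence of any coefficient-wise control in $K$. This is precisely why the paper passes to $L=K\wtimes_k l$ with $l\supset\phi(K)$ algebraically closed: there $L\toisom l\{r^{-1}w\}$ with $w=z-\oz$, and $\{w^i\}$ \emph{is} an orthogonal Schauder basis, so one can legitimately isolate the $w^1$-coefficient $\ob_1$ of $b$ and bound $|\ob_1 w|\le s$ using the orthogonal structure of $S_{a,s}(L)$.

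\textbf{Strictness for $m>1$ is lost.} Suppose the truncation could be salvaged. Your bound would read: for every admissible $c,d$, the element $b+c^p-ac+d$ (rewritten as a degree-$m$ polynomial with leading coefficient essentially $b_m$) satisfies $|b+c^p-ac+d|>|b_m|r^m$ by Lemma~\ref{type4lem}. Taking the infimum over all such elements gives only $s\ge|b_m|r^m$; the per-element strictness does not survive the limit. The paper's mechanism for strictness is different: it applies Lemma~\ref{type4lem} \emph{once}, to the single polynomial $\ob_1=mb_m\oz^{m-1}+\cdots$ of degree $m-1$ in $\oz$, obtaining $|\ob_1|>|b_m|r^{m-1}$ exactly when $m-1>0$. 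Combined with the upper bound $|\ob_1|\le s/r$, this yields $|b_m|r^m<s$ for $m>1$. The degree drop from $m$ to $m-1$ --- which comes from looking at the $w^1$-coefficient rather than the whole element --- is the source of the correct threshold, and your plan has no analogue of it.
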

\begin{proof}
Recall that $r=\inf|z-k^a|=\inf|z-k|$. Shifting $z$ by an element of $k$ we can achieve that $|pz|<r$. We will
later need the following two claims, which follow easily from Lemma \ref{easylem} below: (*) for any $n$ with
$p^n\le m$ one has that $\inf|z^{1/p^n}-k^a|=r^{1/p^n}$, (**) if $k$ is as in \ref{assum}(ii) then
$\inf|k-c^{1/p^n}|\le|pc|^{1/p^n}$ for any $c\in k$.

\begin{lem}\label{easylem}
Given elements $x_1\. x_m\in\whka$ one has that

(i) $|(\sum_{i=1}^m x_i)^{p^n}-\sum_{i=1}^m x_i^{p^n}|\le|p|\max_{1\le i\le n}|x_i|^{p^n}$;

(ii) $|(\sum_{i=1}^m x_i)^{1/p^n}-\sum_{i=1}^m x_i^{1/p^n}|\le |p|^{1/p^n}\max_{1\le i\le n}|x_i|^{1/p^n}$,
where the choice of the root is not important since $\xi_1^{p^n}=\xi_2^{p^n}=1$ implies that
$|\xi_1-\xi_2|<|p|^{1/p^n}$.
\end{lem}
\begin{proof}
(i) is clear and to prove (ii) we estimate the $p^n$-th power of the difference $(\sum_{i=1}^n
x_i)^{1/p^n}-\sum_{i=1}^n x_i^{1/p^n}$ by use of (i).
\end{proof}

Choose an embedding $\phi:K\to l$, where $l$ is an algebraically closed analytic $k$-field, and set
$\oz=\phi(z)$, $L=K\wtimes_k l$ and $w=z-\oz$. Recall that $|w|=r$ and $L\toisom l\{r^{-1}w\}$, and so $w^\bfN$
is an orthogonal Schauder basis. Note that $b=\sum_{i=0}^m \ob_iw^i$, where
\begin{equation}
\label{ob} \ob_i=\ob_i(\oz)=\sum_{j=i}^m\binom{j}{i}b_j\oz^{j-i}\in\phi(K)\subset l
\end{equation}
Recall that $|pb|<s$ by Lemma \ref{212lem}(i), so each term of $b$ satisfies $|p\ob_iw^i|<s$. Similarly to the
case when $L$ is a field, the set $S_{a,s}(L):=\{c^p-ac+d|\ c,d\in L,|pc^p|<s,|d|<s\}$ is an additive group, and
obviously $s':=\inf|b+S_{a,s}(L)|\le\inf|b+S_{a,s}(K)|=s$. To ease the exposition we separate the cases (i)
$a=0$, and (ii) $a=1$ and $s=1$.

We start with (i) since it is easier. We claim that in this case $|\ob_1w|\le s$. Indeed, for any $c=\sum
c_iw^i\in L$ with $|pc^p|<s$ the first term of $b+c^p=\sum c'_iw^i$ is $c'_1w=(\ob_1+pc_1c_0^{p-1})w$. So, if
$|\ob_1w|>s$ then there exists $c$ as above with $|\ob_1w+pc_1c_0^{p-1}w|<|\ob_1w|$ and the latter would imply
that $|pc^p|\ge|pc_1c_0^{p-1}w|=|\ob_1w|>s$. The contradiction proves the claim, and hence $|\ob_1|\le
s/|w|=s/r$. On the other hand we can estimate $|\ob_1|$ by applying Lemma \ref{type4lem} to $\phi(K)$. Since
$|m|=1$ in $L$ and $\ob_1=mb_m\oz^{m-1}+\dots$ by (\ref{ob}), the Lemma implies that $|b_m|r^{m-1}\le|\ob_1|$
and the inequality is strict when $m>1$. Thus, $|b_m|r^{m-1}\le s/r$ and the first case is established.

Now let us assume that $a=1$ and $s=1$. Let $\ob_{pi}w^{pi}$ be the non-zero term of $b$ with largest $i$. Since
$|p\ob_{pi}w^{pi}|<1$ and $l$ is algebraically closed, $c:=\ob_{pi}^{1/p}w^i$ is an element of $L$ satisfying
$|pc^p|<1$, and replacing $b$ with $b-c^p+c$ we find another element of $b+S_{1,1}(L)$ with smaller value of
$i$. Iterating this procedure we obtain an element $B=\ob w+\sum_{i>1,(i,p)=1}x_iw^i\in b+S_{1,1}(L)$ where
$\ob=\ob_1+\ob_p^{1/p}+\dots+\ob_{p^N}^{1/p^N}$ and $N=[\log_p(m)]$. Similarly to the case (i), we will now
prove that $|\ob w|\le 1$. Assume that the inequality fails. Then there exists $c=\sum c_iw^i$ with $|pc^p|<1$
and $|B+c^p-c|<|\ob w|$. Note that $c^p$ coincides with $C:=\sum c_i^pw^{pi}$ up to terms of absolute value
smaller than $1$, hence the inequality $|B+c^p-c|<|\ob w|$ can hold only when $|B+C-c|<|\ob w|=|c_1w|$. Choose
the maximal $i$ with $|c_iw^i|\ge|\ob w|$, then the $pi$-th term of $B+C-c$ is $(c_i^p-c_{pi})w^{pi}$. Since
$|c_i^pw^{pi}|\ge|\ob w|^p>|\ob w|>|c_{pi}w^{pi}|$, we obtain that $|B+C-c|\ge|\ob w|^p>|\ob w|$. The
contradiction proves that $|\ob w|\le 1$. Now, it suffices to show that $|b_m|r^{m-1}\le|\ob|$ and the
inequality is strict when $m>1$. This will be done below to accomplish the proof.

As in (i), we would like to apply Lemma \ref{type4lem} to $\ob$. Since $\ob$ does not have to be in $\phi(K)$,
we will also have to approximate it with an element $\ob'\in\phi(K)(\oz^{1/p^N})$. It will be convenient to
assume that $|z|^{-1}<|b_m|r^{m-1}$, and this is harmless because otherwise $1\ge|b_mz|r^{m-1}>|b_m|r^m$, which
is the assertion of the Lemma. Recall that in the mixed characteristic case $|b_jz^j|<|p^{-1}|$ for $j\ge 1$ (by
assumption of the Lemma). Therefore, in the formula (\ref{ob}) for $\ob_{p^n}$ the absolute value of the
summands is strictly smaller than $R:=|p|^{-1}|z|^{-p^n}$. Using Lemma \ref{easylem}(i) to approximate the
$p^n$-th root we obtain that
\begin{equation}
\label{obb} (\ob_{p^n})^{1/p^n}=W+\sum_{j=p^n}^m{\binom{j}{p^n}}^{1/p^n}b_j^{1/p^n}\oz^{(j-p^n)/p^n}
\end{equation}
where the error term $W$ satisfies $|W|<|p|^{1/p^n}R^{1/p^n}=|z|^{-1}<|b_m|r^{m-1}$. Recall that
$\inf|k-b_j^{1/p^n}|\le|pb_j|^{1/p^n}$ by claim (**). In particular, each $b_j^{1/p^n}$ in (\ref{obb}) can be
replaced by an element of $k$ without increase in the error term (use that $|b_j|<|p|^{-1}|z|^{-j}\le R$ for
$j\ge p^n$). Summing up such approximations of the elements $(\ob_{p^n})^{1/p^n}$ for $1\le n\le N$ (without the
error terms) we obtain an approximation $\ob'$ of $\ob$. More specifically, we obtain an element $\ob'\in
k[\oz^{1/p^N}]$ such that $|\ob'-\ob|<|b_m|r^{m-1}$ and $\ob'$ is a polynomial of degree $(m-1)p^N$ in $\oz$ and
with the highest degree term $mb_m\oz^{m-1}$, which is the contribution of $\ob_1$. Recall that
$\inf|z^{1/p^N}-k^a|=r^{1/p^N}$ by claim (*), and hence $\inf|\oz^{1/p^N}-k^a|=r^{1/p^N}$. Applying Lemma
\ref{type4lem} to the field $\ol{k(\oz^{1/p^N})}$, we obtain that
$|\ob'|\ge|b_m|(r^{1/p^N})^{(m-1)p^N}=|b_m|r^{m-1}$ and the inequality is strict when $m>1$. It follows that
$\ob$ satisfies the same inequalities and we are done.
\end{proof}

\begin{proof}[Proof of Proposition \ref{modramprop}]
Let $b+S_{a,s}(L)$ be a critical coset. The strategy of the proof is very simple: we gradually improve $b$
moving it inside $b+S_{a,s}(L)$ until either $|b|=s$ or $b$ is a topological generator. If $K$ is of type $2$ or
$3$ (so $k=k^a$) then $L$ possesses a Schauder basis $B=1\sqcup U\sqcup U^p\sqcup\dots$ as in Proposition
\ref{schaudprop}. Moving $b$ a little, we can assume that the representation $b=\sum_{v_i\in B} a_iv_i$ involves
finitely many non-zero terms. Let $b=a_0+\sum_{i=1}^N\sum_{u_j\in U}a_{ij}u_j^{p^i}$. If $N>0$ then subtracting
from $b$ elements of the form $a_{ij}u_j^{p^i}-aa_{ij}^{1/p}u_j^{p^{i-1}}$ we can achieve that it is contained
in $\Span(1\sqcup\dots\sqcup U^{p^{N-1}})$.  (One has also to check that $|pa_{ij}u_j^{p^i}|<s$, but this is
obvious because $|pb|<s$ by Lemma \ref{212lem}(i)). Iterating the process we achieve that $b=a_0+\sum_{u_j\in
U}a_ju_j$. Since $k$ is algebraically closed, $a_0=c^p-ac$ for some $c\in k$, and so we can remove $a_0$ as
well. Now, we claim that the absolute value of $b$ cannot be reduced further by adding elements $c^p-ac$.
Indeed, if $|b+c^p-ac|<|b|$ then $|b|>s$, so $|c^p-ac|=|b|>s$. Since $s^{\frac{p-1}{p}}\ge |a|$, the latter is
possible only when $|c^p|>|ac|$. Therefore $|b+c^p|<|b|$, contradicting the property that $b\in\Span(U)$ is
orthogonal to $L^p$. This settles the Proposition for types $2$ and $3$.

In the case of \ref{assum}(ii), $L=K$ because $\tilK=\tilk$ is algebraically closed and $|K^\times|=|k^\times|$
is divisible by Remark \ref{rem0}. Set $r=\inf|z-k|$, as usually. Replacing $b$ with a sufficiently close
element we can achieve that $b\in k[z]$, say $b=\sum_{i=0}^m b_iz^i$. Using linear change of the coordinate $z$
we can achieve that $\sum b_iT^i$ is invertible in $k\{|z|^{-1}T\}$, and then $|pb_iz^i|\le|pb|<s$. Now, if
$(p,m)=1$ and $m>1$ then Lemma \ref{dirtylem} implies that $|b_m|r^m<s$. Choose $z_0\in k$ such that
$d=b_m(z-z_0)^m$ satisfies $|d|<s$. Subtracting $d$ from $b$, we decrease the degree of $b$. If $p|m$ and
$\cha(k)=p$ then we subtract $c^p-ac$ with $c=b_m^{1/p}z^{m/p}$ from $b$ decreasing the degree of $b$, and then
use another linear coordinate change to restore the condition $|pb_iz^i|<s$.  In the mixed characteristic case
with $p|m$, $b_m^{1/p}$ does not have to be in $k$ but by \ref{assum}(ii) there exists $c_0\in k$ so that
$|c_0^p-b_m|\le|pb_m|$. Since $|pb_mz^m|<s$, for $c=c_0z^{m/p}$ we have that $|c^p-b_mz^m|<s$. Thus, we can
replace $b$ with $b-c^p+ac$ so that the absolute value of the $m$-th term of $b$ drops below $s$ and then we can
safely remove it. All in all, we can move $b$ inside the coset until its degree $m$ drops below two. If $m=1$
then $b$ is a topological generator of $K$, and if $m=0$ then $b\in k$.
\end{proof}

\subsection{One-dimensional analytic fields} \label{onedimsec}
Let $z\in K\setminus k$ be an element. We say that it is an {\em unramified generator} (resp. {\em moderately
ramified generator}) if $K/\ol{k(z)}$ is a finite unramified extension (resp. finite moderately ramified
extension). In this section, generator always means topological generator. The main result of this section
states that any one-dimensional analytic $k$-field possesses an unramified generator.

\begin{theor}
\label{fieldunif} Let $L$ be a one-dimensional analytic $k$-field satisfying the conditions of \ref{assum},
then:

(i) $L$ possesses an unramified generator over $k$;

(ii) if $p>0$ and $k=k^a$ then any special coset in $L$ contains a moderately ramified generator of $L$;

(iii) if $L$ is of type $2$ or $3$ then it is stable.
\end{theor}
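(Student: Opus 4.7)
The plan is to prove (iii) and (ii) as consequences of (i), so the real work lies in establishing (i), which I would do by a joint induction that simultaneously furnishes the needed form of (ii).

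For (iii), note that finite extensions of a one-dimensional analytic $k$-field of type 2 or 3 remain of type 2 or 3. By Corollary \ref{immcor}, stability of $L$ reduces to the claim that in every finite Cartesian extension $L'/L$, every special coset attains its infimum. Applying (i) to $L'$ produces an unramified generator $z\in L'$; then $L'$ is a fortiori moderately ramified over $K=\ol{k(z)}$, and Proposition \ref{modramprop} furnishes a minimizer inside any critical, hence in particular any special, coset.

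For (ii), given a special coset $b+S_{a,s}(L)$, apply (i) to obtain an unramified generator $z$ of $L$, so that $L/\ol{k(z)}$ is unramified and hence moderately ramified. Proposition \ref{modramprop} then locates a distinguished element of the coset: a norm-minimizer in the type 2 or 3 case, or a topological generator of $L$ over $k$ in the type 4 case. In the type 4 case this element is already an unramified, a fortiori moderately ramified, generator of $L$; in the type 2 or 3 case, combining Proposition \ref{immprop}(iv) applied to the polynomial $T^p-aT+b'$ with the Schauder-basis picture of Proposition \ref{schaudprop} identifies the minimizer as a moderately ramified generator of $L$ over $k$.

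For (i), start from any $y\in L$ with $[L:\ol{k(y)}]<\infty$ and decompose this extension into unramified, moderately ramified, and wildly ramified layers. The unramified and moderately ramified layers can be absorbed into the choice of generator by a standard coordinate change (adjoining suitable $n$-th roots or Hensel-lifting a separable residue generator). For the wildly ramified layer, filter it into degree-$p$ steps and apply Proposition \ref{immprop} to each step: the obstruction to unramifiedness is encoded in a special coset in some intermediate field, at which point invoking (ii) --- valid at this inductive stage since the intermediate field has strictly smaller wild defect --- yields a moderately ramified generator inside the coset, enabling one to peel off the wild layer and strictly reduce the defect $d_{L/\ol{k(y)}}$. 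Iterating drives the defect to one and yields an unramified generator.

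The principal obstacle will be the apparent mutual dependence of (i) and (ii), which I would break by a joint induction on the defect $d_{L/\ol{k(y)}}$: each invocation of (ii) inside the induction step for (i) is applied to a field of strictly smaller defect, eliminating circularity. A secondary technical difficulty is the type 4 case in Assumption \ref{assum}(ii), where the Schauder basis of Proposition \ref{schaudprop} is unavailable; here Lemma \ref{dirtylem} together with the deep ramification hypothesis on $k$ must supply the necessary $p$-th root extractions for the coset surgery to go through.
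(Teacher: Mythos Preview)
Your outline has the right dependency structure --- (iii) and (ii) do reduce to (i) via Proposition \ref{modramprop} and Corollary \ref{immcor} --- but the proposed proof of (i) has two real gaps.

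First, in types 2 and 3 the wild layer $L/K$ (with $K$ the unramified closure of $\ol{k(z)}$) does \emph{not} in general filter into degree-$p$ steps: one must first pass to a finite moderately ramified extension $K'/K$ so that $K'L/K'$ becomes a $p$-tower (using that $\Gal(K^s/K^\mr)$ is pro-$p$). The paper then argues \emph{directly}, with no induction: $K'$ has a moderately ramified generator, so Proposition \ref{modramprop} says every critical coset in $K'$ attains its infimum, whence by Proposition \ref{immprop}(iv) $K'$ admits no immediate degree-$p$ extension; since $K'L/K'$ is immediate (as $L/K$ is) every degree-$p$ subextension is immediate too, forcing $K'L=K'$ and hence $L=K$. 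Your induction on ``defect'' does not cleanly absorb this base change, and its base case (defect $1$, i.e.\ Cartesian) is not the same as having an unramified generator --- a Cartesian extension in type 2 can still have inseparable residue extension.

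Second, in type 4 the step you call ``peel off the wild layer'' is exactly the crux. Having located (via Proposition \ref{modramprop}) a topological generator $b$ of $K=\ol{k(z)}$ inside the special coset, you must extract a generator of $L$. When $a=1$ this follows from Proposition \ref{immprop}(iii). But when $a=0$ one has only $\alpha\in L$ with $|\alpha^p+b|<s$, and showing $\ol{k(\alpha)}=L$ needs a perturbation argument: set $b'=-\alpha^p$, use Lemmas \ref{epsclose} and \ref{fin} to transfer $[L:\ol{k(b)}]=p$ to $[L:\ol{k(b')}]=p$, then Lemma \ref{lemrank1} (together with the deep ramification hypothesis, to bound $\inf|b-k|$ via $\inf|b+K^p|$) to show $\alpha\notin\ol{k(b')}$. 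This is also where $k$-splitness is genuinely consumed; Lemma \ref{dirtylem}, which you cite, lives inside the proof of Proposition \ref{modramprop} and is not the missing ingredient here. As a smaller matter, your derivation of (ii) in types 2--3 via Proposition \ref{immprop}(iv) is off-target: the paper instead does a case analysis on whether the minimizer $z$ has $|z|\in|L^\times|^p$ or $|z|\in|k^\times|$, crucially invoking stability (iii) --- which must therefore be established before (ii), not after.
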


We start with few simple lemmas. Until the end of Corollary \ref{fincor} $k$ is an arbitrary analytic field.

\begin{lem}
\label{epsclose} Let $K=\ol{k(z)}$ be a one-dimensional analytic $k$-field and $L$ be an analytic $K$-field. Set
$r=|z-k^a|>0$ (computed in $\wh{K^a}$) and assume that $z'\in L$ satisfies $|z-z'|<\ve r$ for some $\ve<1$. Then
there exists an isometric embedding $\phi:K\into L$ over $k$ such that $\phi(z)=z'$ and $|x-\phi(x)|<\ve|x|$ for
any $x\in K^\times$.
\end{lem}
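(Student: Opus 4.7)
The plan is to construct $\phi$ by first defining a $k$-algebra homomorphism $\phi_0\colon k[z]\to L$ sending $z$ to $z'$, showing it is an isometry (with the required closeness), extending it to $k(z)$, and then taking the completion to obtain $\phi\colon K\to L$. The whole argument hinges on one computation: for every polynomial $f\in k[z]$ one has $|f(z)-f(z')|<\ve|f(z)|$, which in particular forces $|f(z)|=|f(z')|$ by the ultrametric inequality.

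Here is how I would carry out this key estimate. Since $\wh{k^a}$ is algebraically closed, any monic polynomial $f\in k[z]$ factors as $f(T)=\prod_{i=1}^n(T-\alpha_i)$ with $\alpha_i\in\wh{k^a}$, and by definition of $r$ (and density of $k^a$ in $\wh{k^a}$) we have $|z-\alpha_i|\ge r$ for each $i$. The hypothesis $|z-z'|<\ve r\le\ve|z-\alpha_i|<|z-\alpha_i|$ together with the ultrametric inequality yields $|z'-\alpha_i|=|z-\alpha_i|$, so $|f(z)|=|f(z')|=\prod_i|z-\alpha_i|$. Using the telescoping identity
\[
\prod_{i=1}^n(z'-\alpha_i)-\prod_{i=1}^n(z-\alpha_i)=\sum_{j=1}^n\Bigl(\prod_{i<j}(z'-\alpha_i)\Bigr)(z'-z)\Bigl(\prod_{i>j}(z-\alpha_i)\Bigr),
\]
the ultrametric inequality gives
\[
|f(z')-f(z)|\le|z-z'|\cdot\max_j\prod_{i\ne j}|z-\alpha_i|\le\frac{|z-z'|}{r}\,|f(z)|<\ve|f(z)|,
\]
as required. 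Scaling by the leading coefficient covers arbitrary (not necessarily monic) polynomials over $k$.

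Once this is established, I would deduce that $z'$ is transcendental over $k$ (if $g(z')=0$ for some nonzero $g\in k[z]$, then $|g(z)|=|g(z')|=0$, forcing $z\in\wh{k^a}$ and contradicting $r>0$). Consequently the evaluation map $\phi_0\colon k[z]\to L$, $z\mapsto z'$, is well-defined and injective, and extends to a $k$-algebra embedding $\phi_1\colon k(z)\to L$. Writing a rational function $f/g$ with $f,g\in k[z]$, the identity $f(z)g(z')-f(z')g(z)=f(z)(g(z')-g(z))-g(z)(f(z')-f(z))$ combined with the polynomial estimate and with $|g(z')|=|g(z)|$ yields
\[
\Bigl|\tfrac{f(z)}{g(z)}-\tfrac{f(z')}{g(z')}\Bigr|<\ve\Bigl|\tfrac{f(z)}{g(z)}\Bigr|,
\]
so $\phi_1$ is isometric and satisfies $|x-\phi_1(x)|<\ve|x|$ on $k(z)^\times$. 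Since $k(z)$ is dense in $K$ and $L$ is complete, $\phi_1$ extends uniquely to a continuous $k$-algebra embedding $\phi\colon K\to L$, and both the isometry property and the inequality $|x-\phi(x)|<\ve|x|$ persist by continuity (the strict inequality survives because $\ve<1$ and the ultrametric distance function only takes values that are suprema of the polynomial ones).

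The main obstacle is really only the polynomial estimate above; once that is in hand the rest is a mechanical passage from $k[z]$ to $k(z)$ and then to the completion $K$. The subtle point in that estimate is the use of $|z-z'|<\ve r$ (rather than just $|z-z'|<r$), which is exactly what is needed to turn the bound $|f(z)-f(z')|\le(|z-z'|/r)|f(z)|$ into a strict contraction with ratio $\ve$, and hence into an isometry via the ultrametric principle.
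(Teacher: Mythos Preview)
Your proof is correct and follows essentially the same route as the paper: factor polynomials into linear factors over $\wh{k^a}$ (working inside $\wh{k^aL}$), verify the estimate $|f(z)-f(z')|<\ve|f(z)|$ for linear factors, propagate it to products and quotients, and pass to the completion. The paper phrases the propagation step abstractly (the condition $|x-x'|<\ve|x|$ is closed under products and quotients), whereas you spell it out via the telescoping identity; these are the same argument. One small point: your justification for why the strict inequality survives completion is a bit loose — the clean way is to pick $x_n\in k(z)$ with $|x-x_n|<\ve|x|$, use that $\phi$ is an isometry so $|\phi(x)-\phi(x_n)|=|x-x_n|$, and bound $|x-\phi(x)|$ by the ultrametric maximum of three terms each strictly below $\ve|x|$.
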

\begin{proof}
For any linear polynomial $f(T)$ over $k$ we have that $|f(z)-f(z')|<\ve|f(z)|$. Assume given $x,x',y,y'\in L$
with $|x-x'|<\ve|x|$ and $|y-y'|<\ve|y|$ (this assumption is symmetric in $x,x'$ and $y,y'$, and it forces that
$x,x',y,y'\in L^\times$). It is easy to check that $|xy-x'y'|<\ve|xy|$ and
$|\frac{x}{y}-\frac{x'}{y'}|<\ve|\frac{x}{y}|$. Hence the above inequality holds for any rational function
$f(T)\in k(T)$ (by factoring $f(z)$ in $\wh{k^aL}$). From this the Lemma follows by continuity.
\end{proof}

\begin{lem}
\label{fin} If $\alpha,\beta:K\to L$ are embeddings of analytic fields and $\ve<1$ is a number such that
$|\alpha(x)-\beta(x)|<\ve|x|$ for any $x\in K$ then $[L:\alpha(K)]=[L:\beta(K)]$.
\end{lem}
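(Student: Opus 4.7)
The first observation is that the hypothesis $|\alpha(x)-\beta(x)|<\ve|x|$ with $\ve<1$ combined with the strong triangle inequality forces $|\alpha(x)|=|\beta(x)|$ for all $x\in K^\times$, so both embeddings are isometric. Consequently $\sigma_0:=\beta\circ\alpha^{-1}\colon\alpha(K)\toisom\beta(K)$ is an isometric isomorphism of analytic subfields of $L$ satisfying $|\sigma_0(y)-y|<\ve|y|$. Since the role of $\alpha$ and $\beta$ is symmetric, it is enough to show that if $n:=[L:\alpha(K)]<\infty$ then $[L:\beta(K)]\le n$.

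Suppose $n<\infty$. The inseparable part of $L/\alpha(K)$ is handled first: if $L_0/\alpha(K)$ is the maximal separable subextension, then every $y\in L$ satisfies $y^{p^N}\in L_0$ for some $N$, and $\sigma_0$ extends over the purely inseparable tower $L/L_0$ by sending $p^N$-th roots to $p^N$-th roots; this preserves the degree. So I may assume $L/\alpha(K)$ is separable and choose a primitive element $y\in L$ with monic minimal polynomial $f(T)=T^n+\sum_{i<n}\alpha(a_i)T^i$. Set $g(T)=\sigma_0(f)(T)=T^n+\sum_{i<n}\beta(a_i)T^i\in\beta(K)[T]$. The plan is to locate a root $z\in L$ of $g$ so close to $y$ that Krasner's lemma (applied in $\alpha(K)$) gives $\alpha(K)(z)\subseteq\alpha(K)(y)=L$, whence in particular $z\in L$, and then $\beta(K)(z)\subseteq L$ has degree at most $n$ over $\beta(K)$; sending $y\mapsto z$ defines a $k$-embedding $\sigma\colon L\to L$ realizing $L$ as $\beta(K)(z)$, yielding $[L:\beta(K)]\le n$.

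Producing this root $z$ is the technical heart of the argument. One computes $|g(y)|=|g(y)-f(y)|\le\ve\max_i|\alpha(a_i)|\,|y|^i$, and once this quantity is smaller than $|f'(y)|^2$, Hensel's lemma furnishes a unique root $z\in L$ of $g$ with $|z-y|\le|g(y)|/|f'(y)|$; requiring this in turn to be strictly less than $\min_{j>1}|y-y_j|$ (the separation of roots of $f$ in $\wh{L^a}$) triggers Krasner's lemma and gives $z\in\alpha(K)(y)=L$.

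The main obstacle is that an arbitrary $\ve<1$ need not satisfy these quantitative inequalities relative to the given $f$: the separation of roots of $f$ can be far smaller than $|y|^n$ in the wildly ramified setting, so a one-step comparison may fail. I would overcome this by inserting a chain of intermediate isometric embeddings $\alpha=\alpha_0,\alpha_1,\dots,\alpha_N=\beta$ of $K$ into $L$ with arbitrarily small consecutive distances, constructed using Lemma \ref{epsclose} applied to a fine polygonal approximation of $\beta$ by $\alpha$ at a topological generator (the field $K$ being one-dimensional, hence topologically generated by a single element in the situations where the lemma is applied). The Hensel/Krasner step above preserves the degree between each consecutive pair $\alpha_j,\alpha_{j+1}$, and telescoping gives $[L:\alpha(K)]=[L:\beta(K)]$.
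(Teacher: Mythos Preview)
Your proposed ``chain of intermediate embeddings'' does not work in the non-Archimedean world, and this is the key gap. By the ultrametric inequality, for any three points $a,c,b$ one has $|a-b|\le\max(|a-c|,|c-b|)$; hence for any chain $\alpha(z)=z_0,z_1,\dots,z_N=\beta(z)$ in $L$ we get $\max_j|z_j-z_{j+1}|\ge|\alpha(z)-\beta(z)|$. Consequently the consecutive ``distances'' between the embeddings $\alpha_j$ produced via Lemma~\ref{epsclose} can never drop below the original $\ve$, and the Hensel/Krasner step you correctly flagged as the obstacle is no easier at any link of the chain than it was for the pair $(\alpha,\beta)$ itself. There is also a secondary issue: the lemma is stated for an arbitrary analytic field $K$, while your interpolation relies on $K$ being topologically generated by one element; and the inseparable reduction (``$\sigma_0$ extends over the purely inseparable tower'') is not a well-defined step, since $\sigma_0$ maps $\alpha(K)$ to $\beta(K)$ and it is not clear what its extension to $L_0$ should land in, nor why the separable closures of $\alpha(K)$ and $\beta(K)$ in $L$ coincide.

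The paper bypasses all of this with a short linear-algebra argument that works for every $\ve<1$ at once. One picks $r$ with $\ve<r<1$ and an $r$-orthogonal basis $x_1,\dots,x_n$ of $L$ over $\alpha(K)$ (such bases exist by \cite[2.6.1/3]{BGR}). Writing any $x\in L$ as $\sum\alpha(a_i)x_i$ and replacing each $\alpha(a_i)$ by $\beta(a_i)$ gives an element of $L':=\sum x_i\beta(K)$ at distance $<(\ve/r)|x|$ from $x$; iterating and using completeness yields $L'=L$, so $[L:\beta(K)]\le n$. The point is that $r$-orthogonality converts the hypothesis $|\alpha(x)-\beta(x)|<\ve|x|$ into a contraction with ratio $\ve/r<1$, with no need to shrink $\ve$ or to assume separability.
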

\begin{proof}
It suffices to prove that if $[L:\alpha(K)]=n<\infty$ then $[L:\beta(K)]\le n$. Fix a number $\ve<r<1$ and pick
up an $r$-orthogonal basis $x_1\. x_n$ of $L$ over $\alpha(K)$; we recall (see \cite[2.6.1/3]{BGR}) that this
means that for any choice of $a_1\. a_n\in\alpha(K)$ one has that $|\sum a_ix_i|\ge r\max_i(|a_ix_i|)$. Let
$x\in L$ be arbitrary. Express it as $x=\sum\alpha(a_i)x_i$ for some elements $a_i\in K$ and set
$x'=\sum\beta(a_i)x_i$, then $|x-x'|<\ve\max_i(|a_i||x_i|)\le\frac{\ve}{r}|x|$. In a similar way we can
approximate $x-x'$ by an element from the vector space $L':=\sum x_i\beta(K)$, etc., obtaining in the end
arbitrary good approximations of $x$ by elements from $L'$. By completeness, $L'=L$ and obviously
$[L':\beta(K)]\le n$.
\end{proof}

\begin{cor}
\label{fincor} A one-dimensional $k$-field $L$ is finite over any one-dimen\-sional subfield $K$.
\end{cor}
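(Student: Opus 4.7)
The plan is to combine Abhyankar's inequality with the classical fact that finite extensions of complete non-Archimedean valued fields remain complete. By the definition of one-dimensional, there exist $y\in L\setminus\whka$ and $z\in K\setminus\whka$ with $L$ finite over $\ol{k(y)}$ and $K$ finite over $\ol{k(z)}$. Since $k^a\subset\whka$, the hypotheses $y,z\notin\whka$ force $y$ and $z$ to be transcendental over $k$, so $\trdeg(L/k)=\trdeg(K/k)=1$. Applying Abhyankar's inequality (Lemma \ref{Abhlem}) to the tower $k\subset K\subset L$ then yields $\trdeg(L/K)=0$, hence $L/K$ is algebraic.

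The second step is to show $L$ is finite over the intermediate field $K(y)$. Since $L/K$ is algebraic, $y$ satisfies some polynomial $f(T)\in K[T]$, so $[K(y):K]\le\deg f<\infty$. As $K$ is analytic (and hence henselian), the restriction to $K(y)$ of the valuation on $L$ is the unique extension of that of $K$, and $K(y)$ is thereby a finite-dimensional $K$-Banach space. A standard result in non-Archimedean functional analysis (see, e.g., \cite[2.3.3]{BGR}) says that any finite-dimensional $K$-vector space equipped with a compatible norm is complete, hence closed in the ambient $L$. In particular $\ol{K(y)}=K(y)$, and since $k(y)\subseteq K(y)$ this forces $\ol{k(y)}\subseteq K(y)$. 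Consequently $[L:K(y)]\le[L:\ol{k(y)}]<\infty$.

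Multiplying degrees in the tower $K\subset K(y)\subset L$ then gives $[L:K]=[L:K(y)]\cdot[K(y):K]<\infty$, which is the desired conclusion. I do not anticipate any serious obstacle: the argument rests only on the transcendence-degree computation supplied by Abhyankar's inequality (already at hand in Lemma \ref{Abhlem}) together with the standard completeness of finite extensions of complete non-Archimedean fields, so neither Lemma \ref{epsclose} nor Lemma \ref{fin} is actually needed for this particular corollary.
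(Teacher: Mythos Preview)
Your argument has a genuine gap at the outset. From ``$y$ is transcendental over $k$ and $L$ is finite over $\ol{k(y)}$'' you conclude $\trdeg(L/k)=1$, but this is unjustified: the completion $\ol{k(y)}$ is typically of very large transcendence degree over $k(y)$, just as $k((t))$ is over $k(t)$; being finite over $\ol{k(y)}$ only says that $L$ has \emph{topological} transcendence degree one over $k$, which is a far weaker statement. Abhyankar's inequality cannot help either: Lemma~\ref{Abhlem} gives only the \emph{lower} bound $E_{L/K}+F_{L/K}\le\trdeg(L/K)$, so even knowing $E_{L/K}=F_{L/K}=0$ yields merely the vacuous $\trdeg(L/K)\ge 0$. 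Thus you have not shown that $y$ is algebraic over $K$.

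Once $y$ is known to be algebraic over $K$, your remaining steps are fine, but establishing that algebraicity is precisely the nontrivial content of the Corollary. The paper handles it by a perturbation argument: after reducing to the case where $L$ is the completion of a finite separable extension $l/k(y)$, one approximates the given $x\in K\subset L$ by an element $x'\in l$ with $|x-x'|<\inf|x-k^a|$, and then Lemmas~\ref{epsclose} and~\ref{fin} give $[L:\ol{k(x)}]=[L:\ol{k(x')}]\le[l:k(x')]<\infty$. The remark immediately following the Corollary is exactly a warning about the pitfall here (non-additivity of topological transcendence degree in towers, and even the existence of non-surjective analytic $k$-endomorphisms of $\wh{k(x)^a}$), so Lemmas~\ref{epsclose} and~\ref{fin} really are doing essential work and cannot be bypassed in the way you propose.
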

\begin{proof}
We may decrease $K$, so replace it with a subfield $\ol{k(x)}$ where $x\notin\whka$. Let $y\in L$ be such that
$L$ is finite over $L_0=\ol{k(y)}$ and let $L_1$ be the separable closure of $L_0$ in $L$. Replacing $x$ with
some $x^{p^n}$ and decreasing $K$ accordingly we achieve that $x\in L_1$, and then it suffices to show that
$[L_1:K]<\infty$. So, replacing $L$ with $L_1$ we can assume that $L/L_0$ is separable and then by Krasner's
lemma $L=\oll$ for a finite separable extension $l/k(y)$. Pick up an element $x'\in l$ such that
$|x-x'|<|x-k^a|$ and set $K'=\ol{k(x')}$. The above two Lemmas imply that $[L:K]=[L:K']$, and it remains to note
that $[L:K']\le[l:k(x')]<\infty$ because the degree of an extension can only drop by passing to completions.
\end{proof}

\begin{rem}
The caution exercised in the proof of the above Corollary is explained by the following rather surprising fact.
One can naturally define the topological transcendence degree of the extensions, but the latter does not have to
be additive in towers. In particular, in some cases there exist non-surjective analytic $k$-endomorphisms of
fields $\wh{k(x)^a}$. However, this non-additivity can only occur in towers containing deeply ramified
extensions (i.e. extensions with infinite different). All this is not needed in this paper, so topological
transcendence degree will be studied elsewhere.
\end{rem}

\begin{proof}[Proof of Theorem \ref{fieldunif}]
The case of $p=0$ is obvious, so we assume that $p>0$. We start with the type $2$ or $3$ case. Since
$\tilk=\tilk^a$ and $|k^\times|$ is divisible, there exists $z\in L$ such that $\tilL/\tilk(\tilz)$ is a finite
separable extension in the type $2$ case and $|L^\times|=|z^{\bfZ}k^\times|$ in the type $3$ case. Note that
$L/\ol{k(z)}$ is finite by Corollary \ref{fincor}. Let $K$ be the unramified closure of $\ol{k(z)}$ in $L$, then
$K$ admits an unramified generator and the extension $L/K$ is finite and immediate. So, $L/K$ is totally wildly
ramified and $[L:K]=p^n$. The extension $LK^\mr/K^\mr$ splits to a tower of extensions of degree $p$ because
$G=\Gal(K^\mr)$ is a pro-$p$-group (hence any subgroup $H\subset G$ of index $p^n$ possesses a tower of larger
subgroups $H=H_0\subset H_1\subset\dots\subset H_n=G$ with $\#(H_{i+1}/H_i)=p$) and any purely inseparable
extension splits into such a tower as well. It follows that there exists a finite moderately ramified extension
$K'/K$ such that the extension $K'L/K'$ splits to a tower of $p$-extensions. In our situation, $K'$ admits a
moderately ramified generator and is of type $2$ or $3$, hence by Propositions \ref{modramprop} and
\ref{immprop} $K'$ has no immediate extensions of degree $p$. Thus $K'L=K'$, and we obtain that $L\subseteq K'$
is moderately ramified over $K$. So, $L=K$ and we have proved (i). Next we note that for any finite extension
$L'/L$ the one-dimensional $k$-field $L'$ possesses a moderately ramified generator by applying (i) to $L'$.
Propositions \ref{modramprop} and \ref{immprop} imply that $L'$ has no immediate extensions of degree $p$, hence
$L$ is stable. In particular, this gives (iii).

For types $2$ and $3$ it remains only to prove (ii). Let $b+S_{a,s}(L)$ be a special coset. By Proposition
\ref{modramprop}, the value of $|c^p-ac+b|$ accepts its minimum $s$ for some $c\in L$. Set $z=c^p-ac+b$. If
$s=|z|\notin|L^\times|^p$ then $L$ is of type $3$, $L/\ol{k(z)}$ is Cartesian by (iii) and
$|L^\times|/|k(z)^\times|$ is prime to $p$, hence we obtain that $L/\ol{k(z)}$ is moderately ramified, i.e. $z$
is a moderately ramified generator. If $s\in |L^\times|^p$ but $s\notin|k^\times|$, then necessarily $a=0$ and
$L$ is of type $3$. Since $\tilK=\tilk=\tilk^a$, there exists $c\in L$ with $|z-c^p|<|z|$, and we obtain a
contradiction with the minimality of $|z|$. It remains to deal with the case when $|z|\in|k^\times|$. Assume
first that $a=0$. Since $k=k^a$ and $z$ is orthogonal to $L^p$ (i.e. $|z+c^p|\ge |z|$ for $c\in L$), the same is
true for any element $tz$ with $t\in k=k^p$. Taking $t$ such that $|tz|=1$ we obtain from orthogonality that
$\wt{tz}\notin\tilL^p$. In particular, $\tilL\neq\tilk$ and so $L$ is of type 2. The extension $L/\ol{k(z)}$ is
Cartesian by (iii), $|L^\times|=|k^\times|$ and $\tilL$ is separable over $\tilk(\wt{tz})$ and hence over the
residue field of $\ol{k(z)}$. Thus, $L/\ol{k(z)}$ is an unramified extension and $z$ is an unramified generator.
Finally, we are left with the case when $a=1$ and $|z|=s=1$. By our choice of $z$, the equation $x^p-x+z=0$ has
no solutions in $\tilL$. Since $\tilk=\tilk^a$ we can use Lemma \ref{basislem} to prove that there exists an
element $\tilc\in\tilL$ such that $\tilc^p-\tilc+\tilz\notin\tilL^p$. Taking a lifting $c$ of $\tilc$ and
replacing $z$ with $c^p-c+z$ we achieve the situation when $\tilz\notin\tilL^p$. Then the same argument as above
proves that $z$ is an unramified generator.

Assume now that $L$ is of type $4$ in \ref{assum}(ii). First, let us prove (ii) assuming (i). In view of (i) and
Proposition \ref{modramprop}, we have only to rule out the possibility that a critical coset $b'+S_{a,s}(L)$
contains an element $b\in k$. Since $k$ is algebraically closed, the equation $b=c^p-ac$ has a solution $c\in k$
which implies that already $b+S_{a,s}(k)$ is split. The contradiction shows that (i) implies (ii), so we should
only establish (i). Since $\tilL=\tilk$ is algebraically closed and $|L^\times|=|k^\times|$ is divisible, $L$
admits only immediate algebraic extensions. In particular, any finite extension of $L$ splits to a tower of
$p$-extensions. Thus we have only to prove that if $K=\ol{k(z)}$ and $[L:K]=p$ then $L$ possesses a generator.
Since $K$ has no non-trivial moderately ramified extensions, by Proposition \ref{immprop}(ii) we can find a
special coset $b+S_{a,s}(K)$ as in \ref{immprop}(i). In particular, the coset $b+S_{a,s}(L)$ is split. By
Proposition \ref{modramprop} we can achieve furthermore that either $b$ is a generator of $K$ or $b\in k$. If
$b\in k$ then there exists $\alp\in L$ with $|\alp^p-a\alp+b|<s$ and this easily implies that $\alp$
approximates a root of $T^p-aT+b=0$ better than any element from $k$. The latter would contradict our assumption
that $L$ is $k$-split, hence $b$ is a generator of $K$.

If $a=1$ then by Proposition \ref{immprop}(iii) the polynomial $T^p-aT+b$ has a root $\alp\in L$ and we claim
that $\alp$ generates $L$. Indeed, $\alp$ generates $L$ over $K$ and $b=-\alpha^p+\alpha$ generates $K$ over
$k$. If $a=0$ we take $\alp\in L$ with $|\alp^p+b|<s$ and we will see that this $\alp$ is a generator. Set
$b'=-\alpha^p$ and $K'=\ol{k(b')}$. By the choice of $\alpha$ we have that $|b-b'|<s=\inf|b+K^p|$. We claim that
the latter infimum does not exceed $\inf|b-k|$. Indeed, if $|b+c|<s$ for some $c\in k$ then by \ref{assum}(ii)
there exists $d\in k$ with $|d^p-c|$ arbitrary close to $|pc|$. Since $|pc|=|pb|<s$ by Lemma \ref{212lem}(i), we
can achieve that $|d^p-c|<s$, which implies that $|b+d^p|<s$. The contradiction proves that
$|b-b'|<\inf|b+K^p|\le\inf|b-k|$, and the latter infimum equals to $\inf|b-k^a|$ because $K$ is $k$-split. By
Lemmas \ref{epsclose} and \ref{fin} we obtain that $[L:K']=[L:K]=p$. It remains to note that
$|pb'|<s\le\inf|b-k|=\inf|b'-k|=\inf|-b'-k|$, hence $\alpha=(-b')^{1/p}\notin K'$ by Lemma \ref{lemrank1}. Thus,
$[\ol{k(\alpha)}:K']=p$, and therefore $\ol{k(\alpha)}=L$.
\end{proof}

As a corollary we prove a slightly generalized version of the stability theorem of Grauert-Remmert, see
\cite[5.3.2/1]{BGR}.

\begin{cor}
If $k$ is a stable analytic field and $K$ is of type $2$ or $3$ over $k$ then $K$ is stable.
\end{cor}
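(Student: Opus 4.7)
The plan is to reduce the corollary to Theorem \ref{fieldunif}(iii) by base-changing along $k\hookrightarrow\whka$ and then to descend using the hypothesis that $k$ is stable. Let $L/K$ be a finite extension; I must show $d_{L/K}=1$, and the case $p=\cha(\tilK)=0$ being trivial, assume $p>0$.

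Set $K'=K\wtimes_k\whka$. I first verify that $K'$ is a one-dimensional analytic $\whka$-field of the same type as $K/k$. If $K$ is of type $2$ then $K\cong\wh{\Frac(k\{T\})}$, and monomials form an orthonormal Schauder basis of $k\{T\}$ over $k$; this basis remains orthonormal after base change (using stability of $k$), giving $k\{T\}\wtimes_k\whka\cong\whka\{T\}$ and then $K'\cong\wh{\Frac(\whka\{T\})}$. If $K$ is of type $3$, $K\cong k\{r^{-1}T,rT^{-1}\}$ with $r\notin\sqrt{|k^\times|}=|\whka^\times|$, so $K'\cong\whka\{r^{-1}T,rT^{-1}\}$ is again a field. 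In both cases $K'$ satisfies Assumption \ref{assum}(i), and so $K'$ is stable by Theorem \ref{fieldunif}(iii).

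Now $L'=L\wtimes_K K'$ is a finite $K'$-algebra of $K'$-dimension $[L:K]$ (since $L$ is a finite-dimensional $K$-Banach space, the completed tensor product agrees with the algebraic one and has the expected dimension). Passing to the Galois closure if necessary, I may assume $L/K$ is separable, so $L'$ is reduced and decomposes as $L'=\prod_i L'_i$ with each $L'_i/K'$ a finite field extension; by stability of $K'$, each $L'_i/K'$ is Cartesian.

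The main obstacle is the descent step, which I plan to carry out by contradiction via Corollary \ref{immcor}. If $K$ were not stable, there would exist a finite Cartesian extension $F/K$ admitting a finite immediate extension of degree $p$, and by Proposition \ref{immprop} this produces a special coset $b+S_{a,s}(F)$ whose infimum $s$ is not attained. Base-changing along $k\hookrightarrow\whka$ and using that $F/K$ Cartesian gives $F\wtimes_K K'$ the structure of a finite reduced $K'$-algebra of dimension $[F:K]$ with an orthogonal $K'$-basis (coming from an orthogonal $K$-basis of $F$), hence a product of finite extensions of $K'$; at least one factor $F'$ receives the image of the coset as a special coset $b+S_{a,s}(F')$. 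The heart of the argument is to check that the infimum $s$ is \emph{preserved} and \emph{still not attained} after this base change: this should follow from the orthonormal-Schauder-basis structure of $\whka\{T\}$ over $k\{T\}$ — which is exactly what stability of $k$ grants on the Tate-like subalgebras approximating $F$ and $K'$ — so that any hypothetical element of $F'$ realizing a smaller infimum would, when expanded along such a basis, produce an element of $F$ realizing a smaller infimum. Having preserved the coset, one contradicts stability of $K'$ (again via Corollary \ref{immcor}), completing the proof. The key technical point I expect to have to verify carefully is precisely this preservation of the non-attained infimum under the stable base change $k\hookrightarrow\whka$.
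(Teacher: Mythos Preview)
Your overall strategy---base change to $\whka$, invoke Theorem \ref{fieldunif}(iii) to get stability there, then descend using stability of $k$---matches the paper's. The gap is in the descent mechanism you propose.

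Your plan is to push a special coset $b+S_{a,s}(F)$ with non-attained infimum $s$ to a factor $F'$ of $F\wtimes_K K'$ and argue that the infimum remains $s$ and remains unattained, contradicting stability of $K'$ via Corollary \ref{immcor}. But this is self-defeating: precisely because $K'$ is stable, Corollary \ref{immcor} (or directly Proposition \ref{modramprop}, once you know $F'$ has a moderately ramified generator over $\whka$ by Theorem \ref{fieldunif}(i)) forces every critical coset in $F'$ to attain its infimum. So after base change either the infimum drops below $s$ (the coset splits) or it stays $s$ and is attained---in neither case do you get a special coset with non-attained infimum in $F'$. The Schauder-basis heuristic you sketch cannot rescue this, because expanding $c\in F'$ along a basis over $F$ and keeping only the $F$-component does not control $|c^p-ac+b|$: the map $c\mapsto c^p$ mixes the basis terms.

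The paper descends differently and avoids cosets altogether. Given $L/K$, one knows $\wh{k^aL}/\wh{k^aK}$ is Cartesian, so it has an orthogonal basis; perturb the basis elements slightly so they lie in $k^aL$, hence in $k'L$ for some finite $k'/k$. Then $k'L/k'K$ is Cartesian with this same basis. Here is where stability of $k$ enters: $k'/k$ is Cartesian, and one checks directly that this forces $k'K/K$ to be Cartesian. Now $L\subset k'L$ and $K\subset k'K\subset k'L$ is a tower of Cartesian extensions, so $k'L/K$ is Cartesian and hence its subextension $L/K$ is too. This argument is shorter than the coset route and uses stability of $k$ in exactly one clean place.
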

\begin{proof}
Let $z\in K$ be such that either $r:=|z|$ is not in $\sqrt{|k^\times|}$ or $r=1$ and $\tilz\notin k^a$. By
Corollary \ref{fincor} $K$ is finite over $\ol{k(z)}$, so it suffices to establish stability of the latter
field. Thus we can assume that $K=\ol{k(z)}=\wh{\Frac(k\{r^{-1}z\})}$. Suppose to the contrary that a finite
extension $L/K$ is not Cartesian. Since $K^s$ is dense in $K^a$, it suffices to consider the case of a separable
extension $L/K$ (if $s=\inf|\alp-K|$ is not achieved for some $\alp\in K^a$ then there exists $\alp'\in K^s$
with $|\alp'-\alp|<s$, so the separable extension $k(\alp')/k$ is not Cartesian). In particular, we can assume
that $L$ is separable over $k$. Given a finite extension $k'/k$, we will use the notation $L'=k'L$ and $K'=k'K$.
Note that $K'=\wh{\Frac(k'\{r^{-1}z\})}$ and one checks straightforwardly that the extension $K'/K$ is Cartesian
since $k'/k$ is Cartesian. We claim that for sufficiently large $k'$ the extension $L'/K'$ is Cartesian. Indeed,
the extension $\wh{k^aL}/\wh{k^aK}$ is Cartesian by Theorem \ref{fieldunif}(iii), hence it admits an orthogonal
basis $\{a_1\. a_n\}\in\wh{k^aL}$. Moving $a_i$'s slightly does not spoil orthogonality in the non-Archimedean
world, so we can achieve that $a_i\in k^aL$, and then $a_i\in k'L$ for a suitable $k'$. Clearly, $\{a_i\}$ is an
orthogonal basis of $L'/K'$, i.e. this extension is Cartesian. Finally, the extension $L'/K$ splits to a tower
of Cartesian extensions $K\subset K'\subset L'$. Hence $L'/K$ is Cartesian and its subextension $L/K$ is
Cartesian too.
\end{proof}

\begin{rem}
\label{GRrem} Stability theorem for type $2$ fields is the main ingredient in the proof of the Grauert-Remmert
finiteness theorem, see \cite[\S6.4]{BGR}. We saw in this section that stability theorem (both for types $2$ and
$3$) is essentially equivalent to uniformization of one-dimensional analytic fields.
\end{rem}

\begin{proof}[Proof of Theorem \ref{fieldunif1}]
We simply have to combine already proved results. Part (i) of the Theorem is exactly Theorem \ref{fieldunif}(i).
To prove (ii) we note that by Lemma \ref{epsclose}, if $\veps$ is small enough and $|x-x'|\le\veps$ then there
exists an analytic $k$-isomorphism $\phi$ between the fields $L=\ol{k(x)}$ and $L'=\ol{k(x')}$ such that
$|y-\phi(y)|\le|y|/2$ for any $y\in L$. Obviously, $\tilL$ coincides with $\tilL'$ as a subfield of $\tilK$, and
also we have that $[K:L]=[K:L']$ by Lemma \ref{fin}. So, $K/L'$ is unramified because $K/L$ is unramified.
Finally, in (iii) we necessarily have that $K$ is of type $2$. Hence $L=\ol{k(x)}$ is stable by Theorem
\ref{fieldunif}(iii) and it follows that $[K:L]=[\tilK:\tilL]$. In particular, $K/L$ is unramified if and only
if $\tilK/\tilL$ is separable.
\end{proof}

\appendix
\section{Stable modification and desingularization of surfaces} \label{A}
This appendix is an attempt to systemize known results and methods in the theories of semi-stable curves and
desingularization of surfaces. It seems to be impossible to give credits to all mathematicians that have
contributed to these theories, but I try to do my best. One of the aims of this systematization is to stress the
analogy between the two theories, to compare them and to describe an interplay between them. For the sake of
simplicity, all relative curves in this appendix are automatically assumed to have smooth geometrically
connected generic fiber.

\subsection{Two contexts where semi-stable families of curves appear}
The semi-stable families of curves naturally appear in two different contexts: the context of moduli spaces and
the context of desingularization of relative curves. Originally, a systematic study of families of semi-stable
curves was motivated by the theory of moduli spaces of curves. The foundational work in that direction is
\cite{DM}. This is the work where the stable reduction theorem over a discrete valuation ring $R$ appeared for
the first time. It is worth to mention that though it may sound surprisingly today, the theorem was very
surprising when discovered \footnote{The author is grateful to M. Raynaud for a consultation on this issue.}.
Not only it was not expected in the positive/mixed characteristic case, but even in the characteristic zero case
its assertion was strikingly new despite the fact that this case follows easily from desingularization of
surfaces of characteristic zero proved by Zariski in 1930s. The stable reduction theorem was applied in
\cite{DM} to prove that the moduli stack of stable curves is proper, and then the general stable extension
theorem (see the Introduction) follows easily, including, as a particular case, the stable reduction theorem
over non-discrete valuation rings.

On the other hand, relative semi-stable curves can be considered as a relative analog of the notion of a smooth
curve. Indeed, if one starts with a relative curve $\phi:C\to S$ and tries to improve the singularities of
$\phi$ by reasonable (e.g. proper surjective) base changes and modifications of $C$, then the mildest
singularities one can hope to obtain are those of semi-stable curves. (Note that one has to allow base changes
which are not modifications in order to obtain relative curves with reduced fibers. For example, to get rid of
nilpotents in the central fiber of $\phi:\Spec(\bfQ[\pi,x,y]/(\pi x-y^2)\to\Spec(\bfQ[\pi]))$ one has to adjoin
$\sqrt\pi$ to the base.) Thus, de Jong's semi-stable modification theorem \cite[2.4]{dJ} can be considered as a
relative desingularization theorem. Our work in this paper has a clear flavor of desingularization approach,
and, as we will see below, our stable modification theorem is an analog of the minimal desingularization of
surfaces.

\subsection{Desingularization of surfaces}\label{A2}
There are two main theorems concerning smooth models of surfaces: the minimal model theorem and the minimal
desingularization theorem. The first one is an absolute result that is close in nature to the theory of moduli
spaces. It states that if $k$ is an algebraically closed field and $K$ is finitely generated over $k$ with
$\trdeg(K/k)=2$ and is sufficiently generic (namely, $K$ is not of the form $L(T)$ for a subfield $L\subset K$
containing $k$), then $K$ admits a minimal $k$-smooth proper model. The minimal desingularization theorem states
that any integral quasi-excellent two-dimensional scheme $X$ admits a minimal desingularization, i.e. a
modification $X'\to X$ with regular source such that any other modification $X''\to X$ with regular source
factors through $X'$). Unlike the minimal model theorem, this second theorem applies to any integral
quasi-excellent surface. Moreover, it admits generalizations which treat divisors and finite group actions.

Usually, a proof of the minimal model/desingularization theorem is not direct and goes in three steps: (1) find
some regular model/modification, (2) prove that the family of such models/modifications is filtered by
domination, (3) given any regular model/modification construct a minimal regular contraction and establish its
uniqueness. The last two stages are easy and rather standard. Step (2) follows from the following two facts:
(2a) if $X$ is a regular surface then the family of all its modifications that can be obtained by successive
blowing up closed points is cofinal in the family of all modifications of $X$; and (2b) any modification of
surfaces $X'\to X$ with regular $X$ and $X'$ can be obtained by successive blowing up closed points. Step (3) is
done by successive contraction of exceptional $\bfP^1$'s (these are $\bfP^1$'s with self-intersection equal to
$-1$, i.e. with the normal bundle isomorphic to $\calO(1)$), and by certain combinatorial computations with the
intersection form. The heart of the proof is in the first step which we call desingularization of surfaces. We
present two approaches to desingularization of surfaces due to Zariski and Lipman.

Zariski was first to establish desingularization of surfaces over fields of characteristic zero. His approach
was to first desingularize a surface $X$ along a valuation ring. Zariski proved the following local
uniformization theorem which can be considered as a local (on the Riemann-Zariski space of a variety) solution
of the desingularization problem: if $X$ is integral and of finite type over an algebraically closed field $k$
of characteristic zero, $\calO$ is a valuation ring of the field of rational functions $k(X)$ with
$k\subset\calO$ and a $k$-morphism $\Spec(\calO)\to X$ extending the isomorphism of generic points, then there
exists a modification $X'\to X$ such that the lifting $\Spec(\calO)\to X'$, which exists by the valuative
criterion, lands in the smooth locus of $X'$. Local uniformization implies desingularization of surfaces because
one can glue the local solutions using Steps (2a) and (2b) above. Using a much more involved gluing method,
Zariski was also able to obtain desingularization of threefolds in characteristic zero via local uniformization.

Lipman proposed in \cite{Lip} another method of desingularization of a quasi-excellent integral two-dimensional
scheme $X$. At the first step, a modification $X'\to X$ is constructed so that $X'$ is normal and has only
rational singularities, i.e. singular points that (a posteriori) are resolved by trees of $\bfP^1$'s (with a
negative definite intersection form). One easily sees that $X'$ has rational singularities if and only if the
arithmetic genus $p_a(X')=h^1(X',\calO_{X'})$ is minimal in the set $\{X_i\}$ of all modifications of $X$, and
the first step is established by proving that arithmetic genus of $X_i$'s is bounded. At the second step, each
rational singularity point is resolved by $\bfP^1$-trees rather explicitly.

\subsection{Desingularization of relative curves}\label{A3}
The theory of semi-stable modifications of relative curves is analogous in many aspects to the theory of
desingularization of surfaces. Its two main results are the stable extension theorem (mentioned in the
introduction) and the stable modification theorem which are clear analogs of the two main results on
desingularization of surfaces. Note also that de Jong's semi-stable modification theorem is an analog of
(non-minimal) desingularization of surfaces. Moreover, we will see that the theory of relative curves is
slightly easier because some arguments are easier and some results can be proved in a stronger form. In
addition, desingularization can often be used to construct semi-stable modifications, while it is much harder
(though sometimes possible) to go in the opposite direction.

Localizing the base (in the Riemann-Zariski sense) one obtains a very important particular case of the above
theorems: the (semi-)stable reduction theorem. I know two published direct proofs of this theorem: the proof of
Bosch-L\"utkebohmert in \cite{BL1} and the proof of van der Put in \cite{Put} (other proofs at least use
desingularization of surfaces, and we will discuss them in \S\ref{stabsec}). The two proofs are close in spirit
and have many common features with the method of Lipman. Both are rigid-analytic and apply to a formal curve
over any complete valuation ring of height one, and then the algebraic version over any valuation ring of height
one is an easy consequence, see \cite[p. 377]{BL1}. Similarly to Lipman's method, both proofs run in two stages:
first one studies the arithmetic genus of the closed fibers of modifications to prove that there exists an
ordinary modification (i.e. a modification whose closed fiber has only ordinary singularities), then ordinary
singularities are resolved rather explicitly by trees of $\bfP^1$'s.

The new proof of the stable reduction theorem presented in this paper is a close analog of Zariski's approach.
The main ingredient of the proof is uniformization of one-dimensional valued fields in Theorem \ref{valunif}.
Local uniformization of a relative curve along a valuation was easily deduced in Proposition \ref{locunifprop}.
The latter statement is a clear analog of Zariski's local uniformization, and similarly to local uniformization,
which is still unknown in positive characteristic and large dimensions, the case of positive characteristic was
much more difficult. Indeed, the main effort in the proof of \ref{valunif} was in struggling with the effects of
wild ramification, in particular in controlling extensions with defect in \S\ref{immsec}. Gluing local
desingularizations to a global one is, again, rather similar to the surface case described in \ref{A2}. Analogs
of Steps (2a) and (3) from \ref{A2} are Propositions \ref{cofinprop} and \ref{blowdownprop}. The only subtle
point is that we managed to avoid factorization of modifications from Step (2b); see \S\ref{gluesec} for
details.

Finally, we would like to say few words about the history of the closely related and nearly equivalent problems
of uniformization of one-dimensional (analytic) valued fields and local description (or uniformization) of
non-Archimedean curves. Despite the fact that the formulation of Theorem \ref{valunif} seems to be new, it was
clear for experts that similar statements can be deduced from the stable reduction theorem. For example,
Berkovich deduced a local description of analytic curves from the stable reduction theorem, see
\cite[3.6.1]{Ber2}. As for direct valuation-theoretic proofs of uniformization of one-dimensional valued fields,
the author knows about the following works: in the analytic case stability theorem of Grauert-Remmert covered
type 2 case (and, probably, the case of type 3 fields was known to experts), and M. Matignon established the
case of analytic type 4 fields in an unpublished work; in the general case F.-V. Kuhlmann proved generalized
stability theorem for types 2 and 3 in \cite{Kuh1} and uniformization of type 4 fields will be worked out in
\cite{Kuh2}. It seems that in all these works the argument is more computational than ours because one studies
$p$-extensions of valued fields by use of Kummer and Artin-Shreier theories, while we struggled with defect by
use of Proposition \ref{immprop} which covered all cases in a uniform manner. Also, it seems that currently only
our method covers some cases when the ground field is not algebraically closed (see \ref{assum}). We note also
that probably a classical valuation-theoretic work \cite{Epp} of Epp was initially motivated by a hope to obtain
uniformization of valued fields algebraically and to then deduce the stable reduction theorem.

\subsection{Comparison of the two theories}
In the two previous sections we described two parallel desingularization theories. We summarize the analogies
between them in the following table.

\begin{tabular}{|l|l|}
\hline Surfaces:& Relative curves:\\ & \\
\hline Modification of the surface & Alteration of the base and \\
& modification of the curve\\
\hline Desingularization of surfaces& Semi-stable modification\\
\hline Minimal model theorem & Stable extension theorem \\
\hline Minimal desingularization theorem &  Stable modification theorem\\
\hline No analog (no localization & Stable reduction theorem \\
on the base)&\\
\hline Local uniformization of surfaces & Uniformization of one-dimensional\\
 &  valued fields \\
\hline Surfaces with rational singularities & Ordinary relative curves\\
\hline Lipman's method & The methods of Bosch-L\"utkebohmert's\\
 & and van der Put \\
\hline Zariski's method & The method of this paper \\
\hline
\end{tabular}

\subsection{The link between the two theories}

The two parallel theories we have described meet together in the following very important particular case. If
$X\to S$ is a relative curve and the base $S$ is a curve then $X$ is a surface. In particular, one can wonder
what is the connection between the desingularizations of $X$ of two kinds when $S$ is a regular curve. Until the
end of this section we assume that $S$ is a local regular curve, that is $S=\Spec(R)$ for a discrete valuation
ring $R$.

The minimal surface desingularization $X_\sm\to X$ does not need to be semi-stable over $S$ because its special
fiber can be non-reduced. In such situations, one is guaranteed that a non-trivial alteration of the base (i.e.
replacing of $R$ with its integral closure in a finite separable extension $K'/K$ of its field of fractions) is
required in order to construct a semi-stable modification. Conversely, a semi-stable modification which involves
a non-trivial alteration of the base does not help (at least at first glance) to desingularize $X$. If a
semi-stable modification is possible already over $S$ then $X_\sm$ and the stable modification $X_\st$ are
tightly connected (e.g. $X_\sm$ is semi-stable) and one can easily use either of them to construct another one.
Thus, one can expect that the Galois group $G_K$ of $K$ is essentially responsible for the gap between the two
theories, and, indeed, we will see that a good control on the Galois group sometimes makes it possible to pass
from desingularizations to semi-stable models and vice versa.

If the residue field $k=R/m_R$ is algebraically closed of characteristic zero then the Galois group $G_K$ has a
simple structure because it coincides with the tame inertia group. In this case the link between the two
theories is so tight that it even extends to higher dimensions. Given $X_\sm$ one can easily predict what is the
minimal extension $K'$ over which stable modification exists ($K'/K$ is totally ramified and its degree is the
minimal common multiple of the multiplicities of the irreducible components in the special fiber of $X_\sm$).
Moreover, the same argument was used in \cite{KKMS} to deduce a higher dimensional semi-stable reduction theorem
from the desingularization theorem of Hironaka. In opposite direction, de Jong and Abramovich proved in
\cite{AdJ} that the quotient $X_\st/G_{K'/K}$ has very mild toric singularities which can be easily resolved
(thus, giving a link from $X_\st$ to $X_\sm$). Moreover, their argument applies to any base $S$, so they deduce
weak desingularization of higher dimensional algebraic varieties in characteristic zero.

The situation with $k$ of positive characteristic is far more complicated. No general way is known to go in the
difficult direction $X_\st\mapsto X_\sm$ even when $S$ is a curve. The main problem here is to control the
properties of the quotient by a wildly ramified Galois group. The easier link $X_\sm\mapsto X_\st$ can be
established at least in the case of curves. The main idea here is to control the Galois group through its action
on the $l$-adic cohomology group $H^1(X_\eta,\bfQ_l)$ or another invariant of close nature (e.g. Jacobian's
$l$-torsion). In particular, it turns out that the Galois group of $K'$ acts unipotently on $H^1(X_\eta,\bfQ_l)$
(via the embedding $G_{K'}\into G_K$) if and only if $X_\eta$ admits a stable model after the base change
corresponding to the extension $K'/K$. This underlies the proofs of the stable reduction theorem by
Deligne-Mumford (using Grothendieck's semi-stable reduction of abelian varieties), Artin-Winters and Saito.

\subsection{Proofs of the stable reduction theorem}
\label{stabsec} The stable reduction theorem is a fundamental result which has been proved in many ways, though
no easy self-contained proof is known. The author knows about six published proofs of the stable reduction
theorem \cite{DM}, \cite{AW}, \cite{Gi}, \cite{BL1}, \cite{Put} and \cite{Sa}, and a new proof is presented in
the paper. It seems natural to systemize different proofs and we try to do this below.

All proofs are naturally divided to three types. The proof of Gieseker in \cite{Gi} is the only proof of the
first type. It is based on the geometric invariant theory. One constructs moduli spaces of stable curves by
global projective methods then the stable reduction theorem is obtained as a by-product.

Three direct proofs perform the main work in the framework of non-Archimedean analytic geometry. They apply to
any complete valuation ring of height one and construct a semi-stable modification similarly to
desingularization of a surface. The proofs of Bosch-L\"utkebohmert, \cite{BL1}, and van der Put, \cite{Put}, are
close to Lipman's desingularization of surfaces. Arithmetic genus plays an important role in these proofs. Our
proof is an analog of Zariski's desingularization of surfaces. It is of rather valuation-theoretic nature, and
the arithmetic genus (and sheaves $R^1f_*(\calO_X)$) shows up only when we want to contract a semi-stable
modification to the stable one.

The proofs of the third type apply to discrete valuation rings. One uses desingularization of surfaces as a
(non-trivial!) starting point. If it is known that $X'=X\times_S S'$ admits a stable modification, where
$S'=\Spec(R')$ and $R'$ is the integral closure of $R$ in a finite separable extension $K'/K$, then such a
desingularization of $X'$ is a required semi-stable modification. The extension $K'/K$ is specified via the
action of the Galois group $G_K$ on an appropriate invariant of $X$. It is the group of $l$-torsion points,
unless $l=2$, of the relative generalized Jacobian $J_{X/S}$ in the Deligne-Mumford-Grothendieck or
Artin-Winters approaches ($K'$ is chosen so that it splits the $l$-torsion of the Jacobian), or the \'etale
cohomology group $H^1(X_\oeta,\bfQ_l)$ in Saito's approach ($K'$ is chosen so that $G_{K'}$ acts unipotently on
this cohomology group).

\section{Curves over separably closed fields} \label{B}

The material of this section is rather standard, so we give sketched proofs only. We assume that $k$ is a
separably closed field and $S=\Spec(k)$. Let $C$ be a proper connected geometrically reduced $S$-curve and
$\pi:\tilC\to C$ be its normalization. For any point $x\in C$ we define a number $g(x)$ as follows: if $x\in
C^0$, where $C^0$ is the set of generic points of $C$, then $g(x)$ is the geometric genus $h^1(\calO)$ of its
irreducible component; if $x\in C\setminus C^0$ then $g(x)$ is the dimension of the $k$-vector space
$\calO_{\tilC,\tilx}/\calO_{C,x}$, where $\tilx=\pi^{-1}(x)$. In particular, $g(x)=0$ if and only if either $x$
is a regular closed point or $x$ is the generic point of a rational irreducible component.

A point $x\in C$ is called an {\em ordinary $n$-fold point} if the completed local ring $\hatcalO_{C,x}$ is
isomorphic to $k[[T_1\. T_n]]/(\{T_iT_j\}_{i\neq j})$, and for $n=2$, $x$ is called an {\em ordinary double
point}. An ordinary $n$-fold point $x$ is a $k$-point and $\tilx=\{\tilx_1\. \tilx_n\}$ where each $\tilx_i$ is
a smooth $k$-point. Furthermore, Zariski locally at $x$ the curve $C$ is obtained from $\tilC$ by gluing the
points $\tilx_i$ to a single point, i.e. $\calO_{C,x}$ is the subring of $\calO_{\tilC,\tilx}$ whose elements
satisfy $f(\tilx_1)=\dots=f(\tilx_n)$. It follows easily from this description that if $x$ is the only
non-regular point in $C$ then $C$ is the pushout of the diagram $\tilC\la\tilx\to x$. Alternatively, one can
describe ordinary points as follows: any $k$-point $x$ satisfies $g(x)\ge|\tilx|-1$ (where $|\tilx|$ is the
cardinality of $\tilx$) and the equality holds if and only if $x$ is ordinary. Indeed,
$\calO_{\tilC,\tilx}/\calO_{C,x}$ admits a natural surjective $k$-linear map $\phi$ onto $k(\tilx)/k(x)$ (where
$k(\tilx)$ is the quotient of the semi-local ring $\calO_{\tilC,\tilx}$ by its radical), hence the inequality is
satisfied. If $x$ is ordinary then $\phi$ is bijective and $k(\tilx)\toisom k^n$, hence the exact equality
holds. Conversely, if $g(x)=|\tilx|-1$ then $k(\tilx)/k(x)$ is of dimension at most $|\tilx|-1$, hence $\tilx$
consists of $k$-points. By the pushout property, the normalization $\tilC\to C$ factors through a curve $C'$
obtained from $\tilC$ by gluing all points of $\tilx$ to a single point $x'$. We proved that $g(x')=|\tilx|-1$,
hence $g(x')=g(x)$ and it follows that the subrings $\calO_{C',x'}$ and $\calO_{C,x}$ of $\calO_{\tilC,\tilx}$
coincide. Since $x'$ is ordinary, $x$ is ordinary too. Note also that this characterization of ordinary points
implies that a $k$-point $x$ is ordinary if and only if its preimage $x^a\in C\otimes_k k^a$ is ordinary. (For
the sake of completeness we remark also that a $k$-point $x$ is ordinary if and only if $C$ is semi-normal at
$x$, i.e. $\Spec(\calO_{C,x})$ does not admit non-trivial bijective finite modifications.)

We say that a curve $C$ is {\em ordinary} (resp. {\em semi-stable}), if all its non-smooth points are ordinary
(resp. ordinary double points). We claim that $C$ is ordinary if and only if $C^a=C\otimes_k k^a$ is ordinary.
The direct implication is obvious and to prove the converse one it suffices to show that if the preimage $x^a\in
C^a$ of a point $x$ is an ordinary singular point then $x$ is such a point too. Note that $g(x^a)\ge g(x)$ and
the equality holds if and only if $\tilC\otimes_k k^a$ is normal over $x$. Also, $|\tilx|$ is the number of
valuation rings of $k(C)$ centered on $\calO_{C,x}$ and any such ring extends uniquely through the purely
inseparable extension $k^a/k$. Hence $|\wt{x^a}|=|\tilx|$ and we obtain that $|\tilx|-1=|\wt{x^a}|-1=g(x^a)\ge
g(x)$, and hence $\tilx$ is ordinary. A multipointed curve $(C,D)$ is called {\em ordinary} (resp. {\em
semi-stable}) if $C$ is ordinary (resp. semi-stable) and $D$ is a union of smooth $k$-points. As usual,
$p_a=1-h^0(\calO_C)+h^1(\calO_C)=h^1(\calO_C)$ denotes the arithmetic genus of $C$.

\begin{lemsect}\label{lastlem}
The equality $p_a(C)=1-|C^0|+\sum_{x\in C}g(x)$ holds.
\end{lemsect}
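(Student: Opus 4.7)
The plan is to compute $\chi(\calO_C)$ via the normalization short exact sequence and compare against the two pieces $\chi(\calO_{\tilC})$ and $\chi(\calQ)$, where $\calQ=\pi_*\calO_\tilC/\calO_C$.

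First I would set up the exact sequence $0\to\calO_C\to\pi_*\calO_\tilC\to\calQ\to 0$ on $C$. Since $\pi$ is finite, $\calQ$ is a skyscraper sheaf whose stalk at a point $x$ equals $\calO_{\tilC,\tilx}/\calO_{C,x}$, so by the definition of $g(x)$ the stalk is $k$-finite-dimensional of dimension $g(x)$ at every closed point, and vanishes at generic points. Thus $\chi(\calQ)=\sum_{x\in C\setminus C^0}g(x)$. Similarly, taking cohomology (and using that $R^i\pi_*\calO_\tilC=0$ for $i>0$) yields $\chi(\calO_C)=\chi(\calO_\tilC)-\chi(\calQ)$.

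Next I would compute $\chi(\calO_\tilC)$. The scheme $\tilC$ is the disjoint union, over the irreducible components of $C$, of the normalizations of these components. Each such normalization $Z$ is a smooth proper connected $k$-curve; since $k$ is separably closed, the field of constants $H^0(Z,\calO_Z)$ is a finite separable extension of $k$ and hence equals $k$, so $h^0(\calO_Z)=1$. Therefore $h^0(\calO_\tilC)=|C^0|$ and $h^1(\calO_\tilC)=\sum_{\eta\in C^0}h^1(\calO_{Z_\eta})=\sum_{\eta\in C^0}g(\eta)$, giving $\chi(\calO_\tilC)=|C^0|-\sum_{\eta\in C^0}g(\eta)$.

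Finally, since $C$ is connected and geometrically reduced over separably closed $k$ we again have $h^0(\calO_C)=1$, so $\chi(\calO_C)=1-p_a(C)$. Substituting into $\chi(\calO_C)=\chi(\calO_\tilC)-\chi(\calQ)$ and rearranging yields
\[
p_a(C)=1-|C^0|+\sum_{\eta\in C^0}g(\eta)+\sum_{x\in C\setminus C^0}g(x)=1-|C^0|+\sum_{x\in C}g(x),
\]
as desired. No step here is really an obstacle; the only point requiring slight care is the assertion $h^0(\calO_Z)=k$ for each component of $\tilC$, which is where the separable closedness of $k$ enters.
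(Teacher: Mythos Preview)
Your proof is correct and follows essentially the same route as the paper: both use the short exact sequence $0\to\calO_C\to\pi_*\calO_\tilC\to\pi_*\calO_\tilC/\calO_C\to 0$, identify the quotient as a skyscraper contributing $\sum_{x\notin C^0}g(x)$, and use geometric reducedness plus $k=k^s$ to get $h^0(\calO_\tilC)=|C^0|$. The only cosmetic difference is that the paper writes out the five-term long exact sequence explicitly while you phrase the same computation via Euler characteristics; also note the small typo ``$h^0(\calO_Z)=k$'' near the end should read $H^0(Z,\calO_Z)=k$.
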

\begin{proof}
The normalization morphism $\pi$ is affine, hence we have an isomorphism $H^i(C,\pi_*\calO_\tilC)\toisom
H^i(\tilC,\calO_\tilC)$. Also, the sheaf $\calF=\pi_*\calO_\tilC/\calO_C$ is a skyscraper because $\pi$ is an
isomorphism over non-closed points. As a consequence, we obtain the following exact sequence
$$
0\to H^0(\calO_C)\to H^0(\calO_\tilC)\to H^0(\calF)\to H^1(\calO_C)\to H^1(\calO_\tilC) \to 0.
$$
Since $k=k^s$ and $C$ is geometrically reduced, $k$ is algebraically closed in the function field of each
irreducible component of $C$. In particular, $H^0(\calO_\tilC)$ is the direct sum of $|C^0|$ copies of $k$. The
Lemma follows now by computing the dimensions of the cohomology groups.
\end{proof}

The Lemma has the following Corollary which will serve us in applications. Let $Z$ be a connected proper
semi-stable $k$-curve. If $p_a(Z)=0$ then we say that $Z$ is a {\em $\bfP^1_k$-tree}. This condition is
equivalent to requiring that the irreducible components of $Z$ are isomorphic to $\bfP^1_k$, and the incidence
graph of $Z$ is a tree (vertex per generic point and edge per double point). If $C$ is a curve then by the
boundary $\partial(Z)=\partial_C(Z)$ of a closed subscheme $Z$ we mean the intersection of $Z$ with the Zariski
closure of its complement.

\begin{corsect}
\label{stabdowncor} Let $f:C'\to C$ be a morphism of proper geometrically reduced $k$-curves such that
$p_a(C')=p_a(C)$. Assume that $x\in C$ is a point such that $Z=f^{-1}(x)$ is a connected curve and
$f:C'\setminus Z\to C\setminus x$ is an isomorphism. Let us assume also that $C'$ is semi-stable at all points
of $Z$. Then $x$ is an ordinary $n$-fold point if and only if $Z$ is a $\bfP^1_k$-tree and $\partial(Z)$
contains exactly $n$ points.
\end{corsect}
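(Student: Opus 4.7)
The plan is to compare the two arithmetic genera via Lemma \ref{lastlem} applied to $C$, $C'$ and $Z$, and to match $|\partial(Z)|$ with $|\wt x|$ using the semi-stability hypothesis.

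First I would partition the irreducible components of $C'$ into the strict transforms of components of $C$ (which, since $f$ is an isomorphism outside $Z$, biject with the components of $C$) and the components of $Z$. This gives $|(C')^0|=|C^0|+|Z^0|$. Off of $Z$ and $x$ the $g$-invariant is preserved by $f$, so Lemma \ref{lastlem} applied to $C$ and $C'$ and the hypothesis $p_a(C')=p_a(C)$ reduce to
\[
g_C(x)=\Big(\sum_{y'\in Z}g_{C'}(y')\Big)-|Z^0|.
\]

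Next I would compare $g_Z(y')$ with $g_{C'}(y')$ for $y'\in Z$. At every generic point and every closed point of $Z$ that is smooth in $C'$, or a node internal to $Z$, the two agree. At a point $y'\in\partial(Z)$ the semi-stability of $C'$ forces $y'$ to be an ordinary double point with exactly one branch in $Z$ and one branch in the closure of $C'\setminus Z$; so $y'$ is a smooth point of $Z$ while it is a node of $C'$, contributing $g_{C'}(y')-g_Z(y')=1$. Lemma \ref{lastlem} for the connected semi-stable curve $Z$ then yields
\[
\sum_{y'\in Z}g_{C'}(y')=p_a(Z)-1+|Z^0|+|\partial(Z)|,
\]
and combining with the previous display gives the clean identity $g_C(x)=p_a(Z)+|\partial(Z)|-1$.

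The remaining point is that $|\partial(Z)|=|\wt x|$. Under semi-stability each point $y'\in\partial(Z)$ is a node with exactly one branch coming from an ``outer'' component, so $y'$ corresponds to a unique branch of $C$ at $x$; conversely each branch of $C$ at $x$ is the image of a unique outer component of $C'$ meeting $Z$ transversally in one node lying over $x$. This bijects $\partial(Z)$ with $\wt x$. Using the criterion recalled before Lemma \ref{lastlem} (a $k$-point $x$ is ordinary iff $g_C(x)=|\wt x|-1$), the identity $g_C(x)=p_a(Z)+|\wt x|-1$ shows that $x$ is ordinary iff $p_a(Z)=0$, i.e.\ iff the connected semi-stable $Z$ is a $\bfP^1_k$-tree; and ``ordinary $n$-fold'' corresponds exactly to $|\partial(Z)|=|\wt x|=n$. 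The $k$-rationality of $x$ in the ``if'' direction is automatic because each node of $\partial(Z)$ is $k$-rational over the separably closed $k$. The main technical point will be the bookkeeping in step two (especially the $+1$ discrepancy at boundary points) and checking that semi-stability really does force the one-branch-each description at points of $\partial(Z)$.
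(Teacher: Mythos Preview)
Your argument is correct and follows the same strategy as the paper: both compare $p_a(C)$ and $p_a(C')$ via Lemma~\ref{lastlem} to obtain $g_C(x)=\sum_{z\in Z}g_{C'}(z)-|Z^0|$, and both establish the bijection $|\partial(Z)|=|\tilx|$ using that each boundary point is a node with exactly one branch in $Z$. The only cosmetic difference is that you package the remaining computation by applying Lemma~\ref{lastlem} a second time to $Z$ itself (yielding $g_C(x)=p_a(Z)+|\partial(Z)|-1$ directly), whereas the paper unpacks the same computation by hand via the incidence graph $\Gamma$ of $Z$, writing $g_C(x)=n-1+h^1(\Gamma)+\sum_{z\in Z^0}g_{C'}(z)$; since $p_a(Z)=h^1(\Gamma)+\sum_{z\in Z^0}g(z)$ for a connected semi-stable curve, the two formulations are identical.
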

\begin{proof}
Apply the genus formula of Lemma \ref{lastlem} to compute $p_a(C')$ and $p_a(C)$. Since the genera are equal and
$C'\setminus Z\toisom C\setminus\{x\}$, we obtain that $g(x)=\sum_{z\in Z}g_{C'}(z)-|Z^0|$. Set
$n=|\partial_{C'}(Z)|$, then the number of double points on $Z$ equals to $n+l$ where $l$ is the number of
non-boundary double points of $Z$. If $\Gamma$ is the incidence graph of $Z$ then $h^1(\Gamma)=l-|Z^0|+1$. So,
$g(x)=\sum_{z\in Z}g_{C'}(z)-|Z^0|=n+l+\sum_{z\in Z^0}g_{C'}(z)-|Z^0|=n-1+h^1(\Gamma)+\sum_{z\in
Z^0}g_{C'}(z)\ge n-1$, and the equality holds if and only if $h^1(\Gamma)=\sum_{z\in Z^0}g_{C'}(z)=0$. The
latter means that $\Gamma$ is a tree and the genera are all zero, i.e. $Z$ is a $\bfP^1_k$-tree. Thus,
$g(x)=n-1$ if and only if $Z$ is a $\bfP^1_k$-tree. It only remains to show that $|\tilx|$ equals to
$n=|\partial_{C'}(Z)|$. We have an obvious embedding of normalizations $\tilC\into\tilC'$, hence a morphism
$h:\tilC\to C'$ arises. Clearly, $h$ maps $\tilx$ surjectively onto $|\partial_{C'}(Z)|$, hence it is enough to
show that $h$ is injective on $\tilx$. If $y\in C'$ is the image of two different points of $\tilx$ then $y$ is
necessarily a double point of $C'$ and hence no component of $Z$ can pass through $y$. But then $\{y\}$ is a
connected component of $Z$, that contradicts $Z$ being a connected curve.
\end{proof}

In the sequel, by $(f,f_D):(C',D')\to(C,D)$ we denote a proper surjective morphism of connected geometrically
reduced multipointed $k$-curves. We say that an irreducible component $Z$ of $C'$ is {\em exceptional} if $Z$
lies in the semi-stable locus of $(C',D')$, is isomorphic to $\bfP^1_k$, is contracted in $C$, and contains at
most two points $x$ from $D'\cup C'_{\sing}$ (i.e. either $x$ is on the divisor or $x$ is an ordinary double
point). Note that the pushout $C''$ of the diagram $\ol{C'\setminus Z}\la\partial_{C'}(Z)\to\Spec(k)$ is also
the pushout of the diagram $C'\la Z\to\Spec(k)$. Let $D''$ be the image of $D'$ in $C''$, then $D'\toisom D''$,
$f$ factors through $C''$ and the image of $Z$ is a point contained in the semi-stable locus of $(C'',D'')$. We
say that $(C'',D'')$ is obtained from $(C',D')$ by {\em contracting} $Z$. Contracting exceptional components
successively, we construct a surjective proper morphism $(\oC,\oD)\to(C,D)$ which has no exceptional components.
We call such $(\oC,\oD)$ a {\em stable blow down of $(f,f_D)$}.

\begin{lemsect}
\label{stabblowdown} A stable blow down of $(f,f_D)$ is unique up to unique isomorphism.
\end{lemsect}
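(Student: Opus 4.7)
The plan is to prove the lemma by a standard confluence (``diamond'') argument for the process of exceptional contractions. Termination is automatic since each contraction strictly decreases the number of irreducible components of $C'$, so it suffices to establish a local diamond: for any two distinct exceptional components $Z_1, Z_2 \subset (C', D')$, writing $(C'_i, D'_i)$ for the result of contracting $Z_i$ and $\oZ_j \subset C'_i$ for the image of $Z_j$ with $j \ne i$, the components $\oZ_2 \subset (C'_1, D'_1)$ and $\oZ_1 \subset (C'_2, D'_2)$ are still exceptional, and further contracting them yields canonically the same multipointed curve $(E, D_E)$ over $(C, D)$, namely the pushout $C' \leftarrow (Z_1 \cup Z_2) \to \Spec(k)$.

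To prove the diamond I would case-split on $Z_1 \cap Z_2$ inside $C'$. If the intersection is empty the contractions commute trivially. If $Z_1 \cap Z_2 = \{p\}$ is a single ordinary node, a local analysis at $p$ shows that contracting $Z_1$ replaces $p$ by a new point $q_1$ lying on the image $\oZ_2 \cong Z_2 \cong \bfP^1_k$; inspecting the three possibilities for the other special point of $Z_1$ (another node, a $D'$-point, or absent) confirms in each case that $\oZ_2$ remains in the semi-stable locus of $(C'_1, D'_1)$ with at most two special points, hence exceptional, and the composition of pushouts $C' \to C'_1 \to C'_{12}$ agrees by universality with the single pushout $C' \leftarrow (Z_1 \cup Z_2) \to \Spec(k)$, which is manifestly symmetric in $Z_1, Z_2$. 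The remaining case $|Z_1 \cap Z_2| \geq 2$ is ruled out globally: two or more nodes between $Z_1$ and $Z_2$ exhaust each component's quota of at most two special points, forcing $Z_1 \cup Z_2$ to have empty $\partial_{C'}$-boundary and to be disjoint from $D'$, so connectedness of $C'$ gives $C' = Z_1 \cup Z_2$, contradicting surjectivity of $f$ onto the one-dimensional curve $C$.

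Termination together with the diamond yields that all maximal contraction sequences starting from $(C', D')$ terminate at the same multipointed $k$-curve up to canonical isomorphism compatible with the surjections from $(C', D')$. In particular, given two stable blow downs $(\oC_1, \oD_1)$ and $(\oC_2, \oD_2)$ of $(f, f_D)$ there is an isomorphism $(\oC_1, \oD_1) \toisom (\oC_2, \oD_2)$ over $(C, D)$ making the surjections from $(C', D')$ commute. For uniqueness of this isomorphism I would use that $(C', D') \to (\oC_i, \oD_i)$ is a proper surjection onto a reduced target (the ordinary $n$-fold points introduced by the pushouts are reduced), hence scheme-theoretically dominant, so any two isomorphisms $(\oC_1, \oD_1) \to (\oC_2, \oD_2)$ over $(C, D)$ agreeing after pullback to $(C', D')$ must coincide.

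The main technical obstacle I expect is the single-node diamond, where one has to enumerate the combinatorial configurations (depending on whether the other special point of each $Z_i$ is another node, a point of $D'$, or absent) and verify by an explicit description of the pushout that $\oZ_2$ remains exceptional in $(C'_1, D'_1)$ and that the iterated pushout identifies with $C' \leftarrow (Z_1 \cup Z_2) \to \Spec(k)$ together with the correct morphism to $(C, D)$.
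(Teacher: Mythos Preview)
Your confluence argument is correct and complete (the case analysis in the single-node diamond is routine, and your exclusion of $|Z_1\cap Z_2|\ge 2$ via connectedness and surjectivity of $f$ is clean). However, the paper takes a different, more structural route: rather than tracking the contraction process step by step, it introduces the notion of an \emph{exceptional $\bfP^1_k$-tree} (a connected semi-stable subcurve $Y\subset C'$ with $p_a(Y)=0$, contracted in $C$, and $|Y\cap D'|+|\partial_{C'}(Y)|\le 2$), proves that the union of two overlapping exceptional trees is again exceptional (via a boundary-count inequality), and concludes that $C'$ contains a canonical finite collection of disjoint \emph{maximal} exceptional trees $Y_1,\dots,Y_n$. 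The stable blow down is then characterized at once as the curve contracting exactly the $Y_i$, and Corollary~\ref{stabdowncor} is invoked to see that any intermediate semi-stable blow down contracts sub-trees of the $Y_i$, hence is dominated by the stable one. Your Newman-lemma approach is more elementary and self-contained (it needs neither the tree notion nor Corollary~\ref{stabdowncor}); the paper's approach is shorter once those ingredients are in hand and has the advantage of identifying the contracted locus intrinsically, which is useful elsewhere (e.g.\ in the proof of Corollary~\ref{sepcloscor}).
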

\begin{proof}
Let $Y\subset C'$ be a $\bfP^1_k$-tree lying in the semi-stable locus of $(C',D')$. We say that $Y$ is
exceptional if it is contracted to a point of $C$ and $|Y\cap D'|+|\partial_{C'}(Y)|\le 2$. We claim that given
two exceptional $\bfP^1_k$-trees $Y_1$ and $Y_2$ with non-empty intersection, their union is an exceptional
$\bfP^1_k$-tree too. Indeed, if none of them is contained in the other then there exists a double point $x$ such
that one irreducible component through $x$ is in $Y_1$ but not in $Y_2$, and the other one is in $Y_2$ but not
in $Y_1$. So, $x$ is contained in $\partial_{C'}(Y_i)$ for $i=1,2$ but not in $\partial_{C'}(Y_1\cup Y_2)$ and
therefore $|\partial_{C'}(Y_1\cup Y_2)|\le |\partial_{C'}(Y_1)|+|\partial_{C'}(Y_2)|-2$ and the claim follows.
Thus, $C'$ contains disjoint exceptional $\bfP^1_k$-trees $Y_1\. Y_n$ such that any exceptional $\bfP^1_k$-tree
is contained in some $Y_i$. Now it is clear that the stable blow down of $f$ is determined up to an isomorphism
by the property that it contracts each $Y_i$ to a point. Indeed, by Corollary \ref{stabdowncor} any semi-stable
blow down $C'\to\oC'$ contracts few disjoint exceptional $\bfP^1_k$-trees, hence the stable blow down $C'\to\oC$
factors through $\oC'$.
\end{proof}

For the sake of completeness, we note that analogous stabilization lemma holds in the absolute situation (i.e.
we contract a multipointed $k$-curve $(C',D')$ without specified base curve $(C,D)$) when one of the following
cases holds: $p_a(C')\ge 2$, or $p_a(C')=1$ and $|D'|\ge 1$, or $p_a(C')=0$ and $|D'|\ge 3$. Note also that one
can similarly construct an ordinary blow down of $C'$, but it is not unique in general. In particular, there is
no minimal ordinary modification of relative curves.

\end{document}